\DeclareSymbolFont{largesymbols}{OMX}{yhex}{m}{n}
\DeclareMathAccent{\widehat}{\mathord}{largesymbols}{"62}
\newtheorem{theorem}{Theorem}[section]
\newtheorem{corollary}[theorem]{Corollary}
\newtheorem{lemma}[theorem]{Lemma}
\newtheorem{proposition}[theorem]{Proposition}
\theoremstyle{definition}
\newtheorem{definition}[theorem]{Definition}
\newtheorem{remark}[theorem]{Remark}
\newtheorem{example}[theorem]{Example}
\newtheorem{question}[theorem]{Question}
\DeclareMathOperator{\Ima}{im}
\newcommand{\Aut}{{\rm Aut}}
\newcommand{\HK}{{\rm HK}}
\newcommand{\Homeo}{{\rm Homeo}}
\newcommand{\id}{{\rm id}}
 \newcommand{\NRP}{{\bf NRP}}
 \newcommand{\NRPk}{{\bf NRP}^{[k]}}
 \newcommand{\NRPs}{{\bf NRP}^{[s]}}
\newcommand{\Stab}{{\rm Stab}}
  \newcommand{\cD}{{\mathcal D}}
 \newcommand{\bN}{{\mathbb N}}
 \newcommand{\bR}{{\mathbb R}}
 \newcommand{\bT}{{\mathbb T}}
 \newcommand{\bZ}{{\mathbb Z}}
 \newcommand{\M}{ M}
\begin{document}
  \title{On the structure theory of cubespace fibrations}
  
  \author{Yonatan Gutman}
\author{Bingbing Liang}

\address{\hskip-\parindent
Y.Gutman, The Institute of Mathematics of the Polish Academy of Sciences,
ul. \'{S}niadeckich  8, 00-656  Warsaw, Poland}
\email{Y.Gutman@impan.pl}

\address{\hskip-\parindent
B.Liang, The Institute of Mathematics of the Polish Academy of Sciences,
ul. \'{S}niadeckich  8, 00-656  Warsaw, Poland }

\email{b.liang2018@outlook.com}
\email{bliang@impan.pl}

\subjclass[2010]{Primary 11B30; Secondary 37B05, 54H20}
\keywords{cubespace, fibration, nilspace, nilmanifold, Lie group, translation, cocycle on fibers, relativized regionally proximal relation, relative nilpotent regionally proximal relation}

\maketitle

\begin{abstract}
We study fibrations in the category of cubespaces/nilspaces. We show that a fibration of finite degree $f \colon X\rightarrow Y$ between compact ergodic gluing cubespaces (in particular nilspaces) factors as a (possibly countable) tower of compact abelian Lie group principal fiber bundles over $Y$.  If the structure groups
of $f$ are connected then the fibers are (uniformly) isomorphic
(in a strong sense) to an inverse limit of nilmanifolds. In addition we give conditions under which 
the fibers of $f$ are isomorphic as subcubespaces.

We introduce regionally proximal 
equivalence relations relative to factor maps between minimal topological dynamical systems for an arbitrary acting group. We prove that
any factor map between minimal distal systems is a fibration and conclude that
if such a map is of finite degree then it factors as a (possibly
countable) tower of
principal abelian Lie compact group extensions, thus achieving  a refinement of both the Furstenberg's and the Bronstein-Ellis  structure theorems in this setting.
\end{abstract}

\tableofcontents

\section{Introduction}

\subsection{General background}

The theory of \emph{cubespaces} and its important subclass of \emph{nilspaces} originated in the work of Host and Kra \cite{HK08} under the name of \emph{parallelepiped structures}, and was developed further by Antolín Camarena and Szegedy \cite{ACS12}. A \emph{cubespace} is a structure consisting of a compact metric space $X$, together with a closed collection of \emph{cubes} $C^k(X)\subseteq X^{2^k}$ for each integer $k\ge 0$, satisfying certain natural axioms. A \emph{nilspace} is a cubespace satisfying an additional rigidity condition \footnote{For the exact definitions of the class of \emph{cubespaces} and the subclass of \emph{nilspaces} see Definitions \ref{cubespace} and \ref{nilspace} respectively.}. The theory has already found applications in \emph{higher order Fourier analysis} \cite{szegedy2012higher, tao2012higher}, in particular in relation to the inverse theorem for the Gowers norms \cite{GTZ12}, as well as in ergodic theory \cite{gutman2019strictly,candela2018nilspace} in relation to the Host-Kra structure theorem \cite{HK05,host2018nilpotent}. Cubespaces and nilspaces also played an essential role in the structure theory of the higher order \emph{nilpotent regionally proximal relations} 
introduced and developed by Glasner, Gutman and Ye in \cite{GGY18}.

As the theory of cubespaces/nilspaces has matured, it has been observed that \emph{fibrations}, cubespace morphisms satisfying an additional rigidity condition, are highly useful\footnote{For the exact definition of the class of \emph{fibrations} (of \emph{finite degree}) see Definition \ref{fibration definition} (Definition \ref{def:fibration_finite_degree}).}. 
This notion was introduced in \cite{GMVI} generalizing the previous notion of \emph{fiber-surjective morphism} from  \cite{ACS12}. Indeed
in \cite{GMVI, GMVII, GMVIII} Gutman, Manners and Varj\'{u} developed a weak structure theory for fibrations as an important step in the proof of the structure theorem for minimal topological dynamical \emph{systems of finite degree}, i.e., such that the nilpotent regionally proximal relation of some degree is trivial. According to this theorem such a system may be represented as an \emph{inverse limit of nilsystems} (subject to some mild assumptions on the acting group). 

According to the \emph{weak structure theorem} a fibration of finite degree factors
as a finite tower of compact abelian group extensions. The groups appearing in this factorization are referred to as the \emph{structure groups} of the fibration. 

In this paper we give a finer structure theory for fibrations building on the Antolín Camarena-Szegedy fundamental structure theorems for nilspaces. We show that a fibration of finite degree between cubespaces obeying some natural conditions factors as a (possibly countable) tower of \emph{Lie-fibered fibrations}, i.e., fibrations whose structure groups are 
(compact abelian) Lie groups. 

Given a fibration of finite degree $f \colon X \to Y$  it is well known that the \emph{fibers} $f^{-1}(y)$ are nilspaces. However this by itself does not elucidate the relation between \emph{different} fibers. In this paper we are able to give conditions guaranteeing that all fibers are \emph{isomorphic as cubespaces}. Moreover, taking advantage of the above-mentioned factorization into a tower of Lie-fibered fibrations, if the structure groups of the fibers are connected\footnote{In Proposition \ref{prop:structure groups} we show that the structure groups of \emph{all} fibers are connected iff the structure groups of $f$ are connected.} we show that the fibers are approximated uniformly as closely as desired  by quotients by natural co-compact subgroups of a \emph{single} Lie group associated with the factorization. 
 
We relate our results to the theory of topological dynamical systems. We show that any factor map between minimal distal systems $\pi\colon (G,X) \to (G,Y)$ where $G$ is an arbitrary topological group, is a fibration between the associated dynamical cubespaces. This supplies an abundance of hitherto unknown new examples of fibrations.   

The \emph{regionally proximal relations} have a long history in topological dynamics \cite{EG60,veech1968equicontinuous,ellis1971characterization}. Its relativization, the \emph{relativized regionally proximal relations} play a fundamental role in the study of the structure of equicontinuous extensions \cite{MW73, McM78}. Attesting to its importance is its central use in Bronstein's proof \cite{Bro68, bronshtein1970distal}\footnote{See also \cite[Chapter 7]{A}.} of the (relative) Furstenberg structure theorem for minimal distal extensions \cite{furstenberg1963structure}\footnote{The relative Furstenberg structure theorem was also proven independently by Ellis in \cite{E68}.}.

In this article we introduce the \emph{relative nilpotent regionally proximal relations} for extensions between minimal systems and analyze its structure. 
In particular we show that when an extension between minimal distal systems has trivial relative nilpotent regionally proximal relation of some degree, then it factors as a (possibly countable) tower of  \emph{principal abelian Lie (compact) group extensions}\footnote{For the exact definitions of \emph{ principal abelian group extensions} see Definition \ref{def:principal abelian group}.}.


\subsection{Cubespaces and nilspaces}

In this subsection, we define the notions of cubespace and nilspace and survey their important properties. For more detailed information see \cite{ACS12, candela2016cpt_notes, candela2016alg_notes, candela2019nilspace_morphisms, GGY18, GMVI,GMVII, GMVIII}.
\begin{definition} \label{cubespace}
Let $k, \ell \geq 0$ be two integers. A map $f=(f_{1},\ldots,f_{\ell}):\{0,1\}^{k}\to\{0,1\}^{\ell}$ is called
a \textbf{morphism of discrete cubes} if each coordinate function
$f_{j}(\omega_{1},\ldots,\omega_{k})$ is either identically  $0$,
identically $1$, or equals either $\omega_{i}$
or $\overline{\omega_{i}}=1-\omega_{i}$ for some $1\le i=i(j)\le k$.
If $\ell \leq k$, by an {\bf $\ell$-face} of $\{0, 1\}^k$ we mean a subset of $\{0, 1\}^k$ obtained from fixing values of  $(k-\ell)$ coordinates. In particular, a $(k-1)$-face is called a {\bf hyperface}\index{hyperface}.

A {\bf cubespace} \index{cubespace} is a metric space $X$, associated with a sequence of closed subsets $C^\bullet(X):=\{\index[nota]{C^k(X)} C^k(X) \subseteq  X^{\{0, 1\}^k} : k=0, 1, \ldots \}$  satisfying: 
\begin{enumerate}
    \item $C^0(X)=X$;
    \item for every integer $k, \ell \geq 0$, $c \in C^\ell(X)$, and morphism of discrete cubes $\rho: \{0, 1\}^k \to \{0, 1\}^\ell$, one has that 
    $c \circ \rho \in C^k(X)$.
\end{enumerate}
We call the elements of $C^k(X)$ $k$-{\bf cubes}\index{$k$-cube}.  A map $\{0, 1\}^k \to X$ is called a $k$-{\bf configuration}\index{configuration}. 
\end{definition}

A particular class of cubespaces arise from dynamical systems. 
By a {\bf (topological) dynamical system}\index{dynamical system} $(G, X)$\index{$(G, X)$},  we mean a continuous action of a topological group $G$ on a compact metric space $X$. 
\begin{definition} \label{HK cube}
Let $(G, X)$ be a dynamical system. For every integer $k \geq 0$ the {\bf Host-Kra cube group $\HK^k(G)$} \index{Host-Kra cube group}  is the subgroup of $G^{\{0, 1\}^k}$ generated by $[g]_F$ for all $g \in G$ and hyperfaces $F$ of $\{0, 1\}^k$. Here the configuration $[g]_F\index{$[g]_F$} \in G^{\{0, 1\}^k}$ sends $\omega \in F$ to $g$ and $e_G$ elsewhere. The cubespace $C_G^\bullet(X)$ is defined by taking orbit closures of constant configurations, i.e.
$$C_G^k(X):=\overline{\{\gamma . x^{\{0, 1\}^k}: \gamma \in \HK^k(G), x \in X \}},$$
where "." is denotes the pointwise action of $\HK^k(G)\subset G^{\{0, 1\}^k}$ on $X^{\{0, 1\}^k}$.  
We  call $(X, C_G^\bullet(X))$ a {\bf dynamical cubespace}\index{cubespace!dynamical cubespace}. Similarly if  $(X, C_G^\bullet(X))$ is a nilspace then we call it  a {\bf dynamical nilspace}.\index{nilspace !dynamical nilspace}
\end{definition}

The cubespaces category has natural notions of {\bf subcubespaces}, {\bf quotients} of cubespaces, and {\bf inverse limits} of cubespaces. \index{cubespace!subcubespace} \cite[Remark 3.7, Definition 5.2]{GMVI}. Let us review inverse limits in the category of cubespaces. Let $X :=\varprojlim X_i$ be an inverse system of cubespaces $(X_i, C^\bullet(X_i) )$. Then the cubespace structure on $X$ is defined via $$C^k(X):=\varprojlim C^k(X_i)$$
where the projections $ C^k(X_{i+1}) \to C^k(X_i)$ are induced from the pointwise projections  $\{X_{i+1} \to X_i\}_i$.

A cubespace $X$ is called {\bf ergodic} \index{ergodic} if every pair of points in $X$ is a $1$-cube, i.e. $C^1(X)=X^{\{0, 1\}}$. 
Recall that a dynamical system $(G, X)$ is called {\bf minimal} if the orbit of every point of $X$ is dense in $X$. A simple observation is that if $(G, X)$ is minimal then the induced  dynamical cubespace $(X, C^\bullet_G(X))$ is ergodic.

A cubespace $X$ is called {\bf strongly connected}\index{strongly connected} when all $C^k(X)$ are connected.

The first important property of cubespace is the (corner) completion property.
Denote by $\overrightarrow{1}$ the  element $(1, 1, \ldots, 1) \in \{0, 1\}^k$ and  $\llcorner^k$\index{$\llcorner^k$} the set $\{0, 1\}^k\setminus \{ \overrightarrow{1} \}$. Let $X$ be a cubespace, $k \geq 0$,  and $\lambda \colon \llcorner^k \to X$ a map. We call $\lambda$ a {\bf $k$-corner} \index{$k$-corner}
if every lower face of $\lambda$ is a $(k-1)$-cube, i.e. $\lambda|_{\{\omega: \ \omega_i=0\}} \in C^{k-1}(X)$ for all $i=1,\ldots, k$.
\begin{definition}
We say $X$ has {\bf $k$-completion} \index{$k$-completion} if every $k$-corner $\lambda$ of $X$ can be completed to a $k$-cube of $X$, i.e. $\lambda=c|_{\llcorner^k}$ for some $c \in C^k(X)$. A cubespace $X$ is called {\bf fibrant} \index{fibrant} if it has $k$-completion for every $k \geq 0$ (note that a $0$-corner is the empty set).
\end{definition}
Let $d$ be the mertic on a compact metric space $X$. Recall that a dynamical system $(G, X)$ is called {\bf distal} if $\inf_{g \in G} d(gx, gx') >0$ for any distinct points $x, x' \in X$.
\begin{example}
Let $(G, X)$ be a minimal distal system. Then the associated Host-Kra cubespace is fibrant \cite[Theorem 7.10]{GGY18}. In general, it is not the case for nondistal systems \cite[Example 3.10]{TY13} \cite[Example 9.3]{GGY18}. 
\end{example}
An important property of fibrant cubespaces is that they are gluing \cite[Proposition 6.2]{GMVI}. Let us recall the definition  as follows. 
Let $c_1, c_2\colon \{0, 1\}^k \to X$ be two configurations. The {\bf concatenation} \index{concatenation} $ [c_1, c_2]\index{$[c_1, c_2]$}\colon \{0, 1\}^{k+1} \to X$ of $c_1$ and $c_2$ is defined by
sending $(\omega, 0)$ to $c_1(\omega)$ and $(\omega, 1)$ to $c_2(\omega)$ for every $\omega \in \{0, 1\}^k$.

\begin{definition}
We say a cubespace $X$ has the  {\bf gluing property}  or is  {\bf gluing} \index{gluing}
if for every integer $k \geq 0$ and every $c_1, c_2, c_3 \in C^k(X)$, $[c_1, c_2], [c_2, c_3] \in C^{k+1}(X)$ implies that 
$[c_1, c_3] \in C^{k+1}(X)$.
\end{definition}

Another important property of cubespaces is the uniqueness property.
\begin{definition}\label{nilspace}
 A cubespace $X$ has {\bf $k$-uniqueness} \index{$k$-uniqueness} if for any $c_1, c_2 \in C^k(X)$ such that
$c_1|_{\llcorner^k}=c_2|_{\llcorner^k}$, one has $c_1=c_2$. Fix $s \geq 0$. We say  $X$  is a {\bf nilspace of degree at most s} or simply an  {\bf $s$-nilspace} \index{$s$-nilspace}
if it is fibrant and has (s+1)-uniqueness. We say $X$ is a {\bf nilspace} \index{nilspace} if it is an $s$-nilspace for some integer $s$.
\end{definition}

Note that if $X$ has $k$-uniqueness then $X$ has $\ell$-uniqueness for every $\ell \geq k$ as one  can apply the $k$-uniqueness to some suitable $k$-face of a given $\ell$-cube.

 \begin{definition} \label{canonical equivalence}
For a cubespace $X$, we say a pair of points $(x, x')$ of $X$ are {\bf $k$-canonically related}, denoted by $x \sim_k x'$\index{$\sim_k$} if there exists $c, c' \in C^{k+1}(X)$ such that $c|_{\llcorner^{k+1}}=c'|_{\llcorner^{k+1}}$, $c(\overrightarrow{1})=x$, and $c'(\overrightarrow{1})=x'$. The relation $\sim_k$  is called the {\bf $k$-th canonical  relation}\index{$k$-th canonical relation}.
\end{definition}

For compact gluing cubespaces the $k$-th canonical relation is an equivalence relation (\cite[Proposition 6.3]{GMVI}). This follows as compact gluing cubespaces satisfy the so-called {\bf universal replacement property}\index{universal replacement property} \cite[Proposition 6.3]{GMVI} (see also \cite[Lemma 2.5]{ACS12} and \cite[Proposition 3]{HK08}):
\begin{proposition} \label{URP}
Let $X$ be a compact gluing cubespace. Fix $s \geq 0$. Let $k \leq s+1$ and $c \in C^k(X)$. Then if a configuration $c' \in  X^{\{0, 1\}^k}$ has the same image as $c$ under the quotient map $X \to X/\sim_s$, then $c' \in C^k(X)$. 
\end{proposition}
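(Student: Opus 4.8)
The plan is to reduce Proposition~\ref{URP} to the problem of changing the value of $c$ at a single vertex, to move that vertex to $\overrightarrow{1}$ by a coordinate symmetry, and then to settle the resulting combinatorial statement by induction on $k$, using the gluing property together with the stability of $C^\bullet(X)$ under morphisms of discrete cubes.

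First I would reduce to changing a single vertex. Enumerating $\{0,1\}^k=\{\omega^{(1)},\dots,\omega^{(2^k)}\}$, set $c_{(0)}:=c$ and let $c_{(i)}$ be obtained from $c_{(i-1)}$ by replacing its value at $\omega^{(i)}$ with $c'(\omega^{(i)})$; since earlier modifications leave $\omega^{(i)}$ untouched, $c_{(i-1)}(\omega^{(i)})=c(\omega^{(i)})\sim_s c'(\omega^{(i)})$ (the two lying in the same $\sim_s$-class as $c,c'$ have the same image in $X/\!\sim_s$). Thus it suffices to show: if $c\in C^k(X)$, $\omega_0\in\{0,1\}^k$ and $x'\in X$ satisfy $c(\omega_0)\sim_s x'$, then the configuration $\tilde c$ agreeing with $c$ off $\omega_0$ and having $\tilde c(\omega_0)=x'$ lies in $C^k(X)$. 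Post-composing with the automorphism of $\{0,1\}^k$ interchanging $\omega_0$ and $\overrightarrow{1}$ — a morphism of discrete cubes, hence preserving $C^k(X)$ — we may take $\omega_0=\overrightarrow{1}$.

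Next I would use $k\le s+1$ to pull the relation $c(\overrightarrow{1})\sim_s x'$ down into dimension $k$. By Definition~\ref{canonical equivalence} there are $D,D'\in C^{s+1}(X)$ with $D|_{\llcorner^{s+1}}=D'|_{\llcorner^{s+1}}$, $D(\overrightarrow{1})=c(\overrightarrow{1})$ and $D'(\overrightarrow{1})=x'$. Restricting both to the $k$-face $F=\{\omega\in\{0,1\}^{s+1}:\omega_{k+1}=\dots=\omega_{s+1}=1\}$, whose parametrizing inclusion $\{0,1\}^k\hookrightarrow\{0,1\}^{s+1}$, $(\omega_1,\dots,\omega_k)\mapsto(\omega_1,\dots,\omega_k,1,\dots,1)$, is a morphism of discrete cubes and for which $F\cap\llcorner^{s+1}$ is a copy of $\llcorner^k$, we obtain $d,d'\in C^k(X)$ with $d|_{\llcorner^k}=d'|_{\llcorner^k}$, $d(\overrightarrow{1})=c(\overrightarrow{1})$ and $d'(\overrightarrow{1})=x'$. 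This is precisely where $k\le s+1$ enters: without it one could not bring the witness into dimension $k$, a compact gluing cubespace having no reason to admit the relevant completions.

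It then remains to prove the core statement: if $c,d,d'\in C^k(X)$ satisfy $d|_{\llcorner^k}=d'|_{\llcorner^k}$, $d(\overrightarrow{1})=c(\overrightarrow{1})$ and $d'(\overrightarrow{1})=x'$, then $\tilde c\in C^k(X)$, where $\tilde c=c$ off $\overrightarrow{1}$ and $\tilde c(\overrightarrow{1})=x'$. I would induct on $k$, the case $k=0$ being trivial. Splitting each $k$-cube along the last coordinate, $c=[c_0,c_1]$, $d=[d_0,d_1]$, $d'=[d'_0,d'_1]$ with all halves in $C^{k-1}(X)$, one has $d_0=d'_0$ (these faces lie inside $\llcorner^k$), while $c_1,d_1,d'_1$ satisfy the hypotheses of the core statement in dimension $k-1$; the inductive hypothesis then yields $\tilde c_1\in C^{k-1}(X)$, namely $c_1$ with its apex moved to $x'$, and one checks $\tilde c=[c_0,\tilde c_1]$. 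The crux is the reassembly, i.e.\ deducing $[c_0,\tilde c_1]\in C^k(X)$ from $[c_0,c_1]=c\in C^k(X)$, from $\tilde c_1\in C^{k-1}(X)$, and from the companions $d,d'$. I expect this to be the main obstacle: it is effected by a bounded chain of applications of the gluing property in dimension $k$, where the $(k+1)$-cubes fed into the gluing axiom are manufactured by pulling cubes back along coordinate-dropping morphisms — the mechanism already visible at $k=1$, where it forces transitivity of the relation $C^1(X)$ (from $(a,q),(p,q)\in C^1(X)$ one pulls back to $[(a,a),(q,q)],[(q,q),(p,p)]\in C^2(X)$ and glues to $[(a,a),(p,p)]\in C^2(X)$, whose hyperface $(a,p)$ is a $1$-cube). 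Since mere agreement of two $k$-cubes on $\llcorner^k$ does not by itself make their concatenation a $(k+1)$-cube (as non-ergodic examples show), the witnesses $d,d'$ must be threaded into the gluing chain in an essential way, and organizing this bookkeeping for general $k$ is the delicate point.
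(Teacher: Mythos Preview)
The paper does not prove Proposition~\ref{URP}; it is quoted from \cite[Proposition 6.3]{GMVI} (with parallel references to \cite[Lemma 2.5]{ACS12} and \cite[Proposition 3]{HK08}), so there is no in-paper argument to compare against. I therefore assess your proposal on its own and against those sources.

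Your reductions --- replacing one vertex at a time, moving that vertex to $\overrightarrow{1}$ by a cube automorphism, and restricting the $(s{+}1)$-dimensional witnesses for $\sim_s$ to a $k$-face to produce $d,d'\in C^k(X)$ with $d|_{\llcorner^k}=d'|_{\llcorner^k}$, $d(\overrightarrow{1})=c(\overrightarrow{1})$, $d'(\overrightarrow{1})=x'$ --- are correct and are exactly how the cited proofs begin.

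The genuine gap is in your inductive step for the ``core statement''. After the inductive hypothesis hands you $\tilde c_1\in C^{k-1}(X)$, you must show $[c_0,\tilde c_1]\in C^k(X)$; but the gluing axiom only lets you swap a \emph{full hyperface} of a cube, whereas the available configurations ($c_1$ versus $\tilde c_1$, or $c_1$ versus $d_1$) agree only on a corner or at a single vertex. The bare conclusion ``$\tilde c_1\in C^{k-1}(X)$'' discards precisely the link to $c_0$ that you would need for any gluing chain in dimension $k$, so the induction as formulated does not close. Your $k=1$ sketch works only because in dimension~$1$ a corner already \emph{is} a hyperface. The arguments in \cite{GMVI,ACS12} bypass this: rather than inducting on $k$, they build directly in dimension $k{+}1$ a chain of $(k{+}1)$-cubes, each obtained from $c$, $d$, or $d'$ via a coordinate-duplication morphism $\{0,1\}^{k+1}\to\{0,1\}^k$, arranged so that consecutive members share a full $k$-face; gluing the chain produces a $(k{+}1)$-cube with $c$ on one hyperface and $\tilde c$ on the opposite one. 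If you prefer to keep an inductive scheme, you would need to strengthen the hypothesis --- for instance to ``$[c,\tilde c]\in C^{k+1}(X)$'' rather than merely ``$\tilde c\in C^k(X)$'' --- so that the output in dimension $k{-}1$ carries enough structure to feed back into the gluing axiom in dimension $k$.
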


 \begin{proposition}\cite[Proposition 6.2]{GMVI} \label{fibrant is gluing}
 Fibrant cubespaces (in particular nilspaces) satisfy the gluing property.
 \end{proposition}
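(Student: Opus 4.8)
First note that the gluing property is exactly a transitivity statement: on pairs of elements of $C^k(X)$, the relation ``$[c,c']\in C^{k+1}(X)$'' is automatically reflexive and symmetric. Indeed, precomposing a $k$-cube $c$ with the projection $\{0,1\}^{k+1}\to\{0,1\}^k$ forgetting the last coordinate gives $[c,c]\in C^{k+1}(X)$, and precomposing $[c_1,c_2]$ with the involution of $\{0,1\}^{k+1}$ that flips the last coordinate gives $[c_2,c_1]\in C^{k+1}(X)$; both the projection and the flip are morphisms of discrete cubes. Hence it suffices to fix $k\ge 0$ and $c_1,c_2,c_3\in C^k(X)$ with $[c_1,c_2],[c_2,c_3]\in C^{k+1}(X)$ and to prove $[c_1,c_3]\in C^{k+1}(X)$, and I would do this by induction on $k$.

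The plan is to build a suitable $(k+2)$-corner out of the given cubes, complete it using fibrancy, and then recover $[c_1,c_3]$ --- up to a coordinate flip --- as the image of the completed cube under a morphism of discrete cubes whose image avoids the apex $\overrightarrow{1}$. The last point is crucial: the value of the completion at the apex is not under our control, but it simply never enters the argument, which is why plain fibrancy suffices, \emph{with no appeal to any uniqueness axiom}. Concretely, writing a point of $\{0,1\}^{k+2}$ as $(\omega,\epsilon,\delta)$ with $\omega\in\{0,1\}^k$, I would take the configuration $\lambda\colon\llcorner^{k+2}\to X$ defined by $\lambda(\omega,0,0)=c_2(\omega)$, $\lambda(\omega,0,1)=c_3(\omega)$, $\lambda(\omega,1,0)=c_1(\omega)$, and $\lambda(\omega,1,1)=c_2(\omega)$ for $\omega\ne\overrightarrow{1}$. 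Its lower hyperface $\{\epsilon=0\}$ is the $(k+1)$-cube $[c_2,c_3]$ and its lower hyperface $\{\delta=0\}$ is the $(k+1)$-cube obtained from $[c_1,c_2]$ by flipping the last coordinate, so the only remaining point in checking that $\lambda$ is a genuine $(k+2)$-corner is that each lower hyperface $\{\omega_i=0\}$, for $1\le i\le k$, is a $(k+1)$-cube. Restricting $c_1,c_2,c_3$ to the face $\{\omega_i=0\}$ of $\{0,1\}^k$ produces cubes $\bar c_1,\bar c_2,\bar c_3\in C^{k-1}(X)$ for which $[\bar c_1,\bar c_2]$ and $[\bar c_2,\bar c_3]$ are the corresponding hyperfaces of $[c_1,c_2]$ and $[c_2,c_3]$, hence lie in $C^k(X)$, and one computes that the $\{\omega_i=0\}$ hyperface of $\lambda$ is the ``double concatenation'' $[[\bar c_2,\bar c_1],[\bar c_3,\bar c_2]]$, i.e.\ the $(k+1)$-configuration sending $(\omega',\epsilon,\delta)$ to $\bar c_2(\omega'),\bar c_1(\omega'),\bar c_3(\omega'),\bar c_2(\omega')$ according as $(\epsilon,\delta)=(0,0),(1,0),(0,1),(1,1)$. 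Granting for the moment that these hyperfaces are cubes, fibrancy supplies $\Phi\in C^{k+2}(X)$ with $\Phi|_{\llcorner^{k+2}}=\lambda$. I would then apply the morphism of discrete cubes $\psi\colon\{0,1\}^{k+1}\to\{0,1\}^{k+2}$, $\psi(\omega,t)=(\omega,t,1-t)$: its image misses $\overrightarrow{1}$, so it lies entirely in $\llcorner^{k+2}$ and hence $\Phi\circ\psi=\lambda\circ\psi$, which by construction equals $[c_3,c_1]$; since $\psi$ is a morphism of discrete cubes, $[c_3,c_1]\in C^{k+1}(X)$, and one more coordinate flip gives $[c_1,c_3]\in C^{k+1}(X)$. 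The base case $k=0$ needs nothing extra: there are no coordinates $\omega_i$, hence no hyperfaces $\{\omega_i=0\}$ to check, so $\lambda$ is a $2$-corner directly and the completion-plus-$\psi$ argument applies verbatim.

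The hard part will be the step I glossed over: showing, in the inductive step, that the side hyperfaces $[[\bar c_2,\bar c_1],[\bar c_3,\bar c_2]]$ are cubes. This is not literally the inductive hypothesis --- it asserts that a \emph{double} concatenation, rather than a single one, is a cube --- so I expect one must prove, by the same induction and in parallel with the main statement, a strengthened claim to the effect that such double concatenations of compatible cubes lie in $C^\bullet(X)$; this strengthened claim would itself be established by a nested corner-completion of exactly the same shape one dimension down, again extracting the desired configuration by an apex-avoiding morphism. Arranging the auxiliary statement and the free values in the various corners so that the whole induction closes --- and keeping careful track of which hyperfaces are required to be cubes and which merely pass through the apex --- is the technical core. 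The conceptual content, however, is entirely in the observation made above: every cube we produce is read off from a completed cube along a morphism of discrete cubes that avoids the apex, so the freedom (hence arbitrariness) of the completion there is harmless.
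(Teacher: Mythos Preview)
The paper does not prove this proposition; it merely cites \cite[Proposition~6.2]{GMVI}, so there is no in-paper argument to compare against. Your overall plan---pass to dimension $k+2$, complete by fibrancy, then read off $[c_3,c_1]$ via the apex-avoiding morphism $\psi(\omega,t)=(\omega,t,1-t)$---is exactly the standard one, and the extraction step is correct as written.

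The gap is in how you handle the ``hard part''. By prescribing $\lambda(\omega,1,1)=c_2(\omega)$ you are forced to show that the \emph{specific} configuration $[[\bar c_2,\bar c_1],[\bar c_3,\bar c_2]]$ is a $(k{+}1)$-cube, and the parallel induction you sketch does not close. An apex-avoiding injective morphism $\{0,1\}^{k+1}\hookrightarrow\{0,1\}^{k+2}$ lands either in a lower hyperface (then the target must already be known to be a cube---circular) or in a diagonal slice $\{\gamma=1-\alpha\}$ (up to relabelling); imposing the pattern $(2,1,3,2)$ on that slice and then requiring each of the three lower faces to be a two-value pattern forces $\Lambda(0,0,0)=\Lambda(0,1,0)=2$, which makes the face $\{\alpha=0\}$ carry the pattern $(2,2,2,3)$---not obtainable from any $[d_i,d_j]$ by a cube morphism. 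So one lower hyperface is always an ``unknown'' pattern, and the induction chases its tail.

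The clean fix is simply not to assign values on the face $\{\varepsilon=\delta=1\}$ at all. One proves, by iterated single-vertex completion (inducting on $|F|$), the face-completion lemma: if $F=\{\omega:\omega_i=1\ \forall i\in I\}$ is an upper face of $\{0,1\}^n$ and $\mu\colon\{0,1\}^n\setminus F\to X$ has $\mu|_{\{\omega_i=0\}}\in C^{n-1}(X)$ for every $i\in I$, then $\mu$ extends to an element of $C^n(X)$. Applying this with $n=k+2$ and $I=\{k{+}1,k{+}2\}$, the \emph{only} hypotheses to check are $\mu|_{\{\varepsilon=0\}}=[c_2,c_3]$ and $\mu|_{\{\delta=0\}}=[c_2,c_1]$; the side hyperfaces $\{\omega_i=0\}$ for $i\le k$ never enter, since they meet $F$ and hence are not in the domain of $\mu$. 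The resulting $(k{+}2)$-cube composed with your $\psi$ yields $[c_3,c_1]\in C^{k+1}(X)$, with no induction on $k$ in the gluing argument itself.
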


Combining Propositions \ref{URP} and \ref{fibrant is gluing}, we have that the universal replacement property for a fibrant cubespace implies $(s+1)$-uniqueness for
the quotient cubespace $X/\sim_s$. Indeed we have:
\begin{corollary} \label{inducing nilspace}
 Let $X$ be a compact fibrant cubespace. Then $X/\sim_s$ is an $s$-nilspace for every $s \geq 0$.
\end{corollary}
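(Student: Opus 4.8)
The plan is to verify the two defining properties of an $s$-nilspace for the quotient $X/\sim_s$, namely $(s+1)$-uniqueness and fibrancy, after first making sure the quotient is a legitimate object. Since $X$ is compact and fibrant, Proposition \ref{fibrant is gluing} shows that $X$ is gluing, so by \cite[Proposition 6.3]{GMVI} the relation $\sim_s$ is an equivalence relation; it is moreover closed, because the set of pairs $(c,c')\in C^{s+1}(X)^2$ agreeing on $\llcorner^{s+1}$ is compact and $\sim_s$ is its image under the (continuous) evaluation at $\overrightarrow 1$. Hence $X/\sim_s$ is a compact cubespace whose $k$-cubes are exactly the images $\pi\circ c$ of $k$-cubes $c$ of $X$, where $\pi\colon X\to X/\sim_s$ is the quotient map; and the universal replacement property (Proposition \ref{URP}) is available for $X$ since $X$ is compact and gluing.

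For $(s+1)$-uniqueness -- the point flagged just before the statement -- I would argue as follows. Suppose $\bar c_1,\bar c_2\in C^{s+1}(X/\sim_s)$ agree on $\llcorner^{s+1}$, and lift them to $c_1,c_2\in C^{s+1}(X)$. On $\llcorner^{s+1}$ we then have $c_1(\omega)\sim_s c_2(\omega)$ for every $\omega$, so the configuration $c_1'$ defined by $c_1'|_{\llcorner^{s+1}}:=c_2|_{\llcorner^{s+1}}$ and $c_1'(\overrightarrow 1):=c_1(\overrightarrow 1)$ has the same image as $c_1$ under $\pi$; since $s+1\le s+1$, Proposition \ref{URP} gives $c_1'\in C^{s+1}(X)$. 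Now $c_1'$ and $c_2$ are $(s+1)$-cubes of $X$ that agree on $\llcorner^{s+1}$, so by Definition \ref{canonical equivalence} $c_1(\overrightarrow 1)=c_1'(\overrightarrow 1)\sim_s c_2(\overrightarrow 1)$, that is $\bar c_1(\overrightarrow 1)=\bar c_2(\overrightarrow 1)$, hence $\bar c_1=\bar c_2$.

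For fibrancy I would show that every $k$-corner $\bar\lambda$ of $X/\sim_s$ lifts to a $k$-corner $\lambda$ of $X$; completing $\lambda$ inside the fibrant cubespace $X$ and projecting then yields the required $k$-cube of $X/\sim_s$ extending $\bar\lambda$. For $1\le k\le s+2$ I build $\lambda$ hyperface by hyperface: lift $\bar\lambda$ on the first hyperface to a $(k-1)$-cube of $X$ arbitrarily, and when passing to the next hyperface choose any lift of the corresponding $(k-1)$-cube of $X/\sim_s$ and then correct its values on the already-defined overlap (a union of $(k-2)$-faces) to the prescribed ones; the corrected configuration has unchanged $\pi$-image, hence is again a $(k-1)$-cube of $X$ by Proposition \ref{URP}, which applies because $k-1\le s+1$. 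Iterating over the $k$ hyperfaces produces the corner lift. For $k>s+2$ one reduces to the cases already handled by the standard nilspace fact that, under $(s+1)$-uniqueness, completion in dimensions up to $s+2$ propagates to all dimensions (cf. \cite{ACS12, candela2016cpt_notes}); alternatively, one runs the same corner-completion inside an $(s+2)$-dimensional subcube and uses $(s+1)$-uniqueness to check that the resulting configuration is a genuine $k$-cube. This gives fibrancy, and together with the previous paragraph it shows $X/\sim_s$ is an $s$-nilspace.

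The main obstacle is the consistent corner-lifting along $\pi$: the naive pointwise lifts of the hyperfaces of a corner need not agree on their overlaps, and it is exactly Proposition \ref{URP} that allows one to rectify a lift on a subface without leaving the class of cubes. The dimension restriction built into that proposition ($k\le s+1$) is what forces the split into low-dimensional corners, handled directly, and high-dimensional ones, handled by the standard promotion of finite-range completion to full fibrancy in the presence of $(s+1)$-uniqueness.
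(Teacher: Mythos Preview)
Your argument is correct and aligns with the paper's. The paper only records the one point that Propositions~\ref{URP} and~\ref{fibrant is gluing} yield $(s+1)$-uniqueness for $X/\sim_s$ --- exactly your second paragraph --- and otherwise treats the corollary as the case $Y=\{*\}$ of Proposition~\ref{inducing $s$-fibration} (\cite[Proposition~7.12]{GMVI}), so fibrancy of the quotient is not argued separately there.

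Your hyperface-by-hyperface lifting for $k\le s+2$ is fine and is precisely the mechanism underlying that cited proposition. For $k>s+2$, however, the ``standard fact'' you invoke (that $(s+1)$-uniqueness together with completion in dimensions $\le s+2$ forces completion in all dimensions) is not stated in that bare form in the references: the analogous results there already assume fibrancy, so appealing to them risks circularity. You can sidestep this cleanly with tools already in the paper. Once $(s+1)$-uniqueness of $X/\sim_s$ is established, apply Proposition~\ref{inner fibrant} with $f=\pi$, $g\colon X/\sim_s\to\{*\}$, $h\colon X\to\{*\}$: since $h$ has $k$-completion ($X$ is fibrant) and $g$ has $k$-uniqueness for every $k\ge s+1$, the map $\pi$ has $k$-completion for every $k\ge s+1$. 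Combined with your URP-based argument for $k\le s+1$, this shows $\pi$ is a fibration, and Proposition~\ref{universal property} then gives that $X/\sim_s\to\{*\}$ is a fibration, i.e.\ $X/\sim_s$ is fibrant.
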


The {\bf weak structure theorem}\index{weak structure theorem} for nilspaces of finite degree is an important factorization result into a finite tower of compact abelian group principal fiber bundles. We first recall the definition of a principal fiber bundle and then state the theorem.

\begin{definition} \cite[Definition 2.2]{H94}\label{def:principal fiber bundle}
Let $G$ be a topological group. A $G$-\textbf{principal fiber bundle} is a surjective continuous map $p \colon E \to B$ between two topological spaces $E$ and $B$ satisfying the following:
\begin{enumerate}
\item there exists a free continuous action of $G$ on $E$ such that for every $x \in E$
$$p^{-1}(p(x))=Gx;$$
\item there exists a homeomorphism $\varphi: B \to E/G$ such that $\varphi \circ p$ is the projection map $E \to E/G$.
\end{enumerate}
\end{definition}

\begin{theorem}  \cite[Theorem 5.4]{GMVI}\cite[Theorem 1]{ACS12} \label{WST}
Let $X$ be a compact ergodic $s$-nilspace. Then $X$ factors as
$$X=X/\sim_s \to X/\sim_{s-1} \to \cdots \to X/\sim_0 \cong \{\ast\},$$
where $\{\ast\}$ is a singleton and each map $X/\sim_k \to X/\sim_{k-1}$ is an $A_k$-principal fiber bundle for some compact metrizable abelian group $A_k$.
\end{theorem}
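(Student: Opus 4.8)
The plan is to argue by induction on $s$, the only real work being at the top of the tower. For the base case, ergodicity ($C^1(X)=X^{\{0,1\}}$) makes any two points $0$-canonically related, via the $1$-cubes $(x_0,x)$ and $(x_0,x')$ for an arbitrary fixed $x_0$; hence $X/\!\sim_0$ is a singleton. For the inductive step one invokes the (standard) facts that the canonical relations are nested, $\sim_s\subseteq\sim_{s-1}\subseteq\cdots\subseteq\sim_0$, and are compatible with quotients, so that $(X/\!\sim_k)/\!\sim_{k-1}=X/\!\sim_{k-1}$, together with Corollary~\ref{inducing nilspace}, which tells us that each $X/\!\sim_k$ is again a compact ergodic $k$-nilspace. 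It therefore suffices to prove: if $Y$ is a compact ergodic $k$-nilspace then the canonical projection $\pi\colon Y\to Y':=Y/\!\sim_{k-1}$ is an $A$-principal fiber bundle for some compact metrizable abelian group $A$; splicing these maps for $Y=X/\!\sim_k$, $k=1,\dots,s$, then yields the asserted tower.

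As candidate structure group I would take the group $A$ of \emph{degree-$k$ translations} of $Y$: the cubespace automorphisms $\alpha$ of $Y$ that induce the identity on $Y'$, endowed with the topology of uniform convergence. With Definition~\ref{def:principal fiber bundle} in mind, it is enough to establish that (i) $A$ is a compact metrizable abelian topological group acting continuously on $Y$; (ii) the action is free; and (iii) the $A$-orbits are precisely the fibers of $\pi$, i.e. the $\sim_{k-1}$-classes. Indeed, granting (i)--(iii), condition~(1) of Definition~\ref{def:principal fiber bundle} is immediate, and for condition~(2) the canonical continuous bijection $Y'=Y/\!\sim_{k-1}\to Y/A$ is automatically a homeomorphism since $Y$ is compact and $Y/A$ Hausdorff.

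Compactness and metrizability in (i) should follow from equicontinuity: automorphisms of a compact cubespace fixing a prescribed factor inherit a uniform modulus of continuity from the closedness of the cube sets $C^\ell(Y)$, so an Arzel\`a--Ascoli argument together with the closedness of the defining conditions shows $A$ is compact in $\mathrm{Homeo}(Y)$, while metrizability is inherited from $Y$. That $A$ is abelian is an Eckmann--Hilton-type cube argument: for $\alpha,\beta\in A$ and $c\in C^{k+1}(Y)$ one applies $\alpha$ along one hyperface direction and $\beta$ along a complementary one to produce two $(k+1)$-cubes that agree on $\llcorner^{k+1}$, whence $(k+1)$-uniqueness forces $\alpha\beta$ and $\beta\alpha$ to agree at every point realized as a top vertex, hence everywhere by ergodicity. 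The real content is (ii) and (iii), namely that each fiber of $\pi$ is a principal homogeneous space for $A$: one inclusion in (iii) is trivial since $A$ fixes $Y'$, and for the converse, given $y\sim_{k-1}y'$ one is handed cubes $c,c'\in C^{k}(Y)$ with $c|_{\llcorner^k}=c'|_{\llcorner^k}$, $c(\overrightarrow{1})=y$ and $c'(\overrightarrow{1})=y'$, and one must manufacture a translation $\alpha\in A$ with $\alpha(y)=y'$ by ``transporting'' this single difference to every point $z\in Y$: form a suitable $(k+1)$-dimensional configuration combining a cube through $z$ with the pair $(c,c')$, complete corners using fibrancy of $Y$, and read off $\alpha(z)$ from a distinguished vertex. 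Verifying that $\alpha$ is well defined (independent of the auxiliary cube through $z$) and continuous is exactly where the universal replacement property (Proposition~\ref{URP}) and the gluing property (Proposition~\ref{fibrant is gluing}) of compact fibrant cubespaces are used; that $\alpha$ is a cubespace morphism, invertible, inducing the identity on $Y'$, and has trivial point stabilizers (again via $(k+1)$-uniqueness), are further cube-chasing checks.

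I expect this transport step --- constructing the translations and proving they are well defined --- to be the main obstacle: it is the technical heart of the Antol\'in Camarena--Szegedy structure theorem (\cite{ACS12}, reproved in \cite{GMVI}), and it is the point at which completion, gluing and $(k+1)$-uniqueness must all be brought to bear simultaneously. The remaining ingredients --- the reductions, the topological group structure of $A$, and the passage from a free action with orbits equal to fibers to a principal bundle in the sense of Definition~\ref{def:principal fiber bundle} --- are comparatively routine.
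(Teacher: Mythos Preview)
The paper does not supply its own proof of Theorem~\ref{WST}: it is quoted as background, with citations to \cite[Theorem~5.4]{GMVI} and \cite[Theorem~1]{ACS12}, and no argument is given here. So there is nothing in the present paper to compare your proposal against.

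That said, your outline is essentially the standard proof in the cited references. The inductive reduction to the top layer, the identification of the principal-bundle conditions with free transitivity of a translation group on the $\sim_{k-1}$-classes, and the recognition that the ``transport'' construction (building $\alpha\in A$ from a single relation $y\sim_{k-1}y'$ via corner completion and $(k+1)$-uniqueness) is the crux --- all of this matches \cite{ACS12,GMVI}. One cosmetic difference: in those sources (and implicitly in this paper, see the proof of Proposition~\ref{structure0} and Proposition~\ref{embedding}) the structure group $A_k$ is first \emph{constructed} as a quotient set of pairs $\{(y,y'):y\sim_{k-1}y'\}$ modulo a cube-theoretic equivalence, with the group law coming from concatenation of cubes, and only afterwards is it shown to act by cubespace automorphisms; you instead \emph{define} $A$ as a subgroup of $\Aut(Y)$ and must then prove transitivity on fibers. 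The two routes are equivalent, and the hard step --- well-definedness of the transport, requiring the universal replacement property and gluing --- is the same either way. Your sketch correctly flags this as the main obstacle but does not carry it out; for a complete proof you would need to fill in precisely that construction.
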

\noindent The group  $A_k$ is called the {\bf $k$-th structure group} of $X$. When all $A_k$ are Lie groups, $X$ is called {\bf Lie-fibered}\index{Lie-fibered}. 

\subsection{Host-Kra cubespaces}

Given a topological group $G$, a sequence of decreasing closed subgroups 
$$G=G_0 \supseteq G_1 \supseteq \cdots \supseteq G_{s+1}=\{e_G\}=G_{s+2}=\cdots$$
is an {\bf s-filtration} or a {\bf filtration of degree s} if $[G_i, G_j] \subseteq G_{i+j}$ for all $i, j \geq 0$ (here $[\cdot, \cdot]$ denotes the commutator subgroup). In such a case, we call $G$ a {\bf filtered group}\index{filtered group}  and
write $G_\bullet$\index{$G_\bullet$} to emphasize that it is equipped with some  s-filtration.
\begin{definition} \label{Host-Kra cube}
For a filtered metric group $G_\bullet$, the {\bf Host-Kra $k$-cube group} $\HK^k(G_\bullet)$\index{$\HK^k(G_\bullet)$} is defined 
as the subgroup of $G^{\{0, 1\}^k}$ generated by $[x]_F$ for every face $F \subseteq \{0, 1\}^k $ and $x \in G_{(k-\dim (F))}$.
\end{definition}

We remark that the Host-Kra $k$-cube group $\HK^k(G)$ in Definition \ref{HK cube} can be recovered  as $\HK^k(G_\bullet)$ with respect to the 1-filtration:
$$G=G_0=G_1 \supseteq G_2=\{e_G\}=\cdots.$$

When $G$ is a filtered topological group and $\Gamma$ is a discrete cocompact subgroup of $G$, 
 $G/\Gamma$ carries a quotient cubespace structure inherited from the Host-Kra cube group
$\HK^k(G_\bullet)$, which we denote by $\HK^k(G_\bullet)/\Gamma$ (see \cite[Definition 2.4]{GMVI} for more details).

We summarize the properties of  \emph{Host-Kra cubes} as follows \cite[Appendix A.4, Propositions 2.5, 2.6]{GMVI}.
\begin{proposition}
$(G, \HK^\bullet(G_\bullet))$ is a nilspace. Moreover, suppose that $\Gamma$ is {\bf  compatible with $G_\bullet$} in the sense that $\Gamma \cap G_i$ is discrete and cocompact
in $G_i$ for all $i \geq 0$. Then for each $k \geq 0$, $\HK^k(G_\bullet)/\Gamma \subseteq (G/\Gamma)^{\{0, 1\}^k}$ is a compact subset. Hence
$(G/\Gamma, \HK^\bullet(G_\bullet)/\Gamma) $ is a compact nilspace.
\end{proposition}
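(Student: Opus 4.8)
The statement has two halves, and I would prove them in sequence, the first feeding into the second. Throughout, assume $G_\bullet$ is an $s$-filtration; the claim is then that $(G,\HK^\bullet(G_\bullet))$ is an $s$-nilspace and, under the compatibility hypothesis, $(G/\Gamma,\HK^\bullet(G_\bullet)/\Gamma)$ is a compact $s$-nilspace.

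First I would record the basic \emph{Host--Kra calculus} for the groups $C^k(G):=\HK^k(G_\bullet)\le G^{\{0,1\}^k}$: (a) the cubespace axioms reduce to checking that precomposition with a discrete-cube morphism $\rho$ sends a generator $[x]_F$ to a product of generators, where the hypothesis $[G_i,G_j]\subseteq G_{i+j}$ is what guarantees the resulting element lies in the correct cube group; (b) restriction to a hyperface $F_0=\{\omega_k=0\}$ is a surjective homomorphism $\HK^k(G_\bullet)\to\HK^{k-1}(G_\bullet)$ whose kernel, the configurations supported on $\{\omega_k=1\}$, is canonically isomorphic to $\HK^{k-1}(G_\bullet^{[1]})$ for the shifted filtration $G^{[1]}_i:=G_{i+1}$; (c) iterating (b) along $\{\omega_1=0\},\dots,\{\omega_k=0\}$ shows the elements of $\HK^k(G_\bullet)$ supported only at $\overrightarrow 1$ form exactly a copy of $G_k$ placed at $\overrightarrow 1$; and (d) $\HK^k(G_\bullet)$ has a product (``normal form'') decomposition $\prod_F [G_{(k-\dim F)}]_F$ over the faces $F$ of $\{0,1\}^k$ taken in any fixed order of decreasing dimension, with uniqueness of the coordinates $x_F\in G_{(k-\dim F)}$, whence also closedness of $\HK^k(G_\bullet)$ in $G^{\{0,1\}^k}$. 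Granting (a)--(d): fibrancy is proved by induction on $k$, lifting the $k$ compatible restrictions of a $k$-corner $\lambda$ to an element of $\HK^k(G_\bullet)$ via the exact sequences of (b), which then completes $\lambda$; and $(s+1)$-uniqueness is immediate from (c), since two elements of $\HK^{s+1}(G_\bullet)$ agreeing on $\llcorner^{s+1}$ differ by an element supported at $\overrightarrow 1$, i.e. by an element of $G_{s+1}=\{e_G\}$. Hence $(G,\HK^\bullet(G_\bullet))$ is an $s$-nilspace.

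For the second half, set $\Gamma_i:=\Gamma\cap G_i$; the compatibility hypothesis says precisely that $\Gamma_\bullet$ is an $s$-filtration of $\Gamma$ with each $\Gamma_i$ cocompact in $G_i$, so by the first part $(\Gamma,\HK^\bullet(\Gamma_\bullet))$ is an $s$-nilspace and $\HK^k(\Gamma_\bullet)\le\HK^k(G_\bullet)$. Applying the decomposition (d) to both $G_\bullet$ and $\Gamma_\bullet$ with the same ordering of faces, the coset space $\HK^k(G_\bullet)/\HK^k(\Gamma_\bullet)$ is a continuous image of the finite product $\prod_F\bigl(G_{(k-\dim F)}/\Gamma_{(k-\dim F)}\bigr)$ of compact spaces, hence compact; equivalently $\HK^k(\Gamma_\bullet)$ is cocompact in $\HK^k(G_\bullet)$. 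Using uniqueness of the coordinates in (d) one further checks $\HK^k(G_\bullet)\cap\Gamma^{\{0,1\}^k}=\HK^k(\Gamma_\bullet)$, so the pointwise quotient identifies $\HK^k(G_\bullet)/\HK^k(\Gamma_\bullet)$ homeomorphically with the image $\HK^k(G_\bullet)/\Gamma$ inside $(G/\Gamma)^{\{0,1\}^k}$; since $\Gamma=\Gamma_0$ is cocompact in $G=G_0$ the ambient space $(G/\Gamma)^{\{0,1\}^k}$ is compact metrizable, and we conclude $\HK^k(G_\bullet)/\Gamma$ is a compact (hence closed) metrizable subset. Finally, the map $(G,\HK^\bullet(G_\bullet))\to(G/\Gamma,\HK^\bullet(G_\bullet)/\Gamma)$ is surjective on cubes by construction and lifts corners by the patching in (b) together with the fibrancy of $(\Gamma,\HK^\bullet(\Gamma_\bullet))$, so it is a fibration and the quotient is fibrant; $(s+1)$-uniqueness descends because two $(s+1)$-cubes of $G/\Gamma$ agreeing on $\llcorner^{s+1}$ lift to $\tilde c_1,\tilde c_2\in\HK^{s+1}(G_\bullet)$ with $\tilde c_1^{-1}\tilde c_2$ taking values in $\Gamma$ on $\llcorner^{s+1}$, that corner is completed by some $c'\in\HK^{s+1}(\Gamma_\bullet)\subseteq\HK^{s+1}(G_\bullet)$, and $(s+1)$-uniqueness for $G$ forces $\tilde c_1^{-1}\tilde c_2=c'\in\HK^{s+1}(\Gamma_\bullet)$. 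Thus $(G/\Gamma,\HK^\bullet(G_\bullet)/\Gamma)$ is a compact $s$-nilspace.

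The substantive input is the Host--Kra calculus packaged in (a)--(d): the normal-form decomposition of $\HK^k(G_\bullet)$ with uniqueness of coordinates, and the identification of kernels of hyperface restrictions with shifted Host--Kra groups. These are where the filtration axiom $[G_i,G_j]\subseteq G_{i+j}$ does real work — one needs commutator relations to reorder generators and to see that low-degree data are not constrained by higher-degree generators — and I expect the bulk of the effort there; once they are available, fibrancy, $(s+1)$-uniqueness, the cocompactness count, and the descent to $G/\Gamma$ are essentially formal. A secondary point that needs care is the equality $\HK^k(G_\bullet)\cap\Gamma^{\{0,1\}^k}=\HK^k(\Gamma_\bullet)$, which is exactly what legitimizes the identification $\HK^k(G_\bullet)/\HK^k(\Gamma_\bullet)\cong\HK^k(G_\bullet)/\Gamma$ used to transfer compactness into $(G/\Gamma)^{\{0,1\}^k}$.
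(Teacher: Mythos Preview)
The paper does not supply its own proof of this proposition: it is stated as a summary with the citation \cite[Appendix A.4, Propositions 2.5, 2.6]{GMVI}, so there is no in-paper argument to compare against. Your proposal is correct and is, in outline, precisely the approach taken in the cited reference: the normal-form decomposition of $\HK^k(G_\bullet)$ over faces (your item (d)), the identification of the kernel of hyperface restriction with the shifted Host--Kra group (your item (b)), and the resulting cocompactness of $\HK^k(\Gamma_\bullet)$ in $\HK^k(G_\bullet)$ are exactly the ingredients used there. Your flagging of the equality $\HK^k(G_\bullet)\cap\Gamma^{\{0,1\}^k}=\HK^k(\Gamma_\bullet)$ as the point requiring care is apt; this is indeed where the uniqueness of the normal-form coordinates is needed.
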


 Given a Lie group $G$ and a discrete cocompact subgroup the quotient $G/\Gamma$  (which carries the structure of a manifold) is termed a {\bf nilmanifold}\index{nilmanifold}. Using nilmanifolds the structure of a nilspace can be described as follows.
\begin{theorem} \cite[Theorem 1.28]{GMVIII} \label{absolute char}
Suppose that $X$ is a compact ergodic strongly connected nilsapce. Then $X$ is isomorphic as a cubespace to an inverse limit $\varprojlim X_n$ of nilmanifolds $X_n$ endowed with Host-Kra cubes.
\end{theorem}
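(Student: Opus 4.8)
The plan is to use the weak structure theorem to present $X$ as an inverse limit of \emph{Lie-fibered} nilspaces, and then to recognise each Lie-fibered term as a nilmanifold. Fix $s$ with $X$ an $s$-nilspace. (If one works in the convention allowing nilspaces of infinite degree, replace $X$ at the outset by the inverse system $(X/\sim_t)_{t\ge 0}$: by Corollary \ref{inducing nilspace} each $X/\sim_t$ is a compact ergodic $t$-nilspace, it is strongly connected because its cube sets are continuous surjective images of those of $X$, and $X=\varprojlim_t X/\sim_t$, so it suffices to treat each term.) Since $X$ is ergodic, Theorem \ref{WST} presents $X$ as a tower
\[
X=X/\sim_s\ \longrightarrow\ X/\sim_{s-1}\ \longrightarrow\ \cdots\ \longrightarrow\ X/\sim_0\cong\{\ast\},
\]
in which each $p_k\colon X/\sim_k\to X/\sim_{k-1}$ is a principal $A_k$-bundle for a compact metrizable abelian group $A_k$, the $k$-th structure group of $X$.

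The first point is that strong connectedness of $X$ forces every $A_k$ to be connected; one proves this within the Antol\'in Camarena--Szegedy structure theory (cf.\ also the relative statement Proposition \ref{prop:structure groups}). Granting it, each $A_k$ is a connected compact metrizable abelian group, so by Pontryagin duality its dual $\widehat{A_k}$ is torsion-free and hence the increasing union of its finitely generated (free) subgroups; dually, $A_k=\varprojlim_m A_k/B_{k,m}$ for a decreasing sequence of closed subgroups $B_{k,m}$ with $\bigcap_m B_{k,m}=\{e\}$ and each quotient $A_k/B_{k,m}$ a finite-dimensional torus.

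Now Lie-fiber the tower. Describe $X$ by the bundle data $(p_k)_{k=1}^s$, equivalently by a system of $A_k$-valued cocycles presenting the successive extensions $X/\sim_k\to X/\sim_{k-1}$, and push these cocycles forward along the quotient homomorphisms $A_k\to A_k/B_{k,m}$. For each tuple $n=(m_1,\dots,m_s)$ this produces a compact ergodic $s$-nilspace $X^{(n)}$ together with a morphism $X\to X^{(n)}$ (pushing a cocycle forward along a surjection of abelian groups preserves the conditions defining an $s$-nilspace), and $X\cong\varprojlim_n X^{(n)}$ over the directed set of tuples, or over a cofinal increasing sequence. The $k$-th structure group of $X^{(n)}$ is a quotient of the torus $A_k/B_{k,m_k}$, hence itself a (finite-dimensional) torus, so $X^{(n)}$ is Lie-fibered; it is ergodic because $X$ is, and strongly connected because its cube sets are continuous images of those of $X$. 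Finally, by the Antol\'in Camarena--Szegedy structure theorem for compact ergodic Lie-fibered $s$-nilspaces, each $X^{(n)}$ is isomorphic, as a cubespace, to a Host-Kra model $\HK^\bullet(G^{(n)}_\bullet)/\Gamma^{(n)}$; because the successive structure groups are connected tori, the filtered group $G^{(n)}$ built by iterating these extensions can be taken to be a connected nilpotent Lie group and $\Gamma^{(n)}$ a cocompact lattice, so $G^{(n)}/\Gamma^{(n)}$ is a nilmanifold. The bonding maps $X^{(n+1)}\to X^{(n)}$ are morphisms of Lie-fibered nilspaces, hence are induced by filtered Lie homomorphisms $G^{(n+1)}\to G^{(n)}$ carrying $\Gamma^{(n+1)}$ into $\Gamma^{(n)}$; assembling, $X\cong\varprojlim_n\bigl(G^{(n)}/\Gamma^{(n)},\ \HK^\bullet(G^{(n)}_\bullet)/\Gamma^{(n)}\bigr)$ is an inverse limit of nilmanifolds endowed with Host-Kra cubes.

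The main obstacle is the last identification: turning a \emph{topological} tower of principal torus bundles into an \emph{algebraic} object $\HK^\bullet(G_\bullet)/\Gamma$. This requires showing that each successive extension is linearizable, i.e.\ that the torus-valued cocycle defining it is cohomologous to one coming from a group law, which rests on vanishing/lifting statements for torus-valued cocycles over nilmanifold-type bases and is the technical heart of the Lie-fibered structure theorem; it is also precisely where connectedness of the structure groups is used. A secondary, bookkeeping-type difficulty is to perform the push-forward construction functorially enough that the $X^{(n)}$ genuinely form an inverse system of nilspaces, that ergodicity and strong connectedness (hence connectedness of all structure groups) are preserved at every stage, and that the bonding maps are realised by homomorphisms of the Lie models, so that the inverse limit is meaningful in the category of cubespaces.
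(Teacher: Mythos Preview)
The paper does not itself prove this theorem; it is quoted from \cite{GMVIII}. What the paper does prove is the \emph{relative} analogue, Theorem \ref{relative inverse limit}, and that proof is explicitly modeled on \cite{GMVIII}, so one can read off the shape of the original argument from Sections \ref{sec:Strongly connected fibers}--\ref{sec:Approximating by Lie-fibered fibrations}.

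Your two-step strategy---first approximate by Lie-fibered nilspaces, then identify each Lie-fibered term as a nilmanifold---is precisely the architecture of \cite{GMVIII}. But your execution of both steps diverges from theirs, and in each case the divergence is where the actual work sits.

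For the reduction step, you propose to ``push cocycles forward'' along $A_k\to A_k/B_{k,m}$ simultaneously for all $k$. That is not how \cite{GMVIII} (or the relative version here) proceeds: one inducts on the degree $s$, quotients $X$ by the kernel $K_n\le A_s$ at the \emph{top} level only to obtain $X/K_n$, and then---this is the delicate part---uses the machinery of \emph{straight classes and straight sections} (cf.\ Lemmas \ref{enough straight classes}--\ref{constructed space}) to construct the intermediate nilspaces and to verify that the resulting quotient maps are fibrations. Merely pushing forward a cocycle does not automatically produce a nilspace with the right quotient structure; the straight-class argument is exactly what makes the construction go through and is what your ``bookkeeping-type difficulty'' actually amounts to.

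For the Lie-fibered step, you invoke the structure theorem for Lie-fibered nilspaces as a black box and then, in your final paragraph, concede that linearising the torus cocycles is the real obstacle. In \cite{GMVII,GMVIII} (and in Theorem \ref{relative nilmanifold} here) the Lie-fibered nilspace is \emph{not} realised by assembling the cocycles into a group law. Instead one proves directly that the translation group $\Aut_1^\circ(X)$ is a Lie group acting transitively with discrete stabiliser, so that $X\cong\Aut_1^\circ(X)/\Stab(x)$ and $C^k(X)\cong\HK^k(\Aut_\bullet^\circ(X))/\Stab(x)$. This sidesteps the cocycle-linearisation problem entirely. Your further claim that the bonding maps are ``induced by filtered Lie homomorphisms $G^{(n+1)}\to G^{(n)}$'' is itself a nontrivial theorem (the absolute version of Theorem \ref{endow cubestructure}), proved again via translation groups rather than by any abstract rigidity of nilspace morphisms.

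In short: your outline has the right global shape but is a roadmap rather than a proof; the two places you flag as difficult are exactly where \cite{GMVII,GMVIII} invest their technical effort, and their resolution---translation groups and straight classes---is quite different from the cocycle-based picture you sketch.
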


\subsection{Structure theorems for fibrations}\label{subsec:main theorems}

\begin{definition}
Suppose that $\varphi \colon X \to Y$ is a continuous map between two cubespaces $X$ and $Y$. We say
$\varphi$ is a {\bf cubespace morphism}\index{cubespace!cubespace morphism} if $\varphi$ sends every cube of $X$ to a cube of $Y$. That is,
the set $\{\varphi \circ c: c \in C^k(X)\}$ is contained in $C^k(Y)$
for every integer $k \geq 0$.  We say $\varphi$ is a {\bf cubespace isomorphism} or simply an {\bf isomorphism} if $\varphi$ is a bijection and both $\varphi$ and $\varphi^{-1}$ are cubespace morphisms.
\end{definition}

In the sequel, given an integer $n \geq 1$ and a map $\varphi: X \to Y$ between metric spaces,  we will use  the same notation $\varphi$ to denote the induced map $X^n \to Y^n$ by pointwise application of $\varphi$, when no confusion arises.

\begin{definition} \label{relative ergodic}
A  cubespace morphism $\varphi \colon X \to Y$ is called {\bf relatively $k$-ergodic} if for any $c \in C^k(Y)$ any configuration of $\varphi^{-1}(c)$ is a cube of $X$. 
\end{definition}
It is clear that if $\varphi$ is relatively $k$-ergodic, $\varphi$ is relatively $\ell$-ergodic for each $\ell \leq k$.

Relativizing the concept of corner-completion, fibrations are introduced in \cite[Definition 7.1]{GMVI}:
\begin{definition} \label{fibration definition}
A cubespace morhpism $f\colon X \to Y$ is called a {\bf fibration}\index{fibration} if $f$ has {\bf $k$-completion}\index{fibration!$k$-completion} for all $k \geq 0$.  That is, 
given a $k$-corner $\lambda$ in $X$, if  $f( \lambda)$ can be completed to a cube $c$ in $Y$, then $\lambda$ can be completed to a cube $c_0$ of $X$ such that $f(c_0)=c$. 
\end{definition}

\begin{example}
In the setting of Definition \ref{Host-Kra cube}, the quotient map $G \to G/\Gamma$ induces a fibration $\HK^k(G_\bullet) \rightarrow \HK^k(G_\bullet)/\Gamma$ \cite[Proposition A.17]{GMVI}.
\end{example}

It is clear that a composition of fibrations is a fibration. By a "corner-lifting" argument, we have the so-called
{\bf universal property}\index{fibration!universal property} of fibrations \cite[Lemma 7.8]{GMVI}:
\begin{proposition} \label{universal property}
Let $f: X \to Y$ be a cubespace morphism and $g: Y \to Z$ a map between cubespaces. Then if $f$ and $g\circ f$ are fibrations, so is $g$.
\end{proposition}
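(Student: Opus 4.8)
The plan is to reduce the statement to one structural fact about fibrations: a fibration $f\colon X\to Y$ \emph{lifts corners and cubes}, meaning that every $k$-corner of $Y$ equals $f(\lambda)$ for some $k$-corner $\lambda$ of $X$, and every $c\in C^k(Y)$ equals $f(\tilde c)$ for some $\tilde c\in C^k(X)$. Granting this, the Proposition splits into two quick steps. First, $g$ is a cubespace morphism (which, since a fibration is by definition a morphism, is genuinely part of what has to be shown): given $c\in C^k(Y)$, choose $\tilde c\in C^k(X)$ with $f(\tilde c)=c$; then $g(c)=(g\circ f)(\tilde c)\in C^k(Z)$ because $g\circ f$ is a morphism. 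Second, $g$ has $k$-completion for every $k$: let $\mu$ be a $k$-corner of $Y$ whose image $g(\mu)$ completes to some $c\in C^k(Z)$; lift $\mu$ to a $k$-corner $\lambda$ of $X$ with $f(\lambda)=\mu$; since $(g\circ f)(\lambda)=g(\mu)$ completes to $c$ and $g\circ f$ is a fibration, $\lambda$ completes to $\tilde c\in C^k(X)$ with $(g\circ f)(\tilde c)=c$; finally $c_0:=f(\tilde c)\in C^k(Y)$ satisfies $c_0|_{\llcorner^k}=f(\lambda)=\mu$ and $g(c_0)=(g\circ f)(\tilde c)=c$, which is exactly the required completion of $\mu$.

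So everything rests on the corner/cube lifting property of a fibration $f$, which I would prove by induction on $k$, establishing together (A$_k$): every $k$-corner of $Y$ lifts to a $k$-corner of $X$, and (B$_k$): every $k$-cube of $Y$ lifts to a $k$-cube of $X$. For $k=0$ a $0$-corner is empty, so (A$_0$) is vacuous, and (B$_0$) is precisely the $0$-completion of $f$, i.e.\ surjectivity of $f$ on points. The implication (A$_k$)$\Rightarrow$(B$_k$) is one application of $f$'s $k$-completion: restrict $c\in C^k(Y)$ to the $k$-corner $c|_{\llcorner^k}$, lift it to a $k$-corner $\lambda$ of $X$ by (A$_k$), and, since $f(\lambda)=c|_{\llcorner^k}$ completes to $c$ in $Y$, complete $\lambda$ to $\tilde c\in C^k(X)$ with $f(\tilde c)=c$. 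For the passage to (A$_{k+1}$): given a $(k+1)$-corner $\mu$ of $Y$, build its lift $\lambda$ by assigning values to the vertices of $\llcorner^{k+1}$ in order of increasing Hamming weight; when a vertex $v$ of weight $m$ is reached, let $Q_v=\{w: w\le v \text{ coordinatewise}\}$ be the $m$-face of $\{0,1\}^{k+1}$ with bottom $\overrightarrow{1}$-opposite vertex $\overrightarrow{0}$... with bottom vertex $\overrightarrow{0}$ and top vertex $v$; the values already assigned on $Q_v\setminus\{v\}$ form an $m$-corner of $X$ with $f$-image $\mu|_{Q_v\setminus\{v\}}$ (its lower faces are precisely the faces completed at earlier stages, hence cubes of $X$), and $\mu|_{Q_v\setminus\{v\}}$ completes in $Y$ to the $m$-cube $\mu|_{Q_v}$; so $f$'s $m$-completion supplies a value $\lambda(v)$ completing $Q_v$ to an $m$-cube of $X$ over $\mu|_{Q_v}$. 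Once all vertices are treated, each lower face $\{w_i=0\}$ of $\lambda$ has been completed to a $k$-cube of $X$, so $\lambda$ is a $(k+1)$-corner of $X$ with $f(\lambda)=\mu$.

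The only genuinely technical point is this vertex-by-vertex construction: one has to fix the processing order and the invariant — after all vertices of weight at most $m$ are treated, every face of $\{0,1\}^{k+1}$ avoiding $\overrightarrow{1}$ whose top vertex has weight at most $m$ carries a lift to a cube of $X$ agreeing with $\mu$ under $f$ — so that at each vertex the configuration handed to $f$'s completion property is a bona fide corner of $X$ (its lower faces being previously completed faces $Q_{v'}$, which are cubes by the restriction axiom for cubespaces), and so that choices made at different vertices of the same weight cannot clash (each step only assigns a value at a vertex not yet treated, and never revises an earlier assignment). Everything else — that restrictions of cubes are cubes, that the indicated faces of $Q_v$ are exactly the lower faces of the relevant corners, that the final $c_0$ completes $\mu$ — is routine from Definitions \ref{cubespace} and \ref{fibration definition}. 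Together with the already noted fact that a composition of fibrations is a fibration, the Proposition records that the class of fibrations is well behaved under composition.
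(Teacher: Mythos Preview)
Your proof is correct and follows exactly the ``corner-lifting'' strategy the paper invokes by citing \cite[Lemma 7.8]{GMVI}: lift the given corner of $Y$ through the fibration $f$ to a corner of $X$, apply the completion property of $g\circ f$, and push the resulting cube back down via $f$. Your vertex-by-vertex inductive construction of the lifted corner (processing $\llcorner^{k+1}$ in order of Hamming weight and using $f$'s $m$-completion on each down-set $Q_v$) is the standard way to establish surjectivity of fibrations on corners and cubes, and your observation that $g$ must first be shown to be a morphism---which also follows from cube-lifting---is a point the paper's terse citation leaves implicit.
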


We recall the relative notion of uniqueness for a cubespace morphism.
\begin{definition}\label{def:fibration_finite_degree}
Let $f\colon X \to Y$ be a cubespace morphism. We say $f$ has {\bf $k$-uniqueness}\index{fibration!$k$-uniqueness} if for any $k$-cubes $c, c'$ of $X$ such 
that $c|_{\llcorner^k}=c'|_{\llcorner^k}$ and $f( c) =f( c')$ we have $c=c'$. Moreover, we call $f$ is {\bf a fibration of degree at most $s$} or simply an {\bf $s$-fibration}\index{fibration!$s$-fibration}
 if $f$  is a fibration and has $(s+1)$-uniqueness.
\end{definition}

\begin{definition}
Given a fibration $f\colon X \to Y$, two points $x, x' \in X$ are called {\bf $k$-canonically related relative to $f$}, denoted by $x\sim_{f, k} x'$ \index{$\sim_k$!$\sim_{f, k}$},  if $x\sim_k x'$ and $f(x)=f(x')$.
\end{definition}

The following is a relative version of Corollary \ref{inducing nilspace}.
\begin{proposition}\cite[Proposition 7.12]{GMVI} \label{inducing $s$-fibration}
Let $f\colon  X \to Y$ be a fibration between compact gluing cubespaces. Fix $s \geq 0$. Then the relation $\sim_{f, s}$ is a closed equivalence relation and the projection map $\pi_{f, s}\index{$\pi_{f, s}$} \colon X \to X/\sim_{f, s}$ is a fibration and factors through an $s$-fibration $g\colon X/\sim_{f, s} \to Y$, that is,  the relation induces a commutative diagram
$$\xymatrix{
X \ar[dd]^f \ar@{-->}[dr]^{\pi_{f, s}} & \\
& X/\sim_{f, s} \ar@{-->}[dl]^g \\
Y.
}$$
\end{proposition}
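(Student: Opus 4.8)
The plan is to establish the three claims of Proposition \ref{inducing $s$-fibration} in order: first that $\sim_{f,s}$ is a closed equivalence relation; second that the quotient map $\pi_{f,s}$ is a fibration; and third that $\pi_{f,s}$ factors through an $s$-fibration $g$. Throughout I would lean on the absolute analogue (Corollary \ref{inducing nilspace}) and on the universal replacement property (Proposition \ref{URP}), which is available here since compact gluing cubespaces satisfy it and since $f$ is a morphism between such cubespaces.

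First I would check that $\sim_{f,s}$ is an equivalence relation. Symmetry and reflexivity are immediate from the definition ($x \sim_{f,s} x'$ iff $x \sim_s x'$ and $f(x)=f(x')$). Since $f(x)=f(x')$ is plainly a closed, transitive condition, the entire burden is to show $\sim_s$ is transitive on $X$ and that the joint relation is closed. Transitivity of $\sim_s$ for compact gluing cubespaces is exactly \cite[Proposition 6.3]{GMVI}, already cited in the excerpt, so the relation $\sim_{f,s}$ inherits transitivity. Closedness follows because $\sim_s$ is defined by the existence of $(s+1)$-cubes $c,c'$ with $c|_{\llcorner^{s+1}}=c'|_{\llcorner^{s+1}}$, $c(\overrightarrow 1)=x$, $c'(\overrightarrow 1)=x'$; using compactness of $X$ and of $C^{s+1}(X)$ one extracts convergent subnets of witnessing cubes, so $\{(x,x') : x\sim_s x'\}$ is closed, and intersecting with the closed set $\{f(x)=f(x')\}$ gives that $\sim_{f,s}$ is closed. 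Hence $X/\sim_{f,s}$ is a compact metric space and $\pi_{f,s}$ is continuous.

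Next I would show $\pi_{f,s}\colon X \to X/\sim_{f,s}$ is a fibration. Here I would first endow $X/\sim_{f,s}$ with its quotient cubespace structure, so that $\pi_{f,s}$ is automatically a morphism. To verify $k$-completion: given a $k$-corner $\lambda$ in $X$ whose image $\pi_{f,s}(\lambda)$ completes to a $k$-cube $\bar c$ in $X/\sim_{f,s}$, I need a $k$-cube $c_0$ of $X$ completing $\lambda$ with $\pi_{f,s}(c_0)=\bar c$. Lift $\bar c$ pointwise (with the corner already lifted by $\lambda$) to a configuration $c_1$ of $X$; by the universal replacement property applied with the relation $\sim_s$ (Proposition \ref{URP}, valid for $k \le s+1$, and for larger $k$ by restricting to faces) together with the fact that $\pi_{f,s}$ refines the $\sim_s$-quotient map, $c_1$ lies in $C^k(X)$ provided we also arrange $f(c_1)$ to be a cube in $Y$. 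That last point is where $f$ being a fibration enters: since $\pi_{f,s}(\lambda)$ completes in the quotient, $f(\lambda)$ completes in $Y$ (the factor map $X/\sim_{f,s}\to Y$ exists set-theoretically because $x\sim_{f,s}x'$ forces $f(x)=f(x')$), so by $k$-completion of $f$ we lift to a $k$-cube of $X$ over $\lambda$; combining this with the universal-replacement lift along the fibers of $\pi_{f,s}$ — both lifts agree on $\llcorner^k$ — produces the desired $c_0$. A clean way to package this: apply the universal property of fibrations (Proposition \ref{universal property}) once we know $\pi_{f,s}$ is a fibration, but to get there directly the corner-lifting is the route.

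Finally, the factorization. Since $x\sim_{f,s}x' \Rightarrow f(x)=f(x')$, the map $f$ descends to a well-defined continuous map $g\colon X/\sim_{f,s}\to Y$ with $g\circ\pi_{f,s}=f$. That $g$ is a fibration follows from the universal property (Proposition \ref{universal property}): $f = g\circ \pi_{f,s}$ is a fibration and $\pi_{f,s}$ is a fibration, hence $g$ is. It remains to check $g$ has $(s+1)$-uniqueness. Suppose $\bar c,\bar c'\in C^{s+1}(X/\sim_{f,s})$ with $\bar c|_{\llcorner^{s+1}}=\bar c'|_{\llcorner^{s+1}}$ and $g(\bar c)=g(\bar c')$. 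Lift to $(s+1)$-cubes $c,c'$ of $X$ (using that $\pi_{f,s}$ is a fibration, so cubes lift, and arranging the lifts to agree on $\llcorner^{s+1}$ by lifting the common corner once); then for each $\omega\in\{0,1\}^{s+1}$, $\pi_{f,s}(c(\omega))=\pi_{f,s}(c'(\omega))$ means $c(\omega)\sim_s c'(\omega)$ and $f(c(\omega))=f(c'(\omega))$. Comparing the top vertices $c(\overrightarrow 1), c'(\overrightarrow 1)$: they are $\sim_s$-related via the cubes $c,c'$ themselves (they agree on $\llcorner^{s+1}$), which is consistent, but to conclude $\bar c(\overrightarrow 1)=\bar c'(\overrightarrow 1)$ I observe that $c(\overrightarrow 1)\sim_{f,s}c'(\overrightarrow 1)$ precisely because $c|_{\llcorner^{s+1}}=c'|_{\llcorner^{s+1}}$ exhibits $\sim_s$ and $f(c(\overrightarrow 1))=g(\bar c(\overrightarrow 1))=g(\bar c'(\overrightarrow 1))=f(c'(\overrightarrow 1))$; hence $\bar c(\overrightarrow 1)=\bar c'(\overrightarrow 1)$, and as the non-top vertices already agree, $\bar c=\bar c'$.

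The main obstacle I anticipate is the fibration property of $\pi_{f,s}$, specifically making the two lifting mechanisms cohere: the "vertical" lift supplied by the universal replacement property for $\sim_s$ (which controls membership in $C^k(X)$ but ignores $f$) and the "horizontal" lift supplied by $f$ being a fibration (which controls $f$-images but a priori lands in the wrong $\sim_{f,s}$-class). Reconciling them — ensuring a single $k$-cube $c_0$ of $X$ that simultaneously extends $\lambda$, maps to $\bar c$ under $\pi_{f,s}$, and maps to a genuine cube under $f$ — is the delicate step, and I expect it to require carefully choosing the pointwise lift of $\bar c$ within each $\sim_{f,s}$-fiber so that its $f$-image is exactly the $Y$-cube obtained from $f$'s completion property, then invoking universal replacement. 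The remaining items (closedness, the descent of $f$, and $(s+1)$-uniqueness of $g$) should be routine given the absolute statements already in the excerpt.
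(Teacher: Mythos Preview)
The paper does not prove this proposition---it is cited from \cite[Proposition 7.12]{GMVI}---but the argument pattern the paper endorses is visible in Section~\ref{sec:Approximating by Lie-fibered fibrations}, in the paragraph showing that $\beta_n\colon Z\to Z^{(n)}_\infty$ is a fibration: one first establishes $(s+1)$-uniqueness of the induced map to $Y$ via the universal replacement property, then invokes Proposition~\ref{inner fibrant} to obtain $k$-completion of the quotient map for $k\ge s+1$, and uses universal replacement directly for $k\le s+1$.

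Your treatment of the closed-equivalence-relation claim and of the $(s+1)$-uniqueness of $g$ is essentially correct. The gap is in your argument that $\pi_{f,s}$ is a fibration, specifically for $k>s+1$. You write that universal replacement is ``valid for $k \le s+1$, and for larger $k$ by restricting to faces,'' but that second clause is not an argument: knowing that all $(s+1)$-faces of a configuration are cubes does not make the configuration a cube. Your proposed reconciliation of the two lifts---choose a pointwise lift $c_1$ of $\bar c$ with prescribed $f$-image, then invoke URP---is also backwards: Proposition~\ref{URP} has no hypothesis on $f$-images and in any case only applies for $k\le s+1$, so it cannot certify that $c_1\in C^k(X)$ when $k\ge s+2$. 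The clean fix is to reverse the order of your steps: first prove $(s+1)$-uniqueness of $g$ (your argument for this works and does not require $\pi_{f,s}$ to be a fibration, since cubes in the quotient are by definition images of cubes, and URP at level $s+1$ lets you match lifts on $\llcorner^{s+1}$). Then apply Proposition~\ref{inner fibrant} to $f=g\circ\pi_{f,s}$: since $f$ has $k$-completion and $g$ has $k$-uniqueness for every $k\ge s+1$, $\pi_{f,s}$ has $k$-completion in that range. For $k\le s+1$ your URP argument is correct (and needs nothing from the fibration property of $f$). Finally, Proposition~\ref{universal property} gives that $g$ is a fibration.
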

We remark that  the  dashed arrows in the above proposition emphasize that the underling maps are induced from the given map. This convention will be used throughout the paper.

The induced $s$-fibration $g: X/\sim_{f, s} \to Y$ is maximal in the following sense. 
\begin{proposition} \label{maximal fibration}
Suppose that $f\colon X \to Y$ is a fibration and $s \geq 0$. Then the induced fibration $g\colon X/\sim_{f,s} \to Y$ is the {\bf maximal $s$-fibration} in the sense that for each commutative diagram of cubespace morphisms
$$\xymatrix{
X \ar[r]^{\pi'} \ar[d]_f & Z \ar[1, -1]^{g'} \\
Y &
}$$
such that $\pi'$ is a fibration and $g'$ is an $s$-fibration. Then $\pi\colon X \to X/\sim_{f, s}$ factors through $\pi'$, i.e. there is a  unique fibration $\varphi: X/\sim_{f,s} \to Z$  for which the following diagram is commutative:
$$\xymatrix{
X \ar[r]^{\pi} \ar[d]_{\pi'} & X/\sim_{f, s} \ar@{-->}[1, -1]^\varphi \\
Z. &
}$$
\end{proposition}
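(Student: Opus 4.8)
The plan is to construct $\varphi$ directly from the universal property of the quotient by $\sim_{f,s}$, and then to verify it is a fibration using the universal property of fibrations (Proposition \ref{universal property}). First I would observe that since $\pi'\colon X\to Z$ is a fibration and $g'\colon Z\to Y$ is an $s$-fibration, for any pair $x\sim_{f,s}x'$ in $X$ we must have $\pi'(x)=\pi'(x')$: indeed, $f(x)=f(x')$ gives $g'(\pi'(x))=g'(\pi'(x'))$, and $x\sim_s x'$ together with the fact that $\pi'$ is a cubespace morphism gives $\pi'(x)\sim_s\pi'(x')$ in $Z$, so $\pi'(x)\sim_{g',s}\pi'(x')$; since $g'$ has $(s+1)$-uniqueness one checks (using the definition of $\sim_s$ via a pair of $(s+1)$-cubes agreeing on $\llcorner^{s+1}$ with a common $g'$-image, which exists because $\pi'$ carries the witnessing cubes forward) that $\pi'(x)=\pi'(x')$. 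Hence $\pi'$ is constant on $\sim_{f,s}$-classes and factors set-theoretically as $\pi'=\varphi\circ\pi$ for a unique map $\varphi\colon X/\!\sim_{f,s}\to Z$; continuity of $\varphi$ follows since $\pi$ is a continuous surjection of compact metric spaces (a quotient map) and $\pi'$ is continuous. Uniqueness of $\varphi$ is immediate from surjectivity of $\pi$.

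Next I would check that $\varphi$ is a cubespace morphism: given a $k$-cube $c$ of $X/\!\sim_{f,s}$, lift it through the fibration $\pi$ (Proposition \ref{inducing $s$-fibration} says $\pi=\pi_{f,s}$ is a fibration, and fibrations are in particular surjective on cubes, which one gets by completing corners or directly) to a $k$-cube $\tilde c$ of $X$ with $\pi(\tilde c)=c$; then $\varphi(c)=\varphi(\pi(\tilde c))=\pi'(\tilde c)\in C^k(Z)$ since $\pi'$ is a cubespace morphism. Finally, to see $\varphi$ is a fibration: we have the factorization $\pi'=\varphi\circ\pi$ where both $\pi'$ and $\pi$ are fibrations; by the universal property of fibrations (Proposition \ref{universal property}, applied with $f\rightsquigarrow\pi$, $g\rightsquigarrow\varphi$, $g\circ f\rightsquigarrow\pi'$), $\varphi$ is a fibration. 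Commutativity of the stated triangle is $\pi'=\varphi\circ\pi$ by construction.

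The main obstacle I anticipate is the very first step — showing $\pi'(x)=\pi'(x')$ whenever $x\sim_{f,s}x'$. The subtlety is that the relation $\sim_s$ on $Z$ is defined by the existence of a pair of $(s+1)$-cubes in $Z$ agreeing off the top vertex; one must produce such cubes as the $\pi'$-images of the cubes witnessing $x\sim_s x'$ in $X$, which works precisely because $\pi'$ is a cubespace morphism, and then one must know these image cubes also have a common $g'$-image, which follows from $g'\circ\pi'=f$ and $f(x)=f(x')$ — but care is needed because the witnessing cubes $c,c'$ in $X$ need not themselves have equal $f$-images on all vertices; one repairs this using the universal replacement property (Proposition \ref{URP}) for $Z$, or more cleanly by invoking that $Z$ being a codomain of an $s$-fibration from $X$ inherits $(s+1)$-uniqueness in the appropriate relative sense, so that the canonical relation $\sim_{g',s}$ degenerates. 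A clean alternative is: since $g'$ is an $s$-fibration it has $(s+1)$-uniqueness, and one shows that $\pi'(x)\sim_{g',s}\pi'(x')$ forces equality by taking an $(s+1)$-corner in $Z$ with $\pi'(x)$ and $\pi'(x')$ as competing top vertices, noting both complete to cubes with the same $g'$-image (coming from a single cube in $Y$ completing $f$ of the common lower face), and applying $(s+1)$-uniqueness of $g'$. Everything else is routine bookkeeping with quotient maps of compact metric spaces and the two universal properties already recorded in the excerpt.
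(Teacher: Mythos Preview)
The paper states this proposition without proof (it sits in the introductory background material as a recorded consequence of Proposition~\ref{inducing $s$-fibration}), so there is no ``paper's proof'' to compare against; your argument is the natural one and is correct.

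One comment: the obstacle you flag in your final paragraph is not actually an obstacle. If $c,c'\in C^{s+1}(X)$ witness $x\sim_s x'$, then by definition $c|_{\llcorner^{s+1}}=c'|_{\llcorner^{s+1}}$, so $f(c)$ and $f(c')$ agree on $\llcorner^{s+1}$; and at the top vertex $f(c(\overrightarrow{1}))=f(x)=f(x')=f(c'(\overrightarrow{1}))$ since $x\sim_{f,s}x'$. Hence $f(c)=f(c')$ on all of $\{0,1\}^{s+1}$, so $g'(\pi'(c))=g'(\pi'(c'))$ automatically, and the $(s+1)$-uniqueness of $g'$ gives $\pi'(c)=\pi'(c')$, in particular $\pi'(x)=\pi'(x')$, with no need for the universal replacement property or any repair. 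The rest of your outline (quotient-map continuity, lifting cubes through the fibration $\pi$ to check $\varphi$ is a morphism, and Proposition~\ref{universal property} to conclude $\varphi$ is a fibration) is exactly right.
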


Now we are ready to state the {\bf relative weak structure theorem}\index{relative weak structure theorem} for fibrations \cite[Theorem 7.19, Corollary 7.20]{GMVI}. 
\begin{theorem} \label{RWST}
Let $f\colon X \to Y$ be an $s$-fibration between compact ergodic gluing cubespaces. Then $f$  factors as a finite tower of fibrations
$$\xymatrix{
X =X/\sim_{f, s} \ar[r] \ar[d]^f & X/\sim_{f, s-1} \ar[r] & \cdots \ar[r] & X/\sim_{f, 1} \ar[dlll]  \\
Y \cong X/\sim_{f, 0}
},$$
where for each $s \geq k \geq 1$  the fibration $X/\sim_{f, k} \to X/\sim_{f, k-1}$ is an $A_k(f)$-principal fiber bundle for a compact metrizable abelian group $A_k(f)$.
\end{theorem}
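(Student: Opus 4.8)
The plan is to first assemble the tower, and then treat one layer at a time.

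\emph{The tower.} Restricting a $(k+1)$-cube that witnesses $x\sim_k x'$ to a hyperface of $\{0,1\}^{k+1}$ through $\overrightarrow{1}$ shows that $x\sim_k x'$ implies $x\sim_{k-1}x'$, so the relations $\sim_{f,k}$ decrease with $k$, and the identity on $X$ induces surjective cubespace morphisms $X/\sim_{f,k}\to X/\sim_{f,k-1}$ whose composite with $\pi_{f,k}$ is $\pi_{f,k-1}$. Each $\pi_{f,k}\colon X\to X/\sim_{f,k}$ is a fibration by Proposition~\ref{inducing $s$-fibration}, so by the universal property (Proposition~\ref{universal property}) every map $X/\sim_{f,k}\to X/\sim_{f,k-1}$ is a fibration. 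At the top of the tower $\sim_{f,s}$ is trivial: if $c,c'\in C^{s+1}(X)$ witness $x\sim_{f,s}x'$, then $f(c)$ and $f(c')$ agree on $\llcorner^{s+1}$ and, since $f(x)=f(x')$, also at $\overrightarrow{1}$, hence $f(c)=f(c')$; the $(s+1)$-uniqueness of the $s$-fibration $f$ then gives $c=c'$, so $x=x'$ and $X/\sim_{f,s}=X$. At the bottom, $\sim_0$ is the full relation $X\times X$ by ergodicity of $X$, so $\sim_{f,0}$ is the relation ``$f(x)=f(x')$'', and since $f$ is surjective (by its fibration property together with ergodicity of $Y$) we get $X/\sim_{f,0}\cong Y$.

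\emph{Reducing one layer.} Fix $1\le k\le s$ and set $Z:=X/\sim_{f,k}$, $W:=X/\sim_{f,k-1}$, with $\pi_k\colon Z\to W$ the induced fibration; $Z$ and $W$ are compact metric because $\sim_{f,k}$ is closed (Proposition~\ref{inducing $s$-fibration}). By Definition~\ref{def:principal fiber bundle}, it suffices to produce a compact metrizable \emph{abelian} group $A_k(f)$ acting freely and continuously on $Z$ whose orbits are exactly the fibers of $\pi_k$: the remaining clause of that definition is then automatic, since the induced continuous bijection $Z/A_k(f)\to W$ of compact metric spaces is a homeomorphism.

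\emph{The structure group.} I would define $A_k(f)$ as the relative counterpart of the $k$-th structure group in the absolute weak structure theorem, namely the group under composition of \emph{vertical translations} of $Z$: continuous self-maps $\alpha$ of $Z$ lying over $\id_W$ that are translations of the cubespace $Z$ in the relativised Antol\'in Camarena--Szegedy sense. One then checks that (a) $A_k(f)$ is a group, (b) it acts freely on $Z$, (c) it acts transitively on every fiber $\pi_k^{-1}(w)$, (d) it is abelian, and (e) it is compact, metrizable, with continuous operations. Parts (a), (b), (d) are formal manipulations with parallelepipeds, closed up using the universal replacement property (Proposition~\ref{URP}) together with gluing (Proposition~\ref{fibrant is gluing}), following the absolute proofs mutatis mutandis; freeness in (b) additionally uses the uniqueness built into the notion of an $s$-fibration. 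For (c): if $\pi_k(z)=\pi_k(z')$, then $z,z'$ are the $\pi_{f,k}$-images of points $x,x'\in X$ with $x\sim_{f,k-1}x'$, hence $x\sim_{k-1}x'$ in $X$ and $f(x)=f(x')$; correcting the $k$-cubes that witness $x\sim_{k-1}x'$ via Proposition~\ref{URP}, so that they project to constant cubes in $Y$ and thereby become vertical over $W$, straightens them into a vertical translation of $Z$ carrying $z$ to $z'$. For (e): evaluation at a fixed base point $z_0$ sends $A_k(f)$ injectively (by freeness) onto the compact fiber $\pi_k^{-1}(\pi_k(z_0))$, and the rigidity of translations makes this a homeomorphism, transporting to $A_k(f)$ a compact metrizable topology for which composition and inversion are continuous. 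Granting (a)--(e), $\pi_k$ is an $A_k(f)$-principal fiber bundle, and stacking the layers produces the asserted factorization.

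\emph{Main obstacle.} The crux is (c): building the relative translation calculus and establishing transitivity on the fibers. The real content there is that the $k$-cubes witnessing $x\sim_{k-1}x'$ can be ``straightened'' into honest vertical translations of $Z$, and it is exactly at this point that the compactness of $X$ and the gluing property (via Proposition~\ref{URP}) enter essentially. Ensuring that the relativised translations retain the rigidity of the absolute theory, which underlies (a), (b) and (e), is the remaining technical burden; everything else runs parallel to the proof of the absolute weak structure theorem, Theorem~\ref{WST}.
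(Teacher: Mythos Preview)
The paper does not prove Theorem~\ref{RWST}: it is quoted verbatim from \cite[Theorem~7.19, Corollary~7.20]{GMVI}, so there is no in-paper proof to compare against. That said, the paper does reveal the construction used in \cite{GMVI} in the proof of Proposition~\ref{structure0}, where the top structure group is described as
\[
A_s \;=\; \{(x,x')\in X^2:\; x\sim_{f,s-1}x'\}\big/\!\sim_f,
\]
with $(x,x')\sim_f(y,y')$ iff $[\llcorner^s(x,x'),\llcorner^s(y,y')]\in C^{s+1}(X)$; the action on $Z=X/\sim_{f,s}$ is then $[x,x']_f\cdot z:=z'$ for the unique $z'$ making the relevant $(s+1)$-cube. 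This is the route of \cite{GMVI}: one \emph{first} builds the abstract abelian group out of pairs and \emph{then} shows it acts, so transitivity on fibers is tautological and the work goes into well-definedness, the group law, and freeness.

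Your outline for the tower (triviality of $\sim_{f,s}$, identification $X/\sim_{f,0}\cong Y$, the use of Propositions~\ref{inducing $s$-fibration} and~\ref{universal property}) is correct and matches the standard argument. Where you diverge is in defining $A_k(f)$ as the group of ``vertical translations'' of $Z$ over $W$. This is a legitimate alternative, but it inverts the logic of \cite{GMVI}: you make transitivity (your item~(c)) the hard step, whereas in the pairs construction transitivity is free and the content sits in well-definedness of the action. Your sketch of (c) --- ``straightening'' the witnessing cubes into a global translation via Proposition~\ref{URP} --- is precisely where the two approaches meet, and it is not yet a proof: you must show that a single pair $(z,z')$ in a fiber determines a \emph{globally defined} self-map of $Z$, consistent across all fibers, and that this assignment is independent of choices. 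That is exactly the content hidden in the equivalence $\sim_f$ above. Your identification of (c) as the crux is accurate; the pairs-modulo-$\sim_f$ construction is the standard device that discharges it.
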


The group  $A_k(f)$ in the above theorem is named the {\bf $k$-th structure group} of $f$. In particular, $A_s(f)$ is called the {\bf top structure group}. If all $A_k(f)$ are Lie groups, $f$ is called {\bf Lie-fibered} as well as a {\bf Lie fibration}\index{Lie fibration}. 

Observe that a cubespace $X$ is an $s$-nilspace is equivalent to saying that the map $X \to \{\ast\}$ is an $s$-fibration.  The following proposition elaborates on this point. 
\begin{proposition} \label{fiber of fibration}
Let $f\colon X \to Y $ be an $s$-fibration. Then each fiber of $f$, as a subcubespace of $X$, is an $s$-nilspace.
\end{proposition}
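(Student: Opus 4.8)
The plan is to verify directly that the subcubespace $f^{-1}(y) \subseteq X$ is fibrant and has $(s+1)$-uniqueness, which by Definition \ref{nilspace} is exactly what it means to be an $s$-nilspace. Fix $y \in Y$ and write $F := f^{-1}(y)$, equipped with the subcubespace structure $C^k(F) = C^k(X) \cap F^{\{0,1\}^k}$.

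First I would check $k$-completion for $F$ for every $k \geq 0$. Let $\lambda \colon \llcorner^k \to F$ be a $k$-corner of $F$. Then $\lambda$ is in particular a $k$-corner of $X$, and $f(\lambda)$ is the constant corner with value $y$; the constant configuration $c$ on $\{0,1\}^k$ with value $y$ is a $k$-cube of $Y$ (it equals $y^{\{0,1\}^k} = y \circ \rho$ for $\rho$ the constant morphism, using $C^0(Y) = Y$ and axiom (2) of Definition \ref{cubespace}), and it completes $f(\lambda)$. Since $f$ is a fibration, by Definition \ref{fibration definition} there is a $k$-cube $c_0 \in C^k(X)$ completing $\lambda$ with $f(c_0) = c$; but then every vertex of $c_0$ lies in $f^{-1}(y) = F$, so $c_0 \in C^k(F)$ and it completes $\lambda$ inside $F$. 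Hence $F$ has $k$-completion for all $k$, i.e. $F$ is fibrant.

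Next I would check $(s+1)$-uniqueness for $F$. Let $c_1, c_2 \in C^{s+1}(F)$ with $c_1|_{\llcorner^{s+1}} = c_2|_{\llcorner^{s+1}}$. Viewing $c_1, c_2$ as $(s+1)$-cubes of $X$, we have $f(c_1) = f(c_2)$ (both are the constant configuration $y^{\{0,1\}^{s+1}}$) and they agree on $\llcorner^{s+1}$; since $f$ is an $s$-fibration it has $(s+1)$-uniqueness by Definition \ref{def:fibration_finite_degree}, so $c_1 = c_2$. Thus $F$ has $(s+1)$-uniqueness, and combined with fibrancy this shows $F$ is an $s$-nilspace.

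I do not anticipate a serious obstacle here: the argument is essentially a matter of unwinding definitions and exploiting that the constant-$y$ configuration is always a cube of $Y$. The only point requiring a little care is the bookkeeping that a cube of $X$ all of whose vertices map to $y$ is genuinely a cube of the subcubespace $F$ with its inherited structure — but this is immediate from the definition of the subcubespace structure. (One could alternatively invoke Proposition \ref{fiber of fibration}'s analogue for the absolute case, Corollary \ref{inducing nilspace} applied to $F \to \{\ast\}$, once one knows $F$ is fibrant; but the relative $(s+1)$-uniqueness is cleanest to extract directly from that of $f$ as above.)
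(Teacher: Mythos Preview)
Your proof is correct and follows essentially the same approach as the paper: the paper does not give a standalone proof of Proposition~\ref{fiber of fibration}, but the identical argument (constant cube completion for fibrancy, relative $(s+1)$-uniqueness for uniqueness) is reproduced verbatim at the start of the proof of Proposition~\ref{prop:structure groups}.
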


The main goal of this paper is to describe the structure of an $s$-fibration.
Based on the relative weak structure theorem, our main endeavor is  to relativize various techniques which appeared in the absolute setting, i.e., we first consider the Lie-fibered case and then deal with the general case.

 One of the innovations in this article is to associate to a fibration $f\colon X \to Y$ the  {\it $k$-th translation group $\Aut_k(f)$ of $f$} (Definition \ref{relative translation}). Building on this concept, the  main result we obtain is as follows.

\begin{theorem} \label{relative inverse limit}
Let $g\colon Z \to Y$ be a fibration of degree at most $s$ between compact ergodic gluing cubespaces. Then there exists a sequence of compact ergodic gluing cubespaces $\{Z_n\}_{n\geq 0}$, and an inverse system of fibrations  $\{p_{m, n}\colon Z_n \to Z_m\}_{n \geq m}$ of degree at most $s$ satisfying the following properties:
\begin{enumerate}
    \item $Z$ is isomorphic to the inverse limit $\varprojlim Z_n$;
    \item There exists a sequence of  Lie fibrations $\{h_n\colon Z_n \to Y\}$ that are compatible with the connecting maps $p_{m, n}$;
    \item  Suppose that $g^{-1}(g(z))$ is strongly connected for some $z \in Z$. Define $z_n=p_n(z)$ as the image of the projection map $p_n\colon Z \to Z_n$. Then $g^{-1}(g(z))$  is isomorphic to the inverse limit of nilmanifolds $$\varprojlim (\Aut_1^\circ(h_n)/\Stab(z_n)).$$  Moreover, for each $k \geq 0$, $C^k(g^{-1}(g(z)))$ is isomorphic to the inverse limit
$$\varprojlim (\HK^k(\Aut_\bullet^\circ(h_n))/\Stab(z_n)).$$
\end{enumerate}
\end{theorem}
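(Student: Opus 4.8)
The plan is to reduce Theorem \ref{relative inverse limit} to the corresponding absolute statement Theorem \ref{absolute char} via the relative weak structure theorem and a relativization of the translation-group machinery. First I would apply Theorem \ref{RWST} to write $g$ as a tower of $A_k(g)$-principal fiber bundles; the key preliminary step is to replace this by a (possibly countable) \emph{refined} tower in which every structure group is a Lie group. Each compact abelian metrizable $A_k(g)$ is an inverse limit of compact abelian Lie groups $A_k(g) = \varprojlim A_k(g)/N_{k,j}$ over a decreasing sequence of closed subgroups $N_{k,j}$ with trivial intersection; pushing these quotients through the principal-bundle description level by level yields cubespaces $Z_n$ together with the Lie fibrations $h_n \colon Z_n \to Y$ and connecting maps $p_{m,n} \colon Z_n \to Z_m$. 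Compatibility of $\{h_n\}$ with $\{p_{m,n}\}$ and the identification $Z \cong \varprojlim Z_n$ are then formal (each $Z_n$ is compact ergodic gluing, being an iterated principal bundle over $Y$, and $g$ factors as $h_n \circ p_n$), giving items (1) and (2).

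For item (3), fix $z$ with $g^{-1}(g(z))$ strongly connected and set $y = g(z)$. The fiber $F := g^{-1}(y)$ is an $s$-nilspace by Proposition \ref{fiber of fibration}, and I would check it is compact ergodic: ergodicity of the fiber follows because $g$ is relatively $1$-ergodic (any fibration of degree $s$ is, since $(s+1)$-uniqueness forces the map on $C^1$ to be injective on fibers and completion gives surjectivity onto $C^1(Y)$ — or more directly one invokes the hypothesis that $g$ is a fibration between ergodic cubespaces so that fibers inherit ergodicity). Then $F$ is a compact ergodic strongly connected nilspace, so Theorem \ref{absolute char} applies and $F \cong \varprojlim F_n$ for nilmanifolds $F_n$ with Host-Kra cubes. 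The content of (3) is to identify this abstract inverse limit with $\varprojlim \big(\Aut_1^\circ(h_n)/\Stab(z_n)\big)$ and the cube structure with $\varprojlim\big(\HK^k(\Aut_\bullet^\circ(h_n))/\Stab(z_n)\big)$. Here the fiber $h_n^{-1}(y) \subseteq Z_n$ is a Lie-fibered compact ergodic $s$-nilspace (Lie-fibered because $h_n$ is a Lie fibration and by Proposition \ref{prop:structure groups}-type reasoning the fiber's structure groups are the structure groups of $h_n$), and for such a nilspace the translation group $\Aut_1^\circ(h_n)$ acting on $h_n^{-1}(y)$ — restricted to the identity component — is a nilpotent Lie group with the fiber's filtration, acting transitively on $h_n^{-1}(y)$ with $\Stab(z_n)$ a discrete cocompact subgroup, and the Host-Kra cubes of the nilmanifold coincide with $\HK^k(\Aut_\bullet^\circ(h_n))/\Stab(z_n)$. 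This is precisely the relative analogue of the absolute identification of a Lie-fibered nilmanifold as $\Aut^\circ/\Gamma$; I would establish it as a lemma about the $k$-th translation group $\Aut_k(f)$ introduced in Definition \ref{relative translation}, namely that for a Lie fibration $f$ the groups $\Aut_k^\circ(f)$ form a filtration acting transitively (fiberwise, on a strongly connected fiber) with discrete cocompact stabilizers. Passing the isomorphisms $h_n^{-1}(y) \cong \Aut_1^\circ(h_n)/\Stab(z_n)$ through the inverse system — using that $p_{m,n}$ restricts to fiber maps compatible with the translation groups and stabilizers — then gives the claimed description of $F$ and of each $C^k(F)$.

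The main obstacle I expect is the transitivity-and-discreteness statement for $\Aut_1^\circ(h_n)$ on the strongly connected fiber $h_n^{-1}(y)$: one must show that the relative translation group is large enough to act transitively on a connected Lie-fibered fiber, and that the stabilizer of a point is discrete and cocompact. In the absolute setting this is the heart of the Antolín Camarena–Szegedy / Gutman–Manners–Varjú analysis (translations lift through each principal-bundle stage, the obstruction to lifting being governed by cocycles on the fibers that are trivializable after passing to the identity component), and the relative version requires redoing this lifting argument one structure group $A_k(h_n)$ at a time, tracking that the fiber over $y$ carries a cocycle whose cohomology class vanishes because the fiber is strongly connected. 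A secondary technical point is ensuring the countable refinement in (1)–(2) can be chosen so that the connecting maps $p_{m,n}$ are themselves $s$-fibrations (not merely fibrations) and so that strong connectedness of $g^{-1}(y)$ descends to strong connectedness of each $h_n^{-1}(y)$ — this should follow since $p_{m,n}$ restricted to fibers is a surjective cubespace morphism and a continuous image of a connected set is connected, but the $C^k$-level connectedness needs the principal-bundle structure of each stage.
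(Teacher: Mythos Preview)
Your high-level outline is in the right spirit and you correctly identify the central difficulty (transitivity of $\Aut_1^\circ(h_n)$ on a strongly connected Lie-fibered fiber, with discrete cocompact stabilizer --- this is exactly the paper's Theorem~\ref{relative nilmanifold}). But there are two genuine gaps.

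First, your treatment of (1)--(2) is too casual. Writing each $A_k(g)=\varprojlim A_k(g)/N_{k,j}$ and ``pushing the quotients through the principal-bundle description level by level'' does \emph{not} yield the $Z_n$ formally. Quotienting $Z/\!\sim_{g,k}$ by a subgroup of $A_k(g)$ gives an intermediate space between $Z/\!\sim_{g,k}$ and $Z/\!\sim_{g,k-1}$, but there is no obvious way to stitch together such quotients at \emph{all} levels $k$ into a single cubespace $Z_n$ that is a Lie $s$-fibration over $Y$ and sits in a coherent inverse system with limit $Z$. The paper proceeds by induction on $s$: assuming the $(s-1)$-case gives an inverse system $Z^{(0)}_m$ for $\pi_{g,s-1}(Z)\to Y$, and separately one forms $Z^{(n)}_\infty:=Z/K_n$ for $K_n=\ker(A_s(g)\to A_n)$. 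The hard step is to \emph{combine} these two directions, and this requires the machinery of straight $\psi$-classes and straight sections (Lemmas~\ref{enough straight classes}, \ref{close class}, \ref{constructed space} and Proposition~\ref{middle fibration}). In particular one must show that for $m$ large the straight $\psi_{m,\infty}$-classes on $Z^{(n)}_\infty$ define a closed equivalence relation whose quotient $Z^{(n)}_m$ is the desired Lie-fibered object, and that these quotients refine one another. None of this is formal.

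Second, invoking Theorem~\ref{absolute char} on the single fiber $F=g^{-1}(y)$ is a detour that does not help. That theorem produces \emph{some} inverse limit of nilmanifolds for $F$, but the claim in (3) is that the \emph{specific} system $\{h_n^{-1}(y)\}$ coming from (1)--(2) does the job; identifying the abstract system with the concrete one is no easier than proving the result directly. The paper instead shows directly (Theorem~\ref{relative nilmanifold}) that each $h_n^{-1}(y)\cong \Aut_1^\circ(h_n)/\Stab(z_n)$ as cubespaces, and then --- a point you only gesture at --- proves a separate result (Theorem~\ref{endow cubestructure}/\ref{pushforward and backward}) producing \emph{surjective} homomorphisms $\Aut_i^\circ(h_n)\to\Aut_i^\circ(h_{n-1})$ compatible with $p_{n-1,n}$, so that the inverse limit $\varprojlim(\Aut_1^\circ(h_n)/\Stab(z_n))$ is actually defined and matches $\varprojlim h_n^{-1}(y)$. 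Your phrase ``$p_{m,n}$ restricts to fiber maps compatible with the translation groups'' hides this: a priori an element of $\Aut_i^\circ(h_n)$ need not descend to $\Aut_i^\circ(h_{n-1})$, and establishing this push-forward requires the lifting/repairing arguments (splitting into vertical and horizontal fibrations) of Section~\ref{sec:Approximating by Lie-fibered fibrations}.
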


We remark that statements $(1)$ and $(2)$ of the above theorem imply that an $s$-fibration factors as an inverse limit of Lie fibrations. This is analogous to the statement in the absolute setting which says that an $s$-nilspace equals an inverse limit of Lie-fibered $s$-nilspaces. 

By Proposition \ref{fiber of fibration} the fiber $g^{-1}(z)$ is a nilspace. As by assumption $g^{-1}(z)$ is strongly connected, it holds by Theorem \ref{absolute char}, that it is isomorphic as a cubespace to an inverse limit of nilmanifolds endowed with the Host-Kra cubes. However statement (3) is much stronger as it gives a
\emph{simultaneous} representation for \emph{all} fibers of $f$ in
terms of the fixed sequence of Lie groups  $\Aut_1^\circ(h_n)$.
Thus  the fibers are approximated uniformly as closely as desired
by quotients by natural co-compact subgroups of a single Lie group
associated with
the factorization\footnote{This follows from the following lemma:
For an inverse limit $Z=\varprojlim Z_n$ arising from continuous maps $p_n \colon (Z, d) \to (Z_n, d_n)$ between compact metric spaces, we have  $\lim_{n \to \infty} \sup_{y \in Z_n} {\rm diam} (p^{-1}(y), d)$=0. As each $Z_n$ is homeomorphic to a disjoint
union of quotients of  $\Aut_1^\circ(h_n)$, the approximation is
uniform for all fibers simultaneously. }. 
We note that the fact that if one fiber is strongly connected then all fibers are strongly connected follows from Proposition \ref{prop:structure groups} in the sequel.

It is natural to ask what can be said for non strongly connected fibers. The general answer is not known, however in \cite{R78} Rees exhibited a certain minimal distal extension of a rotation on a solenoid for which all fibers are \emph{not} homeomorphic. As such an extension is a fibration we see that necessarily its fibers are not isomorphic. Rees also proved that the fibers of a minimal distal extension of a path-connected system are homeomorphic. Inspired by this result we prove the following theorem, relying on a homotopy argument and the relative weak structure theorem.

\begin{theorem} \label{fiber isomorphism}
 Let $g\colon Z \to Y$ be a fibration of degree at most $s$ between compact ergodic gluing cubespaces. Suppose that $Y$ is path-connected and $g$ is Lie-fibered.  Then for any $y_0, y_1 \in Y$ there is a cubespace isomorphism between $g^{-1}(y_0)$ and 
$g^{-1}(y_1)$ as cubespaces.
\end{theorem}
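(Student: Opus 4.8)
The plan is to use the relative weak structure theorem (Theorem \ref{RWST}) to write $g$ as a finite tower of principal fiber bundles with Lie structure groups, and then to move between fibers over different base points by a connectedness/homotopy argument applied one level of the tower at a time. The key elementary fact is that a principal $G$-bundle over a contractible base (or along a path) is trivializable, so the restriction of a Lie fibration to a path in $Y$ "unwinds" into a product. The subtlety compared with the purely topological statement is that we need an isomorphism of \emph{cubespaces}, not merely of topological spaces, so the trivializing homeomorphisms must be chosen compatibly with the Host-Kra cube structure coming from the principal bundle actions.

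\textbf{Step 1: Reduce to a path.} Since $Y$ is path-connected, fix a continuous path $\gamma\colon[0,1]\to Y$ with $\gamma(0)=y_0$, $\gamma(1)=y_1$. It suffices to produce a cubespace isomorphism between $g^{-1}(\gamma(0))$ and $g^{-1}(\gamma(1))$; by compactness of $[0,1]$ we may even reduce to the case where $\gamma$ stays inside a single trivializing neighborhood at each level, and then compose finitely many such isomorphisms. So the real content is local triviality of a Lie-fibered fibration over a small contractible piece of the base, upgraded to a statement about cubespace structure.

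\textbf{Step 2: Induct on the tower.} Write $g$ as $Z = Z^{(s)}\to Z^{(s-1)}\to\cdots\to Z^{(1)}\to Z^{(0)}=Y$ as in Theorem \ref{RWST}, where $Z^{(k)}\to Z^{(k-1)}$ is an $A_k$-principal fiber bundle with $A_k$ a compact abelian Lie group. Assume inductively that for the truncated fibration $g^{(k-1)}\colon Z^{(k-1)}\to Y$ we have a cubespace isomorphism $\Phi_{k-1}\colon (g^{(k-1)})^{-1}(y_0)\xrightarrow{\ \sim\ }(g^{(k-1)})^{-1}(y_1)$ that is \emph{realized by a path}, i.e.\ arises from trivializing $g^{(k-1)}$ along $\gamma$. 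Now $Z^{(k)}$ restricted over $(g^{(k-1)})^{-1}(\gamma([0,1]))$ is a principal $A_k$-bundle over a space that (via the path trivialization) deformation retracts onto the single fiber $(g^{(k-1)})^{-1}(y_0)$; hence this restricted bundle is isomorphic to the pullback of its restriction to $(g^{(k-1)})^{-1}(y_0)$. Pulling back along $y_1$ instead gives an $A_k$-equivariant homeomorphism $(g^{(k)})^{-1}(y_0)\to(g^{(k)})^{-1}(y_1)$ covering $\Phi_{k-1}$. Because the cube structure on each fiber $(g^{(k)})^{-1}(y_i)$, as an $s$-nilspace (Proposition \ref{fiber of fibration}), is generated from the cube structure of $(g^{(k-1)})^{-1}(y_i)$ together with the Host-Kra cubes $\HK^\bullet$ of the filtered group built from the $A_j$'s — and an $A_k$-equivariant map covering a cubespace isomorphism automatically sends $\HK$-cubes to $\HK$-cubes — the homeomorphism $\Phi_k$ is a cubespace isomorphism. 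This closes the induction; at $k=s$ we obtain the desired isomorphism $g^{-1}(y_0)\cong g^{-1}(y_1)$.

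\textbf{The main obstacle} I expect is Step 2's compatibility claim: verifying precisely that an $A_k$-equivariant homeomorphism lying over a cubespace isomorphism of the base fiber is itself a cubespace isomorphism. This requires a clean description of $C^\bullet(g^{-1}(y))$ in terms of the tower — namely that a configuration of the fiber is a cube iff its projection to the next fiber down is a cube and the "difference" lies in the appropriate Host-Kra cube group of $A_k$ (a relative analogue of the description underlying Theorem \ref{WST}). Granting this, equivariance handles the $A_k$-part and the inductive hypothesis handles the base part, so cubes go to cubes in both directions. A secondary point requiring care is that the path-trivialization is only a homeomorphism, not canonical, so one must check the construction does not depend on choices in a way that breaks the cube structure — but since any two trivializations along $\gamma$ differ by an $A_k$-valued gauge transformation, and such transformations preserve $\HK^\bullet(A_{k,\bullet})$, this causes no problem.
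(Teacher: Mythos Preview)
Your overall architecture --- reduce to a path, induct up the tower from Theorem \ref{RWST}, lift the previously constructed isomorphism through the next $A_k$-principal bundle layer, then glue by compactness --- matches the paper's proof exactly. The gap is in the sentence ``equivariance handles the $A_k$-part and the inductive hypothesis handles the base part, so cubes go to cubes in both directions.'' That is not true, and it is precisely the place where the real work lies.

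Here is the issue concretely. Suppose $\Phi_k$ is your $A_k$-equivariant homeomorphism covering the cubespace isomorphism $\Phi_{k-1}$, and take an $(s+1)$-cube $c$ in $(g^{(k)})^{-1}(y_0)$. Fix any genuine cube $c_0'$ in $(g^{(k)})^{-1}(y_1)$ lying over $\Phi_{k-1}(\pi_{k-1}(c))$; then $\Phi_k(c)=\alpha\cdot c_0'$ for some $\alpha\in A_k^{\{0,1\}^{s+1}}$, and $\Phi_k(c)$ is a cube iff $\alpha$ lies in the Host--Kra cube group, equivalently iff $\sum_\omega(-1)^{|\omega|}\alpha(\omega)=0$. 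Equivariance tells you only that $\Phi_k$ carries one $A_k$-coset over $\pi_{k-1}(c)$ to one $A_k$-coset over $\Phi_{k-1}(\pi_{k-1}(c))$; it says nothing about \emph{which} coset, i.e.\ nothing about the alternating sum of $\alpha$. The same objection kills your remark that gauge transformations ``preserve $\HK^\bullet$'': a gauge transformation is $z\mapsto h(\pi_{k-1}(z))\cdot z$ for an arbitrary continuous $h$, and $h\circ\pi_{k-1}\circ c$ has no reason to be a Host--Kra cube of $A_k$.

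What the paper does is exactly the repair you are missing. After producing the equivariant lift $H_s(\cdot,t)$ via the covering homotopy theorem, one forms the \emph{discrepancy cocycle} $\rho_{H_s(\cdot,t)}\colon C^{s+1}(g^{-1}(y_0))\to A_s(g)$ measuring the failure of $H_s(\cdot,t)$ to send $(s+1)$-cubes to $(s+1)$-cubes. For $t$ small this cocycle has small image (since $H_s(\cdot,0)=\id$), and then \cite[Theorem 4.11]{GMVII} (or \cite[Lemma 3.19]{ACS12}) shows it is a coboundary $\partial^{s+1}h$. Replacing $H_s(\cdot,t)$ by $h(\cdot)\cdot H_s(\cdot,t)$ yields a genuine cubespace isomorphism on a short interval $[0,t_s]$, and compactness of $[0,1]$ finishes the argument. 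Without this repairing step your $\Phi_k$ is only a homeomorphism, not a cubespace isomorphism, and the induction does not close.
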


\subsection{Fibrations arising from dynamical systems}\label{subsec:Dynamical fibrations}
Proving that a cubespace morphism is a fibration is in general non-trivial. Turning to topological dynamical systems we show they provide a hitherto unknown source of  new examples of fibrations.  
\begin{theorem} \label{dyn_factor_is_fibration}
Let $\pi\colon X \to Y$ be a factor map of minimal systems such that $X$ is fibrant (e.g., $X$ is minimal distal).  Then $\pi$ is a fibration between the associated dynamical cubespaces.
\end{theorem}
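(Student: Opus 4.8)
The plan is to verify the defining property of a fibration (Definition \ref{fibration definition}) directly, using the structure of the Host-Kra cubes $C^k_G(X)$ and $C^k_G(Y)$ together with the fibrancy of $X$. So fix $k \geq 0$, let $\lambda \colon \llcorner^k \to X$ be a $k$-corner in the dynamical cubespace $(X, C^\bullet_G(X))$, and suppose that $\pi(\lambda)$ extends to a cube $c \in C^k_G(Y)$. We must produce $c_0 \in C^k_G(X)$ with $c_0|_{\llcorner^k} = \lambda$ and $\pi(c_0) = c$. The first step is to exploit the fact that $X$ is fibrant: the cubespace $(X, C^\bullet_G(X))$ has $k$-completion, so $\lambda$ extends to \emph{some} cube $c_1 \in C^k_G(X)$; set $c_2 := \pi(c_1) \in C^k_G(Y)$. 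Then $c$ and $c_2$ are two $k$-cubes of $Y$ agreeing on $\llcorner^k$, and we want to ``correct'' $c_1$ inside its fiber so that its image becomes $c$ rather than $c_2$, without disturbing the values on $\llcorner^k$.

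The second step is the heart of the argument: we must understand how the cubes of $Y$ that share a lower corner differ. Because $c, c_2 \in C^k_G(Y)$ and $c|_{\llcorner^k} = c_2|_{\llcorner^k}$, I expect to use a density/limit argument in the Host-Kra cube group. Writing $c = \lim_n \gamma_n . y_n^{\{0,1\}^k}$ and $c_2 = \lim_n \delta_n . z_n^{\{0,1\}^k}$ with $\gamma_n, \delta_n \in \HK^k(G)$, one wants to combine these into a single ``difference'' element of $\HK^k(G)$ that is supported away from $\llcorner^k$ — i.e., fixes every coordinate in $\llcorner^k$ and moves only the $\overrightarrow{1}$ coordinate — and realizes the passage from $c_2(\overrightarrow{1})$ to $c(\overrightarrow{1})$ along the group orbit. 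Here the key algebraic input is that an element of $\HK^k(G)$ which acts trivially on $\llcorner^k$ must be of the form $[g]_{\{\overrightarrow{1}\}}$ (a generator attached to the single vertex $\overrightarrow{1}$, regarded as a $0$-face), up to the usual generation relations; combined with minimality of $(G,Y)$, which lets one connect $c_2(\overrightarrow{1})$ and $c(\overrightarrow{1})$ through orbit segments, one obtains a net of elements $g_n \in G$ with $[g_n]_{\{\overrightarrow{1}\}} . c_2 \to c$. Applying the \emph{same} corrections $[g_n]_{\{\overrightarrow{1}\}}$ to $c_1$ — which leaves $c_1|_{\llcorner^k}$ untouched — and passing to a convergent subnet (using compactness of $X$ and closedness of $C^k_G(X)$) produces the desired $c_0 \in C^k_G(X)$ with $c_0|_{\llcorner^k} = \lambda$ and $\pi(c_0) = c$.

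The third step is to assemble these pieces cleanly and to check the edge cases: $k = 0$ (where a $0$-corner is empty and the claim is vacuous), and the need to ensure that the limiting element $c_0$ genuinely lies in $C^k_G(X)$ rather than merely in $X^{\{0,1\}^k}$ — this is where closedness of the Host-Kra cube sets and a diagonal/subnet extraction are used. One should also record the cited fact (the example after Definition \ref{HK cube}, via \cite[Theorem 7.10]{GGY18}) that a minimal distal system is fibrant, which supplies the parenthetical special case.

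\medskip

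I expect the main obstacle to be the second step: rigorously controlling the difference between two $k$-cubes of $Y$ agreeing on $\llcorner^k$ and lifting that difference to a transformation of $X$ supported only at $\overrightarrow{1}$. The subtlety is that $c$ and $c_2$ are limits of orbits under possibly very different cube-group elements, so there is no single $\gamma \in \HK^k(G)$ carrying $c_2$ to $c$; one must work with nets and use minimality to bridge orbit closures, while simultaneously keeping the correction transformations trivial on $\llcorner^k$ so that the lower corner $\lambda$ is preserved. Making this net argument precise — identifying exactly which subgroup of $\HK^k(G)$ fixes $\llcorner^k$ pointwise, and showing it acts transitively enough on the relevant fibers after taking closures — is the technical crux; the fibrancy of $X$ then does the rest almost formally.
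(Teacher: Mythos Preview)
Your proposal has a genuine gap at exactly the point you yourself flagged as the crux. The element $[g]_{\{\overrightarrow{1}\}}$ does \emph{not} lie in $\HK^k(G)$ for $k\ge 2$: by Definition~\ref{HK cube} the Host--Kra cube group is generated by $[g]_F$ only for \emph{hyperfaces} $F$, and under the degree-$1$ filtration $G=G_0=G_1\supseteq G_2=\{e\}$ every generator attached to a face of codimension $\ge 2$ is trivial. In fact the only element of $\HK^k(G)$ (for this filtration, $k\ge 2$) acting trivially on $\llcorner^k$ is the identity: once all coordinates except $\overrightarrow{1}$ are $e$, the cube constraints on any $2$-face through $\overrightarrow{1}$ force that last coordinate to be $e$ as well. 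So your proposed ``corrections supported at $\overrightarrow{1}$'' do not exist inside the cube group, and applying $[g_n]_{\{\overrightarrow{1}\}}$ to $c_1$ produces configurations that are \emph{not} known to be cubes. The closedness of $C^k_G(X)$ therefore buys you nothing: you would be taking a subnet limit of non-cubes. Minimality of $Y$ does give $g_n\cdot c_2(\overrightarrow{1})\to c(\overrightarrow{1})$, but that is all it gives; it says nothing about $g_n\cdot c_1(\overrightarrow{1})$ completing $\lambda$.

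The paper's proof proceeds quite differently. It first invokes the relative Furstenberg structure theorem to factor $\pi$ as a transfinite tower of isometric extensions, reduces via Proposition~\ref{limit preserving} and Lemma~\ref{isometric extension} to the case where $\pi$ is a group extension by a compact group $K$, and only then argues directly. In that case, after completing $\lambda$ by fibrancy to get $x_0$ with $\pi(x_0)\sim_{k-1} c(\overrightarrow{1})$, one uses the result $\NRP^{[k-1]}(Y)=(\pi\times\pi)\bigl(\NRP^{[k-1]}(X)\bigr)$ from \cite{GGY18} to lift the pair $(\pi(x_0),c(\overrightarrow{1}))$ to some $(x,z)\in\NRP^{[k-1]}(X)$; the group $K$ is then used to translate $x$ back to $x_0$ (Lemma~\ref{K invariant} shows $\NRP^{[k-1]}(X)$ is $K$-invariant), and the universal replacement property (Proposition~\ref{URP}) finishes the job. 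The point is that the ``difference'' between two completions of a corner is encoded by the canonical relation $\sim_{k-1}$, not by a cube-group element supported at a vertex; lifting that relation through $\pi$ is where the real work lies, and it requires the external structural input of the group-extension reduction.
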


The proof given in Section \ref{sec:Factor maps between minimal distal systems are fibrations} uses the relative Furstenberg structure theorem for distal extensions of minimal systems. 
\begin{remark}
Another rich source of $s$-fibrations is found in ergodic theory. In \cite{gutman2019strictly}, Gutman and Lian studied the question when an ergodic abelian group extension of a strictly ergodic system admits a strictly ergodic distal model. Under some sufficient conditions they  proved that the associated topological model map is actually an $s$-fibration. 
\end{remark}

\subsection{The relative nilpotent regionally proximal relation}

In \cite{GGY18} Glasner, Gutman, and Ye  introduced the so-called \emph{nilpotent regionally proximal relations} for actions of arbitrary groups. For minimal actions by abelian groups these relations coincide with the \emph{regionally proximal relations} introduced by Host, Kra and Maass \cite{HKM10}, generalizing the classical (degree $1$) definition of Ellis and Gottschalk \cite{EG60}. The regionally proximal relations are not equivalence relations in general. However  \cite{veech1968equicontinuous,ellis1971characterization, McM78} proposed  various sufficient conditions under which the regionally proximal relations of degree $1$ are equivalence relations. In particular this is the case for minimal abelian group actions \cite{SY12}. In \cite{GGY18} it was proven unexpectedly that for \emph{any} minimal action $(G, X)$, the nilpotent regionally proximal relation of degree $s$, denoted by $\NRPs(X)$,  is an equivalence relation and moreover using \cite[Theorem 1.4]{GMVIII}, under
some restrictions on the acting group, it corresponds to the maximal dynamical nilspace factor (also known as \emph{pronilfactor}) of order at most $s$. Moreover $\NRP^{[1]}(X)$ corresponds to the maximal abelian group factor of $(G, X)$ for any acting group $G$.

We recall the definition from \cite{GGY18}. Let $x, x' \in X$ be two points of a metric space $X$. Denote by $\llcorner^k(x, x')$\index{$\llcorner^k(x, x')$} the map $\{0,1\}^k \to X$ assigning the value $x'$ at $\overrightarrow{1}$ and $x$ elsewhere. 

\begin{definition} \label{def:NRP}
Let $(G,X)$ be a dynamical system. We say a pair of points $(x, x')$ of $X$ are {\bf nilpotent regionally proximal of order $k$}\index{nilpotent regionally proximal of order $k$}, denoted by $(x, x') \in \NRPk(X)$\index{$\NRPk(X)$}, if $\llcorner^{k+1}(x, x') \in C^{k+1}_G(X)$. 
\end{definition}

\begin{theorem}\cite[Theorem 3.8]{GGY18}\label{equivalence relation}
If $(G, X)$ is minimal, then $\NRPk(X)$ is a closed $G$-invariant equivalence relation for every $k \geq 0$.
\end{theorem}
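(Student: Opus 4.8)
Plan for proving Theorem \ref{equivalence relation} ($\NRPk(X)$ is a closed $G$-invariant equivalence relation when $(G,X)$ is minimal).

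The plan is to unwind Definition \ref{def:NRP}: $(x,x') \in \NRPk(X)$ iff the configuration $\llcorner^{k+1}(x,x')$ — which is $x$ everywhere on $\llcorner^{k+1}$ and $x'$ at $\overrightarrow 1$ — lies in $C^{k+1}_G(X)$. The first observation is that this is exactly the statement $x \sim_k x'$ in the dynamical cubespace $(X,C_G^\bullet(X))$: the completion witnessing $x\sim_k x'$ can be taken to be the constant cube $x^{\{0,1\}^{k+1}}$ (which is always a $(k+1)$-cube since constant configurations are Host-Kra cubes) on the $\llcorner^{k+1}$ part, paired against any cube restricting to $x$ on $\llcorner^{k+1}$ and to $x'$ at $\overrightarrow 1$. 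So morally the theorem reduces to: (i) the dynamical cubespace of a minimal system is compact gluing (so that Proposition \ref{fibrant is gluing}/the universal replacement machinery applies and $\sim_k$ is a closed equivalence relation by \cite[Proposition 6.3]{GMVI}), and (ii) $G$-invariance.

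First I would establish closedness of $\NRPk(X)$: since $C^{k+1}_G(X)$ is by definition a \emph{closed} subset of $X^{\{0,1\}^{k+1}}$, and the map $X\times X \to X^{\{0,1\}^{k+1}}$ sending $(x,x')$ to $\llcorner^{k+1}(x,x')$ is continuous, $\NRPk(X)$ is the preimage of a closed set, hence closed. Next, $G$-invariance: if $(x,x')\in\NRPk(X)$ then $\llcorner^{k+1}(x,x')\in C^{k+1}_G(X)$; acting by the constant element $(g,g,\dots,g)\in\HK^{k+1}(G)$ (the diagonal, which is generated by the $[g]_F$'s — indeed $\prod$ over a partition of the cube into faces, or simply note the diagonal preserves the generating set) carries this configuration to $\llcorner^{k+1}(gx,gx')$, which therefore also lies in $C^{k+1}_G(X)$; so $(gx,gx')\in\NRPk(X)$.

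The heart of the matter — and the step I expect to be the main obstacle — is reflexivity, symmetry, and transitivity, i.e.\ showing $\NRPk(X)$ is an equivalence relation. Reflexivity is immediate ($\llcorner^{k+1}(x,x)=x^{\{0,1\}^{k+1}}$ is a constant cube). For symmetry and transitivity the strategy is to invoke the abstract cubespace machinery: minimality of $(G,X)$ forces the dynamical cubespace $(X,C_G^\bullet(X))$ to be ergodic (noted in the excerpt right after Definition \ref{HK cube}), and one needs it to be \emph{gluing} — this is where the real work of \cite{GGY18} lies, presumably proven by a direct Host--Kra-style argument on orbit closures of cube groups, or by first showing the dynamical cubespace is fibrant in favorable cases and otherwise arguing gluing directly from the structure of $\HK^{k}(G)$. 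Granting gluing, Proposition \ref{fibrant is gluing} is not even needed; instead \cite[Proposition 6.3]{GMVI} (universal replacement property for compact gluing cubespaces) gives that $\sim_k$ is an equivalence relation, and combined with the identification $\NRPk(X)=\{(x,x'):x\sim_k x'\}$ from the first paragraph we conclude. The delicate point to get right is precisely that identification — one must check that membership of $\llcorner^{k+1}(x,x')$ in $C^{k+1}_G(X)$ is genuinely equivalent to the existence of a \emph{pair} of $(k+1)$-cubes agreeing on $\llcorner^{k+1}$ with distinguished vertices $x,x'$; the forward direction uses the constant cube $x^{\{0,1\}^{k+1}}$ as one member of the pair, and the backward direction uses universal replacement (two cubes agreeing off the top vertex and hence agreeing modulo $\sim_{k}$ on the corner) to reconstruct $\llcorner^{k+1}(x,x')$ as a cube. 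So the proof is essentially: closedness and $G$-invariance by hand, then equivalence-relation properties by reducing to $\sim_k$ and citing the gluing/universal-replacement results, with the genuinely new ingredient being the verification that the dynamical cubespace of a minimal system is gluing.
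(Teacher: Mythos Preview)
The paper supplies no proof of this statement; it is quoted verbatim from \cite[Theorem~3.8]{GGY18}. There is therefore no in-paper argument to compare your plan against.

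Your outline is a sound reconstruction of the logical structure. Closedness and $G$-invariance follow exactly as you say (the map $(x,x')\mapsto\llcorner^{k+1}(x,x')$ is continuous into the closed set $C^{k+1}_G(X)$, and the diagonal $(g,\dots,g)\in\HK^{k+1}(G)$ carries cubes to cubes). The identification $\NRPk(X)=\sim_k$ is Proposition~\ref{alternative} in the paper, and you have the two directions right, including the observation that the backward direction already needs the universal replacement property and hence gluing. You correctly isolate the crux: once the dynamical cubespace of a minimal system is known to be gluing, \cite[Proposition~6.3]{GMVI} yields that $\sim_k$ is a closed equivalence relation and hence so is $\NRPk(X)$. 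Establishing gluing for minimal (not necessarily distal, hence not necessarily fibrant) dynamical cubespaces is indeed one of the principal technical contributions of \cite{GGY18}, and you are right to flag it as the place where the real work lies. Your sketch is therefore accurate as a roadmap, with the honest caveat that the substantive step---gluing for arbitrary minimal dynamical cubespaces---has been identified but not carried out.
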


\begin{definition} \label{relative NRP}
Let $\pi \colon (G, X) \to (G, Y)$ be a factor map of dynamical systems. We define the {\bf relative nilpotent regionally proximal relation of order $k$ w.r.t. $\pi$}, denoted by $\NRPk(\pi)$\index{$\NRPk(X)$!$\NRPk(\pi)$},  as the intersection of $\NRPk(X)$ with $R_\pi\index{$R_\pi$}:=\{(x, x') \in X^2: \pi(x)=\pi(x')\}$.
\end{definition}

\begin{proposition} \cite[Lemma 7.17]{GMVI} \label{alternative}
For a factor map $\pi: X \to Y$ of dynamical systems, we have $\NRPk(\pi)=\sim_{\pi, k}$. In particular, $\NRPk(X)=\sim_k$.
\end{proposition}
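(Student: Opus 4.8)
The statement to prove is Proposition~\ref{alternative}: for a factor map $\pi\colon X\to Y$ of dynamical systems, $\NRPk(\pi)=\sim_{\pi,k}$, and in particular $\NRPk(X)=\sim_k$.

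The plan is to unwind the two definitions and show each contains the other. Recall $\NRPk(\pi)=\NRPk(X)\cap R_\pi$, so $(x,x')\in\NRPk(\pi)$ means exactly that $\llcorner^{k+1}(x,x')\in C^{k+1}_G(X)$ and $\pi(x)=\pi(x')$. On the other side, $x\sim_{\pi,k}x'$ means $x\sim_k x'$ and $\pi(x)=\pi(x')$, where by Definition~\ref{canonical equivalence}, $x\sim_k x'$ asks for $c,c'\in C^{k+1}_G(X)$ with $c|_{\llcorner^{k+1}}=c'|_{\llcorner^{k+1}}$, $c(\overrightarrow 1)=x$, $c'(\overrightarrow 1)=x'$. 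Since the $\pi(x)=\pi(x')$ condition is common to both sides, everything reduces to proving $\NRPk(X)=\sim_k$ for the dynamical cubespace $C^\bullet_G(X)$; the relative statement then follows by intersecting with $R_\pi$. So I would first reduce to the absolute equality, then prove that.

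For $\NRPk(X)\subseteq\sim_k$: given $\llcorner^{k+1}(x,x')\in C^{k+1}_G(X)$, I want to exhibit witnesses $c,c'$. Take $c$ to be the constant cube $x^{\{0,1\}^{k+1}}\in C^{k+1}_G(X)$ and $c'=\llcorner^{k+1}(x,x')$, which is a cube by hypothesis. Then $c|_{\llcorner^{k+1}}$ is constantly $x$ on $\llcorner^{k+1}$, and $c'|_{\llcorner^{k+1}}=\llcorner^{k+1}(x,x')|_{\llcorner^{k+1}}$ is also constantly $x$ (the value $x'$ sits only at $\overrightarrow 1$), so the restrictions agree; moreover $c(\overrightarrow 1)=x$ and $c'(\overrightarrow 1)=x'$. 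Hence $x\sim_k x'$.

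For $\sim_k\subseteq\NRPk(X)$: this is the direction needing the structural hypotheses. Given $c,c'\in C^{k+1}_G(X)$ agreeing on $\llcorner^{k+1}$ with $c(\overrightarrow 1)=x$, $c'(\overrightarrow 1)=x'$, I want $\llcorner^{k+1}(x,x')\in C^{k+1}_G(X)$. The natural route is the gluing/universal-replacement machinery: dynamical cubespaces of minimal systems are ergodic, and one needs them to be gluing (fibrant, via Proposition~\ref{fibrant is gluing}, in the distal case, or one invokes that $C^\bullet_G(X)$ always has the relevant closure/gluing structure as established in \cite{GGY18,GMVI}). Then apply the universal replacement property (Proposition~\ref{URP}) or a direct concatenation argument: from $c$ produce the constant cube at $x$, note it agrees with $\llcorner^{k+1}(x,x')$ off $\overrightarrow 1$ up to replacing the $\overrightarrow 1$-coordinate, and use that the pair $(x,x')$ is already a $1$-cube (ergodicity) together with gluing to conclude $\llcorner^{k+1}(x,x')$ is a cube. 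Concretely, I would write $c$ and $c'$ as concatenations along the last coordinate and use the gluing property repeatedly, or cite \cite[Proposition 6.3]{GMVI} directly since $\sim_k$ is there shown to be generated exactly by such corner-type configurations in a compact gluing cubespace. The main obstacle is the $\sim_k\subseteq\NRPk$ inclusion: making precise which gluing/ergodicity hypotheses on $C^\bullet_G(X)$ are available (the statement as given in the excerpt does not list them explicitly, so one must point to the fact, recorded earlier, that dynamical cubespaces of minimal systems are ergodic and — where needed — invoke the general results of \cite{GGY18} on their structure, or restrict to the fibrant case), and then running the universal-replacement argument cleanly to replace the single $\overrightarrow 1$-coordinate.
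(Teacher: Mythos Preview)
The paper does not supply its own proof of this proposition; it simply records the statement and cites \cite[Lemma 7.17]{GMVI}. So there is nothing in the paper to compare against directly.

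Your outline is correct and is essentially what the cited lemma does. The reduction to the absolute case $\NRPk(X)=\sim_k$ (then intersect with $R_\pi$) is the right move, and your easy inclusion $\NRPk(X)\subseteq\sim_k$ via the constant cube is exactly right. For the harder inclusion $\sim_k\subseteq\NRPk(X)$, the clean argument is purely the universal replacement property (Proposition~\ref{URP}) and nothing else: if $x\sim_k x'$, then the constant cube $\Box^{k+1}(x)\in C^{k+1}_G(X)$ and the configuration $\llcorner^{k+1}(x,x')$ have the same image under $X\to X/\!\sim_k$, so Proposition~\ref{URP} (applied with $s=k$ and a configuration of dimension $k+1\le k+1$) gives $\llcorner^{k+1}(x,x')\in C^{k+1}_G(X)$. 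You do mention URP, but then muddy the waters by bringing in ergodicity and an ad hoc concatenation sketch; neither is needed. The only hypothesis you genuinely need is that $X$ be a compact gluing cubespace, which you correctly flag --- the paper's statement is written loosely for arbitrary factor maps, but the cited \cite[Lemma 7.17]{GMVI} is stated under a gluing hypothesis, and that is what is being invoked.
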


From Theorem \ref{equivalence relation}, we see that for any factor map $\pi$ between minimal systems $\NRPk(\pi)$ is a closed $G$-invariant equivalence relation.

Let us discuss the relation between $\NRPk(\pi)$ and several classical (relative) relations.
\begin{definition}
Let $\pi: (G, X) \to (G, Y)$ be a factor map. The {\bf relative proximal relation} ${\bf P}(\pi)$ is defined as those pairs $(x, y) \in R_\pi$ such that
$d(g_ix, g_iy)$ approaches $0$ for some sequence $\{g_i\}_i$  of $G$. The map $\pi$ is called a {\bf distal extension} if ${\bf P}(\pi)$ is trivial. In particular, $(G, X)$ is called a  distal system if and only if ${\bf P}((G, X) \to \{\ast\})$ is trivial.  When $G$ is abelian, for every positive integer $k$, we define the {\bf relative $k$-th regionally proximal relation of $\pi$}, denoted by ${\bf RP}^{[k]}(\pi)$,  by the collection of pairs $(x, y) \in R_\pi$ such that there exist sequences of
elements $(x_i, y_i) \in R_\pi$ and $(g_i^1, \ldots, g_i^k) \in G^k$ satisfying that 
$$\lim_{i \to \infty} (x_i, y_i)=(x, y), {\rm \ and \ } \lim_{i \to \infty} d((\sum_{j=1}^k \epsilon_jg_i^j)x_i, (\sum_{j=1}^k \epsilon_jg_i^j)y_i)=0$$
for every $(\epsilon_1, \ldots, \epsilon_d) \in \{0, 1\}^d\setminus  \{\overrightarrow{0}\}$.
\end{definition}

Penazzi gave some algebraic conditions under which the relative regionally proximal relation is an equivalence relation \cite{P95}. More about this topic may be found in  \cite[Chapter V.2]{VriesB}.

Let $G$ be an abelian group and $\pi: (G, X) \to (G, Y)$ a factor map of minimal systems. From \cite[Proposition 8.9]{GGY18}, we have that ${\bf RP}^{[k]}(X) \subseteq \NRPk(X)$. As a consequence, we have the following proposition.
\begin{proposition} \label{relative relation}
Let $G$ be an abelian group and $\pi: (G, X) \to (G, Y)$ a factor map of minimal systems. Then 
 ${\bf P}(\pi) \subseteq {\bf RP}^{[1]}(\pi) \subseteq {\bf RP}^{[k]}(\pi) \subseteq \NRPk(\pi)$. In particular, If $\NRPk(\pi)$ is trivial, $\pi$ is a distal extension.
\end{proposition}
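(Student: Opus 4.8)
The proposition consists of the three inclusions of the stated chain together with the ``in particular'', and the plan is to treat them in that order.

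\textbf{The two outer inclusions.} For ${\bf P}(\pi)\subseteq{\bf RP}^{[1]}(\pi)$: given $(x,y)\in{\bf P}(\pi)$, pick a net $\{g_i\}$ in $G$ with $d(g_ix,g_iy)\to 0$ and use the constant approximating pairs $(x_i,y_i):=(x,y)$, which lie in $R_\pi$ because $(x,y)\in R_\pi$, together with $g_i^1:=g_i$; since $\{0,1\}^{1}\setminus\{\overrightarrow{0}\}=\{(1)\}$, the only condition to check is $d(g_i^1x_i,g_i^1y_i)=d(g_ix,g_iy)\to 0$, so $(x,y)\in{\bf RP}^{[1]}(\pi)$. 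For ${\bf RP}^{[k]}(\pi)\subseteq\NRPk(\pi)$: any data witnessing $(x,y)\in{\bf RP}^{[k]}(\pi)$ also witnesses $(x,y)\in{\bf RP}^{[k]}(X)$ (one simply drops the requirement that the approximating pairs lie in $R_\pi$), whence ${\bf RP}^{[k]}(\pi)\subseteq{\bf RP}^{[k]}(X)\cap R_\pi$; combining this with the inclusion ${\bf RP}^{[k]}(X)\subseteq\NRPk(X)$ of \cite[Proposition 8.9]{GGY18} recorded just before the statement, and with the definition $\NRPk(\pi)=\NRPk(X)\cap R_\pi$, gives ${\bf RP}^{[k]}(\pi)\subseteq\NRPk(X)\cap R_\pi=\NRPk(\pi)$.

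\textbf{The middle inclusion ${\bf RP}^{[1]}(\pi)\subseteq{\bf RP}^{[k]}(\pi)$.} This is the crux: one must upgrade a single ``collapsing direction'' to $k$ directions that collapse along every nonempty face of $\{0,1\}^{k}$, all the while keeping the approximating pairs inside the closed $G$-invariant set $R_\pi$. The plan is to pass from the regionally-proximal data to an idempotent of the Ellis (enveloping) semigroup and then apply Hindman's theorem. Concretely: starting from $(x,y)\in{\bf RP}^{[1]}(\pi)$ with witnesses $(x_i,y_i)\to(x,y)$ in $R_\pi$ and $g_i\in G$ such that $d(g_ix_i,g_iy_i)\to 0$, one passes to subnets and uses compactness and minimality to produce, for each $\varepsilon>0$, an approximant $(x_\varepsilon,y_\varepsilon)\in R_\pi$ with $d(x_\varepsilon,x)+d(y_\varepsilon,y)<\varepsilon$ whose ``proximal cell'' $\{g\in G:\ d(gx_\varepsilon,gy_\varepsilon)<\varepsilon\}$ is a member of an idempotent ultrafilter on $G$; by Hindman's theorem this set then contains $\{\sum_{j\in S}h_j^\varepsilon:\ \varnothing\ne S\subseteq\{1,\dots,k\}\}$ for some $h_1^\varepsilon,\dots,h_k^\varepsilon\in G$. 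Putting $(g_i^1,\dots,g_i^k):=(h_1^{\varepsilon_i},\dots,h_k^{\varepsilon_i})$ along a sequence $\varepsilon_i\to 0$, every partial sum $\sum_{j\in S}g_i^j$ lies in the corresponding finite-sums set, so $d((\sum_{j\in S}g_i^j)x_{\varepsilon_i},(\sum_{j\in S}g_i^j)y_{\varepsilon_i})<\varepsilon_i\to 0$, which exhibits $(x,y)\in{\bf RP}^{[k]}(\pi)$ with the approximants $(x_{\varepsilon_i},y_{\varepsilon_i})\in R_\pi$. I expect this to be the main obstacle: the delicate points are (a) producing, from data that is only regionally proximal rather than proximal, approximants $(x_\varepsilon,y_\varepsilon)\in R_\pi$ whose proximal cell lies in an idempotent ultrafilter, and (b) applying Hindman's theorem uniformly along one net so that all $k$ directions appear simultaneously with the approximating pairs still inside $R_\pi$. (For a pair $(x,y)$ that is itself proximal the argument collapses to the clean inclusion ${\bf P}(\pi)\subseteq{\bf RP}^{[k]}(\pi)$ with constant approximants, and this case alone already suffices for the corollary below.)

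\textbf{The ``in particular''.} If $\NRPk(\pi)$ is trivial --- i.e.\ it equals the diagonal of $X$ --- then by the chain just assembled ${\bf P}(\pi)\subseteq{\bf RP}^{[1]}(\pi)\subseteq{\bf RP}^{[k]}(\pi)\subseteq\NRPk(\pi)$ forces ${\bf P}(\pi)$ to be the diagonal, which by definition means $\pi$ is a distal extension; specialising $Y=\{\ast\}$ recovers the absolute statement that $X$ is distal whenever $\NRPk(X)$ is trivial.
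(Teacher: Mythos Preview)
Your arguments for the two outer inclusions and for the ``in particular'' are correct and match the paper exactly: the paper gives no explicit proof, only the sentence preceding the proposition, ``From \cite[Proposition~8.9]{GGY18}, we have that ${\bf RP}^{[k]}(X)\subseteq\NRPk(X)$. As a consequence, we have the following proposition.'' Intersecting that absolute inclusion with $R_\pi$ gives ${\bf RP}^{[k]}(\pi)\subseteq\NRPk(\pi)$, and ${\bf P}(\pi)\subseteq{\bf RP}^{[1]}(\pi)$ is immediate from the definitions, exactly as you wrote.

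The middle inclusion ${\bf RP}^{[1]}(\pi)\subseteq{\bf RP}^{[k]}(\pi)$, however, is a slip in the paper: the inclusion that holds trivially is the reverse one, ${\bf RP}^{[k]}(\pi)\subseteq{\bf RP}^{[1]}(\pi)$, obtained by discarding $g_i^2,\dots,g_i^k$ and reading the condition at $\epsilon=(1,0,\dots,0)$. The direction you are trying to prove is false already in the absolute case --- for any minimal $2$-step nilsystem that is not equicontinuous one has ${\bf RP}^{[2]}(X)=\Delta$ while ${\bf RP}^{[1]}(X)\neq\Delta$ --- so your Hindman/idempotent-ultrafilter scheme is aimed at a false target, and the obstacles you yourself flag in (a) and (b) are genuine obstructions, not merely technicalities. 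In particular, membership of the ``proximal cell'' in an idempotent ultrafilter cannot be extracted from regionally-proximal data alone.

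None of this affects the ``in particular'': one needs only ${\bf P}(\pi)\subseteq\NRPk(\pi)$, which follows from ${\bf P}(\pi)={\bf P}(X)\cap R_\pi$ together with the standard inclusion ${\bf P}(X)\subseteq\NRPk(X)$, and your own parenthetical remark already isolates this.
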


\begin{remark}
\begin{enumerate}
\item For a factor map $\pi \colon (G, X) \to (G, Y)$ of minimal systems, since ${\bf RP}^{[1]}(\pi) \subsetneq {\bf RP}^{[1]}(X)\cap R_\pi$ in general (even for $G=\bZ$) \cite[Remark (2.2.3), Page 411]{VriesB}, we have in particular in general that ${\bf RP}^{[1]}(\pi) \subsetneq {\NRP^{[1]}(\pi)}$. \\
\item  Recall that ${\bf RP}^{[1]}(\pi)$ generates the maximal equicontinuous factor $(G, X/{\bf RP}^{[1]}(\pi)) \to (G, Y)$  (see \cite[Chapter V, Theorem 2.21]{VriesB}). One may wonder whether $\NRP^{[1]}(\pi)$ corresponds to the maximal factor which is a principal abelian group extension\footnote{See Definition \ref{def:principal abelian group}.} of the base space. It turns out that this is not true. Indeed, from the famous Furstenberg counterexample \cite[Remark 7.19]{GGY18}, a principal abelian group extension of minimal (distal) systems is not necessarily of finite degree relative to the factor system. To see this, recall that Furstenberg's example is given by the projection map $\pi\colon (\bT^2, T) \to (\bT, S)$. Here $T$ sends $(x, y)$ to $(x+\alpha, y+\varphi(x))$ for some continuous map $\varphi: \bT \to \bT$ and irrational number $\alpha$. Since $(\bT, S)$ is a minimal abelian group system, by \cite[Lemma 8.4]{GGY18}, $\NRPk(\bT) \subseteq \NRP^{[1]}(\bT)$ are trivial for all $k \geq 1$. This implies that $\NRPk(\bT^2)\subseteq R_\pi$ and hence $\NRPk(\pi)=\NRPk(\bT^2)$ is not trivial.
\end{enumerate}
\end{remark}
\emph{Systems of finite degree}\footnote{Also known as \emph{systems of finite order}.} were introduced in \cite{HKM10} for $\mathbb{Z}$-actions. In \cite{GGY18} they were defined for general group actions and investigated from a structural point of view.  Recall that a minimal system $(G, X)$ is called a system of degree at most $s$ if $\NRPs(X)$ equals the diagonal subset $\Delta$ of $X^2$ (\cite[Definition 7.1]{GGY18}). It is thus natural to define:
\begin{definition}\label{def:extension of degree}
Let $\pi\colon X \to Y$ be a factor map of minimal systems. We say $\pi$ is an  {\bf extension of degree at most $s$ (relative to $Y$)} if $\NRPs(\pi)=\Delta$.
\end{definition}

In Section \ref{sec:extensions of finite degree} we develop the structure theory of extensions of finite degree. Our main structural theorem, proven in Section \ref{sec:extensions of finite degree}, is an application of Theorem \ref{relative inverse limit} in the dynamical setting:

\begin{theorem}\label{thm:structure_finite_degree_extension}
 Let $\pi \colon (G,X) \to (G,Y)$ be an extension of degree at most $s$ between minimal distal systems. Then the following holds:
 \begin{enumerate}
       \item The map $\pi$ is an $s$-fibration. 
     \item
     There exists a sequence of dynamical systems $\{(G,Z_n)\}_{n\geq 0}$, and an inverse system of factor maps $\{p_{m, n}\colon (G,Z_n) \to (G,Z_m)\}_{n \geq m}$ such that $(G,X)=\varprojlim (G,Z_n)$.
     \item
     There exists a sequence of factor maps  $\{h_n\colon (G,Z_n) \to (G,Y)\}$ which are Lie fibrations and are compatible with the connecting maps $p_{m, n}$.
 \end{enumerate}
 \end{theorem}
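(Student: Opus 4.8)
The plan is to reduce everything to the cubespace-theoretic machinery already assembled, via the dictionary between minimal distal systems and their Host--Kra cubespaces. First, for (1), recall that by hypothesis $(G,X)$ and $(G,Y)$ are minimal distal, so by the example following the definition of fibrant cubespaces both $(X,C_G^\bullet(X))$ and $(Y,C_G^\bullet(Y))$ are fibrant, hence compact gluing cubespaces (Proposition \ref{fibrant is gluing}); minimality makes them ergodic. By Theorem \ref{dyn_factor_is_fibration} the factor map $\pi$ is a fibration between these dynamical cubespaces. It remains to upgrade ``fibration'' to ``$s$-fibration'', i.e.\ to verify $(s+1)$-uniqueness for $\pi$. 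Here I would invoke Proposition \ref{alternative}: $\NRP^{[s]}(\pi) = \sim_{\pi,s}$, and the hypothesis $\NRP^{[s]}(\pi)=\Delta$ says precisely that the relative $s$-th canonical relation is trivial. The remaining gap is that triviality of $\sim_{\pi,s}$ is a statement about pairs of points, whereas $(s+1)$-uniqueness is a statement about pairs of $(s+1)$-cubes agreeing on $\llcorner^{s+1}$. The standard argument closing this gap: given $c,c'\in C^{s+1}(X)$ with $c|_{\llcorner^{s+1}}=c'|_{\llcorner^{s+1}}$ and $\pi(c)=\pi(c')$, the points $x=c(\overrightarrow 1)$, $x'=c'(\overrightarrow 1)$ satisfy $x\sim_s x'$ by Definition \ref{canonical equivalence} and $\pi(x)=\pi(x')$, so $x\sim_{\pi,s}x'$, forcing $x=x'$; since an $(s+1)$-corner in a fibrant (hence having $(s+1)$-uniqueness on $X/\!\sim_s$, via Corollary \ref{inducing nilspace}) cubespace determines its completion up to $\sim_s$, and the apex is now pinned down, $c=c'$. (One should be slightly careful and run this through the universal replacement property, Proposition \ref{URP}, to get honest equality of configurations rather than equality modulo $\sim_s$; $X$ itself need not have $(s+1)$-uniqueness, only the relative version does, which is exactly what we are proving.)

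For (2) and (3), once (1) is established, we are exactly in the hypotheses of Theorem \ref{relative inverse limit}, applied with $g=\pi$, $Z=X$: it is an $s$-fibration between compact ergodic gluing cubespaces. That theorem hands us compact ergodic gluing cubespaces $\{Z_n\}$, an inverse system of $s$-fibrations $\{p_{m,n}\colon Z_n\to Z_m\}$ with $X\cong\varprojlim Z_n$, and Lie fibrations $\{h_n\colon Z_n\to Y\}$ compatible with the $p_{m,n}$. The only thing left is to promote these cubespace-level data to dynamical data: we must equip each $Z_n$ with a $G$-action making $p_{m,n}$ and $h_n$ factor maps, and making $X\cong\varprojlim Z_n$ an isomorphism of dynamical systems. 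The natural route is to observe that the construction of the $Z_n$ in the proof of Theorem \ref{relative inverse limit} is functorial: the $Z_n$ are built as successive quotients of (a relative cube-refinement of) $X$ by closed equivalence relations that are canonically defined in terms of the cubespace structure of $X$ and the map $\pi$ — these include the relations $\sim_{f,k}$ and the kernels coming from the translation groups $\Aut_k(f)$ — all of which are preserved by any cubespace automorphism of $X$ over $Y$. Since each $g\in G$ acts on $(X,C_G^\bullet(X))$ as a cubespace automorphism commuting with $\pi$, it descends to each $Z_n$; continuity and compactness give that these descended maps form a continuous $G$-action, and the compatibility/commutativity assertions are inherited. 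Hence $(G,Z_n)$ are dynamical systems, $p_{m,n}$ and $h_n$ are factor maps, and $X=\varprojlim(G,Z_n)$ as dynamical systems.

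The main obstacle I expect is precisely this last functoriality/equivariance step: Theorem \ref{relative inverse limit} is stated purely in the cubespace category and says nothing a priori about group actions, so one must either (a) re-examine its proof and check that every choice made in building the tower is canonical (equivalently, natural in the pair $(X\xrightarrow{\pi}Y)$ under cubespace isomorphisms over $Y$), so that the $G$-action transports, or (b) observe that the relations defining the $Z_n$ — quotients by $\sim_{f,k}$ and by stabilizers/kernels associated to the translation groups $\Aut_k(h_n)$, together with relative cube-refinements — are all defined without auxiliary choices and are therefore automatically $G$-invariant when the input carries a $G$-action by cubespace automorphisms over $Y$. Route (b) is cleaner and is the one I would write up: the key lemma to isolate is that if $(G,W)$ is a dynamical system whose Host--Kra cubespace is compact ergodic gluing and $R\subseteq W^2$ is a closed equivalence relation defined intrinsically from $C_G^\bullet(W)$ (such as $\sim_k$, or the relation ``$w\sim_{\pi,k}w'$'', or the orbit relation of $\Aut_k$), then $R$ is $G$-invariant, $W/R$ inherits a dynamical-system structure, and the quotient map is a factor map — after which (2) and (3) follow by transcribing Theorem \ref{relative inverse limit} level by level. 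Everything else (compactness of the $Z_n$, metrizability, minimality passing to factors, compatibility of the $h_n$) is routine bookkeeping carried along for free from the cubespace statement.
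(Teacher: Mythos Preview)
Your argument for (1) is correct and matches the paper's (which packages the uniqueness step via Propositions \ref{structure0}, \ref{dynamical maximal} and \ref{inducing $s$-fibration}, but the underlying content is identical). For (2)--(3) you correctly identify the only real issue as $G$-equivariance of the tower produced by Theorem \ref{relative inverse limit}, and your route (a) is precisely what the paper executes.

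Your preferred route (b), however, mis-describes the construction. The $Z_n$ are not obtained from relations like $\sim_{f,k}$ or $\Aut_k$-orbits. Tracing the proof of Theorem \ref{relative inverse limit} (via Proposition \ref{middle fibration} and Lemma \ref{constructed space}), one first quotients $X$ by subgroups $K_n\le A_s(\pi)$, and then by the relation $\approx_{\psi_n}$ of \emph{straight $\psi_n$-classes}, where $\psi_n\colon \pi_{\pi,s-1}(X)\to B_n$ is a connecting map coming from the tower construction at degree $s-1$. This relation is \emph{not} intrinsically determined by $C_G^\bullet(X)$ and $\pi$ alone: it depends on $\psi_n$, and there are genuine auxiliary choices along the way (the Lie approximations $A_n$ of $A_s(\pi)$, the thresholds $M_n$). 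The paper accordingly argues by induction on $s$: the inductive hypothesis says the degree-$(s-1)$ tower is already built from factor maps, so $\psi_n$ is $G$-equivariant, and from this one checks directly that $\approx_{\psi_n}$ is $G$-invariant, whence $Z_n=X/\!\approx_{\psi_n}$ inherits the $G$-action. Thus route (a) is not merely an alternative but the necessary path; the ``defined without auxiliary choices'' claim underpinning route (b) is false as stated, and writing up (b) as you describe would leave a genuine gap.
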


\begin{question}\label{q:fiber_isomorphic}
Let $\pi \colon X \to Y$ be an extension of degree at most $s$  between minimal systems such that  $\pi$ is a fibration. Is every fiber of $\pi$  isomorphic as a cubespace to an inverse limit of nilmanifolds?
\end{question}

\begin{remark}
It is important to point out that in the relative setting, $G$ does not immerse into $\Aut_1(\pi)$ because $G$ does not fix the fibers of $\pi$. This is a crucial obstruction not existing in the absolute setting, making Question \ref{q:fiber_isomorphic} non-trivial.
\end{remark}

\begin{remark}
It is interesting to compare Theorem \ref{thm:structure_finite_degree_extension} to a refinement of the relative Furstenberg structure theorem due to Bronstein (see \cite[3.17.8]{bronshteuin1979extensions}) which states that a distal extension of metric minimal dynamical systems factors as (a possibly countable) tower of \emph{isometric extensions}\footnote{See Definition \ref{isometric}.} where the fibers are given by quotients of compact Lie groups by closed subgroups. Note however that these compact Lie groups are not necessarily abelian. 
\end{remark}

\subsection{Conventions}
Throughout the paper, all spaces are metric spaces and $G$ denotes a topological group. When we discuss an $s$-fibration, the underlying cubespaces are always assumed to be compact ergodic and have the gluing property. Unless specified otherwise $k$ always denotes a non-negative integer.

\subsection{Structure of the paper}
Theorem \ref{relative inverse limit} is proven in Sections \ref{sec:Strongly connected fibers} and \ref{sec:Approximating by Lie-fibered fibrations}, where Section  \ref{sec:Strongly connected fibers} is solely devoted to the Lie-fibered case of statement (3). In Section \ref{sec:Isomorphisms between fibers of a fibration} we prove Theorem \ref{fiber isomorphism} and in Section \ref{sec:Factor maps between minimal distal systems are fibrations} we prove Theorem \ref{dyn_factor_is_fibration}. The final section, Section \ref{sec:extensions of finite degree} is devoted to
extensions of finite degree and to the proof of Theorem \ref{thm:structure_finite_degree_extension}.

\subsection{Acknowledgements}

Y.G. was partially supported by the National Science Centre (Poland) grant 2016/22/E/ST1/00448. B.L. would like to acknowledge the support from the Institute of Mathematics of the Polish Academy of Sciences.

\section{Strongly connected fibers}
\label{sec:Strongly connected fibers}
In this section, we prove the Lie-fibered case of Theorem \ref{relative inverse limit}(3). Recall that an $s$-fibration $g\colon Z \to Y$ between compact ergodic gluing cubespaces is called {\bf Lie fibered} if all structure groups $A_k(g)$ of $g$ are Lie groups.
In light of Theorem \ref{RWST}, we write $g_{s-1}: Z/\sim_{g, s-1} \to Y$\index{$g_{s-1}$} for the induced $(s-1)$-fibration, which we may call the {\bf canonical $(s-1)$-th factor of $g$}.
\subsection{The main steps of the proof}

Let us first recall the notion of the $k$-th translation groups of a cubespace. For a compact cubespace $X$, denote by $\Aut(X)$ the collection of cubespace isomorphisms of $X$.
\begin{definition} \label{translation}
Fix $k \geq 0$. We say $\varphi \in \Aut(X)$\index{$\Aut(X)$} is a {\bf $k$-translation} if for every $n \geq k$, $(n-k)$-face $F \subseteq \{0, 1\}^n$, and $c \in C^n(X)$, the map  $\{0, 1\}^n \to X$ sending $\omega \in F$  to $\varphi(c(\omega))$ and $c(\omega)$ elsewhere is still a cube of $X$.  The {\bf $k$-th translation group}\index{$k$-th translation group} $\Aut_k(X)$\index{$\Aut(X)$!$\Aut_k(X)$} of $X$ is defined as the collection of all $k$-translations of $X$.
\end{definition}

Let $d$ be the metric on $X$. For each homeomorphism $\phi \colon X \to X$, define
$$||\phi||:=\max_{x \in X} d(x, \phi(x)).$$
Then the compact-open topology of the group of homeomorphisms of $X$, denoted by ${\rm Homeo}(X)$, can be induced from the  metric
$$d(\phi, \psi):=||\phi \circ \psi^{-1}||.$$
In particular, as a closed subgroup of ${\rm Homeo}(X)$, $\Aut(X)$ is a metric space.
When we say $\varphi \in {\rm Homeo}(X)$ is (appropriately) {\bf small}, we mean $||\varphi||$ is (appropriately) small; if $f\circ \varphi=f$, we say $\varphi$ {\bf fixes the fibers of $f$}.

Now consider a fibration $f\colon X \to Y$. To study the cubespace structure of the fibers of $f$, we introduce the notion of the $k$-th translation group of $f$.
Note that each fiber $f^{-1}(y)$ is a subcubespace of $X$. 
 \begin{definition} \label{relative translation}
The {\bf $k$-th translation group of $f$}\index{$k$-th translation group!$k$-th translation group of $f$} is defined as 
$$\Aut_k(f)\index{$\Aut(X)$!$\Aut_k(f)$}:=\{\varphi \in \Homeo(X): \varphi|_{f^{-1}(y)} \in \Aut_k(f^{-1}(y)) \ {\rm for \ every \ } y \in Y \}.$$
 \end{definition}

Intuitively this means that each element $\varphi$ of $\Aut_k(f)$ is a homeomorphism of $X$ such that $\varphi$ fixes the fibers of $f$ and  preserves the cubespace structure of fibers in a strong sense. 

\begin{remark}
 It is interesting to compare this definition with the definition of the \emph{fiber preserving automorphism group}  of a dynamical extension $\phi\colon (G, X) \to (G, Y)$, used to characterize (weak) group extensions \cite[Chapter V, Remark 4.2 (4)]{VriesB}.
\end{remark}

When $f$ is an $s$-fibration, $\Aut_{s+1}(f)=\{1\}$ and  we obtain a filtration $\Aut_\bullet(f)$ of degree $s$ as follows:
$$\Aut_1(f)=\Aut_1(f) \supseteq \Aut_2(f) \supseteq \cdots \supseteq \Aut_{s+1}(f)=\{1\}.$$
We may therefore consider $\Aut_\bullet(f)$ as a cubespace (furthermore an $s$-nilspace) endowed with Host-Kra cubes.

Denote by $\Aut_k^\circ(f)$\index{$\Aut(X)$!$\Aut_k^\circ(f)$} the connected component of $\Aut_k(f)$ containing the identity. We obtain  a filtration of closed groups
$$\Aut_1^\circ(f)=\Aut_1^\circ(f) \supseteq \Aut_2^\circ(f) \supseteq \cdots \supseteq \Aut_{s+1}^\circ(f)=1.$$

Similarly to \cite[Proposition 3.1]{GMVII}, we have
\begin{proposition} \label{embedding}
Let $g\colon Z \to Y$ be an $s$-fibration between compact ergodic gluing cubespaces. Then $A_s(g)$ embeds into $\Aut_s(g)$ via sending $a $ to  $f_a$, where $f_a: Z \to Z$ is given by $f_a(z)=a.z$. 
\end{proposition}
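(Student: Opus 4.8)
The plan is to mimic the proof of the absolute statement \cite[Proposition 3.1]{GMVII} (which handles the case $Y = \{\ast\}$), checking that all the arguments relativize correctly over $Y$. First I would recall, via Theorem \ref{RWST}, that the top map $Z = Z/\!\sim_{g,s} \to Z/\!\sim_{g,s-1}$ is an $A_s(g)$-principal fiber bundle; in particular $A_s(g)$ acts freely and continuously on $Z$, the orbits are exactly the fibers of $Z \to Z/\!\sim_{g,s-1}$, and this bundle map is a fibration. For $a \in A_s(g)$ set $f_a(z) = a.z$; this is a homeomorphism of $Z$ (with inverse $f_{a^{-1}}$), and $a \mapsto f_a$ is a continuous injective group homomorphism $A_s(g) \to \Homeo(Z)$ since the action is free. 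The content of the proposition is that the image lies in $\Aut_s(g)$, i.e.\ (i) each $f_a$ fixes the fibers of $g$, and (ii) the restriction of $f_a$ to each fiber $g^{-1}(y)$ is an $s$-translation of the $s$-nilspace $g^{-1}(y)$ (which is an $s$-nilspace by Proposition \ref{fiber of fibration}).

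For (i): since the $A_s(g)$-orbits are the fibers of $Z \to Z/\!\sim_{g,s-1}$, and this factors through $g$ (because $\sim_{g,s-1}$ refines the relation $g(z)=g(z')$ — indeed $x \sim_{g,s-1} x'$ implies $g(x)=g(x')$ by definition of $\sim_{g,s-1}$), the orbit $A_s(g).z$ is contained in $g^{-1}(g(z))$; hence $g \circ f_a = g$ for all $a$. Thus $f_a$ fixes the fibers of $g$, and $f_a|_{g^{-1}(y)}$ is a homeomorphism of the subcubespace $g^{-1}(y)$ for each $y$.

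For (ii), fix $y \in Y$, write $F = g^{-1}(y)$, and fix $a \in A_s(g)$; I must show $f_a|_F \in \Aut_s(F)$. Unwinding Definition \ref{translation}: given $n \ge s$, an $(n-s)$-face $E \subseteq \{0,1\}^n$ and a cube $c \in C^n(F)$, one must verify that the configuration $c'$ obtained from $c$ by applying $f_a$ on the coordinates in $E$ and leaving the rest fixed is again in $C^n(F)$. Since $\dim E = n - s \le n - s$ and $f_a$ acts by the element $a \in A_s(g)$, the map sending $\omega \in E$ to $a$ and $e$ elsewhere is precisely a generator $[a]_E$ of the Host-Kra cube group $\HK^n(A_s(g)_\bullet)$ for the filtration on $A_s(g)$ concentrated in degree $s$ (i.e.\ the image of $A_s(g)$ inside $\Aut_\bullet(g)$, which sits in degree $s$). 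The key point — and this is the step I expect to require the most care — is that the $A_s(g)$-action on $Z$, being the structure-group action of the principal bundle realizing the top layer of the tower in Theorem \ref{RWST}, acts on $C^n(Z)$ through $\HK^n$ in the sense that for any $c \in C^n(Z)$ and any face $E$ with $\dim E \le n - s$, one has $[a]_E \cdot c \in C^n(Z)$; this is the relative analogue of the standard fact that on an $s$-nilspace the degree-$s$ structure group acts by $s$-translations, and it is established exactly as in the absolute setting \cite[Proposition 3.1]{GMVII}, using that $Z$ is a compact ergodic gluing cubespace together with the universal replacement property (Proposition \ref{URP}) and the description of cubes of a principal-bundle layer. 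Granting this, and noting that $c \in C^n(F)$ together with $g \circ f_a = g$ forces the modified configuration $c'$ to have image $\{y\}$ under $g$, we conclude $c' \in C^n(Z)$ and $c'$ maps into $F$, hence $c' \in C^n(F)$; therefore $f_a|_F \in \Aut_s(F)$. Since this holds for every $y$, we get $f_a \in \Aut_s(g)$, and $a \mapsto f_a$ is the desired embedding. The main obstacle, as indicated, is verifying the Host-Kra compatibility of the structure-group action at the level of $n$-cubes for all $n \ge s$; once that relative lemma is in place (transcribing the absolute argument of \cite{GMVII}), the rest is bookkeeping with Definitions \ref{translation} and \ref{relative translation}.
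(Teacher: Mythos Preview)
Your proposal is correct and follows essentially the same route as the paper. The paper's proof is more compressed: it dispatches what you flag as the ``main obstacle'' (that $[a]_E \cdot c \in C^n(Z)$) in one line by invoking the identity $C^n(\cD_s(A))\,c = \pi_{g,s-1}^{-1}(\pi_{g,s-1}(c))$ from \cite[Section 5.1]{GMVI}, which encodes exactly the principal-bundle description of cubes you allude to; from this, since $[a]_E \in C^n(\cD_s(A))$ for any face $E$ of codimension $s$, both $[a]_E\,c \in C^n(Z)$ and $g([a]_E\,c) = g_{s-1}\circ\pi_{g,s-1}([a]_E\,c) = g_{s-1}\circ\pi_{g,s-1}(c) = y$ follow immediately, so there is no need to separately transcribe the absolute argument from \cite{GMVII} or to appeal to the universal replacement property.
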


\begin{proof}
Fix $n \geq s, y \in Y$, and $c \in C^n(g^{-1}(y))$. Define $A:=A_s(g)$ and $\pi:=\pi_{g, s-1}$. 
Since $C^n(\cD_s(A))c=\pi^{-1}(\pi(c))$ (see definition of $\cD_s(A)$ in \cite[Section 5.1]{GMVI}), we have 
$$g([a]_Fc)=g_{s-1}\circ \pi([a]_Fc)=g_{s-1}\circ \pi(c)=g(c)=y$$
for any face $F \subseteq \{0, 1\}^n$ of codimension $s$.
It follows that $f_a|_{g^{-1}(y)} \in \Aut_s(g^{-1}(y))$ and thus  $f_a \in \Aut_s(g)$.
\end{proof}

As a relative analogue of \cite[Proposition 2.17]{GMVII}, the following proposition says that we can also use the evaluation map to construct the desired cubespace morphism.
\begin{proposition} \label{morphism repr}
Let $g: Z \to Y$ be an $s$-fibration. Fix  $z \in Z$ and a cube $c$ of $C^n(g^{-1}(g(z)))$. Then for any $(\varphi_\omega)_{\omega \in \{0, 1\}^n}$ in $\HK^n(\Aut_\bullet(g))$,   $(\varphi_\omega(c(\omega)))_\omega \in C^n(g^{-1}(g(z)))$. In particular the induced natural map 
$$\Aut_1(g)/\Stab(z) \to g^{-1}(g(z))$$
is a cubespace morphism.
\end{proposition}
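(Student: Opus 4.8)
\textbf{Proof proposal for Proposition \ref{morphism repr}.}

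The plan is to prove the first statement — that applying a Host-Kra cube of translations to a cube of a fiber produces another cube of the same fiber — and then derive the ``in particular'' assertion as a formal consequence. First I would fix $z \in Z$, write $y := g(z)$, and observe that each $\varphi \in \Aut_k(g)$ restricts to an element of $\Aut_k(g^{-1}(y))$ that fixes $g^{-1}(y)$ setwise; hence the whole discussion takes place inside the $s$-nilspace $g^{-1}(y)$ (Proposition \ref{fiber of fibration}), and it suffices to show that the group homomorphism $\Aut_\bullet(g) \to \Aut_\bullet(g^{-1}(y))$ given by restriction carries $\HK^n(\Aut_\bullet(g))$ into $\HK^n(\Aut_\bullet(g^{-1}(y)))$, and then that $\HK^n(\Aut_\bullet(g^{-1}(y)))$ acts on $C^n(g^{-1}(y))$. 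The first point is immediate: restriction is a filtered group homomorphism, and any filtered homomorphism $\Phi\colon G_\bullet \to H_\bullet$ induces a homomorphism $\HK^n(G_\bullet) \to \HK^n(H_\bullet)$, since generators $[x]_F$ with $x \in G_{(n - \dim F)}$ map to generators $[\Phi(x)]_F$ with $\Phi(x) \in H_{(n-\dim F)}$.

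The heart of the matter is therefore the absolute statement: for an $s$-nilspace $W$ (here $W = g^{-1}(y)$), any element of $\HK^n(\Aut_\bullet(W))$ sends $C^n(W)$ to $C^n(W)$. I would prove this by induction on the length of a word representing an element $(\psi_\omega)_\omega \in \HK^n(\Aut_\bullet(W))$ in the generators $[\varphi]_F$, reducing to the case of a single generator $[\varphi]_F$ with $\varphi \in \Aut_k(W)$ and $F$ a face of codimension $k$ (equivalently an $(n-k)$-face). For a single generator, applying $[\varphi]_F$ to a cube $c \in C^n(W)$ yields exactly the configuration ``$\varphi(c(\omega))$ on $F$, $c(\omega)$ elsewhere'', which lies in $C^n(W)$ by the very definition of a $k$-translation (Definition \ref{translation}). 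The inductive step is then a matter of composing: if $w = w'[\varphi]_F$ and $w'$ already sends $c$ to a cube $c'$, then $w$ sends $c$ to the image of $c'$ under $[\varphi]_F$, which is a cube by the base case — one must only be careful that the group $\HK^n(\Aut_\bullet(W))$ acts on $W^{\{0,1\}^n}$ pointwise on the left, so the action of a product is the composition of the actions, and that the inverse of a generator $[\varphi]_F$ is $[\varphi^{-1}]_F$ with $\varphi^{-1} \in \Aut_k(W)$ as well (since $\Aut_k(W)$ is a group), so negative powers cause no trouble.

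For the final claim, I would check that the map $\Aut_1(g)/\Stab(z) \to g^{-1}(y)$, $\varphi \Stab(z) \mapsto \varphi(z)$, is well-defined (clear, since $\Stab(z)$ is by definition the stabilizer of $z$ in $\Aut_1(g)$) and that it sends cubes to cubes. A cube of the Host-Kra cubespace on $\Aut_1(g)/\Stab(z)$, by the definition of the quotient cubespace structure $\HK^n(\Aut_\bullet(g))/\Stab(z)$, is the image of some $(\varphi_\omega)_\omega \in \HK^n(\Aut_\bullet(g))$; its image under the evaluation map is $(\varphi_\omega(z))_\omega$, which by the first part of the proposition (applied to the constant cube $c \equiv z$, which is a cube of $g^{-1}(y)$ since $g^{-1}(y)$ is a cubespace and constant configurations are cubes) lies in $C^n(g^{-1}(y))$. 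I expect the main obstacle to be purely bookkeeping: being scrupulous about the conventions for the pointwise left action of $\HK^n$, about the passage between ``codimension-$k$ face'' and ``$(n-k)$-face'', and about the fact that $\varphi \in \Aut_k(g)$ restricts correctly — there is no deep difficulty here, the content being essentially that of \cite[Proposition 2.17]{GMVII} transported verbatim to the fibers.
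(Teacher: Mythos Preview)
Your proposal is correct and follows essentially the same approach as the paper: the paper's own proof simply notes that the definition of $\Aut_\bullet(g)$ makes the map well-defined (i.e., restriction to a fiber is a filtered homomorphism into $\Aut_\bullet(g^{-1}(y))$) and then defers entirely to \cite[Proposition 2.17]{GMVII} for the absolute case. You have spelled out that absolute argument explicitly --- the induction on word length in the generators $[\varphi]_F$ and the application to the constant cube --- but the strategy is identical.
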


\begin{proof}
The definition of $\Aut_\bullet(g)$ guarantees that the map is well defined. The rest of the argument stays the same as in \cite[Proposition 2.17]{GMVII}.
\end{proof}

Recall that a cubespace $X$ is called {\bf strongly connected}\index{strongly connected} if $C^k(X)$ is connected for every $k \geq 0$. 
In \cite[Theorem 2.18]{GMVII}, a strongly connected Lie-fibered nilspace $X$ is shown to be a nilmanifold using the translation groups $\Aut_k(X)$.  For the relative case, we  are  now ready to state the structure theorem for a Lie-fibered fibration of finite degree. 
\begin{theorem}[The Lie-fibered case of Theorem \ref{relative inverse limit}(3)] \label{relative nilmanifold}
 Let $g\colon Z \to Y$ be a fibration of degree at most $s$ between compact ergodic cubespaces that obey the gluing condition. Fix a point $z \in Z$. Suppose that $g$ is Lie-fibered, then the following holds.
 \begin{enumerate}
     \item $\Aut_1^\circ(g)$ is a Lie group; \\
     \item the stablizer $\Stab(z)$ of $z$ in $\Aut_1^\circ(g)$ is a discrete cocompact subgroup;\\
     \item if $g^{-1}(g(z))$ is strongly connected as a subcubespace, then the fiber $g^{-1}(g(z))$ is homeomorphic to the nilmanifold $\Aut_1^\circ(g)/\Stab(z)$.  Moreover, ther are cubespace isomorphisms
 $$C^k(g^{-1}(g(z))) \cong \HK^k(\Aut_\bullet^\circ(g))/\Stab(z)$$
 for all $k \geq 1$.
 \end{enumerate}
\end{theorem}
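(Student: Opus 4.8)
The strategy is to relativize the absolute proof of \cite[Theorem 2.18]{GMVII}, inducting on the degree $s$ and carrying along the extra datum of the base $Y$. The base case $s=0$ is trivial: a $0$-fibration $g\colon Z\to Y$ has singleton fibers, so $\Aut_1(g)=\{1\}$ and everything is vacuous. For the inductive step, consider the canonical $(s-1)$-th factor $g_{s-1}\colon Z/\!\sim_{g,s-1}\to Y$ from Theorem \ref{RWST}, together with the principal $A_s(g)$-bundle $\pi=\pi_{g,s-1}\colon Z\to Z/\!\sim_{g,s-1}$. By Proposition \ref{embedding} we have an embedding $A_s(g)\hookrightarrow\Aut_s(g)$ with image acting freely on $Z$ along fibers of $g$; since $g$ is Lie-fibered, $A_s(g)$ is a compact abelian Lie group, and $A_s(g)^\circ$ sits inside $\Aut_s^\circ(g)$. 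The first task is to understand the restriction homomorphism $\Aut_1^\circ(g)\to\Aut_1^\circ(g_{s-1})$ induced by passing to the quotient by $\sim_{g,s-1}$ (one must first check every $k$-translation fixing fibers of $g$ descends to a homeomorphism of $Z/\!\sim_{g,s-1}$ fixing fibers of $g_{s-1}$ and lying in the appropriate translation group — this uses that $\sim_{g,s-1}$ is $\Aut_\bullet(g)$-invariant, which follows from Proposition \ref{morphism repr} and the definition of translations). The plan is to show this map is surjective onto $\Aut_1^\circ(g_{s-1})$ with kernel contained in (in fact, after passing to identity components, equal to) the image of $A_s(g)^\circ$: surjectivity is a lifting statement proved by the corner-completion/fibration property of $\pi$ exactly as in the absolute setting, and the identification of the kernel uses that any homeomorphism of $Z$ fixing every fiber of $\pi$ and fixing fibers of $g$, and preserving fiberwise cubes, must act as a fixed element of $A_s(g)$ on each fiber of $\pi$ by the $A_s(g)$-principal bundle structure, hence — by connectedness and a continuity/rigidity argument (here the $s$-fibration uniqueness is used) — must be global translation by a single element of $A_s(g)$. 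This yields a short exact sequence
$$1\to A_s(g)^\circ\to \Aut_1^\circ(g)\to \Aut_1^\circ(g_{s-1})\to 1.$$

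By the inductive hypothesis applied to the $(s-1)$-fibration $g_{s-1}$ (which is again Lie-fibered, with structure groups $A_k(g)$ for $k\le s-1$), $\Aut_1^\circ(g_{s-1})$ is a Lie group. An extension of a Lie group by a compact abelian Lie group is a Lie group (using, e.g., that $\Aut_1^\circ(g)$ is a locally compact, second countable, finite-dimensional group — finite dimensionality being inherited from the extension — with no small subgroups, so Gleason–Yamabe applies); this proves statement (1). For statement (2), fix $z\in Z$ and let $\bar z\in Z/\!\sim_{g,s-1}$ be its image. The stabilizer $\Stab(z)$ in $\Aut_1^\circ(g)$ maps into $\Stab(\bar z)$ in $\Aut_1^\circ(g_{s-1})$, which is discrete and cocompact by induction, and the kernel of $\Stab(z)\to\Stab(\bar z)$ is $\{a\in A_s(g)^\circ: a.z=z\}=\{e\}$ since the $A_s(g)$-action is free. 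Hence $\Stab(z)\hookrightarrow\Stab(\bar z)$ is injective, so $\Stab(z)$ is discrete; cocompactness follows because $\Aut_1^\circ(g)/\Stab(z)$ fibers over the compact space $\Aut_1^\circ(g_{s-1})/\Stab(\bar z)$ with compact fiber $A_s(g)^\circ/(\text{finite})$, or more directly because the orbit map is continuous and — by the strong connectedness hypothesis combined with surjectivity of the morphism in Proposition \ref{morphism repr} — surjective onto the compact fiber.

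For statement (3), assume $g^{-1}(g(z))$ is strongly connected. Proposition \ref{morphism repr} gives a cubespace morphism $\Phi\colon\Aut_1^\circ(g)/\Stab(z)\to g^{-1}(g(z))$. One shows $\Phi$ is bijective and a homeomorphism: injectivity is the definition of the stabilizer; surjectivity follows by lifting the analogous surjectivity for the $(s-1)$-fibration (true by induction) through the principal $A_s(g)$-bundle $\pi$ restricted to the fiber $g^{-1}(g(z))$, using that strong connectedness of $g^{-1}(g(z))$ forces the relevant bundle to have connected total space so that $A_s(g)^\circ$ already acts transitively on $\pi$-fibers inside $g^{-1}(g(z))$; it is then a continuous bijection between compact Hausdorff spaces, hence a homeomorphism. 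To upgrade to the cubespace isomorphisms $C^k(g^{-1}(g(z)))\cong \HK^k(\Aut_\bullet^\circ(g))/\Stab(z)$ for all $k$, one argues that the morphism on $k$-cubes induced by $\Phi$ is surjective: given $c\in C^k(g^{-1}(g(z)))$, build the lifting configuration in $\HK^k(\Aut_\bullet^\circ(g))$ face-by-face, at the top degree $s$ using freeness/transitivity of $A_s(g)^\circ$ on $\pi$-fibers and at lower degrees using the inductive statement for $g_{s-1}$, together with the gluing property to patch; injectivity on cubes again comes from $(s+1)$-uniqueness of the $s$-fibration. I expect the \textbf{main obstacle} to be the identification of the kernel of $\Aut_1^\circ(g)\to\Aut_1^\circ(g_{s-1})$ with exactly $A_s(g)^\circ$ — i.e., proving that a fiberwise-cube-preserving homeomorphism of $Z$ that is trivial on $Z/\!\sim_{g,s-1}$ and fixes fibers of $g$ is \emph{globally} a single translation by an element of $A_s(g)$, rather than a priori a fiber-depending (and only locally constant) choice; this is where one genuinely needs that $g$ has $(s+1)$-uniqueness together with connectedness, and it is the relative analogue of the subtlest point in \cite[Theorem 2.18]{GMVII}. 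A secondary difficulty, flagged in the remark after Theorem \ref{thm:structure_finite_degree_extension}, is that in the relative setting the acting group $G$ (in dynamical applications) does \emph{not} embed in $\Aut_1(g)$ since it moves the fibers, so one cannot shortcut any of these steps by borrowing transitivity from a dynamical action.
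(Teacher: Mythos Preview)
Your overall inductive architecture matches the paper's, but there is a genuine gap in the step you label ``surjectivity is a lifting statement proved by the corner-completion/fibration property of $\pi$ exactly as in the absolute setting.'' Lifting a $k$-translation $\varphi\in\Aut_k(g_{s-1})$ to an element of $\Aut_k(g)$ is \emph{not} a corner-completion problem, in either the absolute or the relative setting. Corner completion lets you lift individual cubes, not homeomorphisms, and certainly not while preserving the translation property. In the paper (Theorem~\ref{openness} and all of Subsection~2.2), and in \cite[Proposition~3.3]{GMVII} absolutely, the lift is produced in two stages: first one uses local sections of the Lie principal $A_s(g)$-bundle to lift $\varphi$ to an \emph{arbitrary} small homeomorphism $\psi$ of $Z$ (Lemma~\ref{preliftting}); this $\psi$ has no reason to be a translation. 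Second, one measures the failure of $\psi$ to be a $k$-translation by a discrepancy cocycle $\rho_\psi\colon C^{s+1-k}_g(Z)\to A_s(g)$ and proves via a relative cocycle theorem (Theorem~\ref{fiber cocycle theorem}) that $\rho_\psi$ is a coboundary $\partial^{s+1-k}h$; the repaired map $h(\cdot).\psi(\cdot)$ is then the desired translation. This cocycle/repair machinery is the substantive content, and without it you cannot establish that $\pi_\ast$ is open (hence surjective onto the identity component).

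Two smaller points. First, you identify as the ``main obstacle'' proving that $\ker\pi_\ast$ equals $A_s(g)^\circ$ exactly. The paper neither proves nor needs this: it shows only that $A_s(g)$ is an \emph{open} subgroup of $\ker\pi_\ast$ (Lemma~\ref{open subgroup}, via the rigidity Theorem~\ref{rigidity} and Lemma~\ref{discreteness}), which already suffices to transport the Lie structure by Gleason's extension theorem (Corollary~\ref{Lieness}). Your proposed exact sequence $1\to A_s(g)^\circ\to\Aut_1^\circ(g)\to\Aut_1^\circ(g_{s-1})\to 1$ is stronger than required and not obviously true. Second, for the cube-level isomorphisms in (3), your face-by-face construction is workable but clumsier than the paper's route: there one shows the evaluation map $\HK^n(\Aut_\bullet^\circ(g))\to C^n(g^{-1}(g(z)))$ is \emph{open} (Lemma~\ref{cubehom is open}), so orbits are open, and strong connectedness forces transitivity directly.
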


The proof of Theorem \ref{relative nilmanifold} uses the full strength of the relative weak structure theorem (Theorem \ref{RWST}) and will be given at the end of this subsection. 
Recall that in \cite[Proposition 3.2]{GMVII}, for  an $s$-nispace $X$ a canonical group homomorphism  $\Aut_k(X) \to \Aut_k(X/\sim_s)$ is exhibited. We adapt the statement and argument for our relative setting. 
\begin{proposition} \label{canonical map}
Fix $s, k \geq 1$.  Let $g\colon Z \to Y$ be an $s$-fibration between compact ergodic gluing cubespaces. Then there is a canonical continuous group homomorphism $\pi_\ast\colon \Aut_k(g) \to \Aut_k(g_{s-1})$ such that for every $\varphi \in \Aut_k(g)$ the diagram 
$$\xymatrix{
Z \ar[r]^\varphi \ar[d]_{\pi_{g, s-1}} \ar@/^3pc/[drr]^g  & Z \ar[dr]^g \ar[d]_{\pi_{g, s-1}} & \\
\pi_{g, s-1}(Z) \ar@{-->}[r]^{\pi_\ast(\varphi)} \ar@/_2pc/[rr]^{g_{s-1}} & \pi_{g, s-1}(Z) \ar[r]^{g_{s-1}} & Y
}$$
commutes.
\end{proposition}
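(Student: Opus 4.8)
The plan is to define $\pi_\ast$ by descending $\varphi$ through the quotient map $\pi_{g,s-1}$. Since each $\varphi|_{g^{-1}(y)}\in\Aut_k(g^{-1}(y))$ is in particular a bijection of $g^{-1}(y)$ onto itself, $\varphi$ preserves the fibres of $g$ and $g\circ\varphi=g$; one then wants to set $\pi_\ast(\varphi)(\pi_{g,s-1}(z)):=\pi_{g,s-1}(\varphi(z))$. The point I expect to be the real obstacle is well-definedness: one must show that $\varphi$ respects the relation $\sim_{g,s-1}$.

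The key lemma is that, for every $y\in Y$, the restriction of $\sim_{g,s-1}$ to the fibre $N:=g^{-1}(y)$ coincides with the intrinsic $(s-1)$-th canonical relation $\sim_{s-1}$ of the subcubespace $N$, which is an ergodic $s$-nilspace by Proposition \ref{fiber of fibration} and the ergodicity of $Z$. I would prove this by comparing two group actions on $N$. By Theorem \ref{RWST}, $\pi_{g,s-1}\colon Z\to Z/\sim_{g,s-1}$ is an $A_s(g)$-principal fibre bundle, so the $\sim_{g,s-1}$-class of any $z$ equals the orbit $A_s(g)\cdot z$; and since $g=g_{s-1}\circ\pi_{g,s-1}$, the group $A_s(g)$ preserves each fibre of $g$, whence $A_s(g)\cdot z\subseteq N$ for $z\in N$. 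By Theorem \ref{WST} applied to $N$, the $\sim_{s-1}$-class of $z$ in $N$ is the orbit $A_s(N)\cdot z$. The inclusion $A_s(N)\cdot z\subseteq A_s(g)\cdot z$ is immediate, since cubes of $N$ are cubes of $Z$. For the reverse inclusion I would invoke Proposition \ref{embedding}: for $a\in A_s(g)$ the translation $f_a$ lies in $\Aut_s(g)$, hence $f_a|_N\in\Aut_s(N)$; and any $\psi\in\Aut_s(N)$ satisfies $z\sim_{s-1}\psi(z)$ in $N$ — apply the defining property of an $s$-translation to the constant cube $z^{\{0,1\}^s}\in C^s(N)$ and the $0$-dimensional face $\{\overrightarrow{1}\}$ of $\{0,1\}^s$, then compare the resulting cube $\llcorner^s(z,\psi(z))$ with $z^{\{0,1\}^s}$ — so $a\cdot z=f_a|_N(z)\in A_s(N)\cdot z$. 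This gives $A_s(g)\cdot z=A_s(N)\cdot z$, hence the lemma. (Alternatively one may restrict the tower of Theorem \ref{RWST} over $y$ and identify it with the tower of Theorem \ref{WST} for $N$.)

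Granting the lemma, well-definedness is clear: if $z\sim_{g,s-1}z'$ then $z,z'$ lie in a common fibre $N$ with $z\sim_{s-1}z'$ in $N$, and since $\varphi|_N$ is a cubespace automorphism of $N$ it preserves the canonical relations of $N$, so $\varphi(z)\sim_{s-1}\varphi(z')$ in $N$, i.e.\ $\varphi(z)\sim_{g,s-1}\varphi(z')$. The induced map $\pi_\ast(\varphi)$ is continuous (it is the factorisation of the continuous map $\pi_{g,s-1}\circ\varphi$ through the quotient $\pi_{g,s-1}$), is a homeomorphism with inverse $\pi_\ast(\varphi^{-1})$, and $\pi_\ast$ is manifestly a group homomorphism; the displayed diagram commutes by virtue of $g\circ\varphi=g$, $g=g_{s-1}\circ\pi_{g,s-1}$ and the surjectivity of $\pi_{g,s-1}$ (which lets one pass from $g_{s-1}\circ\pi_\ast(\varphi)\circ\pi_{g,s-1}=g$ to $g_{s-1}\circ\pi_\ast(\varphi)=g_{s-1}$). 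Continuity of $\pi_\ast$ for the metrics on $\Aut(\cdot)$ follows from the uniform continuity of $\pi_{g,s-1}$ on the compact space $Z$.

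Finally one checks $\pi_\ast(\varphi)\in\Aut_k(g_{s-1})$, i.e.\ that $\pi_\ast(\varphi)$ restricts to a $k$-translation of $g_{s-1}^{-1}(y)$ for every $y$; here I would follow the argument of \cite[Proposition 3.2]{GMVII}. Since $\pi_{g,s-1}^{-1}(g_{s-1}^{-1}(y))=g^{-1}(y)$, the restriction $\pi_{g,s-1}|_{g^{-1}(y)}\colon g^{-1}(y)\to g_{s-1}^{-1}(y)$ is again a surjective fibration (being the restriction of the fibration $\pi_{g,s-1}$ over the subcubespace $g_{s-1}^{-1}(y)$), so a cube $c\in C^n(g_{s-1}^{-1}(y))$ with $n\ge k$ lifts to a cube $\hat c\in C^n(g^{-1}(y))$; applying the $k$-translation property of $\varphi|_{g^{-1}(y)}\in\Aut_k(g^{-1}(y))$ to $\hat c$ along an $(n-k)$-face $F$ and pushing the result forward by $\pi_{g,s-1}$ yields precisely the configuration obtained from $c$ by replacing $c|_F$ with $\pi_\ast(\varphi)(c|_F)$, which is therefore a cube. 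The whole argument rests on the key lemma: the essential feature of the relative setting is that $\varphi\in\Aut_k(g)$ is \emph{not} a cubespace morphism of $Z$ and sees only the cubes internal to the fibres, so one must know that $\sim_{g,s-1}$, though defined via cubes of $Z$, is nonetheless detected inside each individual fibre — which is exactly what the principal fibre bundle structure of Theorem \ref{RWST}, combined with Proposition \ref{embedding}, delivers.
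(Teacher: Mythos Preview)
Your proof is correct. The principal difference from the paper's argument lies in the well-definedness step. The paper observes directly that since $A_s(g)$ embeds into $\Aut_s(g)$ (Proposition~\ref{embedding}) and $\Aut_\bullet(g)$ is a filtration of degree~$s$, one has $[\Aut_k(g),\Aut_s(g)]\subseteq\Aut_{k+s}(g)=\{1\}$ (as $k\ge 1$), so $A_s(g)$ \emph{commutes} with every $\varphi\in\Aut_k(g)$; hence if $x'=ax$ then $\varphi(x')=a\varphi(x)$ and $\pi_{g,s-1}(\varphi(x'))=\pi_{g,s-1}(\varphi(x))$ immediately. Your route is more geometric: you identify $\sim_{g,s-1}$ on each fibre $N=g^{-1}(y)$ with the intrinsic $(s-1)$-th canonical relation of the nilspace $N$, and then use that any cubespace automorphism of $N$ preserves its canonical relations. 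Both arguments ultimately rest on Proposition~\ref{embedding}, but the paper's commutation trick is a one-line consequence of the filtered structure, whereas your lemma (essentially the content of Proposition~\ref{prop:structure groups}) has the merit of making explicit that the tower of Theorem~\ref{RWST} restricted over $y$ agrees with the intrinsic tower of Theorem~\ref{WST} for $g^{-1}(y)$ --- a fact useful in its own right. For the final verification that $\pi_\ast(\varphi)\in\Aut_k(g_{s-1})$ the two proofs agree.
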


\begin{proof}
We modify the proof in the absolute setting to the relative case. Denote by $\pi$ the projection map $\pi_{g, s-1}$.  Given $\varphi \in \Aut_k(g)$ and $y \in \pi(Z)$, writing $y=\pi(x)$ for some $x \in Z$, we define $\pi_\ast(\varphi)(y):=\pi \circ \varphi(x)$.

We check that  $\pi_\ast$ is well-defined.  Note that $\varphi \in \Aut_k(g) \subseteq \Aut_1(g)$. By Proposition \ref{embedding}, $A_s(g)$ embeds into $\Aut_s(g)$.  Since $\Aut_\bullet(g)$ is filtered, we conclude  that $A_s(g)$ commutes with $\Aut_k(g)$.  For any $y \in \pi(Z)$ and $x, x' \in \pi^{-1}(y)$, there exists a unique $a \in A_s(g)$ such that $x'=ax$. Then
$$(\pi_\ast(\varphi))(y):=\pi \circ \varphi(x')=\pi\circ \varphi(ax)=\pi\circ (a\varphi(x))=\pi\circ \varphi(x).$$
This shows that $\pi_\ast$ is independent of the choice of lifting. The statement that $\pi_\ast(\varphi)$ is a $k$-translation follows from the facts that $\varphi$ is a $k$-translation and $\pi$ is a cubespace morphism.   
\end{proof}

The first hard ingredient of the proof of Theorem \ref{relative nilmanifold}, used for establishing the nilmanifold structure in statement (3), is to relativize  \cite[Proposition 3.3]{GMVII}: Every element of $\Aut_k(g_{s-1})$ of sufficient small norm can be realized as the image of $\pi_\ast$ of some element of $\Aut_k(g)$. The proof will be given in Section 4.2.
\begin{theorem}\label{openness}
Fix $s \geq 1$. Let $g: Z \to Y$ be an $s$-fibration between compact ergodic gluing cubespaces.  Assume  that $A_s(g)$ is  a Lie group. Then $\pi_\ast \colon \Aut_k(g) \to \Aut_k(g_{s-1})$ is open for all $k \geq 1$.
\end{theorem}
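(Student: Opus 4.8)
The plan is to reduce the statement to openness at the identity and then establish a local lifting for $\pi_\ast$ along the lines of the absolute case \cite[Proposition 3.3]{GMVII}, but carried out globally over $Y$. Since by Proposition \ref{canonical map} $\pi_\ast$ is a continuous homomorphism between the metrizable topological groups $\Aut_k(g)$ and $\Aut_k(g_{s-1})$ and left translations are homeomorphisms, it suffices to show that the image under $\pi_\ast$ of every neighbourhood of $\id_Z$ is a neighbourhood of $\id$ in $\Aut_k(g_{s-1})$; concretely, that for every $\varepsilon>0$ there is $\delta>0$ such that every $\psi\in\Aut_k(g_{s-1})$ with $\|\psi\|<\delta$ equals $\pi_\ast(\varphi)$ for some $\varphi\in\Aut_k(g)$ with $\|\varphi\|<\varepsilon$. (For $k\ge s$ the target $\Aut_k(g_{s-1})$ is trivial and there is nothing to prove, so assume $1\le k\le s-1$.) Write $B:=Z/\!\sim_{g,s-1}$, $\pi:=\pi_{g,s-1}\colon Z\to B$ and $A:=A_s(g)$. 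Recall that $\pi$ is a fibration (Proposition \ref{inducing $s$-fibration}) and, by Theorem \ref{RWST} and the proof of Proposition \ref{embedding}, an $A$-principal fibre bundle with $A$ a compact abelian Lie group, that $g=g_{s-1}\circ\pi$, and that for every $n$ and every $c\in C^n(Z)$ one has $\pi^{-1}(\pi(c))=C^n(\cD_s(A))\cdot c$ inside $C^n(Z)$. Note also that every $\psi\in\Aut_k(g_{s-1})$ preserves the fibres of $g_{s-1}$, so $g_{s-1}\circ\psi=g_{s-1}$.

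\emph{Step 1: a topological lift.} First I would produce, for $\psi$ close enough to $\id$, a homeomorphism $\tilde\varphi\in\Homeo(Z)$ with $\pi\circ\tilde\varphi=\psi\circ\pi$, with $\tilde\varphi(a.z)=a.\tilde\varphi(z)$ for all $a\in A$ and $z\in Z$, and with $\|\tilde\varphi\|$ as small as desired once $\|\psi\|$ is small enough. Such a $\tilde\varphi$ is precisely an $A$-bundle isomorphism from $\pi$ to the pullback bundle $\psi^\ast\pi$ over $B$. Trivializing $\pi$ over a finite open cover of the compact space $B$, the transition data of $\psi^\ast\pi$ differs from that of $\pi$ by $A$-valued functions that are $C^0$-close to $e_A$ when $\psi$ is close to $\id$, and, using a partition of unity subordinate to the cover together with the exponential map of the Lie group $A$, one averages these out to a genuine $A$-valued gauge transformation, of size controlled by $\|\psi\|$, realizing the desired isomorphism. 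The Lie hypothesis on $A$ is essential precisely here.

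\emph{Step 2: straightening $\tilde\varphi$ into a $k$-translation.} Fix $y\in Y$, an integer $n\ge k$, a codimension-$k$ face $F\subseteq\{0,1\}^n$ and $c\in C^n(g^{-1}(y))$; let $c^{\tilde\varphi}$ be the configuration applying $\tilde\varphi$ on $F$ and $\id_Z$ elsewhere. Since $\psi\in\Aut_k(g_{s-1})$ and $\pi$ is a morphism, $\pi(c^{\tilde\varphi})=(\pi c)^{\psi}\in C^n(B)$, and $g(c^{\tilde\varphi}(\omega))=g_{s-1}(\pi c^{\tilde\varphi}(\omega))=y$ for every $\omega$, so $c^{\tilde\varphi}$ is a configuration of the $s$-nilspace $g^{-1}(y)$ (Proposition \ref{fiber of fibration}). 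Pick any cube $\hat c\in C^n(g^{-1}(y))$ lying over $(\pi c)^{\psi}$ (possible since $\pi$ restricts to a fibration $g^{-1}(y)\to g_{s-1}^{-1}(y)$, and fibrations are surjective on cubes). For $n\le s$, $c^{\tilde\varphi}$ and $\hat c$ have the same image in $B$, hence (as $\sim_{g,s-1}\,\subseteq\,\sim_{s-1}$) the same image under $Z\to Z/\!\sim_{s-1}$, so Proposition \ref{URP} gives $c^{\tilde\varphi}\in C^n(Z)$ at once. The rigidity of the $s$-nilspace fibres in dimensions above $s$ (a consequence of $(s+1)$-uniqueness together with completion) reduces the remaining instances of the $\Aut_k$ condition to the single dimension $n=s+1$; there one writes $c^{\tilde\varphi}=(a_\omega.\hat c(\omega))_\omega$ for a unique $(a_\omega)\in A^{\{0,1\}^{s+1}}$, and $c^{\tilde\varphi}$ is a cube precisely when $(a_\omega)\in C^{s+1}(\cD_s(A))$. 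As $\tilde\varphi$ is $C^0$-close to $\id$ these families are small, and as $c,F,y$ vary they assemble into a small cohomological obstruction over $B$ with coefficients in $A$. Because $A$ is Lie, this obstruction can be killed by replacing $\tilde\varphi$ by $\varphi:=f_\alpha\circ\tilde\varphi$ for a suitable small global continuous map $\alpha\colon B\to A$ of controlled degree (where $f_\alpha(z)=\alpha(\pi z).z$); the resulting $\varphi$ still satisfies $\pi\varphi=\psi\pi$, so $\pi_\ast\varphi=\psi$, is $C^0$-close to $\id$, does not disturb the conditions in dimensions $\le s$, and now lies in $\Aut_k(g)$. This is the relative counterpart of the correction step in \cite[Proposition 3.3]{GMVII}.

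\emph{Main obstacle.} The genuinely new difficulty, absent in \cite{GMVII}, is that Steps 1 and 2 must be performed continuously and \emph{uniformly} over the base $Y$: one cannot merely apply \cite[Proposition 3.3]{GMVII} to each fibre $g^{-1}(y)\to g_{s-1}^{-1}(y)$ and glue, since the fibrewise lifts and corrections need not fit together continuously. What makes the uniform construction possible is that, by Theorem \ref{RWST}, the structure group $A=A_s(g)$ and the $\cD_s(A)$-affine structure of $\pi$ on cubes are the same over all of $Y$, together with the Lie hypothesis on $A$, which is exactly what delivers the required $C^0$-control in both the bundle-isomorphism step and the cohomological correction step. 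A secondary technical point is that in Step 1 the base $B$ is only a compact metric space, not a manifold, so the principle that bundles near $\pi$ are isomorphic to $\pi$ must be run through local triviality and partitions of unity rather than through homotopy invariance over a nice base.
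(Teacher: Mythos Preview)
Your proposal is correct and follows essentially the same two-step approach as the paper: first lift a small $\psi\in\Aut_k(g_{s-1})$ to a small $A_s(g)$-equivariant homeomorphism of $Z$ (the paper's Lemma~\ref{preliftting}, your Step~1), then repair the lift into a genuine element of $\Aut_k(g)$ by killing a small cohomological obstruction using the Lie hypothesis on $A_s(g)$ (your Step~2). The one place where the paper is more explicit than your sketch is the repair: the obstruction is packaged as the $(s+1-k)$-cocycle $\rho_\psi(c)=\Delta(\llcorner^k(c;\psi(c)))$ on the \emph{fiber} cubes $C^{s+1-k}_g(Z)$, and the paper proves a dedicated ``fiber cocycle theorem'' (Theorem~\ref{fiber cocycle theorem}, the relative analogue of \cite[Theorem~5.1]{GMVII}) to express it as $\partial^{s+1-k}h$ with $h$ small, after which averaging over $A_s(g)$ (Lemma~\ref{repairing}) yields exactly your correction $\alpha\colon B\to A$.
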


The next result is analogous to \cite[Theorem 5.2]{GMVII}. As the proof is similar we omit it.

\begin{theorem} \label{rigidity}
Let $A$ be a compact abelian Lie group with a metric $d_A$. Fix integers $s \geq 0$ and $\ell \geq 1$. Then there exists 
$\varepsilon$ depending only on $s, \ell$ and $A$ such that the following holds. 

Let $g: Z \to Y$ be an $s$-fibration between compact ergodic gluing cubespaces. Suppose that $f: Z \to A$ is a continuous function such that 
$$d_A(f(z), f(z')) \leq \varepsilon$$
for every $z, z' \in Z$ with $g(z)=g(z')$, and the map $\partial^\ell f: C^\ell_g(Z) \to A$ sending $c$ to 
$$\sum_{\omega \in \{0, 1\}^\ell} (-1)^{|\omega|} f(c(\omega))$$
 is the zero function. Then $f$ is a constant function.
\end{theorem}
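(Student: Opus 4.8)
The plan is to relativize the proof of \cite[Theorem 5.2]{GMVII} by a double induction: an outer induction on the degree $s$ of the fibration $g$, peeling off one principal abelian bundle at a time via the relative weak structure theorem (Theorem \ref{RWST}), and an inner induction on $\ell$. First I would dispose of the torsion in the target. Write $A^{\circ}\cong\bT^{d}$ for the identity component of $A$ and $A/A^{\circ}$ for its finite component group, fix $\delta_{A}>0$ smaller than the diameter of every nontrivial closed subgroup of $A$ and than the distance between distinct $A^{\circ}$-cosets, take $d_{A}$ translation invariant, and set $\varepsilon<\min(\delta_{A},2^{-\ell})$. Since $f$ is $\varepsilon$-small on every fiber of $g$, the composite $Z\to A\to A/A^{\circ}$ is constant on fibers, hence factors through $g$; because $Z$ is ergodic and $g$ is a fibration, any two fibers are joined by a $1$-cube of $Z$, and the relative cube structure $C^{\bullet}_{g}(Z)$ — which contains the absolute $1$-cubes $C^{1}(Z)=Z^{2}$ and is closed under the relative corner-completions — propagates this constancy along $C^{1}(Y)=Y^{2}$, so that $f(Z)$ lies in a single coset $a_{0}A^{\circ}$. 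Replacing $f$ by $a_{0}^{-1}f$ and working one circle coordinate at a time, we are reduced to $A=\bT$.

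For $A=\bT$ I would run the double induction. The inner base case $\ell=1$ asserts that $\partial^{1}f$ vanishes on $C^{1}_{g}(Z)\supseteq C^{1}(Z)=Z^{2}$, which is exactly the assertion that $f$ is constant; the outer base case $s=0$ is the same, $g$ then having point fibers so that $C^{\bullet}_{g}(Z)$ again sees all of $C^{1}(Z)$. For the inductive step ($s\ge 1$, $\ell\ge 2$), let $\pi:=\pi_{g,s-1}\colon Z\to Z/{\sim_{g,s-1}}$ be the top bundle of $g$, with structure group $A_{s}:=A_{s}(g)$; by Proposition \ref{embedding} the group $A_{s}$ acts on $Z$ by degree-$s$ translations that fix the fibers of $g$. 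For $a\in A_{s}$ put $\partial_{a}f:=f(a\cdot)-f\colon Z\to\bT$. A relative version of the standard fact that differentiating a polynomial of degree $\le d$ along a degree-$i$ translation yields a polynomial of degree $\le d-i$ shows that $\partial_{a}f$ is a relative polynomial of degree $\le\ell-2$ (this is where $s\ge1$ is used), while $d_{\bT}(\partial_{a}f(z),\partial_{a}f(z'))\le 2\varepsilon$ whenever $g(z)=g(z')$. Choosing $\varepsilon$ so that $2\varepsilon$ lies below the threshold for parameter $\ell-1$, the inner induction forces $\partial_{a}f$ to be constant, say equal to $\chi(a)\in\bT$. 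The cocycle identity $\partial_{a+b}f(z)=\partial_{a}f(bz)+\partial_{b}f(z)$ makes $\chi\colon A_{s}\to\bT$ a continuous homomorphism; its image is a compact subgroup of $\bT$ of diameter $\le 2\varepsilon<\delta_{\bT}$, hence trivial. Thus $\partial_{a}f\equiv0$ for every $a\in A_{s}$, i.e. $f$ is $\pi$-invariant.

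Consequently $f=\bar f\circ\pi$ for a continuous $\bar f$ on $Z/{\sim_{g,s-1}}$; by Theorem \ref{RWST} the induced map $g_{s-1}\colon Z/{\sim_{g,s-1}}\to Y$ is an $(s-1)$-fibration between compact ergodic gluing cubespaces, and since $\pi$ is a fibration it carries $C^{\bullet}_{g}(Z)$ onto $C^{\bullet}_{g_{s-1}}(Z/{\sim_{g,s-1}})$, so $\bar f$ still has the $\varepsilon$-smallness on fibers and the vanishing of $\partial^{\ell}\bar f$. The outer induction applied to $\bar f$ then shows $\bar f$, hence $f$, is a constant function, completing the case $A=\bT$ and with it the theorem. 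The hard part will be the relative ``derivative lowers degree'' lemma invoked in the inductive step: in the absolute setting one plays a degree-$s$ translation against a $(d{+}1)$-cube obtained by thickening in fresh coordinate directions, and in the relative setting one must check that all the auxiliary configurations thereby produced stay inside $C^{\bullet}_{g}(Z)$ — i.e. that the translation groups $\Aut_{k}(g)$ and the relative canonical relations $\sim_{g,k}$ interact exactly as $\Aut_{k}$ and $\sim_{k}$ do for nilspaces (Propositions \ref{embedding} and \ref{canonical map} being the first steps in this direction). A secondary difficulty, with no counterpart in the absolute case, is the absence of a section of $g$: both the torsion reduction and the passage from fiberwise to global constancy must be carried out uniformly over $y\in Y$, using ergodicity of the total space $Z$ in place of a basepoint.
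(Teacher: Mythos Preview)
Your overall strategy --- double induction on $s$ and $\ell$, reduction to the circle, differentiation along $A_s(g)$, and descent through the relative weak structure tower --- is exactly what the paper has in mind when it says the proof is ``similar'' to \cite[Theorem~5.2]{GMVII}. The inductive step is essentially sound: if $a\in A_s(g)\subseteq\Aut_s(g)\subseteq\Aut_1(g)$, then for any fiber cube $c\in C^{\ell-1}_g(Z)$ the concatenation $[c,a\cdot c]$ is again a fiber cube, so $\partial^{\ell-1}(\partial_a f)(c)=\partial^\ell f([c,a\cdot c])=0$, and the rest of your induction goes through.

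There is, however, a genuine error in your base cases and torsion reduction. You repeatedly use the inclusion $C^1_g(Z)\supseteq C^1(Z)=Z^2$, but this is backwards: by definition $C^k_g(Z)=\bigcup_{y\in Y}C^k(g^{-1}(y))$, so $C^1_g(Z)$ consists \emph{only} of pairs $(z,z')$ with $g(z)=g(z')$. Thus for $\ell=1$ the hypothesis $\partial^1 f=0$ on $C^1_g(Z)$ says precisely that $f$ is constant on each fiber of $g$, and nothing more; it does not give global constancy. Likewise for $s=0$ the fibers are points and $C^\ell_g(Z)$ consists only of constant cubes, on which $\partial^\ell f$ vanishes automatically --- so neither hypothesis of the theorem has any content and no conclusion about $f$ can follow. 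Your attempt to propagate constancy across fibers via ergodicity of $Z$ fails for the same reason: the hypotheses of the theorem are purely fiberwise, so absolute $1$-cubes joining different fibers carry no information.

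The upshot is that the strongest conclusion the argument can reach is that $f$ is constant on each fiber of $g$ (equivalently, $f$ factors through $g$). With that reading the base cases become trivial --- for $\ell=1$ the hypothesis is literally the conclusion, and for $s=0$ fibers are singletons --- and your induction, together with the torsion reduction (which now only needs the fiberwise $\varepsilon$-smallness to land in a single $A^\circ$-coset \emph{on each fiber}), goes through cleanly. This fiberwise version is also what is actually used downstream: compare the conclusion of Theorem~\ref{fiber cocycle theorem}, where one only obtains approximate constancy on fibers, and the application in Lemma~\ref{close class}, where the absolute \cite[Theorem~5.2]{GMVII} is applied to a single fiber nilspace.
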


The following is used in the the proof of Theorem \ref{relative nilmanifold}.
\begin{lemma} \label {discreteness}
Lie-fiberedness of $g$ implies that $\Stab(z) \subseteq \Aut_1(g)$ is discrete for every $z \in Z$.
\end{lemma}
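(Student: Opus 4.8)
The plan is to prove the lemma by induction on the degree $s$ of the fibration $g$. The base case $s=0$ is immediate, since then the filtration $\Aut_\bullet(g)$ has degree $0$, so $\Aut_1(g)=\Aut_{s+1}(g)=\{1\}$. For the inductive step I would pass to the canonical $(s-1)$-th factor $g_{s-1}\colon Z/\sim_{g,s-1}\to Y$, which is again Lie-fibered (its structure groups are $A_1(g),\dots,A_{s-1}(g)$), and use the canonical continuous homomorphism $\pi_\ast\colon \Aut_1(g)\to \Aut_1(g_{s-1})$ of Proposition \ref{canonical map}. By the commuting diagram there, $\pi_\ast$ carries $\Stab(z)$ into the stabilizer of $\pi_{g,s-1}(z)$ in $\Aut_1(g_{s-1})$, which is discrete by the inductive hypothesis; hence $H:=\Ker(\pi_\ast)\cap\Stab(z)$ is open in $\Stab(z)$. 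A topological-group argument (an open subgroup of $\Stab(z)$ in which the identity is isolated forces $\Stab(z)$ itself to be discrete) then reduces the whole statement to showing that the identity is isolated in $H$.

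To this end I would first describe $\Ker(\pi_\ast)$ concretely. Given $\varphi\in\Aut_1(g)$ with $\pi_{g,s-1}\circ\varphi=\pi_{g,s-1}$, the point $\varphi(x)$ lies in $\pi_{g,s-1}^{-1}(\pi_{g,s-1}(x))$, which by Theorem \ref{RWST} is an $A_s(g)$-orbit on which $A_s(g)$ acts freely; so there is a unique $a_\varphi(x)\in A_s(g)$ with $\varphi(x)=a_\varphi(x)\cdot x$, and $x\mapsto a_\varphi(x)$ is continuous. Using Proposition \ref{embedding} together with the filtration identity $[\Aut_1(g),\Aut_s(g)]\subseteq\Aut_{s+1}(g)=\{1\}$, the subgroup $A_s(g)$ is central in $\Aut_1(g)$, from which a short computation shows $a_\varphi$ is constant on $A_s(g)$-orbits and hence descends to a continuous map $\bar a_\varphi\colon Z/\sim_{g,s-1}\to A_s(g)$. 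Smallness of $\varphi$ translates into $\bar a_\varphi$ taking values in a small neighbourhood of the identity of $A_s(g)$, and $\varphi(z)=z$ forces $\bar a_\varphi$ to vanish at the image of $z$.

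The key step, and the one I expect to be the main technical obstacle, is to show that $\bar a_\varphi$ is forced to be polynomial of low degree: precisely, that $\partial^{s}\bar a_\varphi=0$ on the fiberwise cubes $C^{s}_{g_{s-1}}(Z/\sim_{g,s-1})$. This is the relative analogue of the cocycle description of translations used in \cite{GMVII}, and I would obtain it by unwinding the requirement that $\varphi$ restrict to a $1$-translation of each fiber $g^{-1}(y)$: for an $n$-cube $c$ of $g^{-1}(y)$ and a hyperface $F\subseteq\{0,1\}^n$, the modified configuration $[\varphi]_F c$ must again be a cube, which by the $A_s(g)$-principal bundle structure of $g^{-1}(y)\to g^{-1}(y)/\sim_{s-1}$ means that the configuration supported on $F$ with values $a_\varphi\circ c|_F$ lies in $\HK^n(\cD_s(A_s(g))_\bullet)$, i.e.\ has vanishing alternating sum on every $(s+1)$-dimensional face. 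Examining this condition on the $(s+1)$-faces straddling $F$ (the coordinate defining $F$ being among their free directions), and letting $c$ range over cubes with a prescribed $s$-face, yields $\partial^{s}(a_\varphi\circ c')=0$ for every $s$-cube $c'$ of the fiber, which is the claim. Care is needed here with the indexing of the $\cD_s$-cube group and with the fact that $a_\varphi$ need not be constant along $F$.

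Finally I would invoke Theorem \ref{rigidity} for the $(s-1)$-fibration $g_{s-1}$, the compact abelian Lie group $A_s(g)$, and $\ell=s$: it provides an $\varepsilon>0$ such that any continuous $\bar a_\varphi\colon Z/\sim_{g,s-1}\to A_s(g)$ that stays within $\varepsilon$ of the identity along each fiber and satisfies $\partial^{s}\bar a_\varphi=0$ is constant. For $\varphi$ close enough to $\id$ this applies, so $\bar a_\varphi$ is constant; since it vanishes at the image of $z$ we conclude $\bar a_\varphi\equiv e$, i.e.\ $\varphi=\id$. This shows the identity is isolated in $H$, which by the reduction above completes the induction.
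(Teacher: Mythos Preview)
Your proposal is correct and follows essentially the same approach the paper intends: the paper's proof is a one-line pointer to the absolute argument in \cite{GMVII} together with Theorem~\ref{rigidity} and the relative weak structure theorem, and you have spelled out precisely that argument---induction on $s$, passage to $g_{s-1}$ via $\pi_\ast$, the description of elements of $\Ker(\pi_\ast)$ as multiplication by a function into $A_s(g)$, the cocycle identity $\partial^s\bar a_\varphi=0$ coming from the $1$-translation property, and the application of Theorem~\ref{rigidity} to force $\bar a_\varphi$ constant. Your derivation of $\partial^s\bar a_\varphi=0$ from the condition $[\varphi]_Fc\in C^{s+1}$ by examining $(s+1)$-faces straddling the hyperface $F$ is the right mechanism; the indexing concern you flag is real but resolves exactly as you indicate.
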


\begin{proof}
In light of the relative weak structure theorem (Theorem \ref{RWST}), Theorem \ref{rigidity} guarantees that the argument in the absolute setting may be modified appropriately.
\end{proof}

The proof of the following lemma follows the  argument of \cite[Proposition 3.7]{GMVII} (with the help of  Lemma \ref{discreteness}) so we omit it.
\begin{lemma} \label{open subgroup}
$A_s(g) \subseteq \ker (\pi_*)$ is open.
\end{lemma}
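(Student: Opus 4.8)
\textbf{Plan for the proof of Lemma \ref{open subgroup}.}
The goal is to show that $A_s(g)$, viewed inside $\ker(\pi_*)$ via the embedding of Proposition \ref{embedding}, is an open subgroup. The strategy parallels \cite[Proposition 3.7]{GMVII}: one shows both that $A_s(g)$ is \emph{contained} in $\ker(\pi_*)$ and that it \emph{exhausts a neighbourhood of the identity} inside $\ker(\pi_*)$. The inclusion $A_s(g)\subseteq\ker(\pi_*)$ is the easy direction: if $a\in A_s(g)$ then $f_a$ acts trivially on $Z/\!\sim_{g,s-1}$, because the fibers of $\pi_{g,s-1}$ are precisely the $A_s(g)$-orbits (this is the content of the computation $C^n(\cD_s(A))c=\pi^{-1}(\pi(c))$ already used in the proof of Proposition \ref{embedding}), so $\pi_*(f_a)=\id$. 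Hence $A_s(g)$ sits inside $\ker(\pi_*)$ as a closed (indeed compact) subgroup.

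For the reverse, the plan is to take an arbitrary $\varphi\in\ker(\pi_*)$ of sufficiently small norm and produce $a\in A_s(g)$ with $f_a=\varphi$. Since $\varphi\in\ker(\pi_*)$, it descends to the identity on $Z/\!\sim_{g,s-1}$, which means $\varphi$ fixes each fiber of $\pi_{g,s-1}$ setwise; as each such fiber is a single $A_s(g)$-torsor, for every $z$ there is a unique $a(z)\in A_s(g)$ with $\varphi(z)=a(z).z$. This defines a function $a\colon Z\to A_s(g)$; continuity of $a$ follows from continuity of $\varphi$ and the principal-bundle structure of $\pi_{g,s-1}$ (Theorem \ref{RWST}, local triviality). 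It remains to check that $a$ is \emph{constant} — then $\varphi=f_a$ as desired, and the image $f_a(z)=a.z$ genuinely lands in $A_s(g)\subseteq\Aut_1(g)$ so openness of $A_s(g)$ in $\ker(\pi_*)$ follows. To prove $a$ is constant I would invoke the rigidity result Theorem \ref{rigidity} with $A=A_s(g)$ and $\ell=1$ (or $\ell=s+1$, whichever the cocycle identity naturally gives): because $\varphi\in\Aut_1(g)$ acts by $1$-translations on each fiber, applying $\varphi$ to an edge of a cube in $C^1_g(Z)$ keeps it a cube, and expanding what this says about $a(z)$ yields exactly the vanishing of the relevant discrete derivative $\partial^\ell a$. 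The smallness hypothesis of Theorem \ref{rigidity} is met by restricting to $\varphi$ with $\|\varphi\|$ small, which forces $d_{A}(a(z),a(z'))$ small for $g(z)=g(z')$ (indeed $\pi_{g,s-1}(z)=\pi_{g,s-1}(z')$); rigidity then forces $a$ constant.

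The main obstacle, and the reason the lemma is not entirely formal, is verifying the cocycle/derivative identity that feeds into Theorem \ref{rigidity} — that is, translating "$\varphi$ is a $1$-translation of every fiber" into "$\partial^\ell a = 0$". In the absolute setting of \cite[Proposition 3.7]{GMVII} this is done by choosing a cube $c$ in $C^{s+1}$ whose faces are constrained so that the action of a $1$-translation on it must be implemented by an element of the top structure group, and then reading off the additive relation among the values of $a$ on the vertices. In our relative setting the same argument must be carried out fiberwise, using the relative weak structure theorem to identify $\pi_{g,s-1}^{-1}(y)\to \pi_{g,s-1}$-fiber as an $A_s(g)$-principal bundle and using Proposition \ref{morphism repr} / the definition of $\Aut_\bullet(g)$ to ensure the translated configurations remain cubes of $g^{-1}(g(z))$; one also has to confirm that the relevant cubes actually exist, which uses that $g$ is a fibration (corner completion). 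Once the identity $\partial^\ell a=0$ is in place, Theorem \ref{rigidity} and Lemma \ref{discreteness} close the argument. Since all of these ingredients are already available in the form we need, the cited sentence "The proof ... follows the argument of \cite[Proposition 3.7]{GMVII} (with the help of Lemma \ref{discreteness}) so we omit it" is justified.
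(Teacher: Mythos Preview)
Your overall strategy---show $A_s(g)\subseteq\ker(\pi_*)$ and then that every sufficiently small $\varphi\in\ker(\pi_*)$ lies in $A_s(g)$---is the same as the paper's (i.e.\ as in \cite[Proposition 3.7]{GMVII}), and the inclusion direction is handled correctly. However, the route you propose for the second step is a detour that does not quite close.

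You define the function $a\colon Z\to A_s(g)$ by $\varphi(z)=a(z).z$ and try to show it is \emph{globally} constant by feeding the identity $\partial^{\ell}a=0$ on $C^{\ell}_g(Z)$ into Theorem \ref{rigidity}. The difficulty is that the hypotheses of Theorem \ref{rigidity} are purely fibre-wise (smallness of $d_A(f(z),f(z'))$ only for $g(z)=g(z')$, vanishing of $\partial^{\ell}f$ only on fibre cubes), so the most one can legitimately extract is that $a$ is constant on each fibre of $g$; indeed, any $f=h\circ g$ with $h\colon Y\to A$ non-constant satisfies those hypotheses without being constant. So Theorem \ref{rigidity} alone will not give $a$ globally constant, and you would still need Lemma \ref{discreteness} to finish---at which point the whole $\partial^\ell a$ computation was unnecessary.

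The argument the paper has in mind (following \cite[Proposition 3.7]{GMVII}) bypasses the function $a(\cdot)$ entirely and uses Lemma \ref{discreteness} at a \emph{single point}. Fix $z_0\in Z$. Since $\varphi\in\ker(\pi_*)$ preserves the $\pi_{g,s-1}$-fibre through $z_0$, there is a unique $a\in A_s(g)$ with $\varphi(z_0)=a.z_0$. Then $f_a^{-1}\circ\varphi$ lies in $\Aut_1(g)$ (both factors do), fixes $z_0$, and has small norm when $\|\varphi\|$ is small. By Lemma \ref{discreteness} the stabiliser $\Stab(z_0)\subseteq\Aut_1(g)$ is discrete, so $f_a^{-1}\circ\varphi=\id$, i.e.\ $\varphi=f_a\in A_s(g)$. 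This shows a neighbourhood of the identity in $\ker(\pi_*)$ is contained in $A_s(g)$, hence $A_s(g)$ is open. No cocycle computation or appeal to Theorem \ref{rigidity} is needed here (Theorem \ref{rigidity} enters only upstream, in the proof of Lemma \ref{discreteness} itself).
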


Based on Lemma \ref{open subgroup}, we can prove one of the statements of Theorem \ref{relative nilmanifold}.
\begin{corollary} \label{Lieness}
 The $1$-translation group $\Aut_1(g)$ has a Lie group structure.
\end{corollary}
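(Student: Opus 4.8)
The plan is to prove Corollary \ref{Lieness} by combining Lemma \ref{open subgroup} with an induction on the degree $s$, using the fact that a topological group possessing an open subgroup which is a Lie group is itself a Lie group. First I would set up the induction: for $s=0$ the fibration $g$ is an isomorphism onto $Y$, so every fiber is a single point and $\Aut_1(g)=\{1\}$, which is trivially Lie. For the inductive step, assume the statement holds for $(s-1)$-fibrations (in particular for $g_{s-1}\colon Z/\!\sim_{g,s-1}\to Y$, which is again Lie-fibered since its structure groups are a subcollection of those of $g$), so that $\Aut_1(g_{s-1})$ is a Lie group.

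Next I would invoke Proposition \ref{canonical map} to get the continuous homomorphism $\pi_*\colon \Aut_1(g)\to \Aut_1(g_{s-1})$, and Lemma \ref{open subgroup} to conclude that $\ker(\pi_*)$ contains the open subgroup $A_s(g)$, which is a Lie group by the Lie-fiberedness hypothesis (and $A_s(g)$ is in fact all of $\ker(\pi_*)$ up to an open subgroup, but containing an open Lie subgroup already suffices — actually from Lemma \ref{open subgroup}, $A_s(g)\subseteq\ker(\pi_*)$ is open, so $\ker(\pi_*)$ is locally isomorphic to the Lie group $A_s(g)$ and hence is a Lie group). Then the image $\pi_*(\Aut_1(g))$ is a subgroup of the Lie group $\Aut_1(g_{s-1})$; by Theorem \ref{openness} the map $\pi_*$ is open onto its image, so this image is an open subgroup of a Lie group and therefore a Lie group itself (equivalently, one may use that a locally compact group is a Lie group if it has no small subgroups, checking this via the extension). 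Thus we have a short exact sequence
$$1 \to \ker(\pi_*) \to \Aut_1(g) \to \pi_*(\Aut_1(g)) \to 1$$
with Lie kernel and Lie quotient (and the kernel sits as an open Lie subgroup of $\Aut_1(g)$). Since a topological group with an open neighborhood of the identity homeomorphic — via a local homomorphism — to an open subset of a Lie group is a Lie group, and $A_s(g)$ provides such a neighborhood, $\Aut_1(g)$ is a Lie group.

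To make this rigorous I would state explicitly the standard fact used: if a topological group $H$ contains an open subgroup $K$ which is a Lie group, then $H$ is a Lie group (one transports the smooth structure around by left translations, using that $K$ is open so cosets are open and the translation maps are continuous; compatibility on overlaps follows because the transition maps are multiplications by fixed elements, which are analytic on $K$). Applying this with $H=\ker(\pi_*)$ and $K=A_s(g)$ shows $\ker(\pi_*)$ is Lie; then a second application — or rather the observation that $\ker(\pi_*)$ is open in $\Aut_1(g)$ since $A_s(g)$ is, hence $\ker(\pi_*)$ is an open Lie subgroup of $\Aut_1(g)$ — shows $\Aut_1(g)$ is Lie. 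The only genuine content is already packaged in Lemma \ref{open subgroup} and Theorem \ref{openness}; the rest is this topological-group bootstrapping.

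The main obstacle I anticipate is not conceptual but bookkeeping: one must be careful that $\Aut_1(g)$ is actually a \emph{locally compact} (indeed, closed subgroup of $\Homeo$ acting on a compact metric space, so at least Polish) group for the "open Lie subgroup implies Lie" principle to be applied cleanly, and one must verify that $A_s(g)$, embedded via Proposition \ref{embedding} as $a\mapsto f_a$, is genuinely \emph{open} in $\Aut_1(g)$ as a topological subgroup — but this is precisely the content of Lemma \ref{open subgroup}. Granting that, the argument is a routine application of the structure theory of locally compact groups, and no estimates or cube-completion arguments are needed at this final step.
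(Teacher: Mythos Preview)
Your overall strategy matches the paper's: induct on $s$, use Proposition \ref{canonical map} to get $\pi_*$, Lemma \ref{open subgroup} to handle $\ker(\pi_*)$, and Theorem \ref{openness} to handle $\Ima(\pi_*)$. The issue is in the last step, where you misread Lemma \ref{open subgroup}. That lemma asserts that $A_s(g)$ is open \emph{in $\ker(\pi_*)$}, not in $\Aut_1(g)$. Your final paragraphs repeatedly assume the stronger statement (``$\ker(\pi_*)$ is open in $\Aut_1(g)$ since $A_s(g)$ is'', and ``one must verify that $A_s(g)$\ldots is genuinely open in $\Aut_1(g)$ --- but this is precisely the content of Lemma \ref{open subgroup}''). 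It is not: for $s\ge 2$ the kernel $\ker(\pi_*)$ is typically a proper closed, non-open subgroup of $\Aut_1(g)$, so the ``open Lie subgroup $\Rightarrow$ Lie'' principle does not apply directly to $\Aut_1(g)$.

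What Lemma \ref{open subgroup} does give you is that $\ker(\pi_*)$ is Lie (it has the open Lie subgroup $A_s(g)$), and Theorem \ref{openness} gives that $\Ima(\pi_*)$ is open in the Lie group $\Aut_1(g_{s-1})$, hence Lie. You correctly write the short exact sequence with Lie kernel and Lie quotient, and this is exactly where the paper finishes --- but by invoking Gleason's extension theorem \cite[Theorem 3.1]{Gle51} (a Hausdorff topological group that is an extension of a Lie group by a Lie group is itself Lie), not by the open-subgroup shortcut. So keep your short exact sequence and cite the extension theorem; drop the claim that $A_s(g)$ is open in $\Aut_1(g)$.
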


\begin{proof}
First note that $\Aut_1(g_0)=\Aut_1(\id_Y)=1$ and hence $\ker(\Aut_1(g_1) \to \Aut_1(g_0))=\Aut_1(g_1)$.  By Lemma \ref{open subgroup}, $A_1(g)$ is an open subgroup of $\ker(\Aut_1(g_1) \to \Aut_1(g_0))$. Thus we can extend the differentiable structure of $A_1(g)$ onto $\Aut_1(g_1)$.

Inductively  assume that $\Aut_1(g_{s-1})$ is Lie. By Theorem \ref{openness}, $\Ima (\pi_\ast)$ is an open subgroup of $\Aut_1(g_{s-1})$ and hence Lie. Applying the extension theorem for Lie groups \cite[Theorem 3.1]{Gle51}, it suffices to show $\ker (\pi_\ast)$ has a Lie group structure. By Lemma \ref{open subgroup}, $A_s(g) \subseteq \ker (\pi_\ast)$ is an open subgroup. Thus we can extend the Lie group structure of $A_s(g)$ onto $\ker (\pi_\ast)$.
\end{proof}

The following is an analogue of \cite[Lemma 3.10]{GMVII}.
\begin{lemma} \label{identity orbit is open}
Let $g\colon Z \to Y$ be a Lie-fibered $s$-fibration. Fix $0 \leq k < s$. Then for any $\varepsilon > 0$ there exists  $\delta > 0$ satisfying the following.
 For any $z, z' \in Z$ such that $z\sim_{g, k} z'$ and $d(z, z') < \delta$, there is  $\varphi \in \Aut_{k+1}(g)$ such that $||\varphi||< \varepsilon$ and $\varphi(z)=z'$. In particular, for each $z \in Z$, $\Aut_{k+1}^\circ(g)z$ is open in  $\pi_{g, k}^{-1}(\pi_{g, k}(z))$.
\end{lemma}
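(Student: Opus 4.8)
The plan is to adapt the proof of \cite[Lemma 3.10]{GMVII}, arguing by induction on $s$ and exploiting the tower of principal fiber bundles furnished by the relative weak structure theorem (Theorem \ref{RWST}). The base of the induction — and essentially the only place where something happens — is the case $k=s-1$; the inductive step reduces the general case to it via the canonical $(s-1)$-th factor $g_{s-1}\colon Z/\sim_{g,s-1}\to Y$ together with the openness of $\pi_\ast$ (Theorem \ref{openness}). The "in particular" clause is then a formal consequence once one knows that $\Aut_{k+1}(g)$ fixes the fibers of $\pi_{g,k}$.

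\emph{Base step $k=s-1$.} Here $z\sim_{g,s-1}z'$ means precisely $\pi_{g,s-1}(z)=\pi_{g,s-1}(z')$, so $z,z'$ lie in a common fiber of the $A_s(g)$-principal fiber bundle $Z=Z/\sim_{g,s}\to Z/\sim_{g,s-1}$, whence $z'=a.z$ for a unique $a\in A_s(g)$, and $f_a\in\Aut_s(g)=\Aut_{(s-1)+1}(g)$ by Proposition \ref{embedding}. It remains to check that $\|f_a\|$ is small once $d(z,z')$ is small, \emph{uniformly in $z$}; I would prove the following compactness lemma. Given $\varepsilon>0$, pick an open neighbourhood $U$ of the identity in the compact group $A_s(g)$ with $\|f_b\|<\varepsilon$ for all $b\in U$ (using continuity of $b\mapsto\|f_b\|$ and $\|f_{e}\|=0$); then there is $\delta>0$ with $d(w,b.w)\ge\delta$ for all $w\in Z$ and all $b\notin U$, since otherwise convergent subsequences $b_n\to b\in A_s(g)\setminus U$, $w_n\to w$ of a sequence with $d(w_n,b_n.w_n)\to 0$ would give $d(w,b.w)=0$, forcing $b=e$ by freeness of the action, a contradiction. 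Applying this with $w=z$ gives $a\in U$, hence $\|f_a\|<\varepsilon$.

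\emph{Inductive step $k\le s-2$.} Put $\pi=\pi_{g,s-1}$ and note $g_{s-1}$ is again a Lie-fibered $(s-1)$-fibration. Since $\pi$ is a cubespace morphism it preserves $\sim_k$, and $g_{s-1}\circ\pi=g$, so $z\sim_{g,k}z'$ forces $\pi(z)\sim_{g_{s-1},k}\pi(z')$; moreover $\pi$ is uniformly continuous, so $d(\pi z,\pi z')$ is small when $d(z,z')$ is. The induction hypothesis applied to $g_{s-1}$ (with $k\le(s-1)-1$) yields $\bar\varphi\in\Aut_{k+1}(g_{s-1})$ with $\bar\varphi(\pi z)=\pi z'$ and $\|\bar\varphi\|$ as small as we wish. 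By Theorem \ref{openness} the homomorphism $\pi_\ast\colon\Aut_{k+1}(g)\to\Aut_{k+1}(g_{s-1})$ is open, so $\bar\varphi=\pi_\ast(\varphi_0)$ for some $\varphi_0\in\Aut_{k+1}(g)$ with $\|\varphi_0\|<\varepsilon$. By the defining property of $\pi_\ast$ (Proposition \ref{canonical map}), $\pi(\varphi_0 z)=\bar\varphi(\pi z)=\pi z'$, so $z'=a.\varphi_0(z)$ for some $a\in A_s(g)$; since $d(\varphi_0 z,z')\le\|\varphi_0\|+d(z,z')$ is small, the base-step lemma gives $\|f_a\|<\varepsilon$. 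Then $\varphi:=f_a\circ\varphi_0\in\Aut_{k+1}(g)$ (using $A_s(g)\subseteq\Aut_s(g)\subseteq\Aut_{k+1}(g)$ as $k+1\le s$), $\varphi(z)=a.\varphi_0(z)=z'$, and $\|\varphi\|\le\|f_a\|+\|\varphi_0\|$ is small; rescaling the intermediate $\varepsilon$'s closes the induction. The main obstacle is precisely the availability of Theorem \ref{openness}: granting that, the argument is just bookkeeping of the nested smallness constants.

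\emph{The "in particular" clause.} I would first observe that every $\varphi\in\Aut_{k+1}(g)$ fixes the fibers of $\pi_{g,k}$. Restricting to the fiber $N=g^{-1}(g(z))$, an $s$-nilspace and a subcubespace of $Z$, we have $\varphi|_N\in\Aut_{k+1}(N)$; applying the $(k+1)$-translation property to the constant $(k+1)$-cube at $z$ and the $0$-face $\{\overrightarrow{1}\}$ produces a cube of $N$ agreeing with that constant cube on $\llcorner^{k+1}$ and taking value $\varphi(z)$ at $\overrightarrow{1}$, so $\varphi(z)\sim_k z$ in $N$, hence in $Z$; together with $g(\varphi z)=g(z)$ this gives $\varphi(z)\sim_{g,k}z$, i.e. $\pi_{g,k}(\varphi z)=\pi_{g,k}(z)$. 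Next, since $g$ is Lie-fibered, $\Aut_{k+1}(g)$ is a closed subgroup of the Lie group $\Aut_1(g)$ (Corollary \ref{Lieness}), hence Lie, and so has a neighbourhood basis at the identity consisting of connected sets; thus the small $\varphi$ produced in the main statement lies in $\Aut_{k+1}^\circ(g)$. Consequently $\{z'\in\pi_{g,k}^{-1}(\pi_{g,k}(z)):d(z,z')<\delta\}\subseteq\Aut_{k+1}^\circ(g)z$, so the orbit contains a neighbourhood of $z$ in the fiber; and for any other point $\psi z$ of the orbit ($\psi\in\Aut_{k+1}^\circ(g)$), $\psi$ is a homeomorphism of the fiber $\pi_{g,k}^{-1}(\pi_{g,k}(z))$ with $\psi\bigl(\Aut_{k+1}^\circ(g)z\bigr)=\Aut_{k+1}^\circ(g)z$, so a neighbourhood of $\psi z$ in the fiber also lies in the orbit. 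Hence $\Aut_{k+1}^\circ(g)z$ is open in $\pi_{g,k}^{-1}(\pi_{g,k}(z))$.
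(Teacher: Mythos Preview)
Your proposal is correct and follows essentially the same route as the paper: the paper merely says ``Based on Proposition \ref{canonical map} and Theorem \ref{openness}, similarly to the proof of \cite[Lemma 3.10]{GMVII}, we can induct on $s$'', and then derives the ``in particular'' clause by choosing $\varepsilon$ so that $B_\varepsilon(\id)\subseteq\Aut_{k+1}^\circ(g)$ --- exactly the scheme you have fleshed out. Your detailed base step via compactness of $A_s(g)$, the lifting step via openness of $\pi_\ast$, the correction by $f_a$, and the explicit verification that $\Aut_{k+1}(g)$ preserves the fibers of $\pi_{g,k}$ are precisely the content hidden behind the paper's pointer to \cite{GMVII}.
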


\begin{proof}
Based on Proposition \ref{canonical map} and Theorem \ref{openness}, similarly to the proof of \cite[Lemma 3.10]{GMVII}, we can induct on $s$ to obtain the first conclusion. We now prove that  $\Aut_{k+1}^\circ(g)z$ is open. Fix  $\varphi \in \Aut_{k+1}^\circ(g)$. Let $\varepsilon > 0$  be small enough such that the $\varepsilon$-ball $B_\varepsilon({\rm id})$ at the identity is contained in $\Aut_{k+1}^\circ(g)$.
There exists  $\delta >0$ satisfying that whenever $y, y' \in \pi_{g, k}^{-1}(\pi_{g, k}(z))$ such that $d(y, y')< \delta$, we can find $\varphi' \in B_\varepsilon({\rm id})$ such that
$y=\varphi'(y')$.

Note that for $y \in B_\delta(\varphi(z)) \cap \pi_{g, k}^{-1}(\pi_{g, k}(z))$, we have
$$\pi_{g, k}(y)=\pi_{g, k}(z)=\pi_{g, k}\circ \varphi(z).$$
It follows that $y =\varphi' \circ \varphi(z)$
for some $\varphi' \in \Aut_{k+1}^\circ(g)$. Q.E.D.
\end{proof}

The following is an analogue of \cite[Lemma 3.12]{GMVII}.
Using the relative weak structure theorem, the proof follows a similar argument, inducting on $s$.
\begin{lemma} \label{cubehom is open}
For each $c \in C^n(g^{-1}(g(z)))$, the evaluation map ${\rm ev}_c\colon \HK^n(\Aut_\bullet^\circ(g)) \to C^n(g^{-1}(g(z)))$ sending $(\varphi_\omega)_\omega$ to $(\varphi_\omega(c(\omega)))_\omega$ is open.
\end{lemma}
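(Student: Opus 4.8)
The plan is to induct on the degree $s$ of the Lie fibration $g$, mirroring the argument in \cite[Lemma 3.12]{GMVII} but carrying the fibers of $g$ throughout. For the base case $s=0$ the fiber $g^{-1}(g(z))$ is a point and the statement is vacuous. For the inductive step, let $\pi:=\pi_{g,s-1}\colon Z\to Z/\sim_{g,s-1}$ and recall from Theorem \ref{RWST} that $\pi$ restricted to the fiber $g^{-1}(g(z))$ exhibits it as an $A:=A_s(g)$-principal fiber bundle over the fiber of the $(s-1)$-fibration $g_{s-1}$. By Proposition \ref{embedding} we have the embedding $A\hookrightarrow \Aut_s(g)\subseteq\Aut_s^\circ(g)$ (here using connectedness, which follows since $A$ is a connected compact abelian Lie group in the Lie-fibered case, or by restricting to $A^\circ$ and combining with what Lie-fiberedness gives), and by Proposition \ref{canonical map} there is a continuous homomorphism $\pi_\ast\colon\HK^n(\Aut_\bullet^\circ(g))\to\HK^n(\Aut_\bullet^\circ(g_{s-1}))$ (applying $\pi_\ast$ coordinatewise and checking it preserves the Host-Kra relations). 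First I would form the commutative square relating the evaluation maps ${\rm ev}_c$ upstairs and ${\rm ev}_{\pi(c)}$ downstairs via $\pi$ and $\pi_\ast$; by the inductive hypothesis the bottom evaluation map is open.

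Next I would analyze the kernel direction. An element of $\ker(\pi_\ast)$ on the Host-Kra level has all coordinates in $\ker(\pi_\ast\colon\Aut_\bullet^\circ(g)\to\Aut_\bullet^\circ(g_{s-1}))$, and by Lemma \ref{open subgroup} the subgroup $A=A_s(g)$ is open in that kernel; combined with the filtration structure this identifies (a neighborhood of the identity in) $\ker\big(\HK^n(\Aut_\bullet^\circ(g))\to\HK^n(\Aut_\bullet^\circ(g_{s-1}))\big)$ with (a neighborhood of the identity in) $\HK^n(\cD_s(A))$, the Host-Kra group of $A$ placed in degree $s$. The action of this group on the fiber $\pi^{-1}(\pi(c))\subseteq C^n(g^{-1}(g(z)))$ is, by the principal bundle structure, the full translation action of $C^n(\cD_s(A))$ on each $\pi$-fiber of cubes, hence transitive on those fibers. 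So ${\rm ev}_c$ restricted to the kernel is an open (indeed surjective onto the relevant $\pi$-fiber, and a principal homogeneous map modulo the stabilizer) map onto $\pi^{-1}(\pi({\rm ev}_c(\text{id})))$ inside $C^n(g^{-1}(g(z)))$.

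To conclude openness of ${\rm ev}_c$ itself, I would run the standard "open on the base, open on the fiber $\Rightarrow$ open on the total space" argument: given a neighborhood $U$ of a point $(\varphi_\omega)_\omega\in\HK^n(\Aut_\bullet^\circ(g))$, shrink to $U_1\cdot U_2$ with $U_1$ a small neighborhood in the kernel and $U_2$ mapping under $\pi_\ast$ to an open neighborhood downstairs; openness of ${\rm ev}_{\pi(c)}$ gives an open set of cubes in $C^n(g_{s-1}^{-1}(\cdots))$ in the image, and openness on each $\pi$-fiber (from the kernel/$A$-action) thickens this to an open set of cubes in $C^n(g^{-1}(g(z)))$ contained in ${\rm ev}_c(U)$. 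The main obstacle I anticipate is the bookkeeping at the Host-Kra level: verifying that $\pi_\ast$ really does send $\HK^n(\Aut_\bullet^\circ(g))$ into $\HK^n(\Aut_\bullet^\circ(g_{s-1}))$ respecting the filtration-degree constraints on faces, and that the kernel is exactly (locally) $\HK^n$ of the degree-$s$-concentrated group $\cD_s(A)$ so that its action on $\pi$-fibers of $n$-cubes is transitive — this transitivity is what is needed to upgrade "open downstairs" to "open upstairs," and it relies on Lemma \ref{open subgroup} together with the precise description of $C^n(\cD_s(A))$-orbits from \cite[Section 5.1]{GMVI} used in the proof of Proposition \ref{embedding}.
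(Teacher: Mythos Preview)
Your approach is essentially the one the paper indicates: induct on $s$, pass to the canonical $(s-1)$-factor via $\pi_{g,s-1}$, invoke the inductive hypothesis for ${\rm ev}_{\pi(c)}$, and handle the vertical direction using the $A_s(g)$-action on $\pi$-fibers identified through Lemma \ref{open subgroup}. This is precisely the relative adaptation of \cite[Lemma 3.12]{GMVII} that the paper has in mind.

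One correction: you wrote $A\hookrightarrow \Aut_s(g)\subseteq\Aut_s^\circ(g)$, but the inclusion goes the other way, $\Aut_s^\circ(g)\subseteq\Aut_s(g)$, and a compact abelian Lie group need not be connected. This does not damage the argument, because openness is local at the identity: since $\Aut_k^\circ(g)$ is open in $\Aut_k(g)$ and, by Lemma \ref{open subgroup}, $A_s(g)$ is open in $\ker\pi_\ast$, a neighbourhood of the identity in $\ker\big(\pi_\ast|_{\Aut_k^\circ(g)}\big)$ already lies in $A_s(g)$, which is all you need. Your own hedge (``restricting to $A^\circ$'' and working with neighbourhoods of the identity) is the right fix; just drop the incorrect inclusion and the connectedness claim. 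The remaining bookkeeping you flag---that $\pi_\ast$ respects the Host--Kra filtration constraints and that the kernel is locally $\HK^n(\cD_s(A))$ acting transitively on $\pi$-fibers of cubes---is routine from the definitions and the description in \cite[Section 5.1]{GMVI}.
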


\begin{proof}[{\bf Proof of Theorem \ref{relative nilmanifold}}]
Since $\Aut_k^\circ(g) \subseteq \Aut_k(g) \subseteq \Aut_1(g)$ are closed subgroups, and by Corollary \ref{Lieness}, $\Aut_1(g)$ is Lie, we have $\Aut_k^\circ(g)$ is Lie.

To show that the evaluation map $\HK^n(\Aut_\bullet^\circ(g)) \to C^n(g^{-1}(g(z)))$ is surjective, it suffices to show that  the action $\HK^n(\Aut_\bullet^\circ(g)) \curvearrowright C^n(g^{-1}(g(z)))$ is transitive. Since $\Aut_k^\circ(g)$ is open in $\Aut_k(g)$ for all $k \geq 0$, by a relative version of \cite[Lemma 3.15]{GMVII}, $\HK^n(\Aut_\bullet^\circ(g))$ is open in $\HK^n(\Aut_\bullet(g))$. Thus by Lemma \ref{cubehom is open}, the orbits of $\HK^n(\Aut_\bullet^\circ(g))$ are open and hence closed in $C^n(g^{-1}(g(z)))$. 
As by assumption, $C^n(g^{-1}(g(z)))$ is connected, it implies that the action is transitive. In particular, the evaluation map ${\rm ev}_z\colon \Aut_1^\circ(g)/\Stab(z) \to Z$ and the induced maps 
$${\rm ev}_{\Box^n(z)}\colon \HK^n(\Aut_\bullet^\circ(g))/\Stab(z) \to C^n(g^{-1}(g(z)))$$
by pointwise application are homeomorphisms. Here $\Box^n(z)$\index{$\Box^n(z)$} denotes the constant cube of $C^n(Z)$ taking value $z$. 

To see $\Stab(z)\cap \Aut_k^\circ(g)$ is co-compact in $\Aut_k^\circ(g)$ for all $k \geq 0$, we first note by Lemma \ref{identity orbit is open} that $\Aut_k^\circ(g)z$ is open in $\pi_{g, k-1}^{-1}(\pi_{g, k-1}(z))$. 
Moreover, since orbits partition the space and  $\Aut_k^\circ(g)z$ is an orbit, it must be closed and hence compact. From the homeomorphism between $\Aut_k^\circ(g)z$ and $\Aut_k^\circ(g)/(\Stab(z)\cap \Aut_k^\circ(g))$, we see that $\Stab(z)\cap \Aut_k^\circ(g)$ is co-compact.

\end{proof}

\subsection{Lifting relative translations}

In this subsection, we prove Theorem \ref{openness}. There are two steps in the proof: (1) lift small translations $\varphi$ in $\Aut_k(g_{s-1})$  to  a small homeomorphism $\psi\colon Z \to Z$; (2)  replacing $\psi$ with a genuine $k$-translation of the form $h(\cdot).\psi(\cdot)$ for some continuous map $h\colon Z \to A_s(g)$ with small norm. We will refer to the operation of replacing   $\psi$ with a  $k$-translation of the form $h(\cdot).\psi(\cdot)$ as \textbf{repairing}. Step (2) will be based on a relative version of \cite[Theorem 4.11]{GMVII} \cite[Lemma 3.19]{ACS12}.

 By a careful local section argument for Lie-principal bundles as in \cite[Lemma 4.2]{GMVII}, we accomplish the first step by the following lemma.
 \begin{lemma} \label{preliftting}
Let $g\colon Z \to Y$ be an  $s$-fibration between compact ergodic gluing cubespaces. Assume that $A_s(g)$ is Lie. Then one can lift every small enough homeomorphism  $\varphi$ of $\pi_{g, s-1}(Z)$ up to a small homeomorphism $\psi \colon Z \to Z$ such that $\psi$ is $A_s(g)$-equivariant.
 \end{lemma}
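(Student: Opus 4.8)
The plan is to reduce the lifting problem to the quotient $Z \to \pi_{g,s-1}(Z)$, which by the relative weak structure theorem is an $A_s(g)$-principal fiber bundle, and then to invoke the local-triviality of this bundle exactly as in the absolute setting. First I would recall that, since $A_s(g)$ is a compact abelian Lie group, the principal bundle $p := \pi_{g,s-1} \colon Z \to \pi_{g,s-1}(Z)$ is locally trivial: by a partition-of-unity / local-section argument for principal bundles with Lie structure group (the relativization of \cite[Lemma 4.2]{GMVII}), one can cover $\pi_{g,s-1}(Z)$ by finitely many open sets $U_1, \dots, U_m$ on each of which there is a continuous section $\sigma_i \colon U_i \to Z$, and hence a trivialization $U_i \times A_s(g) \xrightarrow{\ \sim\ } p^{-1}(U_i)$. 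Because $\pi_{g,s-1}(Z)$ is compact metric, choose a Lebesgue number $\rho > 0$ for this cover.

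Next, given a homeomorphism $\varphi$ of $\pi_{g,s-1}(Z)$ with $\|\varphi\| < \rho/2$ (so that each point and its image lie in a common $U_i$), I would define the lift $\psi \colon Z \to Z$ fiberwise using the trivializations: for $z \in Z$ with $p(z) = y$, pick an index $i$ with $\{y, \varphi(y)\} \subseteq U_i$, write $z = \sigma_i(y) \cdot a$ for the unique $a \in A_s(g)$ determined by the trivialization, and set $\psi(z) := \sigma_i(\varphi(y)) \cdot a$. The key points to verify are: (a) $\psi$ is well-defined and independent of the chosen index $i$ — this follows because the transition functions of a principal bundle act by group multiplication and $A_s(g)$ is abelian, so switching charts changes $\sigma_i(y)\cdot a$ and $\sigma_i(\varphi(y))\cdot a$ by the same transition element; (b) $\psi$ is $A_s(g)$-equivariant by construction, since the formula only alters the base coordinate and leaves the fiber coordinate $a$ untouched; (c) $\psi$ is a homeomorphism — continuity of $\psi$ and of $\psi^{-1}$ follows from continuity of the sections and of the trivializations, and $\psi^{-1}$ is the lift of $\varphi^{-1}$ by the same recipe; (d) $\psi$ lies over $\varphi$, i.e. $p \circ \psi = \varphi \circ p$, which is immediate from the formula; (e) $\|\psi\|$ is small — here one uses uniform continuity of the finitely many sections $\sigma_i$ together with compactness to conclude that $d(z, \psi(z))$ is uniformly controlled by $\|\varphi\|$, so $\|\psi\| \to 0$ as $\|\varphi\| \to 0$.

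Finally I would assemble these observations: given any $\varepsilon > 0$, uniform continuity of the sections produces a $\delta \in (0, \rho/2)$ such that $\|\varphi\| < \delta$ forces $\|\psi\| < \varepsilon$, which is exactly the assertion of the lemma. I expect the main obstacle to be the careful bookkeeping in step (a)–(e): ensuring that the fiberwise definition genuinely glues to a globally continuous $A_s(g)$-equivariant homeomorphism (rather than just a continuous map) and extracting the quantitative smallness estimate uniformly over all fibers. This is precisely the content of "a careful local section argument for Lie-principal bundles as in \cite[Lemma 4.2]{GMVII}," and the relativization is essentially formal once one notes that nothing in that argument used the base being a point; the only genuinely new input is that here the base is $\pi_{g,s-1}(Z)$ rather than a single point, which is harmless because all the constructions are local over the base.
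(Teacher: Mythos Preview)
Your overall approach is correct and matches the paper's, which simply refers the reader to ``a careful local section argument for Lie-principal bundles as in \cite[Lemma 4.2]{GMVII}.'' You have correctly identified the key inputs: the relative weak structure theorem makes $p=\pi_{g,s-1}$ an $A_s(g)$-principal bundle, the Lie hypothesis gives local triviality, and the lift is built from local sections.

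There is, however, a genuine gap in step (a). Your claim that ``switching charts changes $\sigma_i(y)\cdot a$ and $\sigma_i(\varphi(y))\cdot a$ by the same transition element'' is false. If $\sigma_j(y)=\sigma_i(y)\cdot g_{ij}(y)$, then writing $z=\sigma_i(y)\cdot a=\sigma_j(y)\cdot a'$ forces $a'=g_{ij}(y)^{-1}a$, so the two candidate lifts are
\[
\psi_i(z)=\sigma_i(\varphi(y))\cdot a,\qquad \psi_j(z)=\sigma_j(\varphi(y))\cdot a'=\sigma_i(\varphi(y))\cdot g_{ij}(\varphi(y))\,g_{ij}(y)^{-1}\cdot a,
\]
and these agree only when $g_{ij}(\varphi(y))=g_{ij}(y)$, which is not automatic. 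Abelianness of $A_s(g)$ plays no role here.

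The fix is precisely the partition-of-unity step you mention at the outset but do not carry out. The discrepancy $h_{ij}(y):=g_{ij}(\varphi(y))\,g_{ij}(y)^{-1}$ is a continuous $A_s(g)$-valued function which is uniformly close to the identity when $\|\varphi\|$ is small (by uniform continuity of the finitely many $g_{ij}$). Since $A_s(g)$ is a compact abelian Lie group, small elements lie in the image of the exponential map and can be averaged: using a partition of unity subordinate to $\{U_i\}$ one glues the local lifts $\psi_i$ into a single global $A_s(g)$-equivariant homeomorphism. This is the ``careful'' part of the local-section argument in \cite[Lemma 4.2]{GMVII}, and once it is in place your steps (b)--(e) go through as written.
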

 
 \begin{definition}
For a cubespace morphism $f\colon  X \to Y$,  we define the  {\bf $k$-th fiber cubes of $f$}\index{$k$-th fiber cube} as the closed subset 
$$C^k_f(X): =\cup_{y \in Y} C^k(f^{-1}(y)).$$
\end{definition}
 
 \begin{definition}
Let $\ell \geq 1$ and $n \geq 0$. Given $c_1, c_2 \in C^n(X)$, the {\bf generalized $\ell$-corner} $\llcorner^\ell(c_1; c_2)\colon \{0, 1\}^{n+\ell} \to X$  is given by $c_2(\omega_1, \ldots, \omega_n)$ for $\omega=(\omega_1, \ldots, \omega_n, \overrightarrow{1})$ and $c_1(\omega_1, \ldots, \omega_n)$ elsewhere.
 \end{definition}
 
We need to repair the lift $\psi$ to a $k$-translation of $\Aut_k(g)$. Unwrapping the definition, we want to find a small continuous function $h\colon Z \to A_s(g)$ such that the map $\widetilde{\varphi}\colon Z \to Z$ defined by sending  $z$ to $h(z).\psi(z)$ is a $k$-translation of $g$. To show $\widetilde{\varphi} \in \Aut_k(g)$, we need the following criteron. 
 
 \begin{proposition}\cite[Proposition 2.13]{GMVII} \label{criteron}
 Let $X$ be an  $s$-nilspace. Fix $0 \leq k \leq s+1$. Then a homeomorphism $\phi: X \to X$ is a $k$-translation if and only if for any $(s+1-k)$-cube $c$ of $X$ the configuration $\llcorner^k(c, \phi(c))$ is an $(s+1)$-cube.
 \end{proposition}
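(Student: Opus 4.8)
\emph{The forward direction} is a direct unwinding of Definition \ref{translation}. Given a $k$-translation $\phi$ and $c\in C^{s+1-k}(X)$, set $n=s+1$, let $c^{\ast}=c\circ P\in C^{s+1}(X)$ be the degenerate cube obtained from the projection $P\colon\{0,1\}^{s+1}\to\{0,1\}^{s+1-k}$ onto the first $s+1-k$ coordinates, and take $F$ to be the $(s+1-k)$-face $\{\omega:\omega_{s+2-k}=\dots=\omega_{s+1}=1\}$. The $k$-translation property applied to $c^{\ast}$ and $F$ produces a cube which, read off coordinatewise, is precisely $\llcorner^{k}(c,\phi(c))$.

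\emph{For the converse}, assume $\llcorner^{k}(c,\phi(c))\in C^{s+1}(X)$ for every $c\in C^{s+1-k}(X)$; the plan is to first show $\phi\in\Aut(X)$ and then verify the $k$-translation property of Definition \ref{translation}. For the first step, restricting $\llcorner^{k}(c,\phi(c))$ to the face $\{\omega_{s+2-k}=\dots=\omega_{s+1}=1\}$ gives $\phi\circ c\in C^{s+1-k}(X)$, and hence $\phi\circ c\in C^{j}(X)$ for all $c\in C^{j}(X)$ with $j\le s+1-k$ (degenerate $c$ up to an $(s+1-k)$-cube and restrict back). To reach larger $j$, I would use the standard fact that in an $s$-nilspace a configuration $\{0,1\}^{n}\to X$ with $n\ge s+1$ is a cube if and only if each of its $(s+1)$-dimensional faces is a cube (proved by completing the restriction to $\llcorner^{n}$ via fibrancy and pinning the apex by $(s+1)$-uniqueness); this reduces ``$\phi$ preserves all cubes'' to ``$\phi$ preserves $(s+1)$-cubes'', which follows by a short induction on $j$ from $s+1-k$ up to $s+1$: the corner $(\phi\circ c)|_{\llcorner^{j}}$ has cube faces by the inductive hypothesis, hence completes to a cube, and $(s+1)$-uniqueness together with the hypothesis applied to an $(s+1-k)$-subface of $c$ identifies the completion with $\phi\circ c$. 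That $\phi^{-1}$ also preserves cubes follows symmetrically, using the invariance of the hypothesis under the coordinate-negating automorphisms of $\{0,1\}^{s+1}$.

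\emph{For the second step}, applying the ``cube iff all $(s+1)$-faces are cubes'' criterion to the candidate configuration, degenerating up when $n<s+1$, moving faces by morphisms of discrete cubes, and using the elementary fact that applying $\phi$ successively on two disjoint codimension-$k$ faces realizes applying $\phi$ on their union, the whole statement reduces to the single assertion: for every $c\in C^{s+1}(X)$, the configuration $\psi$ equal to $\phi\circ c$ on the standard $(s+1-k)$-face $F$ and to $c$ elsewhere lies in $C^{s+1}(X)$. I would prove this by induction on $s$. The base case $s=k-1$ is degenerate: the hypothesis then reads $x\sim_{s}\phi(x)$ for every $x$, so $\phi=\mathrm{id}$ by $(s+1)$-uniqueness, consistent with $\Aut_{s+1}(X)=\{1\}$. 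For the inductive step I would descend $\phi$ to the $(s-1)$-nilspace $X/\sim_{s-1}$, the base of the top $A_s$-principal bundle of Theorem \ref{WST}; the descended map $\bar\phi$ satisfies the degree-$(s-1)$ hypothesis, so by induction the analogous configuration downstairs is a cube, and one then lifts it back through the principal bundle, \textbf{repairing} the lift by a suitable continuous function into $A_s$ --- the obstruction to repairing being a top-dimensional $A_s$-valued coboundary that vanishes for degree reasons (a rigidity statement of the type of Theorem \ref{rigidity}, in the absolute setting) --- after which fibrancy and $(s+1)$-uniqueness identify the repaired lift with $\psi$.

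\emph{The main obstacle} is this last reduction. The hypothesis only controls $\phi$ through the ``degenerate'' prisms $\llcorner^{k}(c,\phi(c))$, in which $c$ is spread trivially along $k$ coordinates; turning that into control of $\phi$ on an arbitrary $(s+1)$-cube through which the face $F$ genuinely cuts --- the lifting-and-repairing step --- is the technical heart, and it requires the interplay of fibrancy, $(s+1)$-uniqueness and the vanishing of a cohomological obstruction to be arranged with care.
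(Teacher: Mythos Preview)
The paper does not prove this proposition; it is quoted verbatim from \cite[Proposition~2.13]{GMVII} and used as a black box. So there is no in-paper proof to compare against, but let me evaluate your argument on its own terms and against the actual proof in \cite{GMVII}.

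Your forward direction is correct.

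Your converse has two genuine problems. First, the induction in Step~1 (``from $s+1-k$ up to $s+1$'') invokes $(s{+}1)$-uniqueness at intermediate levels $j<s+1$, where no uniqueness is available; completing the corner $(\phi\circ c)|_{\llcorner^{j}}$ need not give $\phi\circ c$, and ``the hypothesis applied to an $(s{+}1{-}k)$-subface'' only tells you that $\phi$ preserves $(s{+}1{-}k)$-cubes, which you already knew. Second, and more seriously, Step~2's descent-and-repair strategy imports the cocycle/rigidity machinery (your ``Theorem~\ref{rigidity}-type'' statement) that requires compactness and a Lie top structure group, neither of which is assumed here; and even granting those, the rigidity theorems only kill \emph{small} obstructions, whereas $\phi$ is arbitrary. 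The proposition is a purely combinatorial statement about nilspaces, and its proof should be (and in \cite{GMVII} is) purely combinatorial.

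The actual argument in \cite{GMVII} is a direct gluing argument, with no induction on $s$ and no cohomology. The key step you are missing is this: from the hypothesis one first shows, by a short downward induction on $k'$ from $k$ to $0$ using only gluing and reflections, that $\llcorner^{k'}(c',\phi(c'))\in C^{s+1}(X)$ for every $c'\in C^{s+1-k'}(X)$. (For the inductive step, write $c'=[c_0',c_1']$, note that $\llcorner^{k'+1}(c_i',\phi(c_i'))=[\Box^{k'}(c_i'),\llcorner^{k'}(c_i',\phi(c_i'))]$ are cubes by hypothesis, and glue twice through $\Box^{k'}(c')$.) Taking $k'=0$ gives that $\phi$ preserves $(s{+}1)$-cubes, which closes your Step~1 cleanly. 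With all $k'\le k$ available, the face criterion shows $\llcorner^{k}(c,\phi(c))\in C^{n+k}(X)$ for every $n$; and then the full $k$-translation property on an arbitrary $(s{+}1)$-cube follows by $k$ further gluing moves (one per coordinate of $F$), exactly as in the $k=1$ case where $[c_0,c_1]$ and $[c_1,\phi(c_1)]$ glue to $[c_0,\phi(c_1)]$. No descent to $X/\sim_{s-1}$ is needed.
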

 
 Applying the above proposition to the cubespace $g^{-1}(y)$, we need to show that $\llcorner^k(c; \widetilde{\varphi}(c))$ is an $(s+1)$-cube of $g^{-1}(y)$ for every cube $c \in C^{s+1-k}(g^{-1}(y))$.
 
In order to measure the extent to which a configuration is failing to be a cube, let us introduce the definition of discrepancy in the setting of an $s$-fibration $g\colon Z \to Y$. Abbreviate $\pi_{g, s-1}$ by $\pi$. Let $c\colon \{0, 1\}^{s+1} \to Z$ be a map such that $\pi(c) \in C^{s+1}(\pi(Z))$, say, $\pi(c)=\pi(c_0)$ for some $c_0 \in C^{s+1}(Z)$. By the relative weak structure theorem (Theorem \ref{RWST}), there exists a unique map $\beta\colon \{0, 1\}^{s+1} \to A_s(g)$ such that $c=\beta . c_0$.

\begin{definition}
The {\bf discrepancy}\index{discrepancy} $\Delta(c)$\index{$\Delta(c)$} of $c$ is defined as
$$\Delta(c):=\sum_{\omega \in \{0, 1\}^{s+1}} (-1)^{|\omega|}\beta(\omega).$$
Here $|\omega|$ denotes the sum $\sum_{j=1}^{s+1}\omega_j$ for $\omega=(\omega_1, \ldots, \omega_{s+1})$.
\end{definition}
 Following the argument of \cite[Proposition 4.5]{GMVII}, we have that the discrepancy is well defined and $\Delta(c)=0$ if and only if $c \in C^{s+1}(Z)$.
We now  relativize the notion of cocycles and coboundaries of cubespaces: 
\begin{definition}
Let $f\colon X \to Y$ be a cubespace morphism and $A$ an abelian group. Fix an integer $\ell \geq 1$. Consider a continuous map $\rho \colon C^\ell_f (X) \to A$. We say $\rho$ is an {\bf $\ell$-coclycle\index{cocycle} on fibers of  $f$} if it is {\bf additive} in the sense that 
$$\rho([c_1, c_3])=\rho([c_1, c_2])+ \rho([c_2, c_3])$$
for any  $c_1, c_2, c_3 \in C^{\ell-1}_f(X)$ such that the three concatenations in the equation are  in $C^\ell_f(X)$.

We say $\rho$ is a {\bf coboundary}\index{coboundary} if there exists a continuous map $h: X \to A$ such that $\rho$ can be written as
$$\rho(c)=\partial^\ell h(c): =\sum_{\omega \in \{0, 1\}^\ell} (-1)^{|\omega|}h(c(\omega))$$
for every $c \in C^\ell_f(X)$.
\end{definition}

\begin{lemma} \label{repairing}
 Let $\psi\colon Z \to Z$ be a homeomorphism lifting some element $\varphi$ of $\Aut_k(g_{s-1})$. Then the map
$$\rho_\psi \colon C^{s+1-k}_g(Z) \to A_s(g)$$
sending $c$ to $\Delta(\llcorner^k(c; \psi(c)))$ is well defined. 
Moreover, $\psi$ can be repaired to a $k$-translation of $\Aut_k(g)$ if $\rho_\psi$ is an $(s+1-k)$-coboundary of some function with sufficient small norm.
\end{lemma}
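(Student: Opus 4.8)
The plan is to establish the two assertions in turn: well-definedness of $\rho_\psi$ via the discrepancy calculus, and the repair via an explicit sign-corrected coboundary. Throughout I write $\pi:=\pi_{g,s-1}$ and $A:=A_s(g)$, and I note at the outset that since $\psi$ lifts $\varphi$ one has $\pi\circ\psi=\varphi\circ\pi$ and $g\circ\psi=g$, so $\psi$ preserves every fiber $g^{-1}(y)$.

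For \emph{well-definedness of $\rho_\psi$} I would fix $c\in C^{s+1-k}_g(Z)$, say $c\in C^{s+1-k}(g^{-1}(y))$, and let $p\colon\{0,1\}^{s+1}\to\{0,1\}^{s+1-k}$ be the coordinate projection onto the first $s+1-k$ coordinates. Then $\pi(c)\circ p\in C^{s+1}(g_{s-1}^{-1}(y))$, and applying the defining property of a $k$-translation to $\varphi|_{g_{s-1}^{-1}(y)}\in\Aut_k(g_{s-1}^{-1}(y))$ with the $(s+1-k)$-face $\{\omega:\omega_{s+2-k}=\cdots=\omega_{s+1}=1\}$ shows that $\llcorner^k(\pi(c);\varphi(\pi(c)))$ is an $(s+1)$-cube of $\pi(Z)$. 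Since $\pi\circ\psi=\varphi\circ\pi$, this cube is exactly $\pi\big(\llcorner^k(c;\psi(c))\big)$, so by the relative weak structure theorem (Theorem \ref{RWST}) the discrepancy $\Delta\big(\llcorner^k(c;\psi(c))\big)\in A$ is defined; continuity of $\rho_\psi$ then follows from continuity of $\psi$ and of the discrepancy, concatenation and restriction operations.

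For the \emph{repair}, suppose $\rho_\psi=\partial^{s+1-k}h_0$ for a continuous $h_0\colon Z\to A$ of sufficiently small norm. I would set $h:=(-1)^{k+1}h_0$ (which has the same norm as $h_0$) and $\widetilde\varphi(z):=h(z).\psi(z)$; since $A$ acts along the fibers of $\pi$ one still has $\pi\circ\widetilde\varphi=\varphi\circ\pi$ and $g\circ\widetilde\varphi=g$, and $\pi\big(\llcorner^k(c;\widetilde\varphi(c))\big)=\pi\big(\llcorner^k(c;\psi(c))\big)$. Fixing $c\in C^{s+1-k}(g^{-1}(y))$ and writing $\llcorner^k(c;\psi(c))=\beta.d_0$ with $d_0\in C^{s+1}(Z)$, $\pi(d_0)=\pi\big(\llcorner^k(c;\psi(c))\big)$ and $\beta\colon\{0,1\}^{s+1}\to A$, a coordinatewise comparison gives $\llcorner^k(c;\widetilde\varphi(c))=(\eta+\beta).d_0$ where $\eta(\omega',\tau)=h(c(\omega'))$ for $\tau=\overrightarrow{1}\in\{0,1\}^k$ and $\eta(\omega',\tau)=e_A$ otherwise. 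Since $|(\omega',\overrightarrow{1})|=|\omega'|+k$, summing alternating signs yields
$$\Delta\big(\llcorner^k(c;\widetilde\varphi(c))\big)=\rho_\psi(c)+(-1)^k\,\partial^{s+1-k}h(c)=\rho_\psi(c)-\partial^{s+1-k}h_0(c)=0,$$
so $\llcorner^k(c;\widetilde\varphi(c))$ is an $(s+1)$-cube of $Z$ with image in $g^{-1}(y)$, hence a cube of the subcubespace $g^{-1}(y)$. By Proposition \ref{criteron} applied to the $s$-nilspace $g^{-1}(y)$ this gives $\widetilde\varphi|_{g^{-1}(y)}\in\Aut_k(g^{-1}(y))$ for every $y$, i.e.\ $\widetilde\varphi\in\Aut_k(g)$, which is the desired repair.

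The step I expect to be the main obstacle is the tacit requirement in Proposition \ref{criteron} that $\widetilde\varphi$ be a \emph{homeomorphism} of $Z$, which is precisely where the smallness of $\|h_0\|$ enters: a small \emph{continuous} perturbation of the identity along a torsor fiber need not be injective. I would handle this as in the absolute case — exploiting that $\psi$ may be taken $A$-equivariant (Lemma \ref{preliftting}) to present $\widetilde\varphi$ as the homeomorphism $\psi$ post-composed with the fiberwise translation $z\mapsto h(z).z$, and showing the latter is a homeomorphism once $\|h_0\|$ is small relative to a modulus of continuity controlled by that of $\rho_\psi$ (hence of $\psi$) — following the relative analogue of \cite[Theorem 4.11]{GMVII} (cf.\ \cite[Lemma 3.19]{ACS12}). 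This is what the phrase ``of some function with sufficiently small norm'' in the statement encodes.
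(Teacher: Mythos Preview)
Your well-definedness argument and the discrepancy computation for the repair (including the sign $h=(-1)^{k+1}h_0$, which the paper's own proof glosses over) are correct. The gap is exactly where you flag it: establishing that $\widetilde\varphi$ is a homeomorphism. Your proposed fix---controlling injectivity of $z\mapsto h(z).z$ by taking $\|h_0\|$ small ``relative to a modulus of continuity controlled by that of $\rho_\psi$''---does not follow from the hypotheses of the lemma. The statement only posits the existence of \emph{some} small primitive $h_0$, and two primitives of $\rho_\psi$ on fiber cubes differ by an arbitrary continuous map whose fiberwise $(s{+}1{-}k)$-derivative vanishes; there is no a priori bound on the oscillation of $h_0$ itself, so the map $z\mapsto h_0(z).z$ need not be injective however small $\|h_0\|$ is.

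The paper's device (which is also what \cite[Theorem 4.11]{GMVII} actually does) is different: one \emph{averages} the primitive over $A=A_s(g)$. Since $A$ is a compact abelian Lie group and $\|h_0\|$ is small enough for its image to lie in a ball on which a local lift $p$ to $\bR^d$ is defined, set
\[
F(x):=p^{-1}\Bigl(\int_A p\bigl(h_0(ax)\bigr)\,dm(a)\Bigr),
\]
so that $F(ax)=F(x)$ for all $a\in A$. Using the $A$-equivariance of $\psi$ from Lemma~\ref{preliftting}, a constant $A$-shift of an $(s{+}1)$-configuration leaves its discrepancy unchanged, whence $\rho_\psi(a.c)=\rho_\psi(c)$ and $F$ is still a primitive for $\rho_\psi$ on fiber cubes; moreover the repaired map $\widetilde\varphi:=F.\psi$ is now itself $A$-equivariant and descends to the bijection $\varphi$ on $\pi(Z)$. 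As $A$ acts freely and transitively on the $\pi$-fibers, $\widetilde\varphi$ is a bijection outright, and Proposition~\ref{criteron} then applies fiberwise. Smallness of $\|h_0\|$ is thus used only to make the averaging well-defined, not to force injectivity directly---this is the idea your last paragraph is reaching for but does not name.
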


\begin{proof}
Fix $y \in Y$ and a cube $c \in C^{s+1-k}(g^{-1}(y))$.  Since $\varphi \in \Aut_k(g_{s-1})$, from 
$$\pi(\llcorner^k(c; \psi(c)))=\llcorner^k(\pi(c); \pi(\psi(c)))=\llcorner^k(\pi(c); \varphi (\pi(c))),$$
 we have $\pi(\llcorner^k(c; \psi(c)))$ 
  is a cube of $C^{s+1-k}(\pi \circ g^{-1}(y))$. Thus the discrepancy $\Delta(\llcorner^k(c; \psi(c)))$ is well defined and so is $\rho_\psi$.

Assume $\rho_\psi =\partial^{s+1-k}f$ for some continuous function $f: Z \to A_s(g)$ with small norn. We will show it is possible to "integrate $f$ over $A_s(g)$" resulting with a function $F: Z \to A_s(g)$ such that $F(ax)=F(x)$ for every $a \in A_s(g)$ and $x \in Z$. In order to show that such a procedure is well-defined, let us recall a "lifting up \& down" technique for defining integration.
Since $A_s(g)$ is a compact abelian Lie group, there exists an isomorphism $\phi\colon A_s(g) \to (\bR/\bZ)^d\times K$ for some finite-dimensional torus $(\bR/\bZ)^d$ and some finite group $K$.  If $f$ is of sufficiently small norm, there exists a sufficient small $\delta$ such that the image of $f$ belongs to a $\delta$-ball $B$ around the identity with respect to a given compatible metric. Notice that $\phi$ induces an embedding $p: B \to \bR^d$.
For every $x \in X$, define $F(x)$ as
$$F(x):=p^{-1}\left(\int_{A_s(g)} p(f(ax))dm(a)\right),$$
where $m$ denotes the Haar measure on $A_s(g)$. It is easy to check that $F$ is well defined (see \cite[Subsection 5.2]{GMVII} for further explanation). 

Notice that  the repaired map $\widetilde{\varphi}(x):=F(x). \psi(x)$ is a bijection and hence is a homeomorphism.  By Lemma \ref{fiber of fibration}, $g^{-1}(y)$ is an $s$-nilspace.
Applying  Proposition \ref{criteron} to it,  we obtain that $\widetilde{\varphi}|_{g^{-1}(y)} \in \Aut_k(g^{-1}(y))$. Thus $\widetilde{\varphi}$ is a $k$-translation of
$\Aut_k(g)$.
\end{proof}

Let us introduce the notion of concatenation along the $k$-th axis.
\begin{definition}
 Let $1 \leq k \leq \ell+1$. Given a cubespace $X$ and two maps: $c_1, c_2\colon \{0, 1\}^\ell \to X$, the {\bf concatenation along the $k$-th axis} $[c_1, c_2]_k \index{$[c_1, c_2]$!$[c_1, c_2]_k$} \colon \{0, 1\}^{\ell+1} \to X$ is defined by sending $\omega$ to 
 $$c_1(\omega_1, \ldots, \omega_{k-1}, \omega_{k+1}, \ldots, \omega_{\ell+1})$$
 if $\omega_k=0$ and $c_2(\omega_1, \ldots, \omega_{k-1}, \omega_{k+1}, \ldots, \omega_{\ell+1})$ elsewhere.
\end{definition}
\noindent In particular, $[c_1, c_2]_{\ell+1}$ is simply the concatenation as defined previously.

We need a variant of \cite[Theorem 5.1]{GMVII} to prove Theorem \ref{openness}.

\begin{theorem} \label{fiber cocycle theorem}
Let $A$ be a compact abelian Lie group. Fix $s \geq 0$ and $\ell \geq 1$.
Then there exists  $\varepsilon > 0$ (depending only on $s, \ell$, and $A$) satisfying the following.

Let $\beta \colon Z \to Y$ be an  $s$-fibration.  Fix $0 < \delta < \varepsilon$.
Suppose that $\rho\colon C^\ell_\beta(Z) \to A$ is an $\ell$-cocycle on fibers of $\beta$ such that $d(\rho(c), \rho(c')) \leq \delta$
whenever $c, c'$ are cubes on the same fiber of $\beta$. Then
$$\rho=\partial^\ell f$$
for some continuous function $f: Z \to A$ which is almost constant on fibers of $\beta$, i.e. there exists a constant $c > 0$ (depending only on $s$ and $\ell$) such that $d(f(x), f(y)) \leq c\delta$ whenever $\beta(x)=\beta(y)$.
\end{theorem}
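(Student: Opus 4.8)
The plan is to relativize the proof of the absolute cocycle theorem \cite[Theorem 5.1]{GMVII}, carrying the base $Y$ along, and to induct on $s$. For the base case $s=0$, every fiber $\beta^{-1}(y)$ is a $0$-nilspace, so by Proposition \ref{fiber of fibration} its only cubes are the constant configurations; applying the additivity axiom with $c_1=c_2=c_3$ a constant $(\ell-1)$-cube shows that an $\ell$-cocycle ($\ell\ge 1$) vanishes on every constant cube, so $\rho\equiv 0$ and any constant $f$ works. For the inductive step I would assume the statement for $s-1$ and set $\pi:=\pi_{\beta,s-1}\colon Z\to W:=Z/\sim_{\beta,s-1}$, which by the relative weak structure theorem (Theorem \ref{RWST}) is a principal fiber bundle for the top structure group $B:=A_s(\beta)$, with $\beta_{s-1}\colon W\to Y$ the canonical $(s-1)$-fibration. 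As in the proof of Proposition \ref{embedding}, for a fiber cube $c\in C^\ell(\beta^{-1}(y))$ the cubes of $\beta^{-1}(y)$ lying over $\pi(c)$ are exactly the orbit $\HK^\ell(\cD_s(B))\cdot c$, and since $\pi$ is itself a fibration (Proposition \ref{inducing $s$-fibration}) every cube of $W$ inside a $\beta_{s-1}$-fiber lifts to a cube of $Z$ inside the corresponding $\beta$-fiber, so that $C^\ell_{\beta_{s-1}}(W)$ is precisely the $\pi$-image of $C^\ell_\beta(Z)$.

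The heart of the argument is to subtract from $\rho$ a coboundary after which it becomes invariant under the $\HK^\ell(\cD_s(B))$-action and hence descends to a cocycle on $W$. For a fiber cube $c$ and $g\in\HK^\ell(\cD_s(B))$ I would analyse the deviation $D_g(c):=\rho(g\cdot c)-\rho(c)$; using additivity of $\rho$ one checks, as in the absolute case, that $D_g(c)$ decomposes as a sum over the faces carrying the generators of $g$ of terms each of which is a cocycle in the corresponding $B$-variable. One then produces a continuous $h_1\colon Z\to A$ with $d(h_1(x),h_1(x'))\le C_1\delta$ whenever $\beta(x)=\beta(x')$ such that $\partial^\ell h_1$ has the same deviation under the action as $\rho$. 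This is a ``repairing'' problem in the spirit of Lemma \ref{repairing}: fiberwise it is solved by taking primitives of the (small) $B$-cocycles above, which exist because $B$ is a compact abelian Lie group---via lifting into a chart $\bR^d\times K$ at the identity, solving there, averaging over the Haar measure of $B$, and pushing back down, as in \cite[Subsection 5.2]{GMVII}---and globally it is assembled by trivialising $\pi$ over a cover of $W$, repairing over each piece, and patching the pairwise small discrepancies with a partition of unity inside that chart. Then $\rho_1:=\rho-\partial^\ell h_1$ is constant on $\HK^\ell(\cD_s(B))$-orbits of fiber cubes, so, $\pi$ being an open surjection and hence a quotient map (and likewise its pointwise application on the compact space $C^\ell_\beta(Z)$), $\rho_1$ descends to a continuous $\bar\rho\colon C^\ell_{\beta_{s-1}}(W)\to A$ which one checks is an $\ell$-cocycle on fibers of $\beta_{s-1}$ with oscillation at most $C_2\delta$ on fibers. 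Choosing $\varepsilon$ small enough that $C_2\varepsilon$ falls below the $(s-1)$-threshold, the inductive hypothesis gives $\bar\rho=\partial^\ell\bar f$ with $\bar f$ almost constant on fibers of $\beta_{s-1}$; then $f:=\bar f\circ\pi+h_1$ satisfies $\partial^\ell f=\rho$ on $C^\ell_\beta(Z)$ and is $c\delta$-almost constant on fibers of $\beta$, where $c=c(s,\ell)$ is obtained by tracking the constants through the chart computations (the chart identifications make all the operations affine, so $c$ does not depend on $\beta$ or on $A$).

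I expect the main obstacle to be the construction of $h_1$, and specifically its globalization: the $B$-principal bundle $\pi\colon Z\to W$ need not admit a continuous global section, so the fiberwise repairs must be assembled locally, and it is here that the small-oscillation hypothesis is indispensable, since only $B$-valued data confined to a small ball can be lifted to a linear chart, averaged, and patched with a partition of unity while keeping norms under control---which is also the reason the final constant $c$ is forced to depend only on $s$ and $\ell$. A secondary subtlety is verifying that $\bar\rho$ is continuous and satisfies the cocycle identity: this uses that fiberwise $\pi$ is the structure map of a degree-$s$ extension, so that by the universal replacement property (Proposition \ref{URP}) $C^\ell(\beta_{s-1}^{-1}(y))$ is exactly the image of $C^\ell(\beta^{-1}(y))$ under $\pi$, together with the relative rigidity theorem (Theorem \ref{rigidity}), which guarantees that the fiberwise primitives entering the construction of $h_1$ are unique up to an additive constant, so that the gluing data is well defined.
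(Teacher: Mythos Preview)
Your inductive scheme on $s$---kill the dependence on the top structure group $B=A_s(\beta)$ by subtracting a coboundary, descend to the $(s-1)$-fibration $\beta_{s-1}$, and apply the inductive hypothesis---is correct in outline and matches the shape of the argument. The difference, and the gap, is in how you manufacture the correcting function $h_1$.

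The paper does not build $h_1$ in your sense at all. It simply observes that the absolute proof \cite[Theorem~5.1]{GMVII} goes through verbatim once one checks that the averaging operator of \cite[Lemma~5.7]{GMVII} is well defined on \emph{fiber} cubes: for $c\in C^{\ell-1}_\beta(Z)$ and $t\in T_1^\ell:=\{t\colon\{0,1\}^{\ell-1}\to B : [\overrightarrow{0},t]\in C^\ell(\cD_s(B))\}$ one has $[c,t.c]\in C^\ell_\beta(Z)$, so
\[
\rho'(c):=\int_{T_1^\ell}\rho([c,t.c]_k)\,d\mu_{T_1^\ell}(t)
\]
makes sense on $C^{\ell-1}_\beta(Z)$. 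This single global integral replaces your entire local-section-and-patching construction; no trivializations of the $B$-bundle $\pi$ and no partition of unity are needed, and the rest of \cite[\S5]{GMVII} runs unchanged.

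Your route has a genuine hole at the gluing step. The local primitives $h_1^{(U)}$ on $\pi^{-1}(U)$ are determined only up to functions pulled back from $U$, not up to an additive constant: on an overlap $\pi^{-1}(U\cap U')$ the difference $h_1^{(U)}-h_1^{(U')}$ has vanishing $\partial^\ell$ on fiber cubes, but Theorem~\ref{rigidity} does \emph{not} force it to be constant, since that theorem is stated for the whole compact ergodic gluing cubespace $Z$ (or a full fiber), not for an arbitrary open piece $\pi^{-1}(U\cap U')$. Hence the transition data you must glue is genuinely $A$-valued and varies over the base, its oscillation is not a priori controlled by $\delta$, and the partition-of-unity step does not close. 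The averaging formula above is precisely the device that produces the needed object globally and intrinsically, bypassing this obstruction.
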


\begin{proof}
The proof uses a similar argument to the one used in the proof of \cite[Theorem 5.1]{GMVII}.
To modify the proof of \cite[Lemma 5.7]{GMVII}, consider the set 
$$T_1^\ell:=\{t: \{0, 1\}^{\ell-1} \to A_s(\beta):  [\overrightarrow{0}, t] \in C^\ell(\mathcal{D}_s(A_s(\beta)))\}.$$
Then for any $c \in C^{\ell-1}_\beta(Z)$ and $t \in T^\ell_1$,
we have $[c, t.c] \in C^\ell_\beta(Z)$. Therefore $\rho': C^{\ell -1}_\beta(Z) \to A$ sending $c$ to
$$\rho'(c):=\int_{T_1^\ell} \rho([c, t.c]_k)d\mu_{T_1^\ell}(t)$$
is well defined. Here $\mu_{T_1^\ell}$ denotes the  measure on $T_1^\ell$ induced from the Lebesgue measure (see \cite[Subsection 5.2]{GMVII} for more details). 

\end{proof}

\begin{proof}[Proof of Theorem \ref{openness}.]
Without loss of generality, we may assume $k \leq s$. Since $\pi_\ast$ is a group homomorphism, it suffices to show it is open at the identity. Let $\varphi\in\Aut_k(g_{s-1})$ be an element of small norm. We need to find a small $\widetilde{\varphi}$ in $\Aut_k(g)$ such that 
$$\pi_{g, s-1}\circ \widetilde{\varphi}=\varphi \circ \pi_{g, s-1}.$$ 

By Lemma \ref{preliftting}, we can lift $\varphi$ to a small homeomorphism $\psi\colon Z \to Z$ fixing fibers of $g$.  Since $\psi$ is of small norm, for any $c \in C^{s+1-k}_g(Z)$, $\llcorner^k(c, \psi(c))$ is close to the  $(s+1)$-cube $\Box^k(c) \in C^{s+1}(Z)$. Thus the discrepancy $\Delta(\llcorner^k(c, \psi(c)))$ is close to $\Delta(\Box^k(c))=0$,  where $\Box^k(c) \colon \{0, 1\}^{s+1} \to Z$ sends $\omega=(\omega_1, \ldots, \omega_{s+1})$ to $c(\omega_1, \ldots, \omega_{s+1-k})$. This implies that the image of the $(s+1-k)$-cocycle  $\rho_\psi$ has small diameter. Thus we can apply Theorem \ref{fiber cocycle theorem} to write $\rho_\psi$ as
$$\rho_\psi=\partial^{s+1-k}f$$
for some continuous map $f\colon  Z \to A_s(g)$ whose fibers are close to constant values. Then we can define $f$
as 
$$f=\widetilde{f}\circ g + h$$
for some continuous maps $\widetilde{f}\colon Y \to A_s(g)$ and $h\colon Z \to A_s(g)$ such that $h$ is almost constant $0_{A_s(g)}$ (in the sense of Theorem \ref{fiber cocycle theorem}).

Note that for any $c \in C^{s+1-k}_g(Z)$, $g(c)$ is a constant cube. It follows that $\rho_\psi=\partial^{s+1-k}h$.
 Since $\psi$ fixes the fibers of $\pi_{g, s-1}$ and $\varphi$ fixes the fibers of $g_{s-1}$, we have that $\psi$ fixes fibers of $g_{s-1}\circ \pi_{g, s-1}=g$. Since the image of $h$ is contained inside $A_s(g) \subseteq \Aut_s(g)$,  for each $z \in Z$, $h(z)$ also fixes fibers of $g$.  Therefore, we have
$$g(h(z).\psi(z))=g(\psi(z))=g(z),$$
i.e. $\widetilde{\varphi}:=h.\psi$ fixes the fibers of $g$. In summary, we prove that $\widetilde{\varphi} \in \Aut_k(g)$.

Finally, since $h$ is close to the  constant function $0_{A_s(g)}$, the repairing $\widetilde{\varphi}$ of small $\psi$ will also be small as desired.
\end{proof}

\section{Approximating by Lie-fibered fibrations}\label{sec:Approximating by Lie-fibered fibrations}

In this section, we complete the proof of Theorem \ref{relative inverse limit}. 
\subsection{The main steps of the proof}
Let $g\colon Z \to Y$ be an $s$-fibration between compact ergodic gluing cubespaces.
To deal with the general case as in \cite[Theorem 1.28]{GMVIII}, we need to first represent $Z$ as an inverse limit $\varprojlim Z_n$ respecting the fibers of $g$ as in \cite[Theorem 1.26]{GMVIII}, and then endow the fibers of the fibrations $Z_n \to Y$ with Host-Kra cube structure to obtain the cubespace isomorphism.

The following is an analogue of \cite[Theorem 1.26]{GMVIII} and covers the statements (1) and (2) of Theorem \ref{relative inverse limit}. 
\begin{theorem} \label{space as inverse limit}
Fix $s \geq 1$. Let $g\colon Z \to Y$ be an $s$-fibration between compact ergodic gluing cubespaces. Then $Z$ is isomorphic to an inverse limit 
$$\varprojlim Z_n$$
for an inverse system of $s$-fibrations between compact ergodic gluing cubespaces  $\{p_{m, n}\colon Z_n \to Z_m\}_{0\leq m \leq n \leq \infty}$ and compatible Lie-fibered $s$-fibrations $\{h_n\colon Z_n \to Y\}$. 
Here "compatible" means the following diagram commutes:
$$\xymatrix{
 Z \ar[r]  \ar[d]^g & \cdots \ar[r] & Z_n \ar[r]^{p_{n-1, n}} \ar[dll]_{h_n}^{\cdots} & Z_{n-1} \ar[r]  & \cdots \ar[r] & Z_1 \ar[dlllll]^{h_1}\\
 Y:=Z_0.
}$$
\end{theorem}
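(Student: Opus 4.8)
\textbf{Proof proposal for Theorem \ref{space as inverse limit}.}

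The plan is to mimic the inverse-limit construction of \cite[Theorem 1.26]{GMVIII} but carried out \emph{relatively} over $Y$, working level by level in the filtration $\sim_{g,0},\dots,\sim_{g,s}$ supplied by the relative weak structure theorem (Theorem \ref{RWST}). The induction is on the degree $s$. The base case $s=1$ reduces to expressing the compact metrizable abelian group $A_1(g)$ as an inverse limit of abelian Lie groups: write $A_1(g)=\varprojlim A_1^{(n)}$ with each $A_1^{(n)}$ an abelian Lie group, pull back the $A_1(g)$-principal bundle $g\colon Z\to Y$ along each quotient $A_1(g)\to A_1^{(n)}$ to get principal $A_1^{(n)}$-bundles $h_n\colon Z_n\to Y$, and observe that $Z=\varprojlim Z_n$ respects the fibers of $g$. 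For the inductive step, consider the canonical $(s-1)$-th factor $g_{s-1}\colon Z/\!\!\sim_{g,s-1}\,\to Y$, which by induction is an inverse limit of Lie-fibered $s{-}1$-fibrations $h_m'\colon Z_m'\to Y$ compatible with connecting maps. The top layer $\pi_{g,s-1}\colon Z\to Z/\!\!\sim_{g,s-1}$ is an $A_s(g)$-principal bundle, and $A_s(g)=\varprojlim A_s^{(n)}$ with $A_s^{(n)}$ abelian Lie.

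The key step is to realize the layer $\pi_{g,s-1}$, as an $A_s(g)$-bundle, by \emph{cocycle data}: by the relative weak structure theorem the bundle $Z\to Z/\!\!\sim_{g,s-1}$ is classified by an $A_s(g)$-valued cocycle on the fibers of $g_{s-1}$ (a relative version of the classification of abelian extensions of nilspaces in \cite[Section 5]{GMVI}), and one wants to approximate this cocycle by $A_s^{(n)}$-valued cocycles compatible with the Lie-fibered approximations $Z_m'$ of the base. Concretely, for each $n$ I would push the cocycle forward along $A_s(g)\to A_s^{(n)}$, and — after passing to a sufficiently large index $m=m(n)$ in the base approximation — use a relative version of the cocycle-correction machinery (Theorem \ref{fiber cocycle theorem} together with the discrepancy calculus introduced for Theorem \ref{openness}) to replace it by a cocycle that factors through $Z_m'$. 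Pulling back the resulting $A_s^{(n)}$-extension of $Z_m'$ gives an $s$-fibration $h_n\colon Z_n\to Y$ which is Lie-fibered (its structure groups are those of $h'_{m(n)}$ together with $A_s^{(n)}$), and one checks these fit into an inverse system with $Z=\varprojlim Z_n$, using that the product of the approximating quotients $A_s(g)\to A_s^{(n)}$ separates points and that $Z$ is compact.

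Two points require care. First, one must verify that the cubespace structure on $Z_n$ obtained by pulling back is again compact, ergodic and gluing, and that $\sim_{h_n,k}$ matches the pulled-back filtration, so that $h_n$ is genuinely an $s$-fibration; this is routine given Corollary \ref{inducing nilspace} and Proposition \ref{inducing $s$-fibration} but must be stated. Second, and this is where I expect the real work to lie, one must ensure the two independent approximations — of the base $Z/\!\!\sim_{g,s-1}$ by Lie-fibered fibrations, and of the top structure group $A_s(g)$ by Lie groups — can be made \emph{simultaneously coherent}, i.e.\ that the correction needed to land the cocycle on a finite-level base does not destroy compatibility with the connecting maps $p_{m,n}$. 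The main obstacle is precisely this bookkeeping: choosing the indices $m(n)$ and the correcting coboundaries inductively so that the whole tower commutes, which is the relative analogue of the diagonal-argument at the end of the proof of \cite[Theorem 1.26]{GMVIII}, now complicated by the fact (noted in the remark after Question \ref{q:fiber_isomorphic}) that $G$, or here the base, does not act fiberwise. Once the tower is assembled, statements (1) and (2) of Theorem \ref{relative inverse limit} follow immediately, and statement (3) in the Lie-fibered case is Theorem \ref{relative nilmanifold}, which we then transport through the inverse limit in the next subsection.
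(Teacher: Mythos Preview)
Your overall architecture matches the paper's: induct on $s$, write $A_s(g)=\varprojlim A_n$ with Lie quotients (kernels $K_n$), apply the inductive hypothesis to $g_{s-1}\colon Z/\!\sim_{g,s-1}\to Y$ to get Lie-fibered approximations $\psi_{0,m}\colon Z^{(0)}_m\to Y$, and then combine the two approximations diagonally. You also correctly flag the coherence bookkeeping as the delicate point.

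Where you diverge is in the mechanism for the merge. You propose to classify the top layer by an $A_s(g)$-valued cocycle, push it forward to $A_n$, and then invoke Theorem \ref{fiber cocycle theorem} to ``correct'' it so it descends to $Z^{(0)}_m$. The paper does \emph{not} do this. Instead it works geometrically with the quotients $Z^{(n)}_\infty:=Z/K_n$ and introduces the notion of a \emph{straight $\psi$-class}: a lift to $Z^{(n)}_\infty$ of a fiber of $\psi_{m,\infty}\colon Z^{(0)}_\infty\to Z^{(0)}_m$ along $\pi_{g^{(n)},s-1}$ that respects $(s{+}1)$-cubes (equivalently, has zero discrepancy). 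Lemma \ref{enough straight classes} shows such classes exist through every point once $\psi_{m,\infty}$ is a $\delta$-embedding, and Lemma \ref{close class} shows two of them over the same $\psi$-fiber differ by a single $A_n$-translate. These two lemmas give a closed equivalence relation $\approx_{\psi_{m,\infty}}$ on $Z^{(n)}_\infty$; the quotient is the sought $Z^{(n)}_m$ (Lemma \ref{constructed space}), and one takes $Z_n:=Z^{(n)}_{M_n}$ (Proposition \ref{middle fibration}). The point-separation of the resulting system is checked by hand using horizontality of $\varphi^{(n)}_{M_n}$.

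Your cocycle route is morally the same idea --- the discrepancy of a section \emph{is} the cocycle --- but the tool you cite is not the one that does the work. Theorem \ref{fiber cocycle theorem} asserts that a \emph{small} cocycle on fibers of an $s$-fibration is a coboundary; it does not give descent of a cocycle on $Z^{(0)}_\infty$ to the finite-level base $Z^{(0)}_m$. What actually closes the argument is the rigidity statement \cite[Theorem~5.2]{GMVII} (cf.\ Theorem \ref{rigidity} here), applied to the nilspace $\psi^{-1}(w)$: it forces the $A_n$-valued comparison function between two candidate lifts to be constant once $\psi_{m,\infty}$ has small fibers. That is precisely Lemma \ref{close class}. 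If you want to keep a cocycle formulation, you would still need to prove these two lemmas (existence of straight sections locally, and uniqueness up to $A_n$), so in practice you end up with the paper's argument. The remark about $G$ not acting fiberwise is irrelevant here; that obstruction concerns the dynamical applications in Section \ref{sec:extensions of finite degree}, not the cubespace construction.
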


Let us start with some preliminary steps. Since every compact abelian group equals an inverse limit of compact abelian Lie groups \cite[Lemma 2.1]{GMVIII}, we can write the top structure group $A_s(g)$ of $g$ as an inverse limit of Lie groups $A_n$ with $A_0=\{0\}$, i.e.
$$A_s(g)=\varprojlim A_n.$$
Denote by $K_n$ the kernel of the quotient homomorphism $A_s(g) \to A_n$ and denote the orbit space of $Z$ under the action of by $K_n$, by $Z^{(n)}_\infty$, i.e. $Z^{(n)}_\infty:=Z/K_n$. 

\begin{lemma}
$Z^{(n)}_\infty$ has the gluing property.
\end{lemma}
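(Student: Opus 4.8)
The plan is to deduce the gluing property of $Z^{(n)}_\infty = Z/K_n$ from that of $Z$ by exploiting the universal replacement property (Proposition \ref{URP}) together with the fact that the quotient map $q_n \colon Z \to Z^{(n)}_\infty$ is itself a fibration. First I would record that $Z^{(n)}_\infty$ is compact (a continuous image of the compact space $Z$) and ergodic (a quotient of an ergodic cubespace is ergodic, since the image of a $1$-cube is a $1$-cube and $q_n$ is surjective), so that only the gluing axiom needs checking. The cubespace structure on $Z^{(n)}_\infty$ is the quotient one: $C^k(Z^{(n)}_\infty)$ is the image of $C^k(Z)$ under $q_n$ applied pointwise.

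The key step is to lift cubes. Fix $k \geq 0$ and $\bar c_1, \bar c_2, \bar c_3 \in C^k(Z^{(n)}_\infty)$ with $[\bar c_1,\bar c_2], [\bar c_2,\bar c_3] \in C^{k+1}(Z^{(n)}_\infty)$; I must show $[\bar c_1,\bar c_3]\in C^{k+1}(Z^{(n)}_\infty)$. Choose a lift $c_2\in C^k(Z)$ of $\bar c_2$. Because $q_n$ is a fibration (the quotient map by a compact-group action on the top structure group is a fibration, as in the proof pattern of Proposition \ref{embedding} / the relative weak structure theorem) and $[\bar c_1,\bar c_2]$ completes the generalized corner obtained from $c_2$ on one hyperface, I can lift $[\bar c_1,\bar c_2]$ to a cube $[c_1,c_2]\in C^{k+1}(Z)$ over $c_2$; similarly lift $[\bar c_2,\bar c_3]$ to $[c_2,c_3]\in C^{k+1}(Z)$ over the \emph{same} $c_2$. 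Now $Z$ is gluing, so $[c_1,c_3]\in C^{k+1}(Z)$, and applying $q_n$ pointwise gives $[\bar c_1,\bar c_3]=q_n([c_1,c_3])\in C^{k+1}(Z^{(n)}_\infty)$, as desired. An alternative, perhaps cleaner, route avoids explicit lifting: since $Z$ is fibrant (hence gluing), Corollary \ref{inducing nilspace} shows $Z/\!\sim_s$ is an $s$-nilspace; if one checks that $\sim_s$ on $Z$ descends compatibly through $q_n$, then $Z^{(n)}_\infty$ sits between $Z$ and $Z/\!\sim_s$ and one invokes the universal replacement property (Proposition \ref{URP}) to conclude that any configuration of $Z^{(n)}_\infty$ agreeing with a cube modulo $\sim_s$ is a cube, which yields gluing via Proposition \ref{URP}'s standard consequence.

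I expect the main obstacle to be verifying that $q_n\colon Z\to Z^{(n)}_\infty$ is a fibration — i.e., that one can perform the corner-completions/liftings above — since this is exactly the point where the principal-bundle structure of the top layer (Theorem \ref{RWST}, giving $K_n \subseteq A_s(g)$ acting freely and the induced quotient being well-behaved on cubes) must be used. Once that is in hand, the gluing argument is a short diagram chase. A minor technical point to be careful about is that the lifts of $[\bar c_1,\bar c_2]$ and $[\bar c_2,\bar c_3]$ must be arranged to share the common face $c_2$; this is automatic if one first fixes $c_2$ and then completes, using that $q_n$ restricted to each fiber-direction is surjective on the relevant faces.
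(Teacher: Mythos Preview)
Your overall plan---lift to $Z$, glue there, project back---is exactly the paper's strategy. The gap is in the lifting step. You want to lift $[\bar c_1,\bar c_2]$ and $[\bar c_2,\bar c_3]$ to $(k+1)$-cubes of $Z$ sharing the common face $c_2$, and you justify this by saying $q_n$ is a fibration. But the fibration axiom gives \emph{corner}-completion, not \emph{face}-lifting: given a $(k+1)$-cube downstairs and a lift of a single hyperface, there is no general mechanism to extend to a lift of the whole cube, because a hyperface is not a corner and the missing face cannot be filled in one fibration step. (Your remark that this is ``automatic if one first fixes $c_2$ and then completes'' is precisely where the argument breaks; the configuration you would need to complete is not a corner since its lower faces $\{\omega_i=0\}$ need not be cubes.)

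The paper sidesteps this by \emph{not} forcing a common face. It lifts the two $(k+1)$-cubes arbitrarily to $[c_1,c_2]$ and $[c_2',c_3]$ in $C^{k+1}(Z)$, so that $c_2,c_2'\in C^k(Z)$ have the same image in $Z^{(n)}_\infty=Z/K_n$. The point is that $K_n\subseteq A_s(g)$, so the relative weak structure theorem forces $c_2'=\alpha\cdot c_2$ for a \emph{unique} $\alpha\in C^k(\mathcal D_s(K_n))$ (any two $k$-cubes over the same $\pi_{g,s-1}$-image differ by a cube of $\mathcal D_s(A_s(g))$, and here the values lie in $K_n$). Since $[\alpha,\alpha]\in C^{k+1}(\mathcal D_s(K_n))$, translating gives $[\alpha\cdot c_1,\,c_2']\in C^{k+1}(Z)$; now gluing in $Z$ with $[c_2',c_3]$ yields $[\alpha\cdot c_1,c_3]\in C^{k+1}(Z)$, whose projection is $[\bar c_1,\bar c_3]$. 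So the genuine input is not ``$q_n$ is a fibration'' but the $\mathcal D_s(K_n)$-equivariance of the cube structure on $Z$. Once you replace your face-lifting appeal with this translation argument, your proof coincides with the paper's. (Your alternative route via Proposition~\ref{URP} is too vague as written and does not obviously yield gluing for $Z^{(n)}_\infty$ without going through the same mechanism.)
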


\begin{proof}
Let $c_1, c_2, c_2', c_3 \in C^k(Z)$ such that $[c_1, c_2], [c_2', c_3] \in C^{k+1}(Z)$ and $\overline{c_2}=\overline{c_2'}$ in $C^k(Z^{(n)}_\infty)$.
We want to show $[\overline{c_1}, \overline{c_3}] \in C^k(Z^{(n)}_\infty)$.

\begin{proposition} \label{inner fibrant}
Let $f\colon X \to Y, g\colon Y \to Z$ and $h\colon X \to Z$ be three cubespace morphisms such that $h=g\circ f$. Suppose that h has $k$-completion and $g$ has $k$-uniqueness. Then $f$ has $k$-completion.
\end{proposition}

\begin{proof}
Assume that $\lambda$ is a $k$-corner of $X$ such that $f(\lambda)=c|_{\llcorner^k}$ for some $k$-cube $c$ of $Y$. We want to show there exists  $x \in f^{-1}(c(\overrightarrow{1}))$ completing $\lambda$ as a cube of $X$.

Note that $g(c)$ is a $k$-cube extending $(g\circ f)(\lambda)=h(\lambda)$. Since $h$ has $k$-completion, there exists  $x \in h^{-1}(g(c(\overrightarrow{1})))$ completing $\lambda$ to be a $k$-cube of $X$. Thus $(g\circ f)(x)=h(x)=g(c(\overrightarrow{1}))$. Thus  $f(x)$ completes $f(\lambda)$ as another $k$-cube sharing the same $g$-image as $c$. But since $g$ has $k$-uniqueness, it implies that $f(x)=c(\overrightarrow{1})$. This finishes the proof.
\end{proof}

Recall that for a compact abelian group $A$, $\cD_s(A)$ denotes the Host-Kra cubespace with respect to the $s$-filtration:
$$A=A_0=A_1=\cdots=A_s\supseteq A_{s+1}=\{0\}.$$
By the relative weak structure theorem (Theorem \ref{RWST}), there exists a unique $\alpha \in C^k(\cD_s(K_n))$ such that $c_2'=\alpha.c_2$.
Note that $[\alpha, \alpha] \in C^{k+1}(\cD_s(K_n))$. Thus $[\alpha.c_1, \alpha.c_2]$ is a $(k+1)$-cube of $X$. gluing with $[c_2', c_3]$, we obtainthat  $[\alpha.c_1, c_3] \in C^{k+1}(X)$. In particular, $[\overline{c_1}, \overline{c_3}] \in C^k(Z^{(n)}_\infty)$. Thus $Z^{(n)}_\infty$ has the gluing property.
\end{proof}

 Denote by 
$\beta_n\colon Z \to Z^{(n)}_\infty$ 
the quotient map. By the definition of $Z^{(n)}_\infty$, $g$ uniquely factors through $\beta_n$ via a map 
$g^{(n)}\colon Z^{(n)}_\infty \to Y.$ Since $g$ has $(s+1)$-uniqueness, applying the universal replacement property in a similar way to the proof of Proposition \ref{inducing $s$-fibration}, we have $g^{(n)}$ has $(s+1)$-uniqueness again. Since $g$ is a fibration, by Proposition \ref{inner fibrant}, $\beta_n\colon Z \to Z^{(n)}_\infty$ is fibrant for cubes of dimension greater than $s$.  Applying the universal replacement property of $Z$, $\beta$ is fibrant for cubes of dimension less than $s+1$.  Thus $\beta$ is a fibration. 
By Proposition \ref{universal property}, $g^{(n)}$ is again a fibration and hence is an $s$-fibration. Applying the $(s+1)$-uniqueness of $g$ again, we see that $\beta$ has $(s+1)$-uniqueness and hence is an $s$-fibration. Moreover, we have
$$\pi_{g^{(n)}, s-1}(Z^{(n)}_\infty)=\pi_{g, s-1}(Z)=Z^{(0)}_\infty.$$
In summary,  we  have the commutative diagram:
$$\xymatrixcolsep{5pc}\xymatrix{
Z \ar[d]_{\beta_n} \ar[ddr]^g & \\
Z^{(n)}_\infty \ar[d] \ar@{-->}[dr]^{g^{(n)}} & \\
\pi_{g, s-1}(Z) \ar[r]^{g_{s-1}}  & Y.
}$$

By the inductive hypothesis, the $(s-1)$-fibration $g_{s-1}\colon \pi_{g, s-1}(Z) \to Y $ factors as an inverse limit of a sequence of Lie-fibered $(s-1)$-fibrations 
$\psi_{0, m}\colon  Z^{(0)}_m \to Y$ along with compatible fibrations $\psi_{m, \infty}\colon \pi_{g, s-1}(Z) \to Z^{(0)}_m$. In fact, since $g_{s-1}$ is an $(s-1)$-fibration, so are $\psi_{0, m}$ and $\psi_{m, \infty}$. Here it is convenient to denote $Y$ by $Z^{(0)}_0$ and $Z$ by $Z^{(\infty)}_\infty$.

To factor $g^{(n)}$ properly based on the factorization of $g_{s-1}$, we will introduce some key definitions stemming from  the following lemma, which a variant version of Proposition \ref{canonical map}. 
\begin{lemma} \label{relative shadow}
Let $X, Y, Z$ be three compact ergodic gluing cubspaces and $\varphi: X \to Y$  a fibration. Let $g\colon X \to Z$ and $h: Y \to Z$ be two $s$-fibrations such that $g=h\circ \varphi$. Then there exists a unique fibration $\psi\colon \pi_{g, s-1}(X) \to \pi_{h, s-1}(Y)$ such that the following diagram 
$$\xymatrix{
X \ar[r]^\varphi \ar[d]_{\pi_{g, s-1}} \ar@/^3pc/[drr]^g  & Y \ar[dr]^h \ar[d]_{\pi_{h, s-1}} & \\
\pi_{g, s-1}(X) \ar@{-->}[r]^\psi \ar@/_2pc/[rr]^{g_{s-1}} & \pi_{h, s-1}(Y) \ar[r]^{h_{s-1}} & Z 
}$$
commutes.
\end{lemma}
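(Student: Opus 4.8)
The plan is to realize $\psi$ as the map induced by $\pi_{h,s-1}\circ\varphi$ on the quotient cubespace $\pi_{g,s-1}(X)=X/\sim_{g,s-1}$, and then to obtain the fibration property for $\psi$ not by verifying corner-completion directly but by invoking the universal property of fibrations (Proposition \ref{universal property}).

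First I would check that $\varphi$ carries $\sim_{g,s-1}$ into $\sim_{h,s-1}$, so that the descent makes sense. Suppose $x\sim_{g,s-1}x'$, i.e. $x\sim_{s-1}x'$ in $X$ and $g(x)=g(x')$. Picking $c,c'\in C^{s}(X)$ witnessing $x\sim_{s-1}x'$ as in Definition \ref{canonical equivalence}, and using that $\varphi$ is a cubespace morphism, the configurations $\varphi\circ c,\varphi\circ c'$ lie in $C^{s}(Y)$, agree on $\llcorner^{s}$, and take the values $\varphi(x),\varphi(x')$ at $\overrightarrow{1}$, so $\varphi(x)\sim_{s-1}\varphi(x')$ in $Y$; moreover $h(\varphi(x))=g(x)=g(x')=h(\varphi(x'))$ since $g=h\circ\varphi$. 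Hence $\varphi(x)\sim_{h,s-1}\varphi(x')$. By Proposition \ref{inducing $s$-fibration}, applied to $g$ and to $h$ with degree $s-1$, the relations $\sim_{g,s-1}$ and $\sim_{h,s-1}$ are closed equivalence relations and the quotient maps $\pi_{g,s-1}$, $\pi_{h,s-1}$ are fibrations; in particular $\pi_{g,s-1}(X)$ and $\pi_{h,s-1}(Y)$ are compact metric cubespaces, and there is a unique map $\psi\colon\pi_{g,s-1}(X)\to\pi_{h,s-1}(Y)$ with $\psi\circ\pi_{g,s-1}=\pi_{h,s-1}\circ\varphi$. It is continuous because $\pi_{g,s-1}$, being a continuous surjection of a compact space onto a Hausdorff space, is a quotient map.

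Next I would show $\psi$ is a fibration. The composite $\psi\circ\pi_{g,s-1}=\pi_{h,s-1}\circ\varphi$ is a fibration, being the composition of the fibration $\varphi$ (a hypothesis) with the fibration $\pi_{h,s-1}$; and $\pi_{g,s-1}$ is itself a fibration. Proposition \ref{universal property}, applied to the fibrations $\pi_{g,s-1}$ and $\psi\circ\pi_{g,s-1}$, then yields that $\psi$ is a fibration. Finally, the remaining commutativity and the uniqueness are formal: precomposing the desired identity $h_{s-1}\circ\psi=g_{s-1}$ with the surjection $\pi_{g,s-1}$ reduces it to $h\circ\varphi=g$, which holds; and any $\psi'$ making the diagram commute satisfies $\psi'\circ\pi_{g,s-1}=\pi_{h,s-1}\circ\varphi=\psi\circ\pi_{g,s-1}$, hence $\psi'=\psi$ since $\pi_{g,s-1}$ is surjective.

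I do not expect a genuine obstacle in this argument. The only points that require some care are to route the fibration property of $\psi$ through the universal property rather than trying to lift corners of the quotient cubespace $\pi_{g,s-1}(X)$ by hand, and to remember that $\pi_{g,s-1}(X)$ and $\pi_{h,s-1}(Y)$ appearing in the diagram are the full quotient cubespaces $X/\sim_{g,s-1}$ and $Y/\sim_{h,s-1}$ equipped with their quotient cube structures.
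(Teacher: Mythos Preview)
Your argument is correct and follows essentially the same route as the paper's proof: define $\psi$ as the descent of $\pi_{h,s-1}\circ\varphi$ through the quotient $\pi_{g,s-1}$, invoke the universal property of fibrations (Proposition~\ref{universal property}) to conclude $\psi$ is a fibration, and deduce $h_{s-1}\circ\psi=g_{s-1}$ by precomposing with the surjection $\pi_{g,s-1}$. Your write-up simply makes explicit the verification that $\varphi$ carries $\sim_{g,s-1}$ into $\sim_{h,s-1}$, which the paper leaves as ``one can directly check''.
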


\begin{proof}
Using $g=h\circ \varphi$, one can directly check that $\pi_{h, s-1}\circ \varphi$ factors through $\pi_{g, s-1}$ by a unique map $\psi$ such that $\pi_{h, s-1}\circ \varphi=\psi\circ \pi_{g, s-1}$. By the universal property of fibrations \cite[Lemma 7.8]{GMVI}, $\psi$ is a fibration. From
$$h_{s-1}\circ \psi \circ \pi_{g, s-1}=h_{s-1}\circ \pi_{h, s-1}\circ \varphi=h\circ \varphi=g=g_{s-1}\circ \pi_{g, s-1},$$
we obtain$h_{s-1}\circ \psi=g_{s-1}$. This shows that the diagram is commutative.
\end{proof}

\begin{definition}
With the setup in Lemma \ref{relative shadow}, we say that $\psi$ is the {\bf shadow}\index{shadow} of $\varphi$. Moreover, we say $\varphi$ is {\bf horizontal}\index{horizontal} if it satisfies one of the following equivalent conditions:
\begin{enumerate}
    \item $\varphi(x)\neq \varphi(x')$ for any $x \neq x' \in X$ with $x\sim_{g, s-1} x'$;\\
    \item for any $x \in X$,  the appropriate restriction of $\varphi$ induces a bijection between $\pi^{-1}_{g, s-1}(\pi_{g, s-1}(x))$ and $\pi^{-1}_{h, s-1}(\pi_{h, s-1}(\varphi(x)))$;
    \item the equivalence relation $\sim_{\varphi, s-1}$ is trivial.
\end{enumerate}

\end{definition}

Now we are ready to formulate the relative version of \cite[Proposition 2.5]{GMVIII}.

\begin{proposition} \label{middle fibration}
Let $Z^{(n)}_\infty, \beta_n, g^{(n)}, \psi_{m, \infty}$, etc. be as above. There exists a strictly increasing sequence $\M_1, \M_2, \ldots$ of positive integers satisfying the following. For each $n \in \bN$ and $m \geq \M_n$, there is a compact ergodic gluing cubespace $Z^{(n)}_m$ and an $s$-fibration 
$$h^{(n)}_m\colon Z^{(n)}_m \to Y$$
satisfying that:
\begin{enumerate}
    \item $\psi_{0, m}$ is the canonical $(s-1)$-th factor of $h^{(n)}_m$ with top structure group $A_n$;
    \item there is a  horizontal $s$-fibration $\varphi^{(n)}_m\colon Z^{(n)}_\infty \to Z^{(n)}_m$  such that $g^{(n)}=h_m^{(n)} \circ \varphi^{(n)}_m$ and $\psi_{m, \infty}$ is the relative shadow of $\varphi^{(n)}_m$;
    \item  If $m_1 \leq m_2$ and $n_1 \leq n_2$ are such that $Z^{(n_1)}_{m_1}$ and $Z^{(n_2)}_{m_2}$ are both defined, then the fibers of $\varphi^{(n_2)}_{m_2}\circ \beta_{n_2}$ refine the fibers of $\varphi^{(n_1)}_{m_1}\circ \beta_{n_1}$.
\end{enumerate}
In summary, we have  a  commutative diagram  
$$\xymatrixcolsep{5pc}\xymatrix{
Z^{(n)}_\infty \ar@{-->}[r]^{\varphi^{(n)}_m} \ar[d]_{A_n} \ar@/^4pc/[drr]^{g^{(n)}}  & Z^{(n)}_m \ar@{-->}[dr]^{h^{(n)}_m} \ar@{-->}[d]_{A_n} & \\
Z^{(0)}_\infty \ar[r]^{\psi_{m, \infty}} \ar@/_2pc/[rr]^{g_{s-1}} & Z^{(0)}_m \ar[r]^{\psi_{0, m}} & Y.
}$$
\end{proposition}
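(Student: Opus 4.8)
The plan is to relativize the proof of \cite[Proposition 2.5]{GMVIII}. The engine will be the cocycle description of degree-$s$ extensions together with a ``continuity of cohomology under inverse limits'' statement extracted from the relative rigidity theorem (Theorem \ref{fiber cocycle theorem}). First I would record, as the fibers-relative analogue of the absolute classification, the following: the $s$-fibrations over $Y$ whose canonical $(s-1)$-th factor is a fixed $(s-1)$-fibration $q\colon W\to Y$ and whose top structure group is a fixed compact abelian group $B$ are classified, up to isomorphism over $Y$ respecting the $(s-1)$-th factor and the $B$-action, by the group of continuous $(s+1)$-cocycles on the fibers of $q$ valued in $B$ modulo coboundaries $\partial^{s+1}h$; the extension attached to a cocycle $\sigma$ is a twisted product $W\times_\sigma\cD_s(B)$. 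This is a routine relativization of the absolute construction using Theorem \ref{RWST} and the universal replacement property. Applying it to $g^{(n)}\colon Z^{(n)}_\infty\to Y$ (with $q=g_{s-1}$ and $B=A_n$) produces a cocycle $\sigma^{(n)}$ on the fibers of $g_{s-1}$ with $Z^{(n)}_\infty\cong Z^{(0)}_\infty\times_{\sigma^{(n)}}\cD_s(A_n)$ over $Y$.

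Now fix $n$. Since $Z^{(0)}_\infty=\varprojlim_m Z^{(0)}_m$ as $(s-1)$-fibrations over $Y$ with surjective connecting fibrations, the fiber cubes satisfy $C^{s+1}_{g_{s-1}}(Z^{(0)}_\infty)=\varprojlim_m C^{s+1}_{\psi_{0,m}}(Z^{(0)}_m)$, and I would prove the key lemma that the natural map
$$\varinjlim_{m}\,H^{s+1}_{\psi_{0,m}}\bigl(Z^{(0)}_m,A_n\bigr)\ \longrightarrow\ H^{s+1}_{g_{s-1}}\bigl(Z^{(0)}_\infty,A_n\bigr)$$
between groups of $(s+1)$-cocycles on fibers modulo coboundaries is an isomorphism. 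For surjectivity: a continuous cocycle on $Z^{(0)}_\infty$ is uniformly approximated by a near-cocycle factoring through some $Z^{(0)}_m$, which after a small correction on a later stage becomes an honest cocycle $\widetilde\sigma_m$ pulled back from a finite stage; then $\sigma^{(n)}-\psi_{m,\infty}^\ast\widetilde\sigma_m$ is a genuine cocycle of arbitrarily small sup norm, hence of small fiberwise oscillation, so Theorem \ref{fiber cocycle theorem} expresses it as $\partial^{s+1}h$ with $h$ of small norm; injectivity is handled by the same circle of ideas. Consequently, once $m$ is large enough (depending on $n$), $\sigma^{(n)}$ is cohomologous to $\psi_{m,\infty}^\ast\widetilde\sigma_m$ for some cocycle $\widetilde\sigma_m$ on the fibers of $\psi_{0,m}$.

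With $\widetilde\sigma_m$ in hand, I would set $Z^{(n)}_m:=Z^{(0)}_m\times_{\widetilde\sigma_m}\cD_s(A_n)$ with structure map $h^{(n)}_m\colon Z^{(n)}_m\to Y$. This is an $s$-fibration between compact ergodic gluing cubespaces — compactness and ergodicity are immediate, and the gluing property is inherited exactly as in the argument used above for $Z^{(n)}_\infty$ — with canonical $(s-1)$-th factor $\psi_{0,m}$ and top structure group $A_n$, which is (1). The coboundary $h$ from the previous paragraph yields an isomorphism $Z^{(n)}_\infty\cong Z^{(0)}_\infty\times_{\psi_{m,\infty}^\ast\widetilde\sigma_m}\cD_s(A_n)$ over $Y$; composing it with the tautological map of twisted products induced by $\psi_{m,\infty}$ gives an $A_n$-equivariant $s$-fibration $\varphi^{(n)}_m\colon Z^{(n)}_\infty\to Z^{(n)}_m$ with $g^{(n)}=h^{(n)}_m\circ\varphi^{(n)}_m$ and relative shadow $\psi_{m,\infty}$ by construction. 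Since $A_n$ acts freely on $Z^{(n)}_m$ and the $\sim_{g^{(n)},s-1}$-classes of $Z^{(n)}_\infty$ are precisely the $A_n$-orbits, $\varphi^{(n)}_m$ is injective on each such class, i.e.\ horizontal; this is (2).

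It remains to choose everything coherently. I would run the construction inductively in $n$: because $A=\varprojlim_n A_n$ one has $K_{n_1}\supseteq K_{n_2}$ for $n_1\le n_2$, so $\sigma^{(n_1)}$ is the pushforward of $\sigma^{(n_2)}$ along $A_{n_2}\to A_{n_1}$; one then picks $\M_n>\M_{n-1}$ large enough that the second paragraph applies for all $m\ge\M_n$ with the approximation finer than the rigidity threshold attached to the Lie group $A_n$, and chooses the cocycles $\widetilde\sigma^{(n)}_m$ compatibly — say, fix $\widetilde\sigma^{(n)}_{\M_n}$, put $\widetilde\sigma^{(n)}_m:=\psi_{\M_n,m}^\ast\widetilde\sigma^{(n)}_{\M_n}$ for $m\ge\M_n$, and require agreement under $A_{n_2}\to A_{n_1}$ and the connecting maps $\psi_{m_1,m_2}\colon Z^{(0)}_{m_2}\to Z^{(0)}_{m_1}$. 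This produces compatible structure-group quotients $A_{n_2}\to A_{n_1}$ and connecting $s$-fibrations $Z^{(n_2)}_{m_2}\to Z^{(n_1)}_{m_1}$, whence the fibers of $\varphi^{(n_2)}_{m_2}\circ\beta_{n_2}$ refine those of $\varphi^{(n_1)}_{m_1}\circ\beta_{n_1}$ and the displayed diagram commutes, giving (3). I expect the main obstacle to be exactly the cohomology-continuity lemma together with this bookkeeping: the rigidity constant in Theorem \ref{fiber cocycle theorem} shrinks as $A_n$ grows, so the quantifier order (first $n$, then a large $\M_n$, then $m\ge\M_n$) must be respected, and the approximating cocycles at the different pairs $(n,m)$ must be selected simultaneously compatibly rather than one at a time; verifying that the twisted-product construction and the notions of relative shadow and horizontality are functorial under these choices is the remaining work.
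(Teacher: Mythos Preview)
Your approach is genuinely different from the paper's. The paper does not go through a cocycle classification; it relativizes the geometric ``straight class'' machinery of \cite[Section 2]{GMVIII}. For each fixed $n$ it applies Lemma \ref{constructed space} to the $s$-fibration $g^{(n)}\colon Z^{(n)}_\infty\to Y$, obtaining a $\delta_n>0$ such that whenever $\psi_{m,\infty}$ is a $\delta_n$-embedding the relation $\approx_{\psi_{m,\infty}}$ on $Z^{(n)}_\infty$ (whose classes are the straight $\psi_{m,\infty}$-classes, supplied by Lemma \ref{enough straight classes}) is closed, and one simply puts $Z^{(n)}_m:=Z^{(n)}_\infty/\!\approx_{\psi_{m,\infty}}$. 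Choosing $M_n$ so that $\psi_{m,\infty}$ is a $\delta_n$-embedding for all $m\ge M_n$ gives (1) and (2) directly from Lemma \ref{constructed space}; the refinement (3) follows from Lemma \ref{close class} as in the absolute setting.

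Your route has a substantive gap. The cocycle classifying an $s$-fibration over $Y$ with prescribed $(s-1)$-th factor $q\colon W\to Y$ and top structure group $B$ is defined on all of $C^{s+1}(W)$, not on the fiber cubes $C^{s+1}_q(W)$: the full $(s+1)$-cube structure of the extension is not determined by its restriction to fibers of $q$. Accordingly, the continuity-of-cohomology statement you need concerns full-cube cocycles on $Z^{(0)}_\infty$, and its proof requires that a small cocycle $\rho\colon C^{s+1}(Z^{(0)}_\infty)\to A_n$ be a coboundary. Theorem \ref{fiber cocycle theorem} only treats fiber cocycles, and the absolute \cite[Theorem 5.1]{GMVII} is stated for nilspaces, whereas $Z^{(0)}_\infty$ is merely the domain of an $(s-1)$-fibration over the gluing cubespace $Y$. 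So you would have to establish a new full-cube rigidity theorem in that generality. The paper's straight-class construction is designed to avoid precisely this: the only rigidity it invokes is inside Lemma \ref{close class}, where two straight classes over a \emph{single} fiber $\psi^{-1}(w)$ are compared, and that fiber is an honest nilspace by Proposition \ref{fiber of fibration}, so \cite[Theorem 5.2]{GMVII} applies verbatim.
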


\begin{proof}[{\bf Proof of Theorem \ref{space as inverse limit}.}]
Using the notation of Proposition \ref{middle fibration}, we define $Z_n=Z^{(n)}_{\M_n}$ and the fibration $h_n=h^{(n)}_{\M_n}$. Note that the top structure group of $h_n$ is the Lie group $A_n$. By the induction hypothesis, the canonical $(s-1)$-factor $\psi_{0, \M_n}$ of $h_n$ is Lie-fibered. Combining these two facts, we have that $h_n$ is Lie-fibered. 

Define $p_{n, \infty}=\varphi^{(n)}_{\M_n}\circ \beta_n$. Since both $\varphi^{(n)}_{\M_n}$ and $\beta_n$ are $s$-fibrations, so is $p_{n, \infty}$. Then for every $n < \ell < \infty$, the fibers of $p_{\ell, \infty}$ refine the fibers of $p_{n, \infty}$. Thus by Proposition \ref{universal property}, $p_{n, \infty}$ and $p_{\ell, \infty}$ induce a unique fibration $p_{n, \ell}$ such that $p_{n, \infty}=p_{n, \ell}\circ p_{\ell, \infty}$. Moreover, we obtain a commutative diagram
$$ \xymatrixcolsep{5pc}\xymatrix{
 Z \ar[r]  \ar[d]^g  \ar[r]^{p_{\ell, \infty}} \ar@/^2pc/[rr]^{p_{n, \infty}} & Z_\ell \ar@{-->}[r]^{p_{n, \ell}} \ar[dl]_{h_\ell} & Z_n \ar[dll]^{h_n} \\
 Y.
}$$

For every $0 \leq n < \ell < o \leq \infty $, the condition  $p_{n, \ell}\circ p_{\ell, o}=p_{n, o}$ may be verified in a similar way to the absolute setting. We verify that the inverse system $Z_n$ separates points of $Z$. Let $z, z'$ be two distinct points of $Z$. If $\pi_{g, s-1}(z)\neq \pi_{g, s-1}(z')$, then 
$\psi_{\M_n, \infty}\circ\pi_{g, s-1}(z) \neq \psi_{\M_n, \infty}\circ \pi_{g, s-1}(z')$ as $n$ is large enough. Since $\psi_{\M_n, \infty}$ is the relative shadow of $\varphi^{(n)}_{\M_n}$, we have
$$\psi_{\M_n, \infty}\circ\pi_{g, s-1}=\pi_{h_n, s-1}\circ\varphi^{(n)}_{\M_n}\circ \beta_n=\pi_{h_n, s-1}\circ p_{n, \infty}.$$
It follows that $p_{n, \infty}(z)\neq p_{n, \infty}(z')$.

If $\pi_{g, s-1}(z) = \pi_{g, s-1}(z')$, then there is a unique $a \in A_s(g)$ such that $z'=az$. Note that $a \notin K_n$ as $n$ is large enough. Thus  $\beta_n(z)\neq \beta_n(z')$. Since $\varphi_{\M_n}^{(n)}$ is horizontal, we get
$$p_{n, \infty}(z)=\varphi^{(n)}_{\M_n}\circ \beta_n(z) \neq \varphi^{(n)}_{\M_n}\circ \beta_n(z')=p_{n, \infty}(z').$$

\end{proof}

The following is an analogue of \cite[Theorem 1.27]{GMVIII} \cite[Theorem 4]{ACS12}. We will give the proof in Section 5.3.
\begin{theorem} \label{endow cubestructure}
Let $X, Y, Z$ be three compact ergodic gluing cubespaces. Suppose that $\varphi\colon X \to Y$
is a fibration, and $g\colon X \to Z$ and $h\colon Y \to Z$ are two Lie-fibered $s$-fibrations  such that $g=h\circ \varphi$. Then for every $i\geq 1$, $\varphi$ induces a surjective continuous group homomorphism 
$$\Phi\colon \Aut_i^\circ(g) \to \Aut_i^\circ(h)$$ such that
$\Phi (u)\circ \varphi=\varphi \circ u$
for all $u \in \Aut_i^\circ(g) $. In summary,the following diagram is commutative:
$$\xymatrixcolsep{5pc}\xymatrix{
X \ar[r]^u \ar[d]_\varphi   & X \ar[dr]^g \ar[d]^\varphi & \\
Y \ar@{-->}[r]^{\Phi(u)}  & Y \ar[r]^h & Z.
}$$
\end{theorem}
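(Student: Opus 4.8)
The plan is to build $\Phi$ fiberwise from the absolute structure theorem for fibrations and then to obtain surjectivity from the relative lifting technology developed in Section \ref{sec:Strongly connected fibers}, inducting on the degree $s$.

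First I would check that for each $z \in Z$ the restriction $\varphi|_{g^{-1}(z)}\colon g^{-1}(z) \to h^{-1}(z)$ is an $s$-fibration between Lie-fibered ergodic $s$-nilspaces. Ergodicity of the fibers is inherited from $X$ and $Y$ (a $1$-cube of $X$ with both vertices in a fiber is a $1$-cube of that fiber), they are $s$-nilspaces by Proposition \ref{fiber of fibration}, and Lie-fiberedness of $g$ and $h$ passes to the fibers (their structure groups being among the $A_k(g)$, resp. $A_k(h)$; see also Proposition \ref{prop:structure groups}). From $g = h\circ\varphi$ one gets $\varphi(g^{-1}(z)) = h^{-1}(z)$; and if $\lambda$ is a $k$-corner of $g^{-1}(z)$ whose $\varphi$-image completes to some $c \in C^k(h^{-1}(z)) \subseteq C^k(Y)$, then $k$-completion of $\varphi$ supplies $c_0 \in C^k(X)$ with $c_0|_{\llcorner^k} = \lambda$ and $\varphi(c_0) = c$, so $g(c_0) = h(c)$ is the constant configuration $z$ and hence $c_0 \in C^k(g^{-1}(z))$; thus $\varphi|_{g^{-1}(z)}$ is a fibration, and $(s+1)$-uniqueness is automatic since $g^{-1}(z)$ is an $s$-nilspace.

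Next I would apply the absolute statement \cite[Theorem 1.27]{GMVIII} (see also \cite[Theorem 4]{ACS12}) to $\varphi|_{g^{-1}(z)}$: for each $i \geq 1$ it yields a surjective continuous homomorphism $\Phi_z\colon \Aut_i^\circ(g^{-1}(z)) \to \Aut_i^\circ(h^{-1}(z))$ with $\Phi_z(w)\circ\varphi = \varphi\circ w$. Since restriction $\Aut_i(g) \to \Aut_i(g^{-1}(z))$ is continuous and carries the identity component into the identity component, $\Phi_z(u|_{g^{-1}(z)})$ is defined for every $u \in \Aut_i^\circ(g)$, and the intertwining relation shows that $\varphi(u(x))$ depends only on $\varphi(x)$. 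I would then set $\Phi(u)(y) := \varphi(u(x))$ for any $x \in \varphi^{-1}(y)$; this is a homeomorphism of $Y$ (its inverse is $\Phi(u^{-1})$, continuity follows since $\varphi$ is a quotient map and $\Phi(u)\circ\varphi = \varphi\circ u$), whose restriction to $h^{-1}(z)$ is $\Phi_z(u|_{g^{-1}(z)}) \in \Aut_i(h^{-1}(z))$, so $\Phi(u) \in \Aut_i(h)$ and $\Phi(u)\circ\varphi = \varphi\circ u$. That $\Phi$ is a continuous group homomorphism with $\Phi(\id) = \id$ is then routine, and since $\Aut_i^\circ(g)$ is connected its image lies in $\Aut_i^\circ(h)$.

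The remaining point, which I expect to be the main obstacle, is surjectivity of $\Phi$ onto $\Aut_i^\circ(h)$: each $\Phi_z$ is onto, but the fiberwise preimages need not assemble into a single homeomorphism of $X$. Here I would use that $\Aut_1^\circ(g)$ and $\Aut_1^\circ(h)$, hence $\Aut_i^\circ(g)$ and $\Aut_i^\circ(h)$, are Lie groups (Corollary \ref{Lieness}), so it suffices to realize every sufficiently small $v \in \Aut_i^\circ(h)$ as $\Phi(u)$ for a small $u \in \Aut_i^\circ(g)$; equivalently, to lift a small fiberwise translation $v$ of $Y$ through $\varphi$ to a small $u \in \Aut_i(g)$ with $\varphi\circ u = v\circ\varphi$. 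This mirrors the surjectivity argument in the absolute case, but with the relative lifting lemmas (Lemmas \ref{preliftting} and \ref{repairing}, Theorems \ref{openness} and \ref{fiber cocycle theorem}) in place of their absolute counterparts and with an induction on $s$ driven by the relative weak structure theorem (Theorem \ref{RWST}) for $g$ and $h$: in the inductive step one passes to the canonical $(s-1)$-th factors $g_{s-1}, h_{s-1}$, relates them by the shadow map of Lemma \ref{relative shadow}, lifts the induced translation through $\pi_\ast$ using its openness (Theorem \ref{openness}), and corrects by an element of the top structure group, the correction being furnished by a lift-and-repair argument in the spirit of Lemmas \ref{preliftting} and \ref{repairing} together with the cocycle-vanishing statement of Theorem \ref{fiber cocycle theorem}. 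The crux throughout is uniformity over the base $Z$, which is precisely what the fiberwise (relative) versions of these lemmas are designed to deliver; granting them, the diagram chase is the one from \cite{GMVIII}.
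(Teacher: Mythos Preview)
Your construction of $\Phi$ in the forward direction is correct and is packaged differently from the paper: you reduce to the absolute theorem on each fiber $g^{-1}(z)\to h^{-1}(z)$, whereas the paper derives everything from a single local statement (Theorem~\ref{pushforward and backward}) proved by first factoring $\varphi$ as a vertical fibration followed by a horizontal one (Proposition~\ref{fibration factorization}) and then treating the two cases separately (Propositions~\ref{vertical pushforward} and~\ref{horizontal pushforward}). Your route is pleasant in that it outsources the hard work to \cite{GMVIII}, but it is less self-contained, and of course the absolute theorem you quote is itself proved via the same vertical/horizontal split.

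The surjectivity argument, however, has a genuine gap. Your plan is: push $v\in\Aut_i^\circ(h)$ down to $\bar v\in\Aut_i(h_{s-1})$, use the inductive hypothesis for the shadow $\psi$ to find $\bar u\in\Aut_i(g_{s-1})$ with $\psi\circ\bar u=\bar v\circ\psi$, lift $\bar u$ through $\pi_\ast$ by Theorem~\ref{openness} to some $u_0\in\Aut_i(g)$, and then ``correct by an element of the top structure group''. The problem is the last step. After lifting you only know that $\varphi\circ u_0$ and $v\circ\varphi$ agree modulo $\pi_{h,s-1}$, so they differ by a continuous $A_s(h)$-valued function on $X$; you need to absorb this discrepancy by modifying $u_0$ through an $A_s(g)$-valued function in a way that still lands in $\Aut_i(g)$. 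The tools you cite (Lemmas~\ref{preliftting}, \ref{repairing}, Theorem~\ref{fiber cocycle theorem}) are all about the bundle $Z\to\pi_{g,s-1}(Z)$, i.e.\ about lifting and repairing through $\pi_{g,s-1}$, not through $\varphi$; they say nothing about how the $A_s(g)$-action on $X$ relates, via $\varphi$, to the $A_s(h)$-action on $Y$. This relation is exactly what the vertical/horizontal decomposition organizes: for vertical $\varphi$ every $u\in\Aut_1(g)$ automatically preserves $\varphi$-fibers (Lemma~\ref{vertical independence}), while for horizontal $\varphi$ the fibers are straight $\psi$-classes (Lemma~\ref{fiber class}) and one controls them via Lemma~\ref{close class}. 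The paper's pullback (Proposition~\ref{pushbackward}) then follows the argument of \cite[Proposition~3.9]{GMVIII} using Lemmas~\ref{identity orbit is open} and~\ref{discreteness}. Without the decomposition of Proposition~\ref{fibration factorization}, your inductive scheme does not close.
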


\begin{proof}[{\bf Proof of Theorem \ref{relative inverse limit}}]
Statement $(1)$ and $(2)$ have been proven in Theorem \ref{space as inverse limit}. Let us prove statement $(3)$. The case $s=0$ is trivial since $g$ is clearly a cubespace isomorphism and each fiber of $g$ is simply a singleton. We now assume $s \geq 1$. By Theorem \ref{space as inverse limit}, since the diagram commutes, we can first define $g^{-1}(g(z))$ as an inverse limit of $h_n^{-1}(g(z))$ by restricting the projection map $Z \to Z_n$. By Theorem \ref{relative nilmanifold},  $h_n^{-1}(g(z))=h_n^{-1}(h_n(z_n))$ is isomorphic to $\Aut_1^\circ(h_n)/\Stab(z_n)$. Thus
$$g^{-1}g(z)\cong \varprojlim (h_n^{-1}h_n(z_n)) \cong \varprojlim (\Aut_1^\circ(h_n)/\Stab(z_n)). $$

To see that the above isomorphism is a  cubespace isomorphism, apply Theorem \ref{endow cubestructure} to the fibration $p_{n-1, n}$ and Lie-fibered $s$-fibrations $h_n$ and $h_{n-1}$, to obtain a surjective continuous homomorphism 
$$\Phi_{{n-1, n} }\colon \Aut_i^\circ(h_n) \to \Aut_i^\circ(h_{n-1}).$$
This induces an inverse limit $\varprojlim (\HK^k(\Aut_\bullet^\circ(h_n))/\Stab(z_n))$ for every $k \geq 0$. By Theorem \ref{relative nilmanifold}, $C^k(h_n^{-1}(h_n(z_n)))$ is isomorphic to $\HK^k(\Aut_\bullet^\circ(h_n))/\Stab(z_n)$. Thus we obtain the desired cubespace isomorphism.
\end{proof}

The following proposition gives a a useful condition for verifying when a nilspace fiber is strongly connected.

\begin{proposition} \label{prop:structure groups} Let $f \colon X \to Y$ be  a fibration of degree at most $d$ with structure groups $A_1,\ldots, A_d$,  then for all $y\in Y$,  the subcubespaces $f^{-1}(y)$ are nilspaces  of degree at most $d$ with structure groups $A_1,\ldots, A_d$.\end{proposition}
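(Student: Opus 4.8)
The plan is to realise the canonical tower of Theorem~\ref{WST} for the nilspace $f^{-1}(y)$ as the restriction, to the fibre over $y$, of the relative tower of $f$ furnished by Theorem~\ref{RWST}; once these two towers are matched level by level, the structure groups must coincide. To prepare the ground, set $F:=f^{-1}(y)$ and observe that $F$ is a compact ergodic nilspace of degree at most $d$: it is a $d$-nilspace by Proposition~\ref{fiber of fibration}, it is compact as a closed subset of $X$, and it is ergodic because $X$ is (every pair of points of $F$ is a $1$-cube of $X$ taking values in $F$, hence a $1$-cube of the subcubespace $F$). Being a nilspace, $F$ is gluing, so Theorem~\ref{WST} applies and produces a tower $F=F/\sim_d\to\cdots\to F/\sim_0\cong\{\ast\}$ with certain structure groups $B_1,\dots,B_d$; the goal is to prove $B_k\cong A_k$ for each $k$.

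The technical core is the comparison, for points of $F$, of the canonical relation $\sim_k$ of the subcubespace $F$ with the relative relation $\sim_{f,k}$ on $X$. I would first record an auxiliary fact valid in any compact gluing cubespace $Z$: for $x,x'\in Z$ one has $x\sim_k x'$ if and only if the configuration $\lambda$ defined by $\lambda(\overrightarrow{1})=x'$ and $\lambda(\omega)=x$ for $\omega\neq\overrightarrow{1}$ belongs to $C^{k+1}(Z)$. For the ``only if'' direction, if $x\sim_k x'$ then $\lambda$ has the same image as the constant $(k+1)$-cube at $x$ under the quotient $Z\to Z/\sim_k$, whence $\lambda\in C^{k+1}(Z)$ by Proposition~\ref{URP} applied with $s=k$; for ``if'', $\lambda$ and that constant cube witness $x'\sim_k x$, and $\sim_k$ is an equivalence relation on compact gluing cubespaces. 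Applying this fact both in $X$ and in $F$, and using that $\lambda$ takes values in $\{x,x'\}\subseteq F$ so that $\lambda\in C^{k+1}(X)$ precisely when $\lambda\in C^{k+1}(F)$, I conclude that for $x,x'\in F$ the following are equivalent: $x\sim_k x'$ in $F$; $x\sim_k x'$ in $X$; $x\sim_{f,k}x'$ (the last two being equivalent because $f(x)=f(x')=y$ automatically). Consequently $\pi_{f,k}$ restricted to $F$ descends to a continuous bijection $F/\sim_k\to(X/\sim_{f,k})_y$, where $(X/\sim_{f,k})_y$ is the fibre over $y$ of the induced fibration $X/\sim_{f,k}\to Y$; checking that the cubes of $(X/\sim_{f,k})_y$ are exactly the images of those cubes of $X$ whose $f$-image is the constant configuration at $y$ — equivalently, the images of the cubes of $F$ — upgrades this bijection to a cubespace isomorphism, and these isomorphisms are manifestly compatible with the downward maps of the two towers.

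It remains to transport the structure groups. In the tower of Theorem~\ref{RWST} the group $A_k=A_k(f)$ acts on $X/\sim_{f,k}$ with orbits equal to the fibres of the bundle map $X/\sim_{f,k}\to X/\sim_{f,k-1}$; since that map and $X/\sim_{f,k-1}\to Y$ are compatible over $Y$, these fibres lie over single points of $Y$, so the $A_k$-action preserves the fibres over $Y$, and restricting the $A_k$-principal bundle $X/\sim_{f,k}\to X/\sim_{f,k-1}$ to the $A_k$-saturated subset $(X/\sim_{f,k})_y$ yields an $A_k$-principal bundle $(X/\sim_{f,k})_y\to(X/\sim_{f,k-1})_y$. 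Transporting along the cubespace isomorphisms of the previous step, $F/\sim_k\to F/\sim_{k-1}$ becomes an $A_k$-principal bundle, so $B_k\cong A_k$ by uniqueness of the structure groups of a nilspace. The step I expect to be the main obstacle is the auxiliary fact together with the resulting identification $\sim_k|_F=\sim_{f,k}|_F$, which is precisely what makes the fibre's tower a literal restriction of the relative tower; the principal-bundle bookkeeping of the last step and the invocation of uniqueness of structure groups are by comparison routine.
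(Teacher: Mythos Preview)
Your proof is correct and follows essentially the same route as the paper's: both arguments hinge on identifying, for points $x,x'\in F=f^{-1}(y)$, the canonical relation $\sim_k$ computed in the subcubespace $F$ with the relative relation $\sim_{f,k}$ on $X$, and then reading off that the $A_k$-orbits coincide with the fibres of $F/\sim_k\to F/\sim_{k-1}$.

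The main difference is one of explicitness. The paper argues directly at the level of orbits: it quotes the construction of $A=A_d$ from \cite{GMVI} to get $f(ax)=f(x)$ and $ax\sim_{d-1}x$, hence $Ax\subseteq p^{-1}(p(x))$, and then invokes the relative weak structure theorem for the reverse inclusion; the passage between ``$\sim_{d-1}$ in $X$'' and ``$\sim_{d-1}$ in $F$'' is left implicit. You instead isolate this passage as your auxiliary fact (the $\llcorner^{k+1}(x,x')$ characterisation of $\sim_k$, which is the content of Proposition~\ref{alternative} in the absolute case), use it to prove the clean statement $\sim_k|_F=\sim_{f,k}|_F$, and then upgrade this to a level-by-level cubespace isomorphism $F/\sim_k\cong (X/\sim_{f,k})_y$ of the two towers. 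Your version is more systematic and yields the slightly stronger conclusion that the entire tower of $F$ is the literal restriction over $y$ of the relative tower of $f$; the paper's version is shorter and settles for matching the orbits of the top structure group, asserting the lower levels follow by the same argument.
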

\begin{proof}Recall that for each $k \geq 0$ and $y \in Y$, $C^k(f^{-1}(y))$ is given by the restriction $C^k(X)\cap (f^{-1}(y))^{\{0,1\}^k}$. Since $f$ is a fibration, for each $k$-corner $\lambda$ of $f^{-1}(y)$, $f(\lambda)$ can be completed as a constant $k$-cube, we have $\lambda$ can be completed as a cube of $f^{-1}(y)$. Thus $f^{-1}(y)$ has $k$-completion. Since $f$ has $(d+1)$-uniqueness, it guarantees that $f^{-1}(y)$ has $(d+1)$-uniqueness. This proves that $f^{-1}(y)$ is a nilspace of degree at most $d$.
Now we show the top structure group of $f^{-1}(y)$ is $A_d$. The case  for other structure groups use the same argument which we omits. Set $A_d=A$ and $y=f(x)$ for some $x \in X$. By the construction of $A$ in the proof of \cite[Theorem 7.19]{GMVI}, for every $a \in A$, we have $f(ax)=f(x)$ and $ax\sim_{d-1} x$. Let $p \colon f^{-1}(y) \to f^{-1}(y)/\sim_{d-1}$ be the canonical projection map. It follows that $f^{-1}(y)$ is $A$-invariant and moreover $Ax \subseteq  p^{-1}(p(x))$. On the other hand, if $x' \in f^{-1}(y)$ and $x' \sim_{d-1} x$, by the relative weak structure theorem, there exists a unique $a \in A$ such that $x'=ax$. So we have $Ax=p^{-1}(p(x))$. Thus $A$ is the top structure group of $f^{-1}(y)$. \end{proof}

\subsection{Straight classes and sections}

To prove Proposition \ref{middle fibration}, we need an analogue of \cite[Proposition 2.13]{GMVIII}. Let us start with some preliminary steps. 
\begin{definition}
Let $X, Z, W$ be three compact ergodic gluing cubespaces.  Given an $s$-fibration $g\colon X \to Z$ and a fibration $\psi\colon \pi_{g, s-1}(X) \to W$, we call a subset $D \subseteq  X$ is a {\bf straight $\psi$-class}\index{ straight class} if there exists  $w \in W$ such that 
 \begin{enumerate}
     \item $D \cap \pi^{-1}_{g, s-1}(u)$ is a singleton for every $u \in \psi^{-1}(w)$ and $D$ is the union of those singletons;
     \item a configuration $c\colon \{0, 1\}^{s+1} \to D$ is a cube if and only if $\pi_{g, s-1}(c)$ is a cube of $W$.
 \end{enumerate}
 In short, $D$ is a $\pi_{g, s-1}$-lifting of some fiber of $\psi$ respecting cube structure.
 
Consider a configuration $c\colon \{0, 1\}^{s+1} \to X$ inducing a cube $\pi_{g, s-1}(c)$. In light of the relative weak structure theorem (Theorem \ref{RWST})  and \cite[Proposition 5.1]{GMVI}, there exists a unique element $a \in A_s(g)$ such that the application of $a$ to $c$ at $(0, \ldots, 0) \in \{0, 1\}^{s+1}$ results with a cube of $X$. We call such an element $a \in A_s(g)$ the {\bf discrepancy} of $c$ and denote it by $D(c)$.  

 Let $U \subseteq  W$ be an open subset. We say a continuous map $\sigma\colon \psi^{-1}(U) \to Z$ is a {\bf straight section}\index{ straight section} if 
 \begin{enumerate}
     \item $\pi_{g, s-1}\circ \sigma=\id_U$;
     \item for any $c_1, c_2 \in C^{s+1}(\psi^{-1}(U))$ with $\psi(c_1)=\psi(c_2)$, we have $D(\sigma(c_1))=D(\sigma(c_2))$.
\end{enumerate}

\end{definition}
We remark that the  straightness of a section $\sigma$ implies that $\sigma$ maps every fiber of $\psi$ onto a  straight $\psi$-class.

The following lemma is a relative version of \cite[Lemma 2.7]{GMVIII}.
\begin{lemma} \label{fiber class}
 Let $X, Y, Z$ be three compact ergodic gluing cubspaces and $\varphi: X \to Y$ be a fibration. Let $g\colon X \to Z$ and $h: Y \to Z$ be two $s$-fibrations such that $g=h\circ \varphi$. If $\varphi$ is horizontal, denoting by  $\psi$ the  shadow of $\varphi$, then each fiber of $\varphi$ is a  straight $\psi$-class. 
\end{lemma}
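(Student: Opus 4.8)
\textbf{Proof plan for Lemma \ref{fiber class}.}

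The plan is to fix a fiber $\varphi^{-1}(y_0)$ for some $y_0 \in Y$ and show it satisfies the two defining conditions of a straight $\psi$-class with $w := \psi(\pi_{g,s-1}(x_0))$, where $x_0$ is any preimage of $y_0$. Write $\pi := \pi_{g,s-1}$, $\pi' := \pi_{h,s-1}$ for brevity. First I would record the basic compatibility: since $\psi$ is the shadow of $\varphi$, we have $\pi' \circ \varphi = \psi \circ \pi$; hence for $x \in \varphi^{-1}(y_0)$, $\psi(\pi(x)) = \pi'(\varphi(x)) = \pi'(y_0) = w$, so $\varphi^{-1}(y_0)$ maps under $\pi$ into $\psi^{-1}(w)$. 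For the first condition, take $u \in \psi^{-1}(w)$ and a point $x_u \in \pi^{-1}(u)$. Then $\pi'(\varphi(x_u)) = \psi(\pi(x_u)) = \psi(u) = w = \pi'(y_0)$, so $\varphi(x_u) \sim_{h,s-1} y_0$; by horizontality (condition (2) in the definition, the bijection between $\pi^{-1}_{g,s-1}(\pi_{g,s-1}(x_u))$ and $\pi^{-1}_{h,s-1}(\pi_{h,s-1}(\varphi(x_u)))$) there is a unique point in $\pi^{-1}(u)$ mapping to $y_0$, i.e. $\varphi^{-1}(y_0) \cap \pi^{-1}(u)$ is a singleton. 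Since every point of $\varphi^{-1}(y_0)$ lies in $\pi^{-1}(u)$ for $u = \pi(x) \in \psi^{-1}(w)$, the fiber is exactly the union of these singletons.

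For the second condition I must show a configuration $c \colon \{0,1\}^{s+1} \to \varphi^{-1}(y_0)$ is a cube of $X$ if and only if $\pi(c)$ is a cube of $W$ — here $W$ should be read as $\pi_{g,s-1}(X)$, matching the codomain statement; more precisely $c$ is a cube iff $\pi(c) \in C^{s+1}(\pi_{g,s-1}(X))$, equivalently (since $\psi$ is a cubespace morphism and $\pi(c)$ lands in the single fiber $\psi^{-1}(w)$) iff the constant-image condition holds. The forward direction is immediate: $\pi$ is a cubespace morphism. For the converse, suppose $\pi(c) \in C^{s+1}(\pi_{g,s-1}(X))$. Since $\varphi(c)$ is the constant configuration at $y_0$, it is trivially a cube of $Y$; and $\pi'(\varphi(c)) = \psi(\pi(c))$ is a cube of $\pi_{h,s-1}(Y)$. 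The idea is to use the universal replacement property (Proposition \ref{URP}) together with the relative weak structure theorem: because $\pi(c)$ is a cube and $c$ has constant $\varphi$-image, $c$ and the constant cube $\Box^{s+1}(x_0)$ (suitably translated) have the same image under $X \to X/\sim_{g,s-1}$, so by Proposition \ref{URP} applied with $k = s+1$, $c \in C^{s+1}(X)$; finally $c$ takes values in $\varphi^{-1}(y_0)$ so it is a cube of the subcubespace. I expect the main obstacle to be this converse: one must argue carefully that the $\sim_{g,s-1}$-class of $c$ is constant — equivalently, that $\pi(c)$ being a cube forces $c$ into $C^{s+1}(X)$ — which is exactly where horizontality and the relative weak structure theorem interact, controlling the $A_s(g)$-discrepancy along the constant-$\varphi$-image fiber. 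This should follow the argument of \cite[Lemma 2.7]{GMVIII} with $\pi_{g,s-1}$ and $\pi_{h,s-1}$ playing the roles of the absolute projections, the horizontality of $\varphi$ supplying the needed injectivity on $\sim_{g,s-1}$-classes.
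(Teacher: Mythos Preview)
Your plan matches the paper's approach (the paper gives no proof beyond citing this as the relative analogue of \cite[Lemma 2.7]{GMVIII}), and your verification of condition~(1) is clean and correct.

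The one genuine gap is in your converse for condition~(2). Proposition~\ref{URP} cannot be applied as you suggest: the relevant relation here is $\sim_{s-1}$ (since $\pi=\pi_{g,s-1}$), and URP with parameter $s-1$ only covers configurations of dimension $k\le s$, not $k=s+1$. Also, $c$ and $\Box^{s+1}(x_0)$ do not share a $\pi$-image unless $\pi(c)$ happens to be constant, which it need not be. The argument that actually works is the discrepancy computation you name at the end but do not carry out: since $\pi_{g,s-1}$ is a fibration one may lift the cube $\pi(c)$ to some $c_0\in C^{s+1}(X)$, and then $c=\alpha\cdot c_0$ for a unique configuration $\alpha\colon\{0,1\}^{s+1}\to A_s(g)$. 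Because $\varphi$ is a cubespace morphism with $g=h\circ\varphi$, it induces a group homomorphism $\rho\colon A_s(g)\to A_s(h)$ intertwining the two bundle actions, and horizontality forces $\rho$ to be an isomorphism. Applying $\varphi$ yields $\Box^{s+1}(y_0)=\rho(\alpha)\cdot\varphi(c_0)$; both sides are $(s+1)$-cubes of $Y$ with the same $\pi_{h,s-1}$-image, so $\rho(\alpha)\in C^{s+1}(\cD_s(A_s(h)))$, hence $\alpha\in C^{s+1}(\cD_s(A_s(g)))$, giving $c=\alpha\cdot c_0\in C^{s+1}(X)$. This replaces the URP step rather than supplementing it.
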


The following lemma is a relative analogue of \cite[Proposition 2.8]{GMVIII}, based on \cite[Theorem 1.25]{GMVIII} and the relative weak structure theorem (Theorem \ref{RWST}).
\begin{lemma} \label{enough straight classes}
Suppose that $g\colon X \to Z$ is an $s$-fibration between compact ergodic gluing cubespaces such that the top structure group is a Lie group $A$. Then for any $\varepsilon > 0$ there is  $\delta > 0$ satisfying the following property. 

Let $\psi\colon \pi_{g, s-1}(X) \to W$ be an $(s-1)$-fibration to another compact ergodic gluing cubespace $W$ such that $\psi$ is a $\delta$-embedding in the sense that every fiber of $\psi$ has diameter less than $\delta$. Then for every $c \in C^{s+1}(\pi_{g, s-1}(X))$, there is an open set $U \subseteq W$ satisfying that
\begin{enumerate}
    \item the image of $c$ is contained inside $ \psi^{-1}(U)$;
    \item there is a straight $\psi$-section $\sigma: \psi^{-1}(U) \to X$ with ${\rm diam} (\sigma (\psi^{-1}(b))) \leq \varepsilon$ for every $b \in U$.
\end{enumerate}
In particular, every $x \in X$ is contained in a  $\psi$-class  of small diameter (explicitly,  $ x \in a.\sigma\circ \psi^{-1}(\psi\circ \pi_{g, s-1}(x))$ for some $a \in A$ ).
\end{lemma}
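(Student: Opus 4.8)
The plan is to reduce the statement to a question about the $A$-principal bundle underlying $g$ and then to build the required sections locally, using that $A$ is a Lie group. Since $g$ is an $s$-fibration we have $X = X/\sim_{g,s}$ and $\pi_{g,s-1}(X) = X/\sim_{g,s-1}$, so the relative weak structure theorem (Theorem \ref{RWST}) says that $p := \pi_{g,s-1}\colon X \to \pi_{g,s-1}(X)$ is an $A$-principal fiber bundle, where $A = A_s(g)$ is a compact abelian Lie group by hypothesis. Because $A$ is a Lie group, $p$ is locally trivial; by compactness of $\pi_{g,s-1}(X)$ there is a radius $\delta_0 > 0$ and a common modulus of continuity such that every point of $\pi_{g,s-1}(X)$ has a neighbourhood of radius $\delta_0$ carrying a continuous section of $p$ with that modulus.

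A plain local section $\tau$ over such a chart $N$ need not be a cubespace section: for a cube $c'$ of $\pi_{g,s-1}(X)$ lying in $N$ the configuration $\tau(c')$ fails to be a cube of $X$ by the amount $D(\tau(c')) \in A$. The heart of the proof, and the relative analogue of \cite[Theorem 1.25]{GMVIII}, is to correct $\tau$ by a small function $h\colon N \to A$ so that $\sigma := h.\tau$ becomes \emph{straight with respect to $\psi$}, i.e. $D(\sigma(c_1)) = D(\sigma(c_2))$ whenever $\psi(c_1) = \psi(c_2)$. Two facts enter: over a chart the map $c' \mapsto D(\tau(c'))$ is continuous with small image, so that (since $\psi$ is a $\delta$-embedding, which forces $c_1,c_2$ to be $\delta$-close whenever $\psi(c_1)=\psi(c_2)$) the discrepancies one must reconcile are $\delta$-close; and $A$, being Lie, has no small subgroups, which via a rigidity argument in the spirit of Theorems \ref{rigidity} and \ref{fiber cocycle theorem} lets one absorb the $\psi$-dependence of these discrepancies into a coboundary $\partial^{s+1}h$ with $h$ of small norm and almost constant on $\psi$-fibres. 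The upshot is a straight $\psi$-section $\sigma$ over a chart $N$, which can be taken to surround the vertex set of any prescribed cube $c$ with controlled diameter, and with $\sigma$ enjoying a common modulus of continuity and oscillation bounded by $C_0\delta$ on each $\psi$-fibre.

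Given $\varepsilon > 0$, I would choose $\delta$ — uniformly in $\varepsilon$, using compactness of $\pi_{g,s-1}(X)$ and of the cube space $C^{s+1}(\pi_{g,s-1}(X))$ — so small that $\delta \leq \delta_0$, the rigidity step above applies, and the common modulus of continuity together with the bound $C_0\delta$ turns $\psi$-fibres into sets of image-diameter $\leq \varepsilon$ under $\sigma$. Now let $\psi$ be a $\delta$-embedding and $c \in C^{s+1}(\pi_{g,s-1}(X))$; let $N$ be a chart containing the vertex set of $c$ as above, carrying a straight $\psi$-section $\sigma$. Since $\pi_{g,s-1}(X)$ is compact and $W$ Hausdorff, $\psi$ is closed, so $U := W \setminus \psi\big(\pi_{g,s-1}(X) \setminus N\big)$ is open in $W$; one checks directly that $\psi^{-1}(U) \subseteq N$, and, since for each vertex $v$ of $c$ the fibre $\psi^{-1}(\psi(v))$ has diameter $< \delta \leq \delta_0$ and contains $v$, it lies inside $N$, whence $v \in \psi^{-1}(U)$ — giving (1). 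Restricting $\sigma$ to $\psi^{-1}(U)$ and using that each fibre $\psi^{-1}(b) \subseteq \psi^{-1}(U) \subseteq N$ has diameter $< \delta$ gives $\operatorname{diam}(\sigma(\psi^{-1}(b))) \leq \varepsilon$ by the choice of $\delta$, which is (2).

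For the final claim, fix $x \in X$ and apply the statement to the constant cube at $\pi_{g,s-1}(x)$: one obtains an open $U \ni \psi(\pi_{g,s-1}(x))$ and a straight $\psi$-section $\sigma$ on $\psi^{-1}(U)$ with $\psi$-fibres of image-diameter $\leq \varepsilon$. By the remark following the definition of straight section, $D_0 := \sigma\big(\psi^{-1}(\psi(\pi_{g,s-1}(x)))\big)$ is a straight $\psi$-class of diameter $\leq \varepsilon$; since $x$ and $\sigma(\pi_{g,s-1}(x))$ both lie over $\pi_{g,s-1}(x)$, there is a unique $a \in A$ with $x = a.\sigma(\pi_{g,s-1}(x))$, so $x \in a.D_0$, and $a.D_0$ is again a straight $\psi$-class (of small diameter, as the $A$-translations are uniformly equicontinuous — so one shrinks the initial $\varepsilon$). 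The main obstacle is the content of the second paragraph: establishing the relative version of \cite[Theorem 1.25]{GMVIII}, namely producing straight $\psi$-sections over charts that can surround the vertex set of an arbitrary cube, with all constants uniform over $\pi_{g,s-1}(X)$ and over $C^{s+1}(\pi_{g,s-1}(X))$. Granting that, the remainder is a routine compactness argument together with the closedness of $\psi$.
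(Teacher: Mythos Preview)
Your proposal is correct and follows the same approach the paper takes: the paper gives no proof beyond remarking that the lemma is the relative analogue of \cite[Proposition 2.8]{GMVIII}, based on \cite[Theorem 1.25]{GMVIII} and the relative weak structure theorem, and you have unpacked exactly this, correctly isolating the relative version of \cite[Theorem 1.25]{GMVIII} as the substantive step and treating the rest as a compactness/closed-map argument.

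One clarification is worth making. In your sketch you speak of a ``chart $N$'' of radius $\delta_0$ in $\pi_{g,s-1}(X)$ that ``can be taken to surround the vertex set of any prescribed cube $c$''. For a general $(s+1)$-cube $c$ the vertices need not lie in a single small ball, so $N$ cannot literally be one principal-bundle chart; rather, what \cite[Theorem 1.25]{GMVIII} (and its relative version) actually produces is a section defined on a neighbourhood of the whole cube --- effectively a finite cover of $C^{s+1}(\pi_{g,s-1}(X))$ by open sets carrying compatible straightened sections, obtained by patching chart-wise sections and correcting discrepancies via the cocycle/rigidity argument you describe. You already flag this as ``the main obstacle'' and defer it to \cite{GMVIII}, which is exactly what the paper does; just be aware that the object $N$ in your third paragraph should be understood in this more flexible sense, after which your construction of $U = W \setminus \psi(\pi_{g,s-1}(X)\setminus N)$ and the verification of (1), (2), and the final claim go through as written.
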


The following is a relative analogue of \cite[Proposition 2.9]{GMVIII}.
\begin{lemma} \label{close class}
Let $g\colon X \to Z$ be an $s$-fibration  such that  the top structure group is a Lie group $A$. Then there exists  $\delta > 0$ depending only on $g$ satisfying the following. 

 Let  $\psi\colon \pi_{g, s-1}(X) \to W$ be an $(s-1)$-fibration for another compact ergodic gluing cubespace $W$. Suppose that $D_1$ and  $D_2$ are two  straight $\psi$-classes with the  same image of $\pi_{g, s-1}$ and the restriction $\pi_{g, s-1}|_{D_1\cup D_2}$ is a $\delta$-embedding. Then $D_1=aD_2$ for some $a \in A$.
\end{lemma}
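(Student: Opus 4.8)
The plan is to reduce the statement to the rigidity theorem, Theorem~\ref{rigidity}, applied to a \emph{single} fiber of $\psi$, arranging that the resulting $\delta$ depends only on $g$.

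\emph{Setup.} We may assume $s\ge 1$, since for $s=0$ the map $g$ is an isomorphism and there is nothing to prove. Write $A:=A_s(g)$ and $\pi:=\pi_{g,s-1}$; by the relative weak structure theorem (Theorem~\ref{RWST}) the projection $\pi\colon X\to\pi_{g,s-1}(X)$ is an $A$-principal fiber bundle. Since $\pi(D_1)=\pi(D_2)$ and each $\pi(D_i)$ is a fiber of $\psi$, both classes lie over a common fiber $F:=\psi^{-1}(w)$. As $\psi$ is an $(s-1)$-fibration, $F$ is an $(s-1)$-nilspace by Proposition~\ref{fiber of fibration}; equivalently $F\to\{\ast\}$ is an $(s-1)$-fibration between compact ergodic gluing cubespaces (here $F$ is closed in $\pi_{g,s-1}(X)$, inherits ergodicity from it, and is gluing by Proposition~\ref{fibrant is gluing}). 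Each $D_i$ is closed and $\pi$ restricts to a continuous bijection $D_i\to F$, hence a homeomorphism; let $\sigma_i\colon F\to X$ be the inverse section, with image $D_i$. By freeness of the $A$-action define a continuous map $\phi\colon F\to A$ by $\sigma_1(u)=\phi(u)\cdot\sigma_2(u)$. Proving the lemma amounts to showing $\phi$ is constant.

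\emph{Applying rigidity.} We will invoke Theorem~\ref{rigidity} with parameters $(s-1,\,s+1,\,A)$ in place of $(s,\ell,A)$: it yields $\varepsilon>0$ depending only on $s$ and $A$, hence only on $g$ (and not on $\psi$, $W$, or the $D_i$), such that any continuous $\phi\colon F\to A$ with $\sup_{u,u'}d_A(\phi(u),\phi(u'))\le\varepsilon$ and $\partial^{s+1}\phi\equiv 0$ on $C^{s+1}(F)$ is constant. It remains to verify these two hypotheses. For a cube $\bar c\in C^{s+1}(F)$, the configurations $\sigma_i\circ\bar c$ take values in $D_i$ and satisfy $\pi(\sigma_i\circ\bar c)=\bar c$, so by the defining cube-compatibility of a straight $\psi$-class both are $(s+1)$-cubes of $X$; they have equal $\pi$-image, and $(\sigma_1\circ\bar c)(\omega)=\phi(\bar c(\omega))\cdot(\sigma_2\circ\bar c)(\omega)$. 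Hence, computing the discrepancy of $\sigma_1\circ\bar c$ with respect to the cube $\sigma_2\circ\bar c$, one gets $\Delta(\sigma_1\circ\bar c)=\sum_{\omega}(-1)^{|\omega|}\phi(\bar c(\omega))=(\partial^{s+1}\phi)(\bar c)$, and this vanishes because $\sigma_1\circ\bar c$ is itself a cube of $X$; thus $\partial^{s+1}\phi\equiv 0$ on $C^{s+1}(F)$. For the oscillation bound, fix a left-invariant metric $d_A$ on $A$; since $\pi$ is a principal bundle over a compact space, a short compactness argument provides $\eta(\delta)\to 0$ as $\delta\to 0$ with the property that $\pi(x)=\pi(x')$ and $d(x,x')\le\delta$ force the group element taking $x'$ to $x$ to lie within $\eta(\delta)$ of $e_A$. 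If $\pi|_{D_1\cup D_2}$ is a $\delta$-embedding then $d(\sigma_1(u),\sigma_2(u))\le\delta$ for every $u\in F$, so $d_A(\phi(u),e_A)\le\eta(\delta)$, and by left-invariance $\sup_{u,u'}d_A(\phi(u),\phi(u'))\le 2\eta(\delta)$. Choosing $\delta=\delta(g)$ with $2\eta(\delta)\le\varepsilon$ then forces $\phi\equiv a$ for some $a\in A$, whence $D_1=\sigma_1(F)=a\cdot\sigma_2(F)=aD_2$.

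\emph{Main obstacle.} The one point requiring genuine care is the uniformity of $\delta$: one must ensure that both the rigidity constant $\varepsilon$ (which Theorem~\ref{rigidity} delivers uniformly over all $(s-1)$-fibrations with top structure group $A$) and the modulus $\eta$ (extracted from the principal bundle $\pi$, which is part of the data of $g$) are independent of $\psi$, $W$, and of $D_1,D_2$. The remaining verifications --- continuity of $\phi$, and the identification of $\sum_\omega(-1)^{|\omega|}\phi(\bar c(\omega))$ with the discrepancy of the lifted cube, which is exactly where cube-compatibility of straight classes is used --- are routine.
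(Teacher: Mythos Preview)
The proposal is correct and follows essentially the same approach as the paper: define the function $F\to A$ via the two sections, use straightness of the classes to see that $\partial^{s+1}$ of this function vanishes on $C^{s+1}(F)$, and then invoke rigidity to conclude it is constant. The only cosmetic differences are that you cite Theorem~\ref{rigidity} applied to the trivial $(s-1)$-fibration $F\to\{\ast\}$ whereas the paper cites the absolute version \cite[Theorem~5.2]{GMVII} directly, and you are more explicit about the modulus $\eta$ converting the $\delta$-embedding bound in $X$ to an oscillation bound in $A$ (the paper simply takes $\delta=\varepsilon/2$, implicitly using a compatible metric on $A$).
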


\begin{proof}
Since $\psi$ is a fibration, by Proposition \ref{fiber of fibration}, the space $B:=\pi_{g, s-1}(D_1)$ is a compact ergodic nilspace. Applying \cite[Theorem 5.2]{GMVII} to the nilspace $B$, we obtain  $\varepsilon=\varepsilon(s, s+1, A)$ of \cite[Theorem 5.2]{GMVII}. Define $\delta=\varepsilon/2$.
  
  Denote by $\sigma_i\colon B \to D_i$ the inverses of $\pi_{g, s-1}$  restricted to $D_i$ for $i=1,2$. Let $f: B \to A$ be the continuous function determined by the equation $\sigma_2(y)=f(y).\sigma_1(y)$ for all $y \in B$. Since $D_1$ and $D_2$ are  straight classes, for every $c \in C^{s+1}(B)$, $\sigma_1(c)$ and $\sigma_2(c)$ are cubes of $X$. Thus by the relative weak structure theorem (Theorem \ref{RWST}), $\partial^{s+1}(f(c))=0$. Since $\pi_{g, s-1}|_{D_1\cup D_2}$ is a $\delta$-embedding, we have that the diameter of $\Ima (f)$ is less that $\varepsilon$. Applying \cite[Theorem 5.2]{GMVII}, $f$ is constant. Q.E.D. 
\end{proof}

\begin{remark}
We  note that the assumption on $\psi$ in Lemmas \ref{enough straight classes} and \ref{close class} is weaker than the corresponding assumption in the  absolute setting in \cite{GMVIII}.

\end{remark}

Combining Lemmas \ref{enough straight classes} with \ref{close class}, the straight $\psi$-classes induce an equivalence relation  which we denote by $\approx_\psi$\index{$\approx_\psi$}. This equivalence relation allows us to construct the desired cubespaces and fibrations stated in Proposition \ref{middle fibration}. 
\begin{lemma} \label{constructed space}
Let $g\colon X \to Z$ be an $s$-fibration  such that the top structure group is a Lie group $A$. Write $\pi=\pi_{g, s-1}$. Then there exists  $\delta >0$ depending only on $g$ satisfying the following. 

Let  $\psi\colon \pi_{g, s-1}(X) \to W$ be an $(s-1)$-fibration such that  $\psi$ is a $\delta$-embedding. Then the induced equivalence relation $\approx_\psi$ from the straight $\psi$-classes is closed. Moreover, let $u\colon W \to  Z$ be an $(s-1)$-fibrations with $g_{s-1}=u \circ \psi$. Then it holds:
\begin{enumerate}
    \item  the quotient map $\varphi\colon X \to X/\approx_\psi$ is a fibration and induces a fibration $\pi'\colon X/\approx_\psi \to W$ and an $s$-fibration $u':=u\circ \pi'$ such that $\varphi$ is  horizontal and the diagram below commutes
    $$\xymatrixcolsep{5pc}\xymatrix{
X  \ar@{-->}[r]^\varphi \ar[d]^A_{\pi} \ar@/^3pc/[drr]^g & X/\approx_\psi \ar@{-->}[d]^{\pi'} \ar@{-->}[dr]^{u'} \\
\pi_{g, s-1}(X) \ar[r]^\psi \ar@/_2pc/[rr]^{g_{s-1}} & W \ar[r]^u  & Z; 
}$$
    \item the top structure group of $u'$ is $A$ and $\pi'=\pi_{u', s-1}$;
\end{enumerate}

\end{lemma}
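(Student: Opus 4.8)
The plan is to relativise the construction in \cite[\S2]{GMVIII}. The picture to keep in mind is that the straight $\psi$-classes are precisely the fibres of the quotient map $\varphi$, that $X/\approx_\psi$ carries an $A$-principal bundle map $\pi'$ to $W$ (with $A=A_s(g)$), and that the square with vertices $X,\ \pi_{g,s-1}(X),\ X/\approx_\psi,\ W$ and edges $\pi:=\pi_{g,s-1},\ \varphi,\ \psi,\ \pi'$ behaves like a pullback square of cubespaces; granting this the fibration statements follow formally. First fix $\delta$: take it below the two thresholds of Lemmas~\ref{enough straight classes} and~\ref{close class}, and small enough that any two straight $\psi$-classes produced by the local straight sections of Lemma~\ref{enough straight classes} over a common fibre of $\psi$ have union on which $\pi$ is a $\delta$-embedding, so Lemma~\ref{close class} applies to them. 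Recall from Theorem~\ref{RWST} that $\pi$ is an $A$-principal fibre bundle, so $A$ acts freely on $X$ and preserves the fibres of $\pi$; consequently the straight class through a given point over a given fibre of $\psi$ is unique (if $D_1=aD_2$ both contain $x$ then $a^{-1}x$ and $x$ lie in $D_2$ over the same point of $\pi(X)$, so $a^{-1}x=x$ and $a=e$). For closedness of $\approx_\psi$: if $(x_n,x_n')\to(x,x')$ with $x_n\approx_\psi x_n'$, then $\psi(\pi(x))=\psi(\pi(x'))$; picking a straight section $\sigma$ over a neighbourhood of this point (Lemma~\ref{enough straight classes}), the last clause of that lemma with Lemma~\ref{close class} and the uniqueness just noted give, for large $n$, that the straight class through $x_n$ equals $a_n\cdot\sigma(\psi^{-1}(\psi(\pi(x_n))))$ for some $a_n\in A$, so $x_n=a_n\sigma(\pi(x_n))$ and $x_n'=a_n\sigma(\pi(x_n'))$; passing to $a_n\to a\in A$ (compactness of $A$) and using continuity of the action and of $\sigma$ yields $x=a\sigma(\pi(x))$, $x'=a\sigma(\pi(x'))$, hence $x\approx_\psi x'$.

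Thus $X/\approx_\psi$ is compact metric; equip it with the quotient cubespace structure $C^k:=\varphi(C^k(X))$ (closed, by compactness of $C^k(X)$), which is ergodic because $X$ is and gluing by the argument already used for $Z^{(n)}_\infty$. Since every straight class lies over a single fibre of $\psi$, the map $\pi'([x]):=\psi(\pi(x))$ is well defined and continuous; put $u':=u\circ\pi'$, so that $u'\circ\varphi=g$ and $\pi'\circ\varphi=\psi\circ\pi$ and the stated diagram commutes. Because $g=g_{s-1}\circ\pi=u\circ\psi\circ\pi$ is constant on each straight class and $A$ acts by cubespace automorphisms on each fibre of $g$ (Proposition~\ref{embedding}), $A$ sends straight $\psi$-classes to straight $\psi$-classes, so the $A$-action descends to $X/\approx_\psi$; it is free (if $a[x]=[x]$ then $ax\approx_\psi x$ while $ax$ and $x$ lie in one $\pi$-fibre, which a straight class meets once, so $ax=x$ and $a=e$) and transitive on each fibre of $\pi'$ (two straight classes over the same fibre of $\psi$ differ by an element of $A$, Lemma~\ref{close class}); the local straight sections descend to local sections of $\pi'$, so $\pi'$ is an $A$-principal fibre bundle, and, comparing cubes through these trivialisations with Theorem~\ref{RWST}, the fibres of $\pi'$ are cube by cube $\cD_s(A)$-torsors, so $\pi'$ has $k$-uniqueness for every $k\ge s+1$.

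Now $\varphi$ is a fibration. For $k\le s$: given a $k$-corner $\lambda$ of $X$ whose $\varphi$-image completes to $\bar c\in C^k(X/\approx_\psi)$, the $W$-corner $\psi(\pi(\lambda))$ completes (to $\pi'(\bar c)$), so by $k$-completion of $\psi$ and then of $\pi$ (both fibrations, Proposition~\ref{inducing $s$-fibration}) the corner $\lambda$ completes to some $c_0\in C^k(X)$; as $\varphi(c_0(\overrightarrow{1}))$ and $\bar c(\overrightarrow{1})$ lie in the same $\pi'$-fibre, hence one $A$-orbit, acting on $c_0$ by the appropriate $a\in A$ at the single vertex $\overrightarrow{1}$ — legitimate because $[a]_{\{\overrightarrow{1}\}}$ is a $k$-cube of $\cD_s(A)$ when $k\le s$ and cubes of $X$ over a fixed base cube form a $\cD_s(A)$-orbit (cf.\ the proof of Proposition~\ref{embedding}) — yields a completion of $\lambda$ with $\varphi$-image $\bar c$. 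For $k\ge s+1$: apply Proposition~\ref{inner fibrant} to $X\xrightarrow{\varphi}X/\approx_\psi\xrightarrow{\pi'}W$, using that the composite $\pi'\circ\varphi=\psi\circ\pi$ has $k$-completion (a composition of fibrations) and $\pi'$ has $k$-uniqueness, to conclude that $\varphi$ has $k$-completion. Hence $\varphi$ is a fibration; by the universal property of fibrations (Proposition~\ref{universal property}) applied to $\pi'\circ\varphi=\psi\circ\pi$, $\pi'$ is a fibration and so is $u'=u\circ\pi'$; since $u$ has $(s+1)$-uniqueness (being an $(s-1)$-fibration) and $\pi'$ has $(s+1)$-uniqueness, $u'$ is an $s$-fibration. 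Horizontality of $\varphi$ follows from condition~(3): if $x\sim_{s-1}x'$ and $\varphi(x)=\varphi(x')$, then $x,x'$ lie in one straight class, so $g(x)=g(x')$, hence $x\sim_{g,s-1}x'$ and $x=x'$. Finally $\pi'$ collapses exactly the $A$-orbits of $X/\approx_\psi$, which are the $\sim_{u',s-1}$-classes, so $\pi'=\pi_{u',s-1}$; matching the $A$-action with Theorem~\ref{RWST} identifies the top structure group of $u'$ with $A$.

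The main obstacle is the fibration property of $\varphi$ in degrees $k\ge s+1$, and underneath it the assertion that $\pi'$ really has $(s+1)$-uniqueness — i.e.\ that $\pi'$ is a principal bundle not merely of topological spaces but \emph{as a fibration}, with cubes over a fixed base cube forming a $\cD_s(A)$-torsor. This is precisely where the exact definition of a straight class (property~(2), which forces it to respect $(s+1)$-cubes) must be combined with the relative weak structure theorem, the rigidity input (Theorem~\ref{rigidity}) already being at work inside Lemma~\ref{close class}; the remaining assertions are bookkeeping with principal bundles and with the universal property of fibrations.
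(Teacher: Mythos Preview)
Your proof is correct and follows essentially the same route as the paper's, which relativises \cite[Proposition~2.13]{GMVIII}: closedness of $\approx_\psi$ via the straight sections and Lemma~\ref{close class}; the $A$-action descends and makes $\pi'$ a principal bundle; $\varphi$ is a fibration by a split into low and high dimensions; and the remaining identifications follow. The one genuine variation is in the high-dimensional step $k\ge s+1$: the paper applies Proposition~\ref{inner fibrant} to the factorisation $g=u'\circ\varphi$, first establishing that $u'$ has $(s+1)$-uniqueness by the argument of \cite[Lemma~2.14]{GMVIII} (using that $g$ is an $s$-fibration and $u$ an $(s-1)$-fibration), whereas you apply Proposition~\ref{inner fibrant} to $\psi\circ\pi=\pi'\circ\varphi$, having first argued that $\pi'$ itself has $(s+1)$-uniqueness via the $\cD_s(A)$-torsor structure on its fibres. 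Both are valid; the paper's choice is marginally more economical because it avoids verifying the cube-by-cube torsor claim for $\pi'$ directly. For $k\le s$ the paper simply invokes that $\varphi$ is relatively $s$-ergodic, while you give the explicit complete-then-translate-by-$A$ argument, which amounts to the same thing once unwound. Your horizontality argument via condition~(3) of the definition is also fine and matches the paper's implicit reasoning.
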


\begin{proof}
Based on Lemma \ref{close class}, the fact that the relation $\approx_\psi$ is closed follows from the argument of \cite[Proposition 2.13]{GMVIII}. It is clear from the definition of the equivalence relation that the induced map $\pi'$ is well-defined and $\psi \circ \pi=\pi'\circ \varphi$.  From $g_{s-1}=u\circ \psi$, we have
  $$g=g_{s-1}\circ \pi =u\circ \psi \circ \pi=u\circ \pi'\circ \varphi=u'\circ \varphi.$$
Thus $\psi$ is the  shadow of $\varphi$. 

We show $\varphi$ is a fibration. Since $g$ is an $s$-fibration and $u$ is an $(s-1)$-fibration, by a similar argument to the one in the proof of \cite[Lemma 2.14]{GMVIII}, we have that $u'$ has $(s+1)$-uniqueness. By Proposition \ref{inner fibrant}, $\varphi$ is fibrant for $k$-corners of every $k \geq s+1$.  From the fact that $\varphi$ is relatively $s$-ergodic (see Definition \ref{relative ergodic}) and the fact that $\psi$ is a fibration, it follows  that 
  $\varphi$ is fibrant for corners of lower dimension. Note that $A$ respects  straight $\psi$-classes. Thus $X/\approx_\psi$ inherits an $A$-action from the $A$-action on $X$. Moreover, the straightness guarantees that $W$ is exactly the orbit space induced and hence  $\pi'=\pi_{u', s-1}$ and $A$ is the top structure group of $u'$. 

\end{proof}

\begin{proof}[Proof of Proposition \ref{middle fibration}]
For each fixed $n$ applying Lemma \ref{constructed space} to the $s$-fibration $g^{(n)}: Z^{(n)}_\infty \to Y$, we obtain a $\delta_n$ satisfying the desired property. Take $M_n$ large enough such that for every $m \geq M_n$, $\psi_{m, \infty}$ is a $\delta_n$-embedding. 
Then we obtain the desired $s$-fibration $h^{(n)}_m \colon Z^{(n)}_m \to Y$ and a horizontal fibration $\varphi^{(n)}_m \colon Z^{(n)}_\infty \to Z^{(n)}_m$  such that $g^{(n)}=h^{(n)}_m \circ \varphi^{(n)}_m$ from Lemma \ref{constructed space}.

In light of Lemma \ref{close class}, following the argument as in the absolute setting, it holds that the fibers of $\varphi^{(n_2)}_{m_2}\circ \beta_{n_2}$ refine the fibers of $\varphi^{(n_1)}_{m_1}\circ \beta_{n_1}$ for $n_1\leq n_2, M_{n_1}\leq m_1\leq m_2$ with $M_{n_2} \leq m_2$.
\end{proof}

\subsection{Relations between relative translations}

 For an $s$-fibration $g\colon Z \to Y$ between compact ergodic gluing cubespaces, denote by $\Aut_i^\varepsilon(g)$\index{$\Aut(X)$!$\Aut_i^\varepsilon(g)$} the $\varepsilon$-neighborhood  of the identity in $\Aut_i(g)$ under the metric
 $$d(f,g):=\max_{x \in X} d(f(x), g(x)).$$
By Corollary \ref{Lieness}, $\Aut_i(g)$ is a Lie group. Thus $\Aut_i^\circ(g)$ is a group generated by $\Aut_i^\varepsilon(g)$ as $\varepsilon$ is small enough. Then Theorem \ref{endow cubestructure} is a consequence of the following general result.

\begin{theorem} \label{pushforward and backward}
Fix $i \geq 1$. Let $\varphi\colon X \to Y, g\colon X \to Z$ and $h\colon Y \to Z$ be three $s$-fibrations such that $g=h\circ \varphi$ and $g, h$ are Lie fibrations. 
Then for any $\varepsilon > 0$ there is  $\delta > 0$ satisfying the following property. For any $u \in \Aut_i^\delta(g)$ there is
 $u' \in \Aut_i^\varepsilon(h)$, and conversely for any $u' \in Aut_i^\delta(h)$ there is  $u \in \Aut_i^\varepsilon(g)$, such that 
$u'\circ \varphi=\varphi\circ u$.
\end{theorem}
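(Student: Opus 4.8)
The plan is to prove Theorem \ref{pushforward and backward} by descending induction on $s$ using the relative weak structure theorem, treating the two directions (pushforward $u \mapsto u'$ and pullback $u' \mapsto u$) in parallel, since both reduce to the same lifting/repairing machinery already developed for Theorem \ref{openness}. Fix $i \geq 1$. The base of the induction is when $s$ is small enough that $g$ and $h$ are isomorphisms onto points of $Z$, in which case $\Aut_i(g) = \Aut_i(h) = \{1\}$ and there is nothing to prove.

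For the inductive step I would first treat the \textbf{pushforward} direction. Given $u \in \Aut_i^\delta(g)$, I want to produce $u' \in \Aut_i^\varepsilon(h)$ with $u' \circ \varphi = \varphi \circ u$. The point is that $\varphi$ need not be injective, so $u'$ is not simply $\varphi \circ u \circ \varphi^{-1}$; instead I would build $u'$ fiberwise. Since $\varphi$ is a fibration and $u$ fixes the fibers of $g = h \circ \varphi$, for each $z \in Z$ the map $u$ restricts to an automorphism of $g^{-1}(z)$ that descends (using that $\varphi$ maps $g^{-1}(z)$ onto $h^{-1}(z)$) to a well-defined candidate map on $h^{-1}(z)$ — \emph{provided} $u$ preserves the fibers of $\varphi$ within $g^{-1}(z)$. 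This is where the descending induction enters: pass to the canonical $(s-1)$-th factors, obtain the shadows (Lemma \ref{relative shadow}) $g_{s-1}, h_{s-1}$ and the induced fibration between them, apply the inductive hypothesis there to get a translation downstairs, lift it back up using Lemma \ref{preliftting} to a small homeomorphism $\psi$, and then \emph{repair} $\psi$ via Lemma \ref{repairing} and Theorem \ref{fiber cocycle theorem} so that the discrepancy cocycle $\rho_\psi$ becomes a coboundary of a function of small norm valued in the top structure group $A_s(h)$. The compatibility $u' \circ \varphi = \varphi \circ u$ is arranged by choosing the repair so that it agrees with the repair chosen for $u$ on $X$ — concretely, since $A_s(g)$ surjects onto $A_s(h)$ (both are Lie by the Lie-fiberedness hypothesis), one pushes forward the integrated correction function along $\varphi$. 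The norm estimate $\|u'\| \leq \varepsilon$ follows because each step (lifting, repairing via the rigidity/cocycle theorems) is quantitatively controlled: small $\delta$ gives small shadow, small lift, small discrepancy, small correction.

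The \textbf{pullback} direction is handled symmetrically: given $u' \in \Aut_i^\delta(h)$, compose with $\varphi$ to get $u' \circ \varphi \colon X \to Y$, which is a small map; lift it to a small homeomorphism $\psi \colon X \to X$ fixing the fibers of $g$ (again Lemma \ref{preliftting} applied to $g$, whose top structure group $A_s(g)$ is Lie), then repair $\psi$ to a genuine $i$-translation $u \in \Aut_i(g)$ using Lemma \ref{repairing}, Proposition \ref{criteron}, and Theorem \ref{fiber cocycle theorem}. The defining relation $u' \circ \varphi = \varphi \circ u$ must be preserved under the repair: since the correction function for $u$ is valued in $A_s(g) \subseteq \Aut_s(g)$ and $i \leq s$, composing with it does not change the $\varphi$-image modulo the (trivial, since $i\le s$ and the correction is a top-degree translation) effect on $Y$ — one checks $\varphi \circ (h(z).\psi(z)) = \varphi(\psi(z)) = u'(\varphi(z))$ because the $A_s(g)$-correction is killed by $\varphi$ exactly when the top structure group downstairs, $A_s(h)$, receives it trivially; more carefully, one arranges the repair of $u$ to be the $\varphi$-pullback of the correction already used to build a repair downstairs, which exists by the inductive hypothesis.

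The main obstacle, as in the absolute setting of \cite[Theorem 1.27]{GMVIII}, is \textbf{bookkeeping the compatibility relation through the repair step}: the lift $\psi$ is only a homeomorphism, not a translation, and when one repairs it by multiplying by a correction function $h(\cdot)$ valued in the top structure group, one must ensure the correction is chosen \emph{consistently upstairs and downstairs} so that $u' \circ \varphi = \varphi \circ u$ survives, rather than merely being preserved up to an uncontrolled element of $A_s$. The resolution is to run the whole construction ``from the bottom up'': fix the downstairs translation and its repair first (by induction), then pull back along $\varphi$ to get a \emph{canonical} lift and a \emph{canonical} correction (using that $A_s(g) \twoheadrightarrow A_s(h)$ and splitting the correction function as a pullback plus a fiberwise-small remainder, exactly as in the proof of Theorem \ref{openness}), which automatically makes the square commute. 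The quantitative control then propagates through because all the constants in Theorem \ref{fiber cocycle theorem} and Theorem \ref{rigidity} depend only on $s$, $i$ (playing the role of $\ell$ there), and the fixed Lie groups $A_s(g), A_s(h)$.
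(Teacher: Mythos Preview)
Your proposal has a genuine gap: you are missing the \emph{vertical/horizontal decomposition} of $\varphi$ (Proposition \ref{fibration factorization}), and without it your compatibility argument for $u' \circ \varphi = \varphi \circ u$ does not go through. You correctly identify that the pushforward amounts to showing that $u$ preserves $\varphi$-fibers, but then you abandon that goal and instead try to \emph{construct} $u'$ on $Y$ by an independent lift-and-repair (Lemma \ref{preliftting} and Lemma \ref{repairing}) starting from the shadow on the $(s-1)$-level. The problem is that both the lift and the repair involve non-canonical choices (local sections, an averaged correction $h$), so the $u'$ you produce on $Y$ has no reason to satisfy $u' \circ \varphi = \varphi \circ u$ exactly. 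Your proposed fix --- ``pull back along $\varphi$ to get a canonical lift and a canonical correction'' --- does not make sense, since $\varphi$ is surjective but not injective, so there is no well-defined pullback of a homeomorphism of $Y$ to one of $X$; and your claim that the $A_s(g)$-correction ``is killed by $\varphi$'' is unjustified unless the correction lands in the kernel of $A_s(g) \to A_s(h)$, which you have not arranged.

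The paper's route avoids this entirely. One factors $\varphi = \varphi_h \circ \varphi_v$ with $\varphi_v$ vertical and $\varphi_h$ horizontal. For vertical fibrations, \emph{every} $u \in \Aut_i(g)$ (no smallness needed) preserves $\varphi_v$-fibers by a direct $(s+1)$-uniqueness argument (Lemma \ref{vertical independence}), so $u'$ exists canonically by Lemma \ref{pushforward criterion}. For horizontal fibrations, one shows by induction on $s$ that small $u$ preserve $\varphi_h$-fibers, using that $\varphi_h$-fibers are straight $\psi$-classes (Lemma \ref{fiber class}) and that two nearby straight classes through a common point coincide (Lemma \ref{close class}); again $u'$ is then canonical. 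No lifting or repairing appears in the pushforward direction at all. The pullback direction (Proposition \ref{pushbackward}) then builds on these already-established pushforwards together with Lemma \ref{identity orbit is open} and the discreteness of stabilizers (Lemma \ref{discreteness}), rather than on a fresh lift-and-repair of $u' \circ \varphi$.
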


We introduce vertical fibrations \cite[Definition 3.2]{GMVIII} as follows.
\begin{definition}
Let $\varphi\colon X \to Y, g\colon X \to Z$ and $h\colon Y \to Z$ be three $s$-fibrations  such that $g=h \circ \varphi$. We say $\varphi$ is a {\bf vertical fibration}\index{vertical fibration} if for any $x, x' \in X$ such that $\pi_{h, s-1}\circ \varphi(x)=\pi_{h, s-1} \circ \varphi(x')$, one obtains that $\pi_{g, s-1}(x)=\pi_{g, s-1}(x')$.
\end{definition}

We can factor a fibration in a relative way in contrast with \cite[Proposition 3.3]{GMVIII}. 
\begin{proposition} \label{fibration factorization}
Let $\varphi\colon X \to Y, g\colon X \to Z$ and $h\colon Y \to Z$ be three $s$-fibrations  such that $g=h \circ \varphi$. Then there exists a compact ergodic gluing cubespace $W$ and an $s$-fibration $k\colon W \to Z$ such that $\varphi$ factors as $$\varphi=\varphi_h \circ \varphi_v$$
for some vertical fibration $\varphi_v\colon X \to W$ (with respect to $g$ and $k$) and  horizontal fibration $\varphi_h\colon W \to Y$ (with respect to $k$ and $h$). In summary, the following diagram is commutative: 
$$\xymatrixcolsep{5pc}\xymatrix{
X \ar[dd]_{\varphi} \ar@{-->}[dr]^{\varphi_v} \ar@/^3pc/[ddrr]^g  & &\\
& W \ar@{-->}[dl]_{\varphi_h} \ar@{-->}[dr]^k & \\
Y  \ar[rr]^h &  & Z.
}$$
\end{proposition}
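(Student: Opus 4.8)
The strategy is to construct the intermediate cubespace $W$ as the quotient of $X$ by the equivalence relation that identifies two points of $X$ exactly when they agree under $\varphi$ \emph{and} already agree after passing to the $(s-1)$-th canonical factor over $Z$; more precisely, the idea is to let $W := X/\approx$ where $x \approx x'$ iff $\varphi(x) = \varphi(x')$ and $\pi_{g,s-1}(x) = \pi_{g,s-1}(x')$. The point is that $\approx$ is a closed equivalence relation on $X$ refining the fibers of $\varphi$, so the quotient map $\varphi_v \colon X \to W$ and the induced map $\varphi_h \colon W \to Y$ satisfy $\varphi = \varphi_h \circ \varphi_v$ by construction. The map $k \colon W \to Z$ is then forced: since $g = h \circ \varphi = h \circ \varphi_h \circ \varphi_v$ factors through $\varphi_v$ (because $\approx$ refines $\ker \varphi \subseteq \ker g$), we get a unique continuous $k \colon W \to Z$ with $g = k \circ \varphi_v$ and $k = h \circ \varphi_h$.

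First I would check that $\approx$ is a closed equivalence relation and that $\varphi_v$ is a fibration. Closedness follows from closedness of $R_\varphi$ and of $\sim_{g,s-1}$ (the latter by Proposition \ref{inducing $s$-fibration}), hence $W$ is a compact metric space; ergodicity and the gluing property of $W$ should follow by the same arguments used in the proof of Lemma \ref{constructed space} and in Theorem \ref{space as inverse limit}, i.e. from the fact that $A_s(g)$ acts compatibly with the relation. That $\varphi_v$ is a fibration: by Proposition \ref{universal property} it suffices to see $\varphi_v$ and then $\varphi_h$ behave well, but the cleanest route is Proposition \ref{inner fibrant} — one shows the composite $g = k \circ \varphi_v$ has $k$-completion (it does, being an $s$-fibration) and that $k$ has $(s+1)$-uniqueness, whence $\varphi_v$ is fibrant in dimensions $> s$; in lower dimensions one uses relative $s$-ergodicity of $\varphi_v$ together with the fact that $\psi := $ (the shadow, i.e. the induced map on $(s-1)$-factors) is a fibration, exactly as in Lemma \ref{constructed space}. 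Then $\varphi_h$ is a fibration by Proposition \ref{universal property} applied to $\varphi = \varphi_h \circ \varphi_v$ and the fibration $\varphi$ (and one checks $k$ is an $s$-fibration the same way).

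Next I would verify the two defining properties. \emph{Horizontality of $\varphi_h$}: I need $\varphi_h(w) \neq \varphi_h(w')$ whenever $w \neq w'$ with $w \sim_{k, s-1} w'$. Lifting $w, w'$ to $x, x' \in X$, the relation $w \sim_{k,s-1} w'$ unwinds (using that $\pi_{g,s-1}$ determines the $(s-1)$-factor of $k$ by construction of $W$) to $\pi_{g,s-1}(x) = \pi_{g,s-1}(x')$; but if also $\varphi(x) = \varphi(x')$ then $x \approx x'$, i.e. $w = w'$, a contradiction — so $\varphi_h$ separates points in each $\sim_{k,s-1}$-class, which is condition (1) of horizontality. \emph{Verticality of $\varphi_v$}: I need that $\pi_{k,s-1} \circ \varphi_v(x) = \pi_{k,s-1} \circ \varphi_v(x')$ forces $\pi_{g,s-1}(x) = \pi_{g,s-1}(x')$. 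Again $\pi_{k,s-1} \circ \varphi_v$ is, by the construction of $W$ and of $k$, precisely the map $X \to \pi_{g,s-1}(X)$ followed by the shadow $\psi$; but $\psi$ is a fibration — in fact I would arrange, exactly as in Lemma \ref{constructed space}(2), that $\pi' := $ the induced map $W \to \pi_{g,s-1}(X)$ equals $\pi_{k,s-1}$, so $\pi_{k,s-1} \circ \varphi_v = \pi' \circ \varphi_v$ factors through $\pi_{g,s-1}$ and verticality is immediate from the definition of $W$ (two points with the same $\varphi$-image \emph{and} same $\pi_{g,s-1}$-image are already $\approx$-equivalent, but what we actually need is weaker and built in).

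The main obstacle I anticipate is establishing the structural properties of $W$ — that it is a compact ergodic gluing cubespace and that $k \colon W \to Z$ is genuinely an $s$-fibration with $\pi' = \pi_{k,s-1}$ — rather than the formal factorization, which is essentially diagram-chasing. This is where one must invoke the relative weak structure theorem (Theorem \ref{RWST}) to control the $A_s(g)$-action: one shows $A_s(g)$ descends to an action on $W$ with orbit space $\pi_{g,s-1}(X)$, identifying the canonical $(s-1)$-factor of $k$, and one uses the universal replacement property to get $(s+1)$-uniqueness of $k$. All of these steps are close relatives of arguments already carried out in Lemma \ref{constructed space} and in \cite{GMVIII}, so I would phrase the proof as a reduction to Lemma \ref{constructed space} applied with $\psi$ replaced by the shadow of $\varphi$, checking only the points where the relative setting genuinely differs.
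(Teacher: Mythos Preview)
Your construction is the same as the paper's: since $\varphi(x)=\varphi(x')$ forces $g(x)=g(x')$ (as $g=h\circ\varphi$), your relation $\approx$ is literally $\sim_{\varphi,\,s-1}$, and the paper defines $W := X/\sim_{\varphi,\,s-1}$, $\varphi_v := \pi_{\varphi,\,s-1}$, $\varphi_h$ the induced map to $Y$, and $k := h\circ\varphi_h$.

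Where you diverge is in the verification. You propose to establish that $\varphi_v$ is a fibration via Proposition~\ref{inner fibrant}, relative $s$-ergodicity, and an appeal to the machinery of Lemma~\ref{constructed space}; you then plan to control the structure of $W$ and $k$ through the $A_s(g)$-action and the relative weak structure theorem. This is more work than needed. The paper instead invokes Proposition~\ref{inducing $s$-fibration} directly with $f=\varphi$ and degree $s-1$: that proposition already tells you $\sim_{\varphi,\,s-1}$ is closed, $\varphi_v$ is a fibration, and $\varphi_h$ is an $(s-1)$-fibration. From there everything is genuinely routine: horizontality of $\varphi_h$ is condition~(3) in the definition, which is immediate from the $s$-uniqueness of $\varphi_h$; $k=h\circ\varphi_h$ is a composition of fibrations with $(s+1)$-uniqueness (since $s$-uniqueness implies $(s+1)$-uniqueness); and verticality of $\varphi_v$ follows by lifting $\sim_{s-1}$-witnessing cubes through the fibration $\varphi_v$ and using that $\varphi_v$-fibers are contained in $\sim_{s-1}$-classes. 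There is no need for Lemma~\ref{constructed space}, the $A_s(g)$-action, or Theorem~\ref{RWST}. Your plan would work, but you should recognise that Proposition~\ref{inducing $s$-fibration} applied to $\varphi$ is the single tool that discharges nearly all obligations at once.
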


\begin{proof}
Define $W= X/\sim_{\varphi, s-1}$. Denote by $\varphi_v$ the quotient map $ X \to W$ and $\varphi_h$ the induced map $ W \to Y$. Define $k:=h \circ \varphi_h$. Then it is routine to check the desired properties by definition.
\end{proof}

The following lemma is an analogue of \cite[Proposition 3.5]{GMVIII}.
\begin{lemma} \label{pushforward criterion}
Let $\varphi \colon X \to Y, g\colon X \to Z$ and $h\colon Y \to Z$ be three $s$-fibrations  such that $g=h \circ \varphi$. Let $u \in \Aut_i(g)$. Suppose that the fibers of $\varphi$ refine the fibers of $\varphi \circ u$. Then there is a unique relative translation $u' \in \Aut_i(h)$ such that $u'\circ \varphi=\varphi \circ u$.
\end{lemma}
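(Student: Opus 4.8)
The plan is to define $u'$ directly and then check, in turn, that it is continuous, fixes the fibers of $h$, restricts to an $i$-translation on each fiber, and is a homeomorphism; the overall shape follows the absolute version \cite[Proposition 3.5]{GMVIII}. First note that $\varphi$ is surjective (by its $0$-completion property) and, being a closed continuous surjection of compact metric spaces, is a topological quotient map. The hypothesis that the fibers of $\varphi$ refine those of $\varphi\circ u$ says exactly that $\varphi(x)=\varphi(x')$ implies $\varphi(u(x))=\varphi(u(x'))$, so $u'(\varphi(x)):=\varphi(u(x))$ is a well-defined map $Y\to Y$, manifestly the unique one satisfying $u'\circ\varphi=\varphi\circ u$; since $u'\circ\varphi$ is continuous and $\varphi$ is a quotient map, $u'$ is continuous, and uniqueness of $u'$ in the statement is immediate from surjectivity of $\varphi$. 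From $h\circ u'\circ\varphi=h\circ\varphi\circ u=g\circ u=g=h\circ\varphi$ and surjectivity of $\varphi$ we get $h\circ u'=h$, so $u'$ carries each fiber $h^{-1}(z)$ into itself; and $u'$ is surjective because $u'(Y)=\varphi(u(X))=Y$, which likewise forces each restriction $u'\colon h^{-1}(z)\to h^{-1}(z)$ to be surjective.

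Next I would check that $u'$ obeys the $i$-translation cube condition on each fiber $h^{-1}(z)$, which is an $s$-nilspace by Proposition~\ref{fiber of fibration} (and ergodic, since $Y$ is). Fix $n\ge i$, an $(n-i)$-face $F\subseteq\{0,1\}^n$, and $c\in C^n(h^{-1}(z))$. Since a fibration is surjective on cubes (by the corner-lifting arguments of \cite{GMVI}), lift $c$ to some $\tilde c\in C^n(X)$ with $\varphi(\tilde c)=c$; as $g(\tilde c(\omega))=h(c(\omega))=z$ for all $\omega$, in fact $\tilde c\in C^n(g^{-1}(z))$. Because $u$ restricts to an element of $\Aut_i(g^{-1}(z))$, the configuration obtained from $\tilde c$ by replacing $\tilde c(\omega)$ with $u(\tilde c(\omega))$ for $\omega\in F$ is an $n$-cube of $g^{-1}(z)\subseteq X$; applying $\varphi$ and using $\varphi\circ u=u'\circ\varphi$ shows that the corresponding modification of $c$ by $u'$ is an $n$-cube of $Y$ lying in $h^{-1}(z)$. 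This is precisely the $i$-translation condition for $u'|_{h^{-1}(z)}$ (and in particular it makes $u'|_{h^{-1}(z)}$ a cubespace morphism of $h^{-1}(z)$).

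The one genuinely delicate point is that $u'$ is bijective. Granting this, $u'$ is a homeomorphism of $Y$, and running the entire construction also for $u^{-1}\in\Aut_i(g)$ — which does satisfy the analogous refinement hypothesis once $u'$ is known to be injective — produces a map $v'$ with $v'\circ\varphi=\varphi\circ u^{-1}$, necessarily inverse to $u'$ and carrying its own $i$-translation property; hence $u'\in\Aut_i(h)$. To prove bijectivity, since $u'$ maps each $h^{-1}(z)$ onto itself it suffices to show each restriction is injective, i.e.\ it suffices to prove the following fact about a single compact ergodic $s$-nilspace $N$: a continuous surjection $v\colon N\to N$ satisfying the $i$-translation cube condition is bijective (hence lies in $\Aut_i(N)$). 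This I would establish by induction on $s$ via the weak structure theorem (Theorem~\ref{WST}): it is trivial for $s=0$ and for $i>s$; for the inductive step, $v$ preserves $\sim_{s-1}$ and so descends to a surjective weak $i$-translation $\bar v$ of the $(s-1)$-nilspace $N/\!\sim_{s-1}$, which is a genuine translation by the inductive hypothesis, after which the behaviour of $v$ along the $A_s$-fibers of the principal bundle $N\to N/\!\sim_{s-1}$ is pinned down — via the translation condition together with the rigidity statement Theorem~\ref{rigidity} — to be translation by elements of $A_s$, in particular a bijection on each fiber. This is the relative counterpart of the analysis of translation groups in \cite{GMVII}, which can be invoked directly in the Lie-fibered situations where the lemma is applied. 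I expect this bijectivity step to be the main obstacle; everything else is formal.
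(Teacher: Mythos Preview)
Your argument follows the same skeleton as the paper's: define $u'$ from the refinement hypothesis, check that it fixes $h$-fibers, and verify the $i$-translation cube condition by lifting cubes through the fibration~$\varphi$. The paper does exactly this, only it is much terser --- it simply writes ``By Proposition~\ref{universal property}, there exists a unique fibration $u'$'' and then checks the translation condition. That invocation is imprecise (since $u\in\Aut_i(g)$ need not be a global cubespace morphism of $X$, one cannot literally feed $\varphi\circ u$ into Proposition~\ref{universal property}); it is really just the quotient-map argument you give. More importantly, the paper does not explicitly address why $u'$ is a homeomorphism, which you correctly flag as the non-formal point. In the paper's actual applications (Lemma~\ref{vertical independence} and Proposition~\ref{horizontal pushforward}) the refinement condition is in fact established \emph{symmetrically}, i.e.\ $u$ permutes the $\varphi$-fibers, so bijectivity of $u'$ is immediate there; this is presumably why the authors did not linger on it.

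Your inductive resolution of bijectivity is correct in outline, but the appeal to Theorem~\ref{rigidity} is not needed and slightly misleading. The clean step is: once $\bar v$ is a bijection on $N/\!\sim_{s-1}$ by the inductive hypothesis, one shows directly that $v$ is $A_s$-equivariant. Indeed, for $y\in N$ and $a\in A_s\subset\Aut_s(N)$, starting from the constant $(s{+}1)$-cube $\Box^{s+1}(y)$ and applying $v$ on a codimension-$i$ face, respectively $a$ on a codimension-$s$ face, in the two possible orders produces two $(s{+}1)$-cubes agreeing on $\llcorner^{s+1}$; $(s{+}1)$-uniqueness of $N$ then forces $v(a\cdot y)=a\cdot v(y)$. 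Equivariance plus freeness of the $A_s$-action and bijectivity of $\bar v$ give injectivity of $v$. This replaces your reference to rigidity and makes the induction self-contained. With that adjustment your proof is complete and, on the bijectivity point, more careful than the paper's.
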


\begin{proof}
By Proposition \ref{universal property},  there exists a unique fibration $u'\colon Y \to Y$ such that $u'\circ \varphi=\varphi \circ u$. 
We check that $u' \in \Aut_i(h)$. 

Firstly,  since $u$ fixes the fibers of $g$, we have $u'$ fixes fibers of $h$. Let $n\geq i, y \in Y$,  and  $c \in C^n(h^{-1}(h(y)))$. Since fibrations are surjective \cite[Corollary 7.6]{GMVI}, we have that $c=\varphi(\widetilde{c})$ for some $\widetilde{c} \in C^n(X)$. Choose some $x \in X$ such that $\varphi(x)=y$. It follows that $\widetilde{c} \in C^n(g^{-1}(g(x)))$. Let $F \subseteq \{0, 1\}^n$ be a face of codimension $i$. Since $u \in \Aut_i(g)$, we have $[u]_F.\widetilde{c} \in C^n(g^{-1}(g(x)))$.  Thus $[u']_F.c=\varphi([u]_F.\widetilde{c})$ is a cube of $h^{-1}(h(y))$.
\end{proof}

The  following lemma is an analogue of \cite[Lemma 3.6]{GMVIII}. 
\begin{lemma} \label{vertical independence}
Let $\varphi\colon X \to Y, g\colon  X \to Z$ and $h\colon Y \to Z$ be three $s$-fibrations  such that $g=h\circ \varphi$ and $\varphi$ is  vertical.  Fix $u \in \Aut_1(g)$. Then for any $x, x' \in X$ with $\varphi(x)=\varphi(x')$, one has $\varphi \circ u(x)=\varphi \circ u(x')$.
\end{lemma}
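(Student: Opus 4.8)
The plan is to strip the statement down, through two soft reductions, to a single structural fact about vertical fibrations and then invoke that fact.

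\emph{Reduction.} From $\varphi(x)=\varphi(x')$ I get $\pi_{h,s-1}(\varphi(x))=\pi_{h,s-1}(\varphi(x'))$, so verticality of $\varphi$ forces $\pi_{g,s-1}(x)=\pi_{g,s-1}(x')$; by the relative weak structure theorem (Theorem \ref{RWST}) there is then a unique $a\in A_s(g)$ with $x'=a.x$. Viewing $a$ as the translation $f_a\in\Aut_s(g)$ provided by Proposition \ref{embedding} and using that $\Aut_\bullet(g)$ is a filtration of degree $s$, so $[\Aut_1(g),\Aut_s(g)]\subseteq\Aut_{s+1}(g)=\{1\}$, the elements $u$ and $f_a$ commute; hence $u(x')=u(a.x)=a.u(x)$ and $\varphi\circ u(x')=\varphi(a.u(x))$. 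Thus it remains to prove: \emph{if $\varphi(a.z_0)=\varphi(z_0)$ for a single point $z_0\in X$, then $\varphi(a.z)=\varphi(z)$ for all $z\in X$}, to be applied with $z_0=x$ and $z=u(x)$. Since $u$ fixes the fibres of $g$, both $z_0=x$ and $z=u(x)$ in fact lie in the single fibre $F:=g^{-1}(g(x))$, which by Propositions \ref{fiber of fibration} and \ref{prop:structure groups} is an ergodic $s$-nilspace with top structure group $A_s(g)$, and on which $\varphi$ restricts to a vertical fibration onto $h^{-1}(g(x))$ (a routine check, using that cubes of a subcubespace are cubes of the ambient space). So one may assume $Z=\{\ast\}$, renaming $F$ as $X$ and $h^{-1}(g(x))$ as $Y$.

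\emph{The key claim.} I would deduce it from the assertion that a vertical fibration $\varphi\colon X\to Y$ of $s$-nilspaces is equivariant for a continuous surjective homomorphism $\bar\varphi\colon A_s(X)\twoheadrightarrow A_s(Y)$ between top structure groups, i.e.\ $\varphi(a.z)=\bar\varphi(a).\varphi(z)$ for all $a\in A_s(X)$ and $z\in X$: granting this, $\varphi(a.z_0)=\varphi(z_0)$ forces $\bar\varphi(a)=e$ by freeness of the $A_s(Y)$-action, whence $\varphi(a.z)=\bar\varphi(a).\varphi(z)=\varphi(z)$ for every $z$, as needed. To produce $\bar\varphi$: verticality is precisely the injectivity of the shadow $\psi$ of Lemma \ref{relative shadow}, so $\psi$ is an isomorphism and $\varphi$ is a morphism of principal bundles over the common base $B:=\pi_{g,s-1}(X)$, with structure groups $A_s(X)$ and $A_s(Y)$; over each $b\in B$ the map $\varphi$ restricts to a fibration between the torsors $\pi_{s-1}^{-1}(b)$ and $\pi_{s-1}^{-1}(\psi(b))$, whose cube structures are those of $\cD_s(A_s(X))$ and $\cD_s(A_s(Y))$ by the relative weak structure theorem. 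One then uses that a fibration between abelian nilspaces of the same degree is affine to extract a homomorphism $\bar\varphi_b\colon A_s(X)\to A_s(Y)$ for each $b$, and that $b\mapsto\bar\varphi_b$ is continuous into a discrete group, hence locally constant, and in fact constant once one also feeds in genuine $(s+1)$-cubes of $X$ whose $\pi_{s-1}$-image joins different points of the ergodic base $B$.

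I expect the equivariance statement of the previous paragraph to be the only real difficulty; everything else is bookkeeping with the relative weak structure theorem and with the filtration $\Aut_\bullet(g)$. Within it, the two points requiring care are the affineness of a same-degree fibration between abelian nilspaces (this is where the rigidity of the $\cD_s$-structure is used — a quick check shows non-affine "polynomial" maps like a binomial map fail to be cube morphisms) and the base-point independence of the induced homomorphism when $B$ is disconnected. This is the relative incarnation of the analysis of vertical fibrations in \cite{GMVIII} underlying \cite[Lemma 3.6]{GMVIII}; once the reduction to the fibre $F$ above is performed one could alternatively just quote the absolute statement there.
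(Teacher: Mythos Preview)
Your argument is correct but substantially more elaborate than the paper's. The paper dispatches the lemma in one paragraph using only $(s+1)$-uniqueness of $h$: from verticality one gets $x\sim_{s-1}x'$, hence $\llcorner^s(x,x')\in C^s(X)$; since $u\in\Aut_1(g)$ the configuration $[\llcorner^s(x,x'),\llcorner^s(u(x),u(x'))]$ is an $(s+1)$-cube, and its $\varphi$-image $c$ agrees on $\llcorner^{s+1}$ with $c':=[\Box^s(\varphi(x)),\Box^s(\varphi(u(x)))]$ while $h(c)=h(c')=\Box^{s+1}(g(x))$; the relative uniqueness of $h$ then forces $\varphi(u(x'))=\varphi(u(x))$.

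Your route --- rewriting $x'=a.x$ via the top structure group, commuting $u$ past $a$ via the filtration $\Aut_\bullet(g)$, restricting to a single fibre, and then establishing an equivariance $\varphi(a.z)=\bar\varphi(a).\varphi(z)$ --- is sound, and the reduction to the absolute case of \cite[Lemma~3.6]{GMVIII} you offer at the end is a legitimate shortcut. The equivariance paragraph, however, carries real work that the paper's argument simply sidesteps: you need the shadow to be an isomorphism (fine), affineness of the restricted fibre maps, and constancy of $b\mapsto\bar\varphi_b$ over an ergodic base. Your discreteness-of-$\mathrm{Hom}$ observation is correct for compact abelian targets, but the affineness claim deserves more than the parenthetical you give it. What your approach buys is a clearer picture of \emph{why} the lemma holds --- it is ultimately a statement about the kernel of a homomorphism of structure groups --- whereas the paper's proof is a clean opaque trick with two cubes. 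For the purposes of the paper, where this lemma is a stepping stone to Proposition~\ref{vertical pushforward}, the direct uniqueness argument is preferable; your structural analysis would be more at home if one needed the induced homomorphism $A_s(g)\to A_s(h)$ explicitly elsewhere.
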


\begin{proof}
Since $\varphi$ is  vertical, we have $x\sim_{s-1}x'$ and hence $\llcorner^s(x, x')$ is a cube. Moreover, since $u \in \Aut_1(g)$,  we obtain an $(s+1)$-cube $[\llcorner^s(x, x'), \llcorner^s(u(x), u(x'))]$.  Hence $c:=\varphi([\llcorner^s(x, x'), \llcorner^s(u(x), u(x'))])$ is also an $(s+1)$-cube. On the other hand, by ergodicity of $Y$, we have another $(s+1)$-cube $c':=\varphi([\Box^s(x), \Box^s(u(x))])$. Note that $c|_{\llcorner^{s+1}}=c'|_{\llcorner^{s+1}}$ and 
$h(c)=\Box^{s+1}(g(x))=h(c')$. By the $(s+1)$-uniqueness of $h$, it holds that 
$$\varphi(u(x))=c'(\overrightarrow{1})=c(\overrightarrow{1})=\varphi(u(x')).$$
\end{proof}

Combining Lemmas \ref{pushforward criterion} with \ref{vertical independence}, we complete the pushing-forward part of Theorem \ref{pushforward and backward} for vertical fibrations.
\begin{proposition} \label{vertical pushforward}
Let $\varphi\colon X \to Y, g \colon X \to Z$ and $h\colon Y \to Z$ be three $s$-fibrations  such that $g=h\circ \varphi$. Suppose that $\varphi$ is  vertical. 
 Then there is a continuous homomorphism $\Phi\colon \Aut_i(g) \to \Aut_i(h)$ such that 
$$\Phi(u) \circ \varphi =\varphi \circ u$$
for any $u \in \Aut_i(g)$.
\end{proposition}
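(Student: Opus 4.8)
The plan is to descend each translation $u$ along $\varphi$ and then package the assignment $u \mapsto u'$ into a homomorphism, using Lemmas \ref{pushforward criterion} and \ref{vertical independence} as the two main inputs. Since $\Aut_i(g) \subseteq \Aut_1(g)$, Lemma \ref{vertical independence} applies to every $u \in \Aut_i(g)$: whenever $\varphi(x) = \varphi(x')$ we get $\varphi(u(x)) = \varphi(u(x'))$. This says precisely that the fibers of $\varphi$ refine the fibers of $\varphi \circ u$, which is the hypothesis of Lemma \ref{pushforward criterion}. Applying that lemma to each $u \in \Aut_i(g)$ produces a \emph{unique} $u' \in \Aut_i(h)$ with $u' \circ \varphi = \varphi \circ u$; I set $\Phi(u) := u'$. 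Note that being an element of $\Aut_i(h)$ (and not merely of $\Homeo(Y)$) is already part of the conclusion of Lemma \ref{pushforward criterion}.

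Next I would verify that $\Phi$ is a group homomorphism. For $u_1, u_2 \in \Aut_i(g)$ one computes
$\varphi \circ (u_1 \circ u_2) = (\Phi(u_1)\circ \varphi)\circ u_2 = \Phi(u_1)\circ(\varphi\circ u_2) = \bigl(\Phi(u_1)\circ \Phi(u_2)\bigr)\circ \varphi$,
and since $\Aut_i(h)$ is a group, $\Phi(u_1)\circ\Phi(u_2)$ is an element of $\Aut_i(h)$ satisfying the defining relation for $\Phi(u_1\circ u_2)$. Because fibrations are surjective (\cite[Corollary 7.6]{GMVI}), the uniqueness clause of Lemma \ref{pushforward criterion} forces $\Phi(u_1\circ u_2) = \Phi(u_1)\circ\Phi(u_2)$; likewise $\Phi(\id_X)=\id_Y$. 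Thus $\Phi$ is a homomorphism satisfying $\Phi(u)\circ\varphi = \varphi\circ u$ for all $u$, which is the pushing-forward half of Theorem \ref{pushforward and backward} in the vertical case.

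For continuity, both $\Aut_i(g)$ and $\Aut_i(h)$ carry the metric $d(u,v) = \max_{x} d(u(x),v(x))$ (equivalently the compact-open topology). Writing an arbitrary point of $Y$ as $y=\varphi(x)$ for some $x\in X$, we have $d(\Phi(u)(y),\Phi(v)(y)) = d(\varphi(u(x)),\varphi(v(x)))$, hence $d(\Phi(u),\Phi(v)) = \sup_{x\in X} d(\varphi(u(x)),\varphi(v(x)))$. Since $X$ is compact, $\varphi$ is uniformly continuous, so this supremum is controlled by $d(u,v) = \sup_x d(u(x),v(x))$; therefore $\Phi$ is (uniformly) continuous, completing the proof. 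I do not expect a genuine obstacle here: all the substance is already carried by Lemmas \ref{pushforward criterion} and \ref{vertical independence}, and the only points needing a little care are that $\Phi(u)$ lies in $\Aut_i(h)$ (handled by Lemma \ref{pushforward criterion}) and that the relation $\Phi(u)\circ\varphi=\varphi\circ u$ together with surjectivity of $\varphi$ determines $\Phi$ uniquely, which is what makes both the homomorphism property and continuity fall out cleanly.
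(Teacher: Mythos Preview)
Your proposal is correct and follows exactly the approach the paper indicates: the paper simply states that the result is obtained by combining Lemmas \ref{pushforward criterion} and \ref{vertical independence}, and you have spelled out precisely how those two lemmas combine, together with the routine verifications of the homomorphism property and continuity that the paper leaves implicit.
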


Now we deal with the  horizontal case.
\begin{proposition} \label{horizontal pushforward}
Let $\varphi\colon X \to Y, g\colon X \to Z$ and $h\colon Y \to Z$ be three $s$-fibrations  such that $g=h\circ \varphi$. Suppose that $\varphi$ is  horizontal and $g, h$ are Lie fibrations. Then for any $\varepsilon > 0$ there exists  $\delta > 0$ satisfying the following property. For any $u \in \Aut_i^\delta(g)$ there is
 $u' \in \Aut_i^\varepsilon(h)$ such that $u'\circ \varphi=\varphi \circ u$.
\end{proposition}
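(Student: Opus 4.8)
The plan is to induct on $s$ and to reduce the pushing-forward to the rigidity statement of Lemma~\ref{close class} for straight $\psi$-classes, where $\psi$ denotes the shadow of $\varphi$. Since $\Aut_i(g)=\Aut_i(h)=\{1\}$ whenever $i\geq s+1$, we may assume $1\leq i\leq s$, so in particular $s\geq1$. Throughout we use the following features of a horizontal $s$-fibration $\varphi\colon X\to Y$ (see Lemma~\ref{fiber class}, Lemma~\ref{constructed space} and \cite[\S2]{GMVIII}): the top structure groups agree, $A_s(h)=A_s(g)=:A$, the map $\varphi$ is $A$-equivariant, so $a\cdot\varphi^{-1}(y)=\varphi^{-1}(a\cdot y)$ for every $a\in A$ and $y\in Y$, and every fiber $\varphi^{-1}(y)$ is a straight $\psi$-class, so that $\pi_{g,s-1}$ restricts to a homeomorphism from $\varphi^{-1}(y)$ onto $\psi^{-1}(\pi_{h,s-1}(y))$. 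Being an $s$-fibration, $\varphi$ is a composition of principal fiber bundles (Theorem~\ref{RWST}), hence open, so $y\mapsto\varphi^{-1}(y)$ is continuous for the Hausdorff metric; the same applies to $\pi_{h,s-1}$, which is an $A$-principal bundle. One further ingredient is that the inverse homeomorphisms $(\pi_{g,s-1}|_{\varphi^{-1}(y)})^{-1}$ are \emph{uniformly} equicontinuous as $y$ ranges over $Y$: otherwise a compactness argument produces $x\neq x'$ with $\varphi(x)=\varphi(x')$ and $\pi_{g,s-1}(x)=\pi_{g,s-1}(x')$, contradicting the triviality of $\sim_{\varphi,s-1}$.

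Fix $\varepsilon>0$ and let $\delta_g>0$ be the constant of Lemma~\ref{close class} associated with $g$. Let $u\in\Aut_i^\delta(g)$ with $\delta$ to be specified, and fix $y\in Y$; write $q=\pi_{h,s-1}(y)$. By Proposition~\ref{canonical map}, $\pi_\ast(u)\in\Aut_i(g_{s-1})$ and $\|\pi_\ast(u)\|$ is small when $\delta$ is. Applying the inductive hypothesis — Theorem~\ref{pushforward and backward} for the triple $\psi,g_{s-1},h_{s-1}$, where $g_{s-1}$ and $h_{s-1}$ are Lie-fibered — we obtain, once $\delta$ is small enough, an element $v'\in\Aut_i(h_{s-1})$ with $v'\circ\psi=\psi\circ\pi_\ast(u)$ and $\|v'\|$ as small as we wish (for $s=1$ this simply reads $v'=\id$, as $g_0$ is an isomorphism). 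The first claim is that $u(\varphi^{-1}(y))$ is again a straight $\psi$-class, the one associated with $\psi^{-1}(v'(q))$. Indeed, from $\pi_{g,s-1}\circ u=\pi_\ast(u)\circ\pi_{g,s-1}$ (Proposition~\ref{canonical map}) and $v'\circ\psi=\psi\circ\pi_\ast(u)$ we get $\pi_{g,s-1}(u(\varphi^{-1}(y)))=\pi_\ast(u)(\psi^{-1}(q))=\psi^{-1}(v'(q))$, and $u(\varphi^{-1}(y))$ meets each $\pi_{g,s-1}$-fiber over this set in exactly one point. For the cube condition one observes that $\varphi^{-1}(y)$ and $u(\varphi^{-1}(y))$ both lie in the single $g$-fiber $g^{-1}(h(y))$, on which $u$ is a cubespace automorphism, while $\pi_\ast(u)$ is one on the corresponding $g_{s-1}$-fiber; chaining these two facts with the straightness of $\varphi^{-1}(y)$ through the square of Proposition~\ref{canonical map} shows that a configuration $c$ into $u(\varphi^{-1}(y))$ is a cube iff $\pi_{g,s-1}(c)$ is a cube.

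Now pick $y'\in Y$ with $\pi_{h,s-1}(y')=v'(q)$ and $d(y',y)$ bounded by a modulus of $\|v'\|$ (possible because $\pi_{h,s-1}$ is an open map). Then $u(\varphi^{-1}(y))$ and $\varphi^{-1}(y')$ are two straight $\psi$-classes with the common $\pi_{g,s-1}$-image $\psi^{-1}(v'(q))$, and they are Hausdorff-close: $u(\varphi^{-1}(y))$ is within $\|u\|\leq\delta$ of $\varphi^{-1}(y)$, which by continuity of fibers is within a modulus of $d(y,y')$ of $\varphi^{-1}(y')$. Combining Hausdorff-closeness with the uniform equicontinuity of the inverse parametrizations shows that, once $\delta$ is small enough, $\pi_{g,s-1}$ restricted to $u(\varphi^{-1}(y))\cup\varphi^{-1}(y')$ is a $\delta_g$-embedding; Lemma~\ref{close class} then provides $a\in A$ with $u(\varphi^{-1}(y))=a\cdot\varphi^{-1}(y')=\varphi^{-1}(a\cdot y')$, a fiber of $\varphi$. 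Since $y$ was arbitrary and $u$ is bijective, $u$ permutes the fibers of $\varphi$; in particular the fibers of $\varphi$ refine those of $\varphi\circ u$, so Lemma~\ref{pushforward criterion} yields a unique $u'\in\Aut_i(h)$ with $u'\circ\varphi=\varphi\circ u$. Finally $u'$ is small: fibrations are surjective, so for $y=\varphi(x)$ we have $d(u'(y),y)=d(\varphi(u(x)),\varphi(x))\leq\omega_\varphi(\|u\|)$ with $\omega_\varphi$ a modulus of uniform continuity of $\varphi$, whence $\|u'\|<\varepsilon$ once $\delta$ is small. Unwinding, a single choice of $\delta$ achieves all the required smallness bounds.

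I expect the main obstacle to be exactly the heart of the argument: showing that the small translation $u$ preserves the partition of $X$ into fibers of $\varphi$, which is by no means automatic in the horizontal case. Making Lemma~\ref{close class} applicable forces one both to push $\pi_\ast(u)$ down through $\psi$ — so as to recognize $u(\varphi^{-1}(y))$ as a straight $\psi$-class over the \emph{shifted} fiber $\psi^{-1}(v'(q))$, which is where the induction and Proposition~\ref{canonical map} enter — and to upgrade Hausdorff-closeness of the two straight classes to the $\delta_g$-embedding hypothesis, which is where the uniform modulus for the inverse parametrizations is used. The remaining work, namely tracking the various moduli of continuity so that one $\delta$ suffices, is routine.
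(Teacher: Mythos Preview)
Your proof is correct and follows essentially the same route as the paper's: induct on $s$, push $u$ down to $\pi_\ast(u)$ via Proposition~\ref{canonical map}, use the inductive hypothesis to recognise $u(\varphi^{-1}(y))$ as a straight $\psi$-class, invoke Lemma~\ref{close class} to identify it with a $\varphi$-fiber, and finish with Lemma~\ref{pushforward criterion}. The one tactical simplification in the paper is to take the comparison class directly as $D_2:=\varphi^{-1}\bigl(\varphi(u(x_0))\bigr)$ for some fixed $x_0\in\varphi^{-1}(y)$; since then $u(x_0)\in D_1\cap D_2$, Lemma~\ref{close class} forces $D_1=D_2$ outright (the $A$-translate must be trivial), which spares you the separate lifting of $y'$ through $\pi_{h,s-1}$ and the accompanying Hausdorff-closeness and equicontinuity bookkeeping.
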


\begin{proof}
By Proposition \ref{pushforward criterion}, it suffices to show as $\delta$ is small enough every map in  $ \Aut_i^\delta(g)$ preserves $\varphi$-fibers.  We induct on $s$ to prove this. 

The case $s=0$ is trivial as $\varphi$ is  a cubespace isomorphism. 
Denote by $\psi \colon \pi_{g, s-1}(X) \to \pi_{h, s-1}(Y)$ the shadow of $\varphi$. By the induction hypothesis, there exists $\delta_0 > 0$ such that any element of $ \Aut_i^{\delta_0}(g_{s-1})$ preserves $\psi$-fibers.  Let  $u \in \Aut_i^\delta(g)$ for $\delta$ to be decided later. By Proposition \ref{canonical map}, $u$ induces a map $v:=\pi_\ast(u) \in \Aut_i(g_{s-1})$. As $\delta$ is small enough, we have $v \in \Aut_i^{\delta_0}(g_{s-1})$. 
By the induction hypothesis, we have that  $v$ preserves $\psi$-fibers.

Let $y \in Y$ and fix a point $x_0 \in \varphi^{-1}(y)$. Define $z=\varphi (u(x_0))$. We show that $D_1:=u(\varphi^{-1}(y))=D_2:=\varphi^{-1}(z)$ by applying Proposition \ref{close class} to the Lie fibration $g$. Note that $u(x_0) \in D_1\cap D_2$.
  By Lemma \ref{fiber class}, $D_2$ is a  straight $\psi$-class. Since $v$ preserves $\psi$-fibers, we can deduce that $D_1$ is also a
 straight $\psi$-class. Denote by $\delta'$ the number $\delta$ in Proposition \ref{close class}. Finally, one can check that $\pi_{g, s-1}|_{D_1\cup D_2}$ is a desired $\delta'$-embedding  as $\delta$ is small. This will force that $D_1=D_2$.

\end{proof}

Following the argument of \cite[Proposition 3.9]{GMVIII}, based on Propositions \ref{vertical pushforward} and \ref{horizontal pushforward}, and  Lemmas \ref{identity orbit is open} and \ref{discreteness}, we obtain the following proposition.
\begin{proposition} \label{pushbackward}
Let $\varphi\colon X \to Y, g\colon X \to Z$ and $h\colon Y \to Z$ be three $s$-fibrations  such that $g=h \circ \varphi$ and $g, h$ are Lie fibrations. Then for any $\varepsilon > 0$ there exists  $\delta > 0$ satisfying the following property. For any $u' \in \Aut_i^\delta(h)$ there is
 $u \in \Aut_i^\varepsilon(g)$ such that $u' \circ \varphi=\varphi \circ u$. 
\end{proposition}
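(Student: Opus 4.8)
The plan is to follow the proof of \cite[Proposition 3.9]{GMVIII}. By Proposition \ref{fibration factorization} we may factor $\varphi=\varphi_h\circ\varphi_v$ with $\varphi_v\colon X\to W$ vertical and $\varphi_h\colon W\to Y$ horizontal, with respect to an auxiliary $s$-fibration $k\colon W\to Z$. Since a composition of intertwinings is again an intertwining and the quantitative constants compose, it suffices to establish the backward statement separately for a vertical fibration and for a horizontal fibration; in each case I keep the generic notation $\varphi\colon X\to Y$, $g\colon X\to Z$, $h\colon Y\to Z$ with $g=h\circ\varphi$.

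In both situations the forward direction already constructs the relevant map: by Proposition \ref{vertical pushforward} (vertical case) and Proposition \ref{horizontal pushforward} (horizontal case) there is a continuous homomorphism $\Phi$ with $\Phi(u)\circ\varphi=\varphi\circ u$, a priori defined on $\Aut_i(g)$ in the vertical case and only near the identity in the horizontal case. Since $\Aut_i^\circ(g)$ is a connected Lie group (Corollary \ref{Lieness}), hence generated by any neighbourhood of the identity, $\Phi$ restricts (resp.\ extends) to a continuous homomorphism $\Aut_i^\circ(g)\to\Aut_i^\circ(h)$, and the backward statement becomes the assertion that this homomorphism is \emph{open at the identity}. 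To prove that I would use the hinted lemmas to compute dimensions: by Lemma \ref{identity orbit is open} the orbit $\Aut_j^\circ(g)z$ is open in the $\sim_{g,j-1}$-class of $z$, and by Lemma \ref{discreteness} its stabiliser is discrete, so $\dim\Aut_j^\circ(g)=\sum_{\ell\ge j}\dim A_\ell(g)$ by the relative weak structure theorem (Theorem \ref{RWST}), and likewise for $h$. In the horizontal case $\varphi$ restricts to an isomorphism on each $\sim_{s-1}$-class; this both makes $\Phi$ injective near the identity and forces $A_\ell(g)\cong A_\ell(h)$ for all $\ell$, so $\Phi$ is a local diffeomorphism, hence open at the identity. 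In the vertical case the shadow of $\varphi$ is an isomorphism below the top level while $A_s(g)$ surjects onto $A_s(h)$; a dimension count then identifies $\Phi(\Aut_i^\circ(g))$ with an open --- hence closed, hence full --- subgroup of the connected group $\Aut_i^\circ(h)$, so $\Phi$ is a surjective continuous homomorphism of second countable locally compact groups and is therefore open by the Baire open mapping theorem. Openness at the identity furnishes the function $\delta(\varepsilon)$ in each case, and composing the estimates for $\varphi_v$ and $\varphi_h$ yields the general statement.

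The main obstacle I expect is the horizontal case. One must genuinely verify that $A_\ell(g)\cong A_\ell(h)$ for all $\ell$ (equivalently, that $\Phi$ is a submersion), and a more hands-on proof would instead mirror the ``straight class'' argument behind Proposition \ref{horizontal pushforward}: lift a small $u'\in\Aut_i^\delta(h)$ through the shadow using openness of $\pi_\ast$ (Theorem \ref{openness}), observe via Lemma \ref{fiber class} and Proposition \ref{close class} that the resulting homeomorphism agrees with the desired lift up to a single element of $A_s(g)$ on each fibre, and repair it to an honest element of $\Aut_i(g)$ of controlled norm --- all inside an induction on $s$ through which every constant must be kept uniform. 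A secondary, routine point is checking that the vertical and horizontal estimates compose without loss of uniformity.
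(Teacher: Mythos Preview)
Your proposal is correct and follows essentially the same route as the paper: the paper's own ``proof'' is merely the one-line remark preceding the statement, which invokes \cite[Proposition 3.9]{GMVIII} together with exactly the ingredients you use --- the vertical/horizontal factorization (Proposition \ref{fibration factorization}), the forward maps (Propositions \ref{vertical pushforward} and \ref{horizontal pushforward}), and Lemmas \ref{identity orbit is open} and \ref{discreteness}. Your sketch therefore spells out more than the paper does, and the obstacle you flag in the horizontal case (that $A_\ell(g)\cong A_\ell(h)$ for all $\ell$ is not immediate from horizontality at the top level alone) is genuine; your proposed remedy via induction on $s$ through the shadow is the right one and is how the argument in \cite{GMVIII} proceeds.
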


\begin{proof}[{\bf Proof of Theorem \ref{pushforward and backward}}]
By Proposition \ref{fibration factorization},  in order to prove the first statement, it is enough to consider the cases that $\varphi$ is horizontal and  vertical separately. These two cases are dealt with in 
Propositions \ref{vertical  pushforward} and \ref{horizontal pushforward} respectively. The second statement follows from  Proposition \ref{pushbackward}. 
\end{proof}

\section{Isomorphisms between fibers of a fibration}\label{sec:Isomorphisms between fibers of a fibration}

In this section we prove Theorem \ref{fiber isomorphism} which gives a natural condition under which the fibers of a Lie-fibered $s$-fibration are isomorphic as subcubespaces. 

Let us first recall the covering homotopy theorem \cite[Theorem 2.1]{C17} \cite[Theorem 11.3]{Ste51}.
\begin{theorem}[Covering homotopy theorem]\label{thm:covering_homotopy}
Let $p\colon E \to B$ and $q\colon Z \to Y$ be fiber bundles with the same fiber $F$, where $B$ is compact. Let $h_0$ be a bundle map 
$$
\xymatrix{
E \ar[r]^{\widetilde{h_0}} \ar[d]^p & Z  \ar[d]^q\\
B \ar[r]^{h_0} & Y.
}$$
Let $H\colon B\times I \to Y$ be a homotopy of $h_0$, i.e. $h_0=H|_{B \times \{0\}}$. Then there exists a covering $\widetilde{H}$ of the homotopy $H$ by a bundle map 
$$
\xymatrix{
E \times I \ar@{-->}[r]^{\widetilde{H}} \ar[d]^{p\times {\rm id}} & Z \ar[d]^q \\
B \times I \ar[r]^H & Y.
}$$
\end{theorem}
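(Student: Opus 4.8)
The plan is to establish the \emph{covering homotopy theorem} by the classical two-move argument: first treat the case of a product bundle, then globalize through a subdivision of $B\times I$ (legitimate because $B$, and hence $B\times I$, is compact) followed by Steenrod's ``staircase'' patching. For the \emph{product case}, suppose $q$ is trivial over an open $W\subseteq Y$, $q^{-1}(W)\cong W\times F$ with $q$ the first projection, and suppose $H$ carries $V\times J$ into $W$ for an open $V\subseteq B$ and a closed subinterval $J=[a,b]\subseteq I$. A bundle map on $p^{-1}(V)\times\{a\}$ covering $H|_{V\times\{a\}}$ reads, in this trivialization, $e\mapsto(H(p(e),a),\phi(e))$ where $\phi\colon p^{-1}(V)\to F$ restricts to a homeomorphism on each fibre of $p$. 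I would then simply set $\widetilde H(e,t):=(H(p(e),t),\phi(e))$ for $(e,t)$ over $V\times J$: this is continuous, satisfies $q\circ\widetilde H=H\circ(p\times\mathrm{id})$, extends the given datum at $t=a$, and on each fibre $E_b\times\{t\}$ is a homeomorphism onto $\{H(b,t)\}\times F\cong Z_{H(b,t)}$, hence a bundle map. Taking $W=Y$, $V=B$, $J=I$, $a=0$ settles the theorem whenever $q$ is globally a product.

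Next, the \emph{subdivision}. I would fix a trivializing open cover $\{W_\alpha\}$ of $Y$ for $q$; then $\{H^{-1}(W_\alpha)\}$ is an open cover of the compact space $B\times I$, so a Lebesgue-number (tube-lemma) argument produces a finite open cover $\{V_1,\dots,V_m\}$ of $B$ together with a partition $0=t_0<t_1<\dots<t_n=1$ such that $H(V_j\times[t_{i-1},t_i])$ lies in some trivializing chart of $q$ for all $i,j$. By induction on $i$ it then suffices to extend any bundle map defined on $E\times\{t_{i-1}\}$ and covering $H|_{B\times\{t_{i-1}\}}$ to a bundle map on $E\times[t_{i-1},t_i]$ covering $H|_{B\times[t_{i-1},t_i]}$; starting from $\widetilde{h_0}$ at $t_0=0$ and observing that successive pieces agree at each $t_i$, this assembles the required $\widetilde H$ on $E\times I$. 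So we are reduced, after relabelling $[t_{i-1},t_i]$ as $[0,1]$, to a single extension problem over $B\times[0,1]$ equipped with the cover $\{V_j\}$.

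The substance is the \emph{patching} over $\{V_j\}$, which I would carry out by the staircase construction. Using normality of the compact Hausdorff space $B$, choose a partition of unity $\{\rho_j\}_{j=1}^m$ subordinate to $\{V_j\}$ and put $r_k:=\rho_1+\dots+\rho_k$, so $r_0\equiv0\le r_1\le\dots\le r_m\equiv1$; the ``bands'' $\Omega_k:=\{(b,t):r_{k-1}(b)\le t\le r_k(b)\}$ are closed, cover $B\times I$, and overlap only along the graphs of the $r_k$. I would define $\widetilde H$ band by band: given $\widetilde H$ already on the part of $E\times I$ over $\Omega_1\cup\dots\cup\Omega_{k-1}$ (initially just the datum over $\Omega_0=B\times\{0\}$), note that over $\mathrm{supp}(\rho_k)\subseteq V_k$ a single trivialization of $q$ is available, so the product case above extends $\widetilde H$ across the portion of $\Omega_k$ lying over $V_k$, using the already-defined values on the graph of $r_{k-1}$ as initial data; over $B\setminus\mathrm{supp}(\rho_k)$ one has $r_{k-1}=r_k$, so that part of $\Omega_k$ is degenerate and nothing is added. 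These prescriptions agree where $\rho_k=0$, and since the $\Omega_k$ form a finite closed cover whose intersections are graphs on which the definitions coincide, the pasting lemma yields a continuous bundle map over $\Omega_1\cup\dots\cup\Omega_k$. After $m$ bands $\widetilde H$ is defined on all of $E\times[0,1]$, completing the induction.

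I expect this last step to be the genuine obstacle: the risk is that local lifts arising from \emph{different} trivializations of $q$ over overlapping $V_j$'s refuse to glue continuously. The staircase device is precisely what disarms this, because the $k$-th extension is supported where one trivialization suffices and is trivial near $\partial V_j$; and for $j<\ell$ the bands $\Omega_j$ and $\Omega_\ell$ are separated by $\Omega_{j+1},\dots,\Omega_{\ell-1}$ and meet in at most a single graph, so two extensions never compete for the same point. As a fallback I would instead invoke that a fibre bundle over the paracompact base $B\times I$ is a Hurewicz fibration, lift $H$ as a plain map by the covering homotopy property, and then promote the lift to a bundle map by the fibrewise argument of the product case --- but the content of that promotion is the same local computation, so the staircase route is the more self-contained one.
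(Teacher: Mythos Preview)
Your argument is the classical Steenrod proof and is correct. Note, however, that the paper does not give its own proof of this theorem at all: it is stated with a citation to Steenrod's book \cite[Theorem 11.3]{Ste51} (and \cite[Theorem 2.1]{C17}) and used as a black box in the proof of Theorem \ref{fiber isomorphism}. So there is no ``paper's proof'' to compare against --- your write-up is essentially a reconstruction of the cited reference, and the product-case/Lebesgue-subdivision/staircase patching you outline is exactly the standard route taken there.
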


\begin{proof}[Proof of Theorem \ref{fiber isomorphism}]
We first modify the homotopy argument of \cite[Theorem 5.1]{R78} to obtain a local homeomorphism  and then repair it to a cubespace isomorphism. By compactness, the isomorphism can be built globally. 

Denote by $I$ the interval $[0, 1]$. Since $Y$ is path-connected, there exists a continuous map $H_0\colon \{y_0\}\times I \to Y$ such that $H_0(y_0, 0)=y_0$ and $H_0(y_0, 1)=y_1$. Define $R=\Ima(H_0)$ and $y_t=H_0(y_0, t)$.  For every $i=0, 1, \ldots, s-1$ abbreviate by $\pi_i$ the map $\pi_{g, i}\colon Z/\sim_{g, i+1} \to Z/\sim_{g,i}$. We first show for every $0 \leq i \leq s$ there exists an interval $I_i:=[0, t_i]$ for some $t_i=t_i(y_0)> 0$ and a continuous map 
$$G_i\colon g_i^{-1}(y_0)\times I_i \to g_i^{-1}(R)$$
such that 
\begin{enumerate}
    \item $G_i(x, 0)=x$ for all $x \in g_i^{-1}(y_0)$, i.e. $G_i$ is a homotopy of inclusion map;
    \item $G_i(\cdot, t)\colon g_i^{-1}(y_0) \to g_i^{-1}(y_t)$ is a cubespace isomorphism for every $t \in I_i$.
\end{enumerate}

The case $i=0$ is clear since the fibers of $g_0$ are singletons and $G_0:=H_0$ works.
Inductively, suppose that we have a continuous map $G_{s-1}\colon g_{s-1}^{-1}(y_0)\times I_{s-1} \to g_{s-1}^{-1}(R)$ with the desired properties. Consider the fiber bundle map given by the inclusion map
$$\xymatrix{
g^{-1}(y_0) \ar[d]^{\pi_{s-1}} \ar[r]^i & (g_{s-1})^{-1}(R) \ar[d]^{\pi_{s-1}} \\
(g_{s-1})^{-1}(y_0) \ar[r]^i & (g_{s-1})^{-1}(R)
}$$
Clearly $G_{s-1}$ is a homotopy  of the inclusion map $i: (g_{s-1})^{-1}(y_0) \to (g_{s-1})^{-1}(R)$.
Applying Theorem \ref{thm:covering_homotopy} to this bundle map, we obtain a fiber bundle map $H_s$ such that the following diagram 
$$\xymatrix{
g^{-1}(y_0)\times I_{s-1} \ar[d]^{\pi_{s-1}\times \id} \ar@{-->}[r]^{H_s} & g^{-1}(R) \ar[d]^{\pi_{s-1}} \\
(g_{s-1})^{-1}(y_0) \times I_{s-1} \ar[r]^{G_{s-1}} & (g_{s-1})^{-1}(R).
}$$
commutes and $H_s(x,0)=x$ for all $x \in g^{-1}(y_0)$. 
In particular, for every $t \in I_{s-1}$, $H_s$ restricts to a continuous map from $g^{-1}(y_0)\times\{t\}$ to $g^{-1}(y_t)$. 
As $\pi_{s-1}$ is a principal $A_s(g)$-bundle map, it holds that this restriction map is a homeomorphism.

By the induction hypothesis $G_{s-1}(\cdot, t)$ is a cubespace isomorphism for every $t \in I_{s-1}$. Thus the discrepancy 
$$\rho_{H_s(\cdot, t)} \colon C^{s+1}(g^{-1}(y_0)) \to A_s(g)$$
is well defined for every $t \in I_{s-1}$. Since $H_s$ is a homotopy, as $t$ is small enough, say, $t \in I_s:=[0, t_s]$ for some $0 < t_s \leq t_{s-1}$,  $H_s(\cdot, t)$ is of sufficiently small norm. Thus by \cite[Theorem 4.11]{GMVII} (or \cite[Lemma 3.19]{ACS12}), $\rho_{H_s(\cdot, t)}$ is a coboundary map. Furthermore, since $H_s$ is continuous, it can be  repaired to a (continuous) homotopy 
$$G_s: g^{-1}(y_0)\times I_s \to g^{-1}(R)$$
which restricts to a cubespace isomorphism from $g^{-1}(y_0)\times \{t\}$ to $g^{-1}(y_t)$ for every $t \in I_s$. 
This completes the inductive step.

Finally, for every $y \in R$, the above argument shows in particular that for every $t \in I$, there exists $\delta_t> 0$ such that $g^{-1}(y_u)$ is isomorphic to $g^{-1}(y_{u'})$ as cubespaces for every  $u, u'$ in the ball $(t-\delta_t, t+\delta_t)$. By compactness, we obtain a finite open cover of $I$ consisting of open balls such that the $g$-fibers over each ball are isomorphic as cubespaces. Then a finite composition of isomorphisms give an isomorphism between $g^{-1}(y_0)$ and $g^{-1}(y_1)$.
\end{proof}

\begin{remark}
In the statement of Theorem \ref{fiber isomorphism}, it is desirable to drop the assumption that $g$ is a Lie fibration. Let us explain the obstruction. Note that the proof of Theorem \ref{fiber isomorphism} is based on an induction with a finite number of steps. To deal with the general case, one may apply the factorization of $g$ as an inverse limit of Lie fibrations in Theorem \ref{relative inverse limit}. However, 
after an infinite steps of induction, one fails to obtain a homotopy for some strictly positive time interval. This gives rise to the obstruction to construct the desired cubespace isomorphism.
\end{remark}

\section{Factor maps between minimal distal systems are fibrations}\label{sec:Factor maps between minimal distal systems are fibrations}

In this section, we prove that every factor map between minimal distal systems is a fibration for the induced cubespace morphism. 

\begin{definition} \label{def:principal abelian group}
Let $(G, X)$ be a dynamical system and $K$ a compact group acting on $X$ such that the action of $K$ commutes with the action by $G$. We say a factor map $\pi: (G, X) \to (G, Y)$ is a {\bf (topological) group extension by $K$} if 
$$R_\pi:=\{(x, x') \in X^2: \pi(x)=\pi(x')\}=\{(x, kx): x \in X, k \in K\}.$$
If furthermore $K$ is an abelian group and $K$ acts on $X$ freely, we say $\pi$ is a {\bf principal abelian group extension}\footnote{Comparing to Definition \ref{def:principal fiber bundle}, it is easy to see that a group extension $\pi: (G, X) \to (G, Y)$  by a compact group $K$ which acts freely on $X$ is a $K$-principal bundle.}.
\end{definition}
\begin{lemma} \label{K invariant}
Let $(G, X)$ be a dynamical system and $\pi: X \to Y$  a group extension by a compact group $K$. Then 
for every $\ell \geq 0$ we have
$$K\NRP^{[\ell]}(X)=\NRP^{[\ell]}(X)$$
where  $K$ acts on $\NRP^{[\ell]}(X)$ by diagonal action.
\end{lemma}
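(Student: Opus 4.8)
The plan is to show that each element $k \in K$, acting diagonally on $X^2$, maps $\NRP^{[\ell]}(X)$ bijectively onto itself; since $k^{-1} \in K$ gives the reverse inclusion it suffices to prove $k\,\NRP^{[\ell]}(X) \subseteq \NRP^{[\ell]}(X)$. Unravelling Definition \ref{def:NRP}, a pair $(x,x')$ lies in $\NRP^{[\ell]}(X)$ precisely when the configuration $\llcorner^{\ell+1}(x,x')$ — taking value $x$ on $\llcorner^{\ell+1}$ and $x'$ at $\overrightarrow{1}$ — belongs to $C^{\ell+1}_G(X)$. So I must show that if $\llcorner^{\ell+1}(x,x') \in C^{\ell+1}_G(X)$ then $\llcorner^{\ell+1}(kx,kx') \in C^{\ell+1}_G(X)$ for every $k \in K$.

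The key point is that the $K$-action on $X$ commutes with the $G$-action, hence the diagonal $K$-action on $X^{\{0,1\}^{\ell+1}}$ commutes with the pointwise action of $\HK^{\ell+1}(G) \subseteq G^{\{0,1\}^{\ell+1}}$. Concretely, write $k^{\{0,1\}^{\ell+1}}$ for the constant configuration with value $k$ acting pointwise; then for any $\gamma \in \HK^{\ell+1}(G)$, $x_0 \in X$ we have
$$k^{\{0,1\}^{\ell+1}} . \bigl(\gamma . x_0^{\{0,1\}^{\ell+1}}\bigr) = \gamma . \bigl((k x_0)^{\{0,1\}^{\ell+1}}\bigr),$$
because $k$ commutes with each generator $[g]_F$ of $\HK^{\ell+1}(G)$. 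Thus the homeomorphism $c \mapsto k.c$ of $X^{\{0,1\}^{\ell+1}}$ sends the set $\{\gamma . x_0^{\{0,1\}^{\ell+1}} : \gamma \in \HK^{\ell+1}(G),\ x_0 \in X\}$ into itself; being a homeomorphism, it preserves the closure $C^{\ell+1}_G(X)$. Finally one observes $k.\llcorner^{\ell+1}(x,x') = \llcorner^{\ell+1}(kx, kx')$ since the diagonal action is pointwise, and this gives $\llcorner^{\ell+1}(kx,kx') \in C^{\ell+1}_G(X)$, i.e. $(kx,kx') \in \NRP^{[\ell]}(X)$. Running the same argument with $k^{-1}$ yields equality $k\,\NRP^{[\ell]}(X) = \NRP^{[\ell]}(X)$.

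I do not anticipate a serious obstacle here: the whole argument is the observation that the cube structure $C^\bullet_G(X)$ is built $G$-equivariantly from orbit closures, and any homeomorphism of $X$ commuting with the $G$-action automatically permutes the cube sets. The only mild care needed is checking that $k$ commutes with the generators $[g]_F$ of the Host-Kra cube group — but this is immediate from the hypothesis that the $K$-action commutes with the $G$-action, applied coordinatewise — and noting that the map $c \mapsto k.c$ is a homeomorphism of $X^{\{0,1\}^{\ell+1}}$ so it commutes with taking closures. (The statement does not require minimality, although $\NRP^{[\ell]}(X)$ is only known to be an equivalence relation in the minimal case, which is the setting of interest.)
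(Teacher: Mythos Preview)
Your proposal is correct and follows essentially the same approach as the paper: both arguments establish that the diagonal $K$-action preserves each cube set $C^{\ell+1}_G(X)$ by using the commutation of $K$ with $G$ to pass the $k$-action through the $\HK^{\ell+1}(G)$-orbits of constant configurations and then through the closure, and then apply this to the configuration $\llcorner^{\ell+1}(x,x')$. Your write-up is slightly more explicit about why the closure is preserved (via the homeomorphism argument) and about the reverse inclusion coming from $k^{-1}$, but there is no substantive difference.
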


\begin{proof}
Let $c \in C_G^\ell(X)$ and $k \in K$. We check that $kc \in C_G^\ell(X)$. By definition there exists $g_n \in \HK^\ell(G)$ 
and $x_n \in X$ such that $c=\lim g_n(x_n, x_n, \ldots, x_n)$. Thus 
$$kc=k(\lim g_n(x_n, x_n, \ldots, x_n))=\lim g_n(k(x_n, x_n, \ldots, x_n)) \in C_G^\ell(X).$$

Now let $(x, y) \in \NRP^{[\ell]}(X)$. By definition we have  $ (x, x, \ldots, x, y) \in C_G^{\ell+1}(X)$. It follows that $k(x, x. \ldots, x, y) \in C_G^{\ell+1}(X)$. In other words, $(kx, ky) \in \NRP^{[\ell]}(X)$.

\end{proof}

\begin{definition} \label{isometric}
A factor map $\pi: X \to Y$ is an {\bf isometric extension} if there exists a continuous function $d\colon R_\pi: \to \bR$ such that
\begin{enumerate}
    \item for every $y \in Y$ the restriction map $d|_{\pi^{-1}(y)\times \pi^{-1}(y)}$ is a metric on $\pi^{-1}(y)$;
    \item $d(gx, gx')=d(x, x')$ for every $g \in G$ and $(x, x') \in R_\pi$.
\end{enumerate}
\end{definition} 

  The following lemma says that every isometric extension between minimal systems factors through group extensions.

\begin{lemma} \cite[Page 15]{GlasnerB} \label{isometric extension} 
A factor map $\pi\colon X \to Y$ is an isometric extension of minimal systems if and only if there exists a compact group $K$  and a closed subgroup $H$ of $K$ such that $X$ admits a group extension $\widetilde{X}$ by  $H$ and $Y$ admits a group extension $\widetilde{X}$ by  $K$ such that the 
diagram 
$$\xymatrix{
\widetilde{X} \ar[r] \ar[dr] & X (\cong \widetilde{X}/H) \ar[d]^\pi  \\
                             & Y (\cong \widetilde{X}/K)
}$$
commutes.
\end{lemma}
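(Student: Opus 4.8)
The plan is to prove the two implications separately. The direction ($\Leftarrow$) is routine: given $\widetilde X,K,H$ as in the statement — so that $q\colon\widetilde X\to Y$ is a group extension by the compact group $K$, $r\colon\widetilde X\to X$ a group extension by the closed subgroup $H\le K$, and $q=\pi\circ r$ — I would first note that, since the $G$-action on $\widetilde X$ commutes with the $K$-action, the $K$-stabiliser of a point is constant along $G$-orbits and hence, by minimality and closedness, equals a fixed closed normal subgroup $N$ of $K$; after replacing $K,H$ by $K/N,H/N$ one may assume the $K$-action is free, so that $q$ is a principal $K$-bundle. Then, fixing a bi-invariant metric $\varrho$ on the compact metrizable group $K$ and transporting it to each $q$-fibre (a $K$-torsor) in the unique $K$-invariant way, one gets a continuous fibrewise metric on $\widetilde X$ that is $G$-invariant because $G$ commutes with $K$, so $q$ is an isometric extension; pushing it down, the quotient metric on $K/H$ — well defined as $\varrho$ is $H$-invariant — is a continuous $G$-invariant fibrewise metric on $X\cong\widetilde X/H$, so $\pi$ is an isometric extension.

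For ($\Rightarrow$), suppose $\pi$ is isometric with continuous $G$-invariant fibrewise metric $d$. I would fix $x_0\in X$, set $y_0:=\pi(x_0)$, and consider the set $\mathcal I$ of all $d$-isometric maps $\phi\colon\pi^{-1}(y_0)\to X$ whose image is a single fibre of $\pi$, identified with their graphs inside the hyperspace of closed subsets of $X\times X$; this is a compact metrizable space (for a minimal isometric extension the fibre map $y\mapsto\pi^{-1}(y)$ is continuous, so such a $\phi$ is automatically onto its target fibre). It carries the continuous $G$-action $g\cdot\phi:=g\circ\phi$, which stays in $\mathcal I$ since $G$ acts on fibres by $d$-isometries, and the assignments $\tau(\phi):=\pi(\phi(x_0))$ and $\mathrm{ev}(\phi):=\phi(x_0)$ are continuous, $G$-equivariant, and satisfy $\pi\circ\mathrm{ev}=\tau$. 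Choosing a minimal subflow $\widetilde X\subseteq\mathcal I$ and restricting, I obtain factor maps $\mathrm{ev}\colon\widetilde X\to X$ and $\tau\colon\widetilde X\to Y$ (equivariant maps onto minimal flows) with $\pi\circ\mathrm{ev}=\tau$, which is the commutative triangle required by the lemma.

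It then remains to recognise $\tau$ and $\mathrm{ev}$ as group extensions. I would put $K:=\{\phi\in\widetilde X:\tau(\phi)=y_0\}$, a closed set of $d$-isometries of $\pi^{-1}(y_0)$ onto itself, and $H:=\{\phi\in K:\phi(x_0)=x_0\}$; for $\psi\in K$ and $\phi\in\widetilde X$ one has $\phi\circ\psi\in\mathcal I$ with $\tau(\phi\circ\psi)=\tau(\phi)$, and this right action of $K$ on $\widetilde X$ commutes with $G$ and is free. The key step, which I expect to be the main obstacle, is to prove that $K$ is a subgroup of the isometry group of $\pi^{-1}(y_0)$, that $\widetilde X$ is stable under precomposition by $K$, and that $K$ acts transitively on each $\tau$-fibre; equivalently, that whenever $\phi,\phi'\in\widetilde X$ have the same $\tau$-image, the isometry $\phi^{-1}\circ\phi'$ again lies in $\widetilde X$. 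Granting this, $\tau\colon\widetilde X\to Y$ is a principal, hence group, extension by the compact group $K$, while $R_{\mathrm{ev}}:=\{(\phi,\phi\circ\psi):\psi\in H\}$ exhibits $\mathrm{ev}\colon\widetilde X\to X$ as a group extension by $H\le K$ inducing $\widetilde X/H\cong X$ — exactly the diagram of the lemma. The difficulty in the key step is that distinct elements of $\mathcal I$ live over different fibres and cannot naively be composed or inverted against one another; the standard remedy is to carry out this part of the argument inside the relative enveloping semigroup of $\pi$, whose elements are globally defined (if discontinuous) self-maps of $X$, so that composition and inversion make sense everywhere. There, $G$-invariance of $d$ forces the relevant limit maps to be surjective $d$-isometries on fibres, and minimality of $\widetilde X$ (together with minimality of $Y$) yields the transitivity; this classical argument is carried out in \cite[Page 15]{GlasnerB}.
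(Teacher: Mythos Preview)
The paper does not give its own proof of this lemma: it is stated with the citation \cite[Page 15]{GlasnerB} and used as a black box, so there is nothing in the paper to compare your argument against. Your proposal is a reasonable outline of the classical construction (the isometry-bundle/relative Ellis group approach that Glasner presents), and you correctly identify the genuine content --- showing that the fibre $K=\tau^{-1}(y_0)$ is closed under composition and inversion and acts transitively on each $\tau$-fibre --- as the step that requires passing to the relative enveloping semigroup. Since you ultimately defer that step to the same reference the paper cites, your write-up is at the level of a guided sketch rather than a self-contained proof; if you want it to stand alone you would need to carry out the enveloping-semigroup argument explicitly, but as the paper itself simply quotes the result, nothing more is needed here.
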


The following proposition says that fibrations are stable under inverse limits.
\begin{proposition} \label{limit preserving}
Let $p_{i, i+1}\colon X_{i+1} \to X_i$ be a fibration for every $i=0, 1, 2, \ldots$.  Denote by $X$ the inverse limit of $X_i$. Then the induced map $f\colon X \to X_0$ is  a fibration. Similarly, the degree of fibrations is also preserved by the inverse limit operation.
\end{proposition}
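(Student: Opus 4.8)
The plan is to verify directly that $f\colon X \to X_0$ has $k$-completion for every $k$, where $X = \varprojlim X_i$ with structure maps $p_i \colon X \to X_i$ and connecting fibrations $p_{i,i+1}\colon X_{i+1}\to X_i$ (write $p_{i,j}\colon X_j \to X_i$ for the composites, so $f = p_{0,i}\circ p_i$ for every $i$). Recall that a $k$-cube of $X$ is, by definition of the inverse-limit cubespace structure, a compatible sequence of $k$-cubes $(c_i)_i$ with $c_i \in C^k(X_i)$ and $p_{i,i+1}(c_{i+1}) = c_i$; likewise a $k$-corner $\lambda$ of $X$ is a compatible sequence of $k$-corners $\lambda_i$ of $X_i$. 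So suppose we are given a $k$-corner $\lambda = (\lambda_i)_i$ of $X$ and a $k$-cube $c^{(0)}\in C^k(X_0)$ with $f(\lambda) = c^{(0)}|_{\llcorner^k}$, i.e. $p_{0,i}(\lambda_i) = c^{(0)}|_{\llcorner^k}$ for all $i$.

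First I would build, by induction on $i$, a compatible sequence of completions. For $i=0$ take $c^{(0)}$ itself. Given $c^{(i)}\in C^k(X_i)$ with $c^{(i)}|_{\llcorner^k} = p_i(\lambda) \;(= \lambda_i)$, apply the fibration property of $p_{i,i+1}\colon X_{i+1}\to X_i$ to the $k$-corner $\lambda_{i+1}$ of $X_{i+1}$: its lower faces form a $k$-corner, $p_{i,i+1}(\lambda_{i+1}) = \lambda_i = c^{(i)}|_{\llcorner^k}$ is completed by $c^{(i)}$ in $X_i$, so there is a $k$-cube $c^{(i+1)}\in C^k(X_{i+1})$ with $c^{(i+1)}|_{\llcorner^k} = \lambda_{i+1}$ and $p_{i,i+1}(c^{(i+1)}) = c^{(i)}$. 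The sequence $(c^{(i)})_i$ is by construction compatible, hence defines $c \in C^k(X)$ with $c|_{\llcorner^k} = \lambda$ and $f(c) = p_{0,i}(c^{(i)}) = c^{(0)}$. This shows $f$ has $k$-completion for every $k$, so $f$ is a fibration.

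For the claim about the degree, suppose in addition each $p_{i,i+1}$ is an $s$-fibration, so each has $(s+1)$-uniqueness; I must check $f$ has $(s+1)$-uniqueness. Let $c,c'\in C^{s+1}(X)$ with $c|_{\llcorner^{s+1}} = c'|_{\llcorner^{s+1}}$ and $f(c)=f(c')$, and write $c = (c^{(i)})_i$, $c' = (c'^{(i)})_i$. I would show $c^{(i)} = c'^{(i)}$ by induction on $i$: for $i=0$ this is $f(c)=f(c')$; assuming $c^{(i)} = c'^{(i)}$, the two cubes $c^{(i+1)},c'^{(i+1)}\in C^{s+1}(X_{i+1})$ agree on $\llcorner^{s+1}$ and satisfy $p_{i,i+1}(c^{(i+1)}) = c^{(i)} = c'^{(i)} = p_{i,i+1}(c'^{(i+1)})$, so $(s+1)$-uniqueness of $p_{i,i+1}$ forces $c^{(i+1)} = c'^{(i+1)}$. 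Hence $c = c'$.

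The argument is essentially a diagram chase, so there is no serious obstacle; the one point requiring a little care is the compatibility of the inductively chosen completions — the completion $c^{(i+1)}$ must be chosen to map to the previously chosen $c^{(i)}$ (not merely to some completion of $\lambda_i$), which is exactly what the fibration property of $p_{i,i+1}$ delivers, so this is built into the induction rather than being an afterthought. One should also state explicitly at the outset the description of cubes and corners in an inverse limit of cubespaces (recalled just before the statement of this proposition in the excerpt), since the whole proof rests on it.
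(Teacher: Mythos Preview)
Your proof is correct and follows essentially the same inductive lifting argument as the paper: build a compatible sequence of completions $c^{(i)}$ by repeatedly applying the fibration property of $p_{i,i+1}$, which is exactly what the paper does (the paper phrases it in terms of the missing vertex $u_i$ rather than the full cube $c^{(i)}$, and also gestures at a transfinite version, but the core idea is identical). Your treatment of the degree-preservation clause is actually more explicit than the paper's, which just says ``similarly''.
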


\begin{proof}
Denote by $p_n$ the projection map $X \to X_n$. Assume that $\lambda$ is a $k$-corner of $X$ such that $f (\lambda)$ extends to a cube $c$ of $X_0$. We need to complete $\lambda$ as a cube  via some point $(x_i)_i$ of $X$  such that  $f((x_i)_i)=c(\overrightarrow{1})$.

 Note that $p_1(\lambda)$ is a $k$-corner of $X_1$ and $p_{0, 1}\circ p_1 (\lambda)= f (\lambda) $ extends to a cube of $X_0$ via $c(\overrightarrow{1})$. Since $p_{0, 1}$ is a fibration, there exists $u_1 \in (p_{0, 1})^{-1}(c(\overrightarrow{1}))$ completing $p_1 (\lambda)$ as a cube of $X_1$. 

Now $p_2(\lambda)$ is a $k$-corner of $X_2$ and $p_{1, 2}\circ p_2(\lambda)=p_1 (\lambda)$ extends to a cube of $X_1$ via $u_1$. Since $p_{1, 2}$ is a fibration, there exists $u_2 \in (p_{1,2})^{-1}(u_1)$ completing $p_2(\lambda)$ as a cube of $X_2$. Inductively, if $\alpha$ is a limit ordinal,  we obtaina point $(u_i)_{i < \alpha}$ in $X_\alpha$ such that 
$$p_{0, \alpha}((u_i)_{i< \alpha})=p_{0, 1}(u_1)=c(\overrightarrow{1}).$$
Here for every $i< \alpha$, $u_i$ completes $p_i(\lambda)=p_{i, \alpha}\circ p_\alpha (\lambda)$ as a cube of $X_i$. Thus $(u_i)_{i < \alpha}$ completes $p_\alpha (\lambda)$ as a cube of $X_\alpha$. 
We proceed by induction.
\end{proof}

\begin{proof} [Proof of Theorem \ref{dyn_factor_is_fibration}]
  The relative Furstenberg structure theorem states that a distal extension of minimal systems is given by a (countable) transfinite tower\footnote{This is defined in \cite[Appendix E14.3, E15.5]{VriesB}.} of isometric extensions \cite[Chapter V, Theorem 3.34]{VriesB}. Applying Proposition \ref{limit preserving}, we reduce to the case where $\pi$ is an isometric extension.
  By Lemma \ref{isometric extension} and Proposition \ref{universal property}, we can further reduce to the case where $\pi$ is a group extension by a compact group $K$.

Fix $k \geq 1$. Suppose that $\lambda$ is a $k$-corner of  $X$ such that $\pi(\lambda)$ can be extended to a cube $c$ of $Y$. Since $X$ is fibrant,  we can extend $\lambda$ to be a cube via some point $x_0$ in $X$. It follows that $\pi(x_0)$ and $c(\overrightarrow{1})$ are $(k-1)$-canonically related. Since $(G, X)$ is minimal, from \cite[Theorem 6.1]{GGY18},  we have ${\rm NRP}^{k-1}(Y)=(\pi\times \pi)({\rm NRP}^{k-1}(X))$.  Thus by Proposition \ref{alternative}, we have
$$(\pi(x_0), c(\overrightarrow{1})) \in {\rm NRP}^{k-1}(Y)=(\pi\times \pi)({\rm NRP}^{k-1}(X)).$$
Thus there exists $(x, z) \in {\rm NRP}^{k-1}(X)$ such that $\pi(x)=\pi(x_0)$ and $\pi(z)=c(\overrightarrow{1})$.

Now since $\pi$ is a group extension by $K$, there exists a unique $a \in K$ such that $x_0=ax$. By Proposition \ref{URP},  it suffices to show $(x_0, az) \in {\rm NRP}^{k-1}(X)$. Indeed, in such a case, $az$ completes $\lambda$ as a cube and 
$\pi(az)=\pi(z)=c(\overrightarrow{1})$.

By Lemma \ref{K invariant}, $K\NRP^{k-1}(X)=\NRP^{k-1}(X)$. Since $(x, z) \in \NRP^{k-1}(X)$, it follows that $(x_0, az) =a(x, z) \in \NRP^{k-1}(X)$.

\end{proof}

\section{Extensions of finite degree}\label{sec:extensions of finite degree}

In this section we investigate extensions of finite degree (see Definition \ref{def:extension of degree}).
\begin{proposition}\label{structure0}
Let $s \geq 1$ and $f \colon (G, X) \to (G, Y)$ an extension of degree at most $s$ such that $X$ is minimal distal. Then $f$ factors as a tower of principal abelian group extensions:
$$\xymatrix{
(G, X)  \ar[r] \ar[d]^f  & (G,  X/\NRP^{s-1}(f)) \ar[r] & \cdots \ar[r] & (G, X/\NRP^{[1]}(f)) \ar[dlll]  \\
(G, Y).
}$$
\end{proposition}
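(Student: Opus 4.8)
The plan is to first promote $f$ to an $s$-fibration, extract the tower from the relative weak structure theorem, and then upgrade each topological principal bundle occurring there to a genuine dynamical principal abelian group extension.

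First I would note that $Y$, being a factor of the minimal distal system $X$, is itself minimal and distal, so both associated dynamical cubespaces are compact ergodic and gluing (minimality gives ergodicity, distal minimality gives fibrancy and hence the gluing property by Proposition \ref{fibrant is gluing}), and by Theorem \ref{dyn_factor_is_fibration} the map $f$ is a fibration between them. By Proposition \ref{alternative}, $\NRP^{[s]}(f)=\sim_{f,s}$, which by the degree hypothesis is the diagonal; hence $X/\sim_{f,s}=X$ and Proposition \ref{inducing $s$-fibration} shows $f\colon X\to Y$ is an $s$-fibration. Applying the relative weak structure theorem (Theorem \ref{RWST}) and using Proposition \ref{alternative} once more to identify $\sim_{f,k}$ with $\NRP^{[k]}(f)$, we obtain the displayed tower, with each bonding map $\pi_k\colon W_k:=X/\NRP^{[k]}(f)\to W_{k-1}:=X/\NRP^{[k-1]}(f)$ an $A_k(f)$-principal fiber bundle for a compact metrizable abelian group $A_k(f)$. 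Since $\NRP^{[k]}(f)$ is a closed $G$-invariant equivalence relation (Theorem \ref{equivalence relation}, Definition \ref{relative NRP}), every $W_k$ is a minimal distal $G$-system and every $\pi_k$ is a factor map.

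It remains to see that each $\pi_k$ is a principal abelian group extension in the sense of Definition \ref{def:principal abelian group}. Freeness of the $A_k(f)$-action and the identity $R_{\pi_k}=\{(w,aw):a\in A_k(f)\}$ are immediate from the principal bundle structure; the only real point is that the $A_k(f)$-action commutes with the $G$-action. Here I would use that $G$ acts on $W_k$ by automorphisms of the dynamical cubespace $(W_k,C^\bullet_G(W_k))$ — which coincides with the quotient cube structure used to define $\pi_k$ — while $\pi_k$ is $G$-equivariant, so each $g\in G$ carries the $\pi_k$-fiber over $\bar w$ onto the fiber over $g\bar w$ by a cubespace isomorphism. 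Since these fibers are isomorphic as subcubespaces to $\cD_k(A_k(f))$, whose automorphism group is the affine group $A_k(f)\rtimes\Aut(A_k(f))$ and on which $A_k(f)$ acts by translations, conjugation by $g$ produces a continuous cocycle $(g,\bar w)\mapsto\sigma_{g,\bar w}\in\Aut(A_k(f))$, and the desired commutation is exactly the statement $\sigma\equiv\mathrm{id}$. The cocycle identity forces $\sigma_{g,\bar w}$ to depend only on $\bar w$ and to descend to a continuous cocycle over the minimal system $W_{k-1}$ with values in $\Aut(A_k(f))$, equal to $\mathrm{id}$ at $g=e$; I would first reduce to the case that $A_k(f)$ is a Lie group by writing it as an inverse limit of compact abelian Lie groups and passing to the corresponding quotient tower (so that $\Aut(A_k(f))$ becomes discrete), and then conclude triviality from minimality of $W_{k-1}$ together with the rigidity of the cube structure of a finite-degree fibration. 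Granting $\sigma\equiv\mathrm{id}$, the global $A_k(f)$-action commutes with $G$, so $\pi_k$ is a principal abelian group extension and the tower is of the asserted form.

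The hard part is this last step — showing the $\Aut(A_k(f))$-valued cocycle $\sigma$ is trivial, i.e.\ that the acting group cannot \emph{twist} the structure group of the extension. This is precisely the obstruction highlighted in the remark following Theorem \ref{thm:structure_finite_degree_extension} (that $G$ need not embed into $\Aut_1(\pi)$, since $G$ does not fix the fibers), and it is where finiteness of the degree and distality of $X$ enter in an essential way.
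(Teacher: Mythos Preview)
Your reduction to the relative weak structure theorem is correct and matches the paper: $X$ is fibrant, $f$ is a fibration, $\NRP^{[s]}(f)=\sim_{f,s}=\Delta$ makes $f$ an $s$-fibration, and Theorem~\ref{RWST} produces the tower with each $\pi_k$ an $A_k(f)$-principal fiber bundle; the $G$-invariance of $\NRP^{[k]}(f)$ makes each $\pi_k$ a factor map. Freeness and the fiber description are indeed immediate from the bundle structure.

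The genuine gap is exactly where you flag it: you do not prove that the $A_k(f)$-action commutes with $G$. Your proposed route --- introduce an $\Aut(A_k(f))$-valued cocycle $\sigma$, reduce to the Lie case where $\Aut(A_k(f))$ is discrete, then invoke ``minimality together with rigidity'' --- does not close. Continuity into a discrete group only gives local constancy; even on a connected base this collapses $\sigma$ to a homomorphism $G\to\Aut(A_k(f))$, and nothing you have said rules out a nontrivial homomorphism (think $G=\bZ$, $A_k=\bT$, $\Aut(\bT)=\{\pm1\}$). The ``rigidity of the cube structure'' is doing no work as stated. Moreover, the Lie reduction itself is delicate here: you would need the $G$-action to pass to the quotients $X/K_n$ in a way compatible with the induced $A_n$-actions, which already presupposes part of what you want.

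The paper avoids all of this by a direct, three-line computation using the \emph{explicit} description of $A_s$ as $\NRP^{[s-1]}(f)/\!\sim_f$, where $(x,x')\sim_f(y,y')$ iff $f(x)=f(x')$, $f(y)=f(y')$ and $[\llcorner^s(x,x'),\llcorner^s(y,y')]\in C_G^{s+1}(X)$, with action $[x,x']_f\cdot x=x'$. For $a=[x,x']_f$ and $t\in G$, applying $(\Box^s(e),\Box^s(t))\in\HK^{s+1}(G)$ to the duplicated cube $[\llcorner^s(x,x'),\llcorner^s(x,x')]$ yields $[\llcorner^s(x,x'),\llcorner^s(tx,tx')]\in C_G^{s+1}(X)$; together with $f(tx)=f(tx')$ this gives $a=[tx,tx']_f$, i.e.\ $a(tx)=t(ax)$. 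The point is that the $A_s$-action is \emph{defined} through the $G$-invariant cube structure, so $G$-equivariance is essentially built in; your abstract cocycle framing discards precisely this information and then tries to recover it by force.
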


\begin{proof}
Since $X$ is minimal distal, by \cite[Theorem 7.10]{GGY18}, it is  fibrant. From Theorem \ref{dyn_factor_is_fibration}, $f$ is a fibration. By Proposition \ref{fibrant is gluing}, fibrant cubespaces have the gluing property. Thus applying Proposition \ref{inducing $s$-fibration}, we conclude that $f$ is an  $s$-fibration. Since $X$ is minimal, it is an ergodic cubespace and hence so are the induced quotient spaces. From Proposition \ref{alternative}, $\NRPs(f)=\sim_{f, s}$.  By Theorem \ref{RWST}, $f$ is factored as stated. It suffices to show every successive map in the tower is a principal abelian group extension. Let us show, for example,  $f_s: (G, X) \to (G,  X/\NRP^{s-1}(f))$ is a principal abelian group extension by the structure group $A_s$. 

Since $\NRPs(f)$ is a $G$-invariant closed equivalence relation, the quotient map $f_s$ is a factor map. By Theorem \ref{RWST}, $f_s$ is an $A_s$-principal fiber bundle. Thus we need only to show $A_s$ commutes with the $G$-actions. Recall that in  \cite[Page 48]{GGY18}, $A_s$ is constructed as
$$A_s=\NRP^{s-1}(f)/\sim_f,$$
where $(x, x')\sim_f (y, y')$ if and only if $f(x)=f(x'), f(y)=f(y')$ and $[\llcorner^s(x,x'), \llcorner^s(y, y')] \in C_G^{s+1}(X)$. Denote by $[x, x']_f$ the equivalence class of $(x, x')$.  Fix $x \in X, a \in A_s$ and $t \in G$. We need to show $a(tx)=t(ax)$. 
Set $x'=ax$. By definition \cite[Page 49]{GGY18}, $(x, x') \in \NRP^{s-1}(f)$ and $a=[x, x']_f$. Applying $(\Box^s(e), \Box^s(t)) \in \HK^{s+1}(G)$ to the $(s+1)$-cube $[\llcorner^s(x,x'), \llcorner^s(x,x')]$. We obtain another $(s+1)$-cube $[\llcorner^s(x,x'), \llcorner^s(tx,tx')]$. Note that $f(tx)=tf(x)=tf(x')=f(tx')$. It follows that 
$$a=[tx, tx']_f=[tx, t(ax)]_f.$$
In particular, by definition of $A_s$-actions, we obtain $a(tx)=t(ax)$.
\end{proof}

Recall the definition of  \emph{maximal $s$-fibration} in Proposition \ref{maximal fibration}. In \cite[Theorem 7.15]{GGY18}, it was shown that for a minimal system $(G, X)$, the maximal $s$-nilspace factor of $(X, C^\bullet_G(X))$ coincides with $X/\NRPs(X)$.  As a relative version of this, we have
\begin{proposition}\label{dynamical maximal}
Let $f\colon X \to Y$ be a factor map of minimal distal systems.  Then $g\colon X/\NRPs(f) \to Y$ is the maximal $s$-fibration for every $s\geq 0$ and $g$ is an extension of degree at most $s$ relative to $Y$.
\end{proposition}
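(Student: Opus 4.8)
The plan is to reduce everything to the already-established machinery for fibrations. First I would observe that since $X$ (and hence $Y$) is minimal distal, Theorem \ref{dyn_factor_is_fibration} shows $f$ is a fibration, and by \cite[Theorem 7.10]{GGY18} the underlying cubespaces are fibrant, hence gluing (Proposition \ref{fibrant is gluing}); minimality makes them ergodic. By Proposition \ref{alternative} we have $\NRPs(f)=\sim_{f,s}$, so the quotient $X/\NRPs(f)$ is exactly $X/\sim_{f,s}$. Proposition \ref{inducing $s$-fibration} then tells us that the induced map $g\colon X/\sim_{f,s}\to Y$ is an $s$-fibration, and Proposition \ref{maximal fibration} states that this induced $s$-fibration is the maximal one in the precise categorical sense recalled there. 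This already gives the first assertion; it remains only to record that the projection $X\to X/\NRPs(f)$ is itself a factor map of dynamical systems, which holds because $\NRPs(f)$ is a closed $G$-invariant equivalence relation (it is the intersection of the closed $G$-invariant relation $\NRPs(X)$ of Theorem \ref{equivalence relation} with the closed $G$-invariant relation $R_f$), and $G$ acts on the quotient compact space; minimality of $(G,Y)$ passes to $X/\NRPs(f)$ since it is an intermediate factor of the minimal system $(G,X)$.

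For the second assertion — that $g$ is an extension of degree at most $s$ relative to $Y$ — I would unwind Definition \ref{def:extension of degree}: we must show $\NRPs(g)=\Delta$ in $(X/\NRPs(f))^2$. By Proposition \ref{alternative} applied to the factor map $g$, this is the same as $\sim_{g,s}=\Delta$, i.e. that $g$ has $(s+1)$-uniqueness, which is precisely the defining property of $g$ being an $s$-fibration; this was obtained in the previous paragraph from Proposition \ref{inducing $s$-fibration}. Concretely, if $\bar x\sim_{g,s}\bar x'$ with $g(\bar x)=g(\bar x')$, then $\sim_{f,s}$ being an equivalence relation together with the universal replacement property (Proposition \ref{URP}) forces representatives $x,x'$ with $x\sim_{f,s}x'$, whence $\bar x=\bar x'$ in $X/\sim_{f,s}$.

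The only genuine point requiring care — and the step I expect to be the main obstacle — is checking that $g$ is a fibration (not merely a cubespace morphism) and that the quotient cubespace structure on $X/\NRPs(f)$ is the Host-Kra cubespace structure $C^\bullet_G(X/\NRPs(f))$ of the quotient dynamical system, so that "$s$-fibration between dynamical cubespaces" and "extension of degree at most $s$" really refer to the same object. The first half is Proposition \ref{inducing $s$-fibration}. For the second, one uses that $\NRPs(f)\subseteq\NRPs(X)$ is $G$-invariant and that quotients of dynamical cubespaces by $G$-invariant closed equivalence relations carry both the quotient cubespace structure and the dynamical cubespace structure of the quotient system, and these agree — a fact already used implicitly in \cite[Theorem 7.15]{GGY18} in the absolute case and which transfers verbatim to the relative setting since $C^k_G(X/\NRPs(f))$ is the image of $C^k_G(X)$ under the $G$-equivariant quotient map. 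Once this identification is in place, both claims follow immediately from Propositions \ref{maximal fibration}, \ref{inducing $s$-fibration} and \ref{alternative}, exactly paralleling the absolute statement \cite[Theorem 7.15]{GGY18}.
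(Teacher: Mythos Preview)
Your proposal is correct and follows essentially the same route as the paper: identify $\NRPs(f)$ with $\sim_{f,s}$ via Proposition~\ref{alternative}, invoke Propositions~\ref{inducing $s$-fibration} and~\ref{maximal fibration} for maximality, and then read off $\NRPs(g)=\sim_{g,s}=\Delta$ for the second assertion. You supply more detail than the paper (verifying that $f$ is a fibration via Theorem~\ref{dyn_factor_is_fibration}, that the hypotheses of Proposition~\ref{maximal fibration} are met, and that the quotient cubespace structure agrees with the dynamical one on $X/\NRPs(f)$); the paper's three-line proof leaves these points implicit, but your added care is justified and does not constitute a different approach.
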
 

\begin{proof}
 From Proposition \ref{alternative}, we know $\NRPs(f)=\sim_{f, s}$.  By Proposition \ref{maximal fibration}, $g\colon X/\NRPs(f) \to Y$ is the maximal $s$-fibration. Moreover, $\NRPs(g)=\sim_{g,s}=\Delta$. Thus $g$ is an extension of degree at most $s$ relative to $Y$.
\end{proof}

In general,  given a factor map $f\colon X \to Y$ of minimal systems, from Proposition \ref{relative relation},  the induced map $g\colon X/\NRPk(f) \to Y$ is a distal extension  and has $(k+1)$-uniqueness.
\begin{question} \label{fibration question}
\begin{enumerate}
    \item Is $g$ a fibration?
    \item More generally, if $f$ is distal, can one conclude that $f$ is a fibration? In \cite[Example 3.10]{TY13}, Tu and Ye considered the projection map of the Denjoy minimal system onto the unit circle and showed that it is not a fibration. However it is also not distal.
\end{enumerate}
\end{question}

Recall that a minimal system $(G, X)$ is called a system of degree at most $s$ if $\NRPs(X)=\Delta$. In \cite[Theorem 7.14]{GGY18}, it is proved that $(G, X)$ is a system of degree at most $s$ if and only if it is an $s$-nilspace. 
As a relative analogue of this result, we remark that if the answer of Question \ref{fibration question} (1) is positive, then we will obtain a dynamical characterization of $s$-fibration. That is, for a factor map  $f: X \to Y$ of minimal systems, $f$ is an extension of degree at most $s$ relative to $Y$ if and only if $f$ is an $s$-fibration.

\begin{proof}[Proof of Theorem \ref{thm:structure_finite_degree_extension}]

By Proposition \ref{dynamical maximal}, $\pi$ is an $s$-fibration.
Applying Theorem \ref{relative inverse limit}, we obtain a
factorization of $\pi$
by $s$-fibrations $p_{m, n} \colon Z_n \to Z_m$ and Lie fibrations
$h_n \colon Z_n \to Y$ which are compatible with each other. It
suffices to show that every $p_{m, n}$ is a factor map. We induct on
the degree $s$ in order to prove this.

The case $s=0$ is trivial since then $\pi$ is an isomorphism. Assume
that the statement is true for $s-1$.
Recall that in the proof of Theorem \ref{relative inverse limit}, the
space $Z_n$ is constructed as a quotient space $X/\approx_{\psi_n}$
based
on a fibration map $\psi_n \colon \pi_{\pi, s-1}(X) \to B_n$ for some
cubespace $B_n$. To show $p_{m, n}$ is a factor map, it suffices to
show that the
equivalence relation $\approx_{\psi_n}$ is $G$-invariant. By the
induction hypothesis, $\psi_n$ is  $G$-equivariant. Let
$x\approx_{\psi_n} x'$ for some $x, x' \in X$. Then for every $s \in
G$
$$\psi(\pi_{\pi, s-1}(sx))=\psi(s\pi_{\pi, s-1}(x))=s\psi(\pi_{\pi,
s-1}(x))=s\psi(\pi_{\pi, s-1}(x')).$$
Thus $sx\approx_{\psi_n} sx'$.
\end{proof}

\bibliographystyle{alpha}

\bibliography{On_the_structure_theory_of_cubespace_fibrations}

\vspace{0.5\textheight}

\printindex

\end{document}